\documentclass[a4paper,11pt]{amsart}

\usepackage{amssymb}
\usepackage{mathrsfs}
\usepackage{cite}
\usepackage{url}
\usepackage[all]{xy}

\newtheorem{theorem}{Theorem}[section]
\newtheorem{lemma}[theorem]{Lemma}
\newtheorem{corollary}[theorem]{Corollary}
\newtheorem{proposition}[theorem]{Proposition}
\newtheorem{conjecture}[theorem]{Conjecture}

\theoremstyle{definition}
\newtheorem{definition}[theorem]{Definition}
\newtheorem{remark}[theorem]{Remark}

\numberwithin{equation}{section}

\makeatletter
\@namedef{subjclassname@2020}{%
  \textup{2020} Mathematics Subject Classification}
\makeatother

\DeclareMathOperator{\RE}{Re}
\DeclareMathOperator{\IM}{Im}
\DeclareMathOperator{\tr}{tr}

\begin{document}

\title[Riemann zeta-function and random matrices]
{Connection between the Riemann zeta-function and random matrices via hyperfunctions}

\author[M. Mine]{Masahiro Mine}
\address{Global Education Center, Waseda University, 1-6-1 Nishiwaseda, Shinjuku-ku, Tokyo 169-8050, Japan}
\email{m-mine@aoni.waseda.jp}

\date{}

\begin{abstract}
Bohr pioneered the study of the statistical behavior of the Riemann zeta-function. 
A classical result by Bohr and Jessen revealed that the values of the Riemann zeta-function to the right of the critical line behave like a random variable. 
We now propose to extend Bohr's theory to the stage of hyperfunctions. 
In this paper, we introduce two random hyperfunctions: one is associated with the values of the Riemann zeta-function on the critical line, and the other is associated with the characteristic polynomial of a random matrix from the circular unitary ensemble. 
We then derive a relationship between these random hyperfunctions which is consistent with the Keating--Snaith conjecture on the moments of the Riemann zeta-function. 
\end{abstract}

\subjclass[2020]{Primary 11M06; Secondary 11M50, 46F15}

\keywords{Riemann zeta-function, value-distribution, critical line, limit theorem, hyperfunction, random matrix, circular unitary ensemble}

\maketitle

\section{Introduction}\label{sec:1}
The theory of the statistical behavior of the Riemann zeta-function $\zeta(s)$ was established by Bohr and his colleagues in the early 20th century. 
For example, Bohr showed in \cite{Bohr1916} that the set of the values of $\log{\zeta}(s)$ on the line $\RE(s)=\sigma$ is dense in the complex plane $\mathbb{C}$ if $\sigma$ is fixed with $1/2<\sigma \leq1$. 
It should be mentioned here how to choose the branch of the logarithm, but we will postpone the details for now. 
Then, according to the celebrated result by Bohr and Jessen \cite{BohrJessen1930, BohrJessen1932}, there exists a Borel probability measure $\mu_{\sigma}$ on $\mathbb{C}$ for any fixed $\sigma>1/2$ such that
\begin{align}\label{eq:032114543}
\lim_{T \to\infty}
\frac{1}{T}
\int_{0}^{T}
F \big(
\log{\zeta}(\sigma+it)
\big)
\,dt
=
\int_{\mathbb{C}}
F
\,d \mu_{\sigma}
\end{align}
for all bounded continuous functions $F$ on $\mathbb{C}$. 
The limit measure $\mu_{\sigma}$ can be explicitly constructed; we will focus on it later in this section. 
The statistical behavior of $\zeta(s)$ on the critical line $\RE(s)=1/2$ has also been investigated, but many issues remain unresolved. 
Indeed, it is still an open problem whether the set of the values of $\log{\zeta}(s)$ on $\RE(s)=1/2$ is dense in $\mathbb{C}$. 
As for limit formula \eqref{eq:032114543}, we obtain some analogous result with a normalization factor. 
Denote by $\gamma$ the standard complex Gaussian measure on $\mathbb{C}$. 
Then we have
\begin{align*}
\lim_{T \to \infty}
\frac{1}{T}
\int_{0}^{T}
F \bigg(
\frac{\log{\zeta}(1/2+it)}{\sqrt{\log\log{T}}}
\bigg)
\,dt
=
\int_{\mathbb{C}}
F
\,d \gamma
\end{align*}
for all bounded continuous functions $F$ on $\mathbb{C}$. 
This is known as the Selberg central limit theorem, and a complete proof can be found in \cite{Joyner1986} for example. 
Without any normalization factor, a conjecture of the limit formula has been proposed when $F$ is an exponential function. 
Throughout this paper, we write
\begin{align*}
\vec{\kappa} \circ \lambda
=
\kappa_{1} \lambda
+ 
\kappa_{2} \overline{\lambda}
\end{align*}
for $\vec{\kappa}=(\kappa_{1},\kappa_{2}) \in \mathbb{C}^{2}$ and $\lambda \in \mathbb{C}$. 
Therefore, for example, we have 
\begin{align*}
|\zeta(1/2+it)|^{2k}
=
\exp \big(
(k,k) \circ \log{\zeta}(1/2+it)
\big)
\end{align*} 
for $k \in \mathbb{C}$. 
The following conjecture is essentially due to Keating and Snaith \cite{KeatingSnaith2000a}. 

\begin{conjecture}\label{conj:1.1}
For any $\vec{\kappa}=(\kappa_{1},\kappa_{2}) \in \mathbb{C}^{2}$ with $\RE(\kappa_{1}+\kappa_{2})>-1$, 
\begin{align}\label{eq:03211910}
\lim_{T \to\infty}
\frac{1}{T(\log{T})^{\kappa_{1} \kappa_{2}}}
\int_{0}^{T} 
\exp \big(
\vec{\kappa} \circ \log{\zeta}(1/2+it)
\big)
\,dt
=
a(\vec{\kappa})\, g(\vec{\kappa}),
\end{align}
where $a(\vec{\kappa})$ and $g(\vec{\kappa})$ are represented as
\begin{align*}
a(\vec{\kappa})
&
= 
\prod_{p}
\bigg\{
\big(1-p^{-1}\big)^{\kappa_{1} \kappa_{2}}
\sum_{m=0}^{\infty}
\frac{\Gamma(\kappa_{1}+m)}{m!\, \Gamma(\kappa_{1})}
\frac{\Gamma(\kappa_{2}+m)}{m!\, \Gamma(\kappa_{2})}
p^{-m}
\bigg\}, 
\\
g(\vec{\kappa})
&
=
\frac{G(1+\kappa_{1})\, G(1+\kappa_{2})}{G(1+\kappa_{1}+\kappa_{2})}
\end{align*}
using the Barnes $G$-function $G(z)$. 
\end{conjecture}

It is known that \eqref{eq:03211910} holds for $\vec{\kappa}=(0,0)$ obviously, for $\vec{\kappa}=(1,1)$ by Hardy and Littlewood \cite{HardyLittlewood1916}, and for $\vec{\kappa}=(2,2)$ by Ingham \cite{Ingham1927}. 
Furthermore, the cases for $\vec{\kappa}=(1,2), (2,1)$ are also discussed in \cite{CaoTanigawaZhai2021, Pliego2025}. 
These appear to be all that is known about the truth of Conjecture \ref{conj:1.1}. 
Then, we recall that the constant $g(\vec{\kappa})$ has a background derived from random matrix theory. 
For a unitary matrix $U \in \mathcal{U}(N)$, we denote its characteristic polynomial by
\begin{align*}
Z(s;U)
=
\det(I-s\, U)
=
\prod_{n=1}^{N}
\big(
1-s\, e^{i \theta_{n}}
\big)
\end{align*}
for $s \in \mathbb{C}$, where $e^{i \theta_{1}},\ldots,e^{i \theta_{N}}$ are the eigenvalues of $U$. 
Let $s=e^{-i \theta}$ be any fixed point on the unit circle. 
By the method of Keating and Snaith \cite{KeatingSnaith2000a}, we obtain
\begin{align*}
\lim_{N \to \infty}
\frac{1}{N^{\kappa_{1} \kappa_{2}}}
\int_{\mathcal{U}(N)}
\exp \big(
\vec{\kappa} \circ \log{Z}(e^{-i \theta};U)
\big)
\,dU
=
g(\vec{\kappa})
\end{align*}
for any $\vec{\kappa}=(\kappa_{1},\kappa_{2}) \in \mathbb{C}^{2}$ with $\RE(\kappa_{1}+\kappa_{2})>-1$, where $dU$ denotes the normalized Haar measure of the compact group $\mathcal{U}(N)$ so that the total measure is equal to $1$. 
This implies that \eqref{eq:03211910} is equivalent to 
\begin{align}\label{eq:05240210}
\lim_{T \to \infty}
\frac{
\displaystyle{
\frac{1}{T}
\int_{0}^{T} 
\exp \big(
\vec{\kappa} \circ \log{\zeta}(1/2+it)
\big)
\,dt
}}{
\displaystyle{
\int_{\mathcal{U}(N_{T})}
\exp \big(
\vec{\kappa} \circ \log{Z}(e^{-i \theta};U)
\big)
\,dU
}}
=
a(\vec{\kappa}), 
\end{align}
where $N_{T}$ is a positive integer satisfying $N_{T}/\log{T} \to 1$ as $T \to \infty$. 
In this paper, we prove a similar limit formula that may provide evidence to support the truth of Conjecture \ref{conj:1.1}. 
Before proceeding to the statement of the main result, we describe the strict definition of the logarithms of $\zeta(s)$ and $Z(s;U)$. 
Let $\log$ denote the usual branch of the logarithm defined on $\mathbb{C} \setminus (-\infty,0]$ so that
\begin{align*}
-\log(1-z)
= 
\sum_{k=1}^{\infty} \frac{1}{k}\, z^{k}
\end{align*}
holds for $|z| \leq1$ with $z \neq 1$. 
Denote by $\mathcal{G}$ the open set in the complex plane obtained from the half-plane $\RE(s)>0$ 
by leaving out the segment $(0,1]$ and all segments $\{ x+i \IM(\rho) \mid 0<x \leq \RE(\rho) \}$, where $\rho$ denote the nontrivial zeros of $\zeta(s)$. 
Then $\mathcal{G}$ is simply connected, and $\zeta(s)$ is nonvanishing and holomorphic on $\mathcal{G}$. 
Hence we can construct a branch of the logarithm of $\zeta(s)$ on the domain $\mathcal{G}$ by letting
\begin{align*}
\log{\zeta}(s)
=
\log{\frac{\pi^{2}}{6}}
+
\int_{2}^{s}
\frac{\zeta'}{\zeta}(z)
\,dz
\end{align*}
for each $s \in \mathcal{G}$.  
We know that it is a holomorphic function on $\mathcal{G}$ and satisfies
\begin{align}\label{eq:05081554}
\log{\zeta}(s)
=
- \sum_{p} \log(1-p^{-s})
=
\sum_{p} \sum_{k=1}^{\infty} \frac{1}{k}\, p^{-ks} 
\end{align}
for $s \in \mathbb{C}$ with $\RE(s)>1$. 
It should be noted that $\log{\zeta}(\sigma+it)$ has a discontinuity at $t=0$ if $\sigma$ is fixed with $0<\sigma<1$. 
To avoid this, we shall modify the branch cut on $\mathbb{R}$. 
Denote by $\mathcal{G}^{*}$ the open set in $\mathbb{C}$ obtained from the half-plane $\RE(s)>0$ by leaving out the half-line $[1,\infty)$ and all segments $\{ x+i \IM(\rho) \mid 0<x \leq \RE(\rho) \}$, where $\rho$ denote the nontrivial zeros of $\zeta(s)$. 
Then $\mathcal{G}^{*}$ is also simply connected, and $\zeta(s)$ is nonvanishing and holomorphic on $\mathcal{G}^{*}$. 
We construct another branch of the logarithm of $\zeta(s)$ on the domain $\mathcal{G}^{*}$ by letting
\begin{align*}
\log^{*}{\zeta}(s)
=
\log{\zeta}(2+i)
+
\int_{2+i}^{s}
\frac{\zeta'}{\zeta}(z)
\,dz 
\end{align*}
for each $s \in \mathcal{G}^{*}$, where $\log{\zeta}(2+i)$ is defined by \eqref{eq:05081554}. 
We also know that $\log^{*}{\zeta}(s)$ is a holomorphic function on $\mathcal{G}^{*}$. 
In particular, it is holomorphic on the vertical strip $1/2<\RE(s)<1$ if we assume the Riemann Hypothesis (RH), that is, all nontrivial zeros of $\zeta(s)$ lie on $\RE(s)=1/2$. 
There is a simple relation between these branches of the logarithm such that
\begin{align}\label{eq:05152038}
\log^{*}{\zeta}(s)
=
\begin{cases}
\log{\zeta}(s)
&
\text{for $s \in \mathcal{G}^{*}$ with $\IM(s)>0$},
\\
\log{\zeta}(s)-2\pi i 
&
\text{for $s \in \mathcal{G}^{*}$ with $\IM(s)<0$},  
\end{cases}
\end{align}
and $\log^{*}{\zeta}(\sigma)=\lim_{\epsilon \downarrow 0} \log{\zeta}(\sigma+i \epsilon)$ for $0<\sigma<1$. 
Then, the logarithm of $Z(s;U)$ is defined as 
\begin{align}\label{eq:05091441}
\log{Z}(s;U)
=
\sum_{n=1}^{N}
\log \big(
1-s\, e^{i \theta_{n}}
\big)
=
\sum_{k=1}^{\infty} \frac{\tr(U^{k})}{k} s^{k} 
\end{align}
for $|s| \leq 1$ with $s \neq e^{-i \theta_{n}}$ for all $1 \leq n \leq N$. 
Since $|\tr(U^k)| \leq N$ for $U \in \mathcal{U}(N)$, this power series converges absolutely for $|s|<1$. 
Therefore $\log{Z}(s;U)$ is a holomorphic function on $|s|<1$. 
To present the statement of the result, we lastly define 
\begin{align}\label{eq:05170402}
f_{L}(z)
=
\sqrt{\frac{L}{2\pi}}\exp \Big(-\frac{Lz^2}{2}\Big)
\end{align}
for $z \in \mathbb{C}$ and $L>0$, which is an entire function rapidly decreasing as $|\RE(z)| \to \infty$ on the horizontal strip $a<\IM(z)<b$ for any fixed $a,b \in \mathbb{R}$.  
We will later see that $f_{L}(x) \log^{*}{\zeta}(1/2+ix+it)$ and $f_{L}(x) \log{Z}(e^{-ix};U)$ are integrable over $\mathbb{R}$. 

\begin{theorem}\label{thm:1.2}
Assume RH. 
Let $N_{T}$ be any positive integer such that $N_{T} \to \infty$ as $T \to \infty$. 
For any $\vec{\kappa}=(\kappa_{1},\kappa_{2}) \in \mathbb{C}^{2}$, 
\begin{align}\label{eq:03222030}
\lim_{L \to \infty}
\lim_{T \to \infty}
\frac{
\displaystyle{
\frac{1}{T}
\int_{0}^{T} 
\exp \Big(
\vec{\kappa} \circ
\int_{-\infty}^{\infty} 
f_{L}(x) \log^{*}{\zeta}(1/2+ix+it)
\,dx
\Big)
\,dt
}}{
\displaystyle{
\int_{\mathcal{U}(N_{T})}
\exp \Big(
\vec{\kappa} \circ
\int_{-\infty}^{\infty} 
f_{L}(x) \log{Z}(e^{-ix};U)
\,dx
\Big)
\,dU
}}
=
a(\vec{\kappa}), 
\end{align}
where $a(\vec{\kappa})$ is the same as in Conjecture \ref{conj:1.1}. 
Furthermore, the followings hold when $N_{T}/\log{T} \to 1$ as $T \to \infty$. 
\begin{itemize}
\item[$(\mathrm{i})$]
For any $\vec{\kappa}=(\kappa_{1},\kappa_{2}) \in \mathbb{C}^{2}$ with $\RE(\kappa_{1}+\kappa_{2}) \geq0$, limit formula  \eqref{eq:03211910} is valid if and only if the order of the limits in \eqref{eq:03222030} is commutative. 
\item[$(\mathrm{ii})$]
Assume further that all nontrivial zeros of $\zeta(s)$ are simple. 
Then, for any $\vec{\kappa}=(\kappa_{1},\kappa_{2}) \in \mathbb{C}^{2}$ with $-1<\RE(\kappa_{1}+\kappa_{2})<0$, limit formula \eqref{eq:03211910} is valid if and only if the order of the limits in \eqref{eq:03222030} is commutative. 
\end{itemize}

\end{theorem}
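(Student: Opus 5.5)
We compute the numerator and the denominator of \eqref{eq:03222030} separately for fixed $L$, and only afterwards let $L\to\infty$. The starting point is to rewrite the smoothed logarithms as explicit series.

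For the random matrix side, insert \eqref{eq:05091441} on $|s|=1$ and integrate termwise. Since $\int f_L(x)e^{-ikx}\,dx=e^{-k^2/(2L)}$, we get
\begin{align*}
\int_{-\infty}^{\infty} f_{L}(x)\log Z(e^{-ix};U)\,dx=-\sum_{k\ge1}\frac{\tr(U^k)}{k}e^{-k^2/(2L)} .
\end{align*}
The sign and convention follow the paper's definition of $\log$. Termwise integration is justified by approximating along $|s|=r<1$ and letting $r\to1$, using integrability of the logarithmic singularities. By Diaconis--Shahshahani (with exact moments for $k\le N$ and uniform bounds in general), the traces $\tr(U^k)$ behave like independent $\sqrt{k}\,\mathcal{N}_{\mathbb C}$ variables, with errors that vanish as $N\to\infty$ for fixed $L$. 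The Gaussian tail $e^{-k^2/2L}$ makes the sum effectively finite. Hence
\begin{align*}
\int_{\mathcal U(N)}\exp\Big(\vec\kappa\circ\cdots\Big)dU\to \exp\Big(\kappa_1\kappa_2\sum_{k}\frac{e^{-k^2/L}}{k}\Big)=:D_L .
\end{align*}

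For the zeta side, under RH we shift $\log^*\zeta(1/2+ix+it)$ to $\log^*\zeta(\sigma+ix+it)$ with $\sigma>1$. We use that $f_L$ is entire and that $\log^*\zeta$ is holomorphic in $\RE s>1/2$ with at most logarithmic growth. The contour shift on $\RE=1/2$ is legitimate because the singularities there are logarithmic, hence integrable. The $x$-contour moves to $\IM x=-(\sigma-1/2)$. Expanding via \eqref{eq:05081554} and the relation \eqref{eq:05152038} (the constant $-2\pi i$ integrates against $f_L$ to a harmless constant, which vanishes for $t$ large, and under RH only the $\IM>0$ part is relevant), we obtain
\begin{align*}
\int f_L(x)\log^*\zeta(1/2+ix+it)\,dx=\sum_{p,k}\frac{p^{-k(1/2+it)}}{k}e^{-(k\log p)^2/(2L)} .
\end{align*}
This is an absolutely convergent Dirichlet polynomial-like series, with convergence supplied by the Gaussian weight. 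By Bohr--Jessen type equidistribution (Kronecker–Weyl for $\{p^{-it}\}$), the time average converges as $T\to\infty$ to $\mathbb E\exp(\vec\kappa\circ\sum_{p,k}X_p^k e^{-(k\log p)^2/2L}/(k p^{k/2}))$ with independent uniform $X_p$ on the circle. Uniform integrability is needed to pass the exponential through the limit, and it follows from a moment bound that is uniform in $T$, because the series is bounded in sup norm for fixed $L$. The expectation factors over primes into an Euler product $E_L$.

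The limit $L\to\infty$ of $E_L/D_L$ is the main obstacle. We write $D_L=\prod_p\exp(\kappa_1\kappa_2\sum_k e^{-(k\log p)^2/L}/(kp^k)\cdot\ldots)$ after comparing $\sum_k e^{-k^2/L}/k$ with the prime sum $\sum_{p}\sum_k e^{-(k\log p)^2/L}/(kp^k)$. By the prime number theorem both are $\tfrac12\log L+C+o(1)$, and the discrepancy is controlled through $\sum_p\log(1-p^{-1})^{-1}$ versus Mertens. Each local factor then tends to the factor in $a(\vec\kappa)$, namely the expectation $\mathbb E|1-X_p p^{-1/2}|^{-2\kappa}$-type sum, which gives the hypergeometric series. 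The prime-by-prime ratio is $1+O(p^{-2})$ uniformly in $L$, which justifies dominated convergence in the product.

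For (i) and (ii), we interchange the limits: first let $L\to\infty$ for fixed $T$. We show that $\int f_L\log^*\zeta(1/2+ix+it)\,dx\to\log^*\zeta(1/2+it)$ for almost every $t$, with domination in $L^1(dt)$ of the exponentials. For $\RE(\kappa_1+\kappa_2)\ge0$ the exponential of the smoothed logarithm is at most the smoothed $|\zeta|^{\RE\cdots}$ by Jensen's inequality, and this is integrable. The negative range requires simple zeros, so that $|\zeta|^{-2\alpha}$ with $\alpha<1/2$ is locally integrable near each zero. Similarly, for matrices, as $L\to\infty$ the denominator tends to the Keating–Snaith moment, which is $\sim g(\vec\kappa)N^{\kappa_1\kappa_2}$. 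The commuted limit therefore equals the left side of \eqref{eq:05240210}, giving the equivalence with \eqref{eq:03211910}.
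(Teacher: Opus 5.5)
Your proof of \eqref{eq:03222030} is correct and more direct than the paper's, but part (ii) has a genuine gap in the domination step.

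\textbf{The main formula.} For fixed $L$ you shift the contour across $\RE(s)=1$. This is the identity used in the proof of Corollary~\ref{cor:3.9}, coming from Lemma~\ref{lem:3.5}. It turns the smoothed logarithm into the absolutely convergent series $\sum_{p,k}k^{-1}p^{-k(1/2+it)}e^{-(k\log p)^2/(2L)}$ plus $-2\pi i\int_{-\infty-i/2}^{-t-i/2}f_L(z)\,dz$. The second term is not a constant, but it is a bounded function of $t$ tending to $0$, which suffices. Kronecker--Weyl on the torus indexed by the primes, applied to a bounded continuous function, then gives the $T$-limit with no uniform integrability issue.

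The paper instead proves a Bohr--Jessen theorem in the space $\mathfrak{Q}$ of Fourier hyperfunctions (Theorem~\ref{thm:3.4}). This uses the approximants $\Phi_N$ and Selberg's mean-square bound for $\zeta'/\zeta$, and is then specialized in Corollary~\ref{cor:3.9} via Lemma~\ref{lem:3.3}. That buys convergence in law for every bounded continuous functional on $\mathfrak{Q}$, whereas Theorem~\ref{thm:1.2} only needs the Gaussian pairing.

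Your $L\to\infty$ step is Section~\ref{sec:4.1} in substance. Your normalizing sum $\sum_{p,k}e^{-(k\log p)^2/L}/(kp^k)$ differs from the paper's $\sum_p\log(1-p^{-1})^{-1}e^{-(\log p)^2/L}$ by $o(1)$.

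On the matrix side, be careful with ``exact moments for $k\le N$''. The Diaconis--Shahshahani identity is exact only when $k_1+\cdots+k_\mu\le N$ and $\ell_1+\cdots+\ell_\nu\le N$. So it does not by itself give a bound on $\mathbb{E}|\exp(\vec\kappa\circ\cdot)|^2$ uniform in $N$. (The proof of Corollary~\ref{cor:3.11} makes the same leap.) One way to get the bound: $2\RE(\vec\kappa\circ\cdot)$ is a real linear statistic $\sum_n h(\theta_n)$ with $\hat h_0=0$. The monotone form of Szeg\H{o}'s theorem then gives $\mathbb{E}_{\mathcal{U}(N)}e^{\sum_n h(\theta_n)}\le\exp(\sum_{k\ge1}k|\hat h_k|^2)$.

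\textbf{Part (ii).} Part (i) is fine. For $\RE(\kappa_1+\kappa_2)\ge0$ the function $|\zeta(\tfrac12+iu)|^{\RE(\kappa_1+\kappa_2)}e^{-\IM(\kappa_1-\kappa_2)\arg^{*}\zeta(\frac12+iu)}$ is polynomially bounded, so its $f_L$-smoothings are bounded uniformly in $L\ge1$ and $t\in[0,T]$.

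For $\RE(\kappa_1+\kappa_2)<0$, Jensen's inequality still holds, but the majorant it gives is
\[
\int_{-\infty}^{\infty}f_L(x)\,\big|\zeta\big(\tfrac12+i(x+t)\big)\big|^{\RE(\kappa_1+\kappa_2)}e^{-\IM(\kappa_1-\kappa_2)\arg^{*}\zeta(\frac12+i(x+t))}\,dx .
\]
Since $f_L$ has full support, this sees zeros at every height. Its finiteness would need growth bounds for $\int_n^{n+1}|\zeta(\tfrac12+iu)|^{\RE(\kappa_1+\kappa_2)}\,du$ for all $n$. That amounts to quantitative control on clusters of close zeros, or on small $|\zeta'(\rho)|$, which RH and simplicity do not provide. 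Local integrability near each zero is therefore not enough. The majorant is also an $L$-dependent family, not a single integrable function.

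The missing idea, used in Proposition~\ref{prop:4.8}, is to smooth the logarithm rather than $|\zeta|^{\RE(\kappa_1+\kappa_2)}$, and to localize. Using $f_L(x)\le|x|^{-1}$ and $f_L(x)\le x^{-2}$ for $L\ge1$, one gets
\[
\int_{-1}^{1}f_L(y+\gamma-t)\log|y|\,dy\ \ge\ \frac{1+\epsilon}{1-\epsilon}\log|t-\gamma|+O_{\epsilon}(1)\qquad(|t-\gamma|\le2),
\]
and a lower bound of $-8(t-\gamma)^{-2}$ when $|t-\gamma|>2$. This yields the $L$-independent majorant $C\prod_{|t-\gamma|\le2}|t-\gamma|^{\mu}$, with $C$ depending on $T,\vec\kappa,\epsilon$. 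Here $\mu=\frac{1+\epsilon}{1-\epsilon}\RE(\kappa_1+\kappa_2)>-1$ for small $\epsilon$. It is integrable on $[0,T]$ because only finitely many simple zeros enter.

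On the matrix side a Jensen argument does work. By rotation invariance of Haar measure, the majorants $\int f_L(x)\,|\exp(\vec\kappa\circ\log Z(1;e^{-ix}U))|\,dx$ all have $L^1(dU)$-norm $\mathbb{E}|\exp(\vec\kappa\circ\log Z(1;U))|<\infty$ and converge in $L^1(dU)$. There is no such invariance for the zeta average over $[0,T]$ at fixed $T$.
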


Recall that a modern method for the proof of limit formula \eqref{eq:032114543} is randomizing the values of $\zeta(s)$ for $\RE(s)>1/2$. 
Let $(\mathsf{X}(p))_{\text{$p$ prime}}$ be a sequence of independent random variables uniformly distributed on the unit circle. 
Furthermore, we extend this to all positive integers $n$ by complete multiplicativity. 
By the Kronecker–Weyl equidistribution theorem, $\mathsf{X}(n)$ is known to be a good model of $X_{\tau}(n):=n^{-i \tau}$ as $\tau$ varies over $\mathbb{R}$. 
Recalling that $\zeta(s+i \tau)$ is represented as
\begin{align*}
\zeta(s+i \tau)
= 
\sum_{n=1}^{\infty}
X_{\tau}(n) n^{-s}
=
\prod_{p}
\big(1- X_{\tau}(p) p^{-s}\big)^{-1} 
\end{align*}
for $\RE(s)>1$, we may expect that it behaves like the random analytic function 
\begin{align}\label{eq:03221850}
\zeta^{\mathsf{rand}}(s)
= 
\sum_{n=1}^{\infty}
\mathsf{X}(n) n^{-s}
=
\prod_{p}
\big(1-\mathsf{X}(p)p^{-s}\big)^{-1}. 
\end{align} 
In fact, it is known that the limit measure $\mu_{\sigma}$ in \eqref{eq:032114543} is obtained as the distribution of the random variable $\log{\zeta}^{\mathsf{rand}}(\sigma)$ for any fixed $\sigma>1/2$, and hence, 
\begin{align}\label{eq:05051441}
\lim_{T \to\infty}
\frac{1}{T}
\int_{0}^{T}
F \big(
\log{\zeta}(\sigma+it)
\big)
\,dt
=
\mathbb{E} \big[
F \big(
\log{\zeta}^{\mathsf{rand}}(\sigma)
\big)
\big]
\end{align}
for all bounded continuous functions $F$ on $\mathbb{C}$, where $\mathbb{E}[X]$ stands for the expectation of a random variable $X$. 
Using the same idea, we can further study the statistical behavior of $\zeta(s+i \tau)$ for $\tau \in \mathbb{R}$ in the space of holomorphic functions on the strip $1/2<\RE(s)<1$. 
This is related to the universality property of $\zeta(s)$, which means that any function with some suitable conditions is approximated by $\zeta(s+i \tau)$ for some $\tau \in \mathbb{R}$. 
See Bagchi \cite{Bagchi1981} for details. 

Then, we outline the strategy for the proof of Theorem \ref{thm:1.2}. 
We aim to randomize the values of $\zeta(s)$ on the critical line, but the series and product of \eqref{eq:03221850} are indeed convergent only for $\RE(s)>1/2$ almost surely. 
Moreover, we know that the critical line $\RE(s)=1/2$ is the natural boundary of the random analytic function $\zeta^{\mathsf{rand}}(s)$ almost surely; see \cite[p.~90]{Bagchi1981}. 
As it is difficult to assign meaning to $\zeta^{\mathsf{rand}}(1/2+ix)$ as random function values, we present it as boundary values of a random analytic function in the sense of hyperfunctions introduced by Sato \cite{Sato1959, Sato1960}. 
More precisely, we consider the randomization of $\log^{*}{\zeta}(s)$ in the space of Fourier hyperfunctions studied by Kawai \cite{Kawai1970}. 
As an analogue of \eqref{eq:05051441}, we will show that
\begin{align*}
\lim_{T \to \infty}
\frac{1}{T}
\int_{0}^{T} 
F \big(
\log^{*}{\zeta}(1/2+0+ix+it)
\big)
\,dt
=
\mathbb{E} \big[
F \big(
\log{\zeta}^{\mathsf{rand}}(1/2+0+ix)
\big)
\big]
\end{align*}
for all bounded continuous functions $F$ on the space of Fourier hyperfunctions, where the meanings of $\log^{*}{\zeta}(1/2+0+ix+it)$ and $\log{\zeta}^{\mathsf{rand}}(1/2+0+ix)$ are explained in Section \ref{sec:2.2}. 
Furthermore, we will show that
\begin{align*}
\lim_{N \to \infty}
\int_{\mathcal{U}(N)}
F \big(
\log{Z}(e^{-ix-0};U)
\big)
\,dU
=
\mathbb{E} \big[
F \big(
\log{Z}^{\mathsf{Gauss}}(e^{-ix-0})
\big)
\big], 
\end{align*}
where $\log{Z}^{\mathsf{Gauss}}(e^{-ix-0})$ is a certain random hyperfunction defined in Section \ref{sec:2.3}. 
Then limit formula \eqref{eq:03222030} follows by relaxing the condition of the test function $F$ and showing a relation between $\log{\zeta}^{\mathsf{rand}}(1/2+0+ix)$ and $\log{Z}^{\mathsf{Gauss}}(e^{-ix-0})$ such that 
\begin{align}\label{eq:03251633}
\lim_{L \to \infty}
\frac{
\displaystyle{
\mathbb{E} \Big[
\exp \Big(
\vec{\kappa} \circ
\big\langle
-f_{L}
\mid
\log{\zeta}^{\mathsf{rand}}(1/2+0+ix)
\big\rangle
\Big)
\Big]
}}{
\displaystyle{
\mathbb{E} \Big[
\exp \Big(
\vec{\kappa} \circ
\big\langle
-f_{L}
\mid
\log{Z}^{\mathsf{Gauss}}(e^{-ix-0})
\big\rangle
\Big)
\Big]
}}
=
a(\vec{\kappa}), 
\end{align}
where $\langle f \mid g \rangle$ is a certain paring of a function $f$ and a hyperfunction $g$. 
Finally, the connection to \eqref{eq:03211910} follows by applying the fact that $f_{L}(x)$ approaches to the Dirac delta on $\mathbb{R}$ as $L \to\infty$. 

\begin{remark}\label{rem:1.3}
Saksman and Webb \cite{SaksmanWebb2020} used another concept of generalized functions for studying the statistical behavior of $\zeta(s)$ on the critical line. 
They considered the randomization of $\zeta(s)$ in the $L^{2}$-based Sobolev space $W^{-\alpha,2}(\mathbb{R})$ for $\alpha>1/2$, which consists of all Schwartz distributions $g \in \mathcal{S}'(\mathbb{R})$ such that the Fourier transform $\widehat{g}(\xi)$ is locally $L^{2}$-integrable and satisfies
\begin{align*}
\int_{\mathbb{R}} 
(1+\xi^{2})^{-\alpha} |\widehat{g}(\xi)|^{2}
\,d \xi
<
\infty. 
\end{align*}
Then $\zeta^{\mathsf{rand}}(1/2+ix)$ is presented as a random Schwartz distribution, and several results on the statistical behavior of $\zeta(1/2+ix+it)$ in the space $W^{-\alpha,2}(\mathbb{R})$ were obtained in \cite{SaksmanWebb2020}. 
In particular, it was shown that the same Gaussian multiplicative chaos distribution appears in both mesoscopic behaviors of $\zeta(1/2+i \delta_{T} x+it)$ and $Z(e^{-i\delta_{N} x};U)$ with some $\delta_{T},\delta_{N}>0$ such that $\delta_{T},\delta_{N} \to 0$ as $T,N \to \infty$. 
This might be related to Theorem \ref{thm:1.2} in some way, because we have
\begin{align*}
\int_{-\infty}^{\infty} 
f_{L}(x) \log^{*}{\zeta}(1/2+ix+it)
\,dx
&
=
\int_{-\infty}^{\infty} 
f_{1}(x) \log^{*}{\zeta}\Big(1/2+\frac{ix}{\sqrt{L}}+it \Big)
\,dx, 
\\
\int_{-\infty}^{\infty} 
f_{L}(x) \log{Z}(e^{-ix};U)
\,dx
&
=
\int_{-\infty}^{\infty} 
f_{1}(x) \log{Z}\big(e^{-\frac{ix}{\sqrt{L}}};U\big)
\,dx 
\end{align*}
by changing the variables. 
The key novelty of Theorem \ref{thm:1.2} is that it yields the same constant $a(\vec{\kappa})$ as in Conjecture \ref{conj:1.1}. 
\end{remark}

\begin{remark}\label{rem:1.4}
Keating and Snaith conjectured in \cite{KeatingSnaith2000b} that the distributions of the values of $L$-functions in several families at the central point $s=1/2$ are also related to random matrices distributed on the groups $\mathrm{SO}(2N)$ and $\mathrm{USp}(2N)$. 
It may be reasonable to extend the results of this paper to such cases, but we do not discuss those issues and leave them for future consideration. 
\end{remark}

\subsection*{Organization of the paper}
\begin{itemize}
\item
Section \ref{sec:2} is devoted to preparing the notion of random hyperfunctions used in this paper. 
After reviewing the basics of Fourier hyperfunctions of one variable, we then introduce certain random hyperfunctions associated with the Riemann zeta-function and the characteristic polynomial of a random matrix from the circular unitary ensemble. 
\item
Section \ref{sec:3} contains the proof for a limit theorem which extends the result of Bohr and Jessen \cite{BohrJessen1930, BohrJessen1932} to the space of Fourier hyperfunctions. 
We prove this in terms of the theory of the convergence of random elements according to the method refined by Kowalski \cite{Kowalski2021}. 
Furthermore, we investigate a random matrix analogue of this limit theorem. 
\item
Section \ref{sec:4} completes the proof of Theorem \ref{thm:1.2}. 
The main ingredient is limit formula \eqref{eq:03251633}, which is proved by the local analysis at each prime $p$ and an application of the prime number theorem. 
Finally, we conclude this paper with discussions on the approximation of the Dirac delta on $\mathbb{R}$. 
\end{itemize}

\section{Certain random hyperfunctions}\label{sec:2}

\subsection{Basics on Fourier hyperfunctions}\label{sec:2.1}
The notion of Fourier hyperfunctions was introduce by Sato in his Japanese article \cite[p.23--25]{Sato1958c}. 
Kawai \cite{Kawai1970} developed the general theory of Fourier hyperfunctions and applied it to the study of partial differential equations. 
Here, we review the basics on Fourier hyperfunctions of one variable according to Sato \cite{Sato1958c} since we do not deal with multivariate cases. 
We will provide full proofs of the necessary results in Appendix for convenience. 

Let $D_{\delta}$ be the strip $|\IM(z)|<\delta$ on the complex plane for $\delta>0$. 
We define $\mathfrak{P}_{\delta}$ as the linear space of holomorphic functions $f$ on $D_{\delta}$ such that
\begin{align}\label{eq:04091459}
\sup_{|\IM(z)| \leq \delta_{1}}
|f(z)| e^{-\epsilon |z|}
<
\infty
\end{align}
for any $\epsilon>0$ and $0<\delta_{1}<\delta$. 
We also define $\widetilde{\mathfrak{P}}_{\delta}$ as the linear space of holomorphic functions $\varphi$ on $D_{\delta} \setminus \mathbb{R}$ such that
\begin{align}\label{eq:04091500}
\sup_{\delta_{1} \leq |\IM(z)| \leq \delta_{2}}
|\varphi(z)| e^{-\epsilon |z|}
<
\infty
\end{align}
for any $\epsilon>0$ and $0<\delta_{1}<\delta_{2}<\delta$. 
Note that the linear map $\mathfrak{P}_{\delta} \to \widetilde{\mathfrak{P}}_{\delta}$ defined by the restriction $f \mapsto f \mid_{D_{\delta}\setminus \mathbb{R}}$ is injective by the identity theorem. 
We regard $\mathfrak{P}_{\delta}$ as a subspace of $\widetilde{\mathfrak{P}}_{\delta}$ and define $\mathfrak{Q}_{\delta}=\widetilde{\mathfrak{P}}_{\delta}/\mathfrak{P}_{\delta}$. 
Then we define $\mathfrak{Q}$ as the inductive limit
\begin{align*}
\mathfrak{Q}
=
\varinjlim
\mathfrak{Q}_{\delta} 
\end{align*}
with respect to the linear map $\rho_{\delta,\delta'}: \mathfrak{Q}_{\delta} \to \mathfrak{Q}_{\delta'}$  induced from the restriction maps $\mathfrak{P}_{\delta} \to \mathfrak{P}_{\delta'}$ and $\widetilde{\mathfrak{P}}_{\delta} \to \widetilde{\mathfrak{P}}_{\delta'}$ for $0<\delta'<\delta$. 
We refer to the elements of the space $\mathfrak{Q}$ as \textit{Fourier hyperfunctions}. 

\begin{lemma}[$=$ Lemma \ref{lem:A.1}]\label{lem:2.1}
The linear map $\rho_{\delta,\delta'}: \mathfrak{Q}_{\delta} \to \mathfrak{Q}_{\delta'}$ is bijective for any $0<\delta'<\delta$. 
\end{lemma}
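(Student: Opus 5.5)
Both injectivity and surjectivity reduce to a gluing (Cousin-type) decomposition with exponential-type growth control, which I plan to prove first.

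\emph{Key decomposition.} Fix $0<\delta'<\delta''<\delta$. Every $\psi$ holomorphic on the thin strips $\delta'<|\IM(z)|<\delta$ (say, in the needed smaller substrips), with growth bounds of type \eqref{eq:04091500}, splits as $\psi=\psi_{1}-\psi_{2}$. Here $\psi_{1}$ is holomorphic on $D_{\delta}\setminus\mathbb{R}$, $\psi_{2}$ is holomorphic on $D_{\delta''}$, and both satisfy the corresponding growth bounds.

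To construct the splitting, I take Cauchy integrals along the horizontal lines $\IM(z)=\pm c$ with $\delta'<c<\delta''$. Since $\psi$ only satisfies $|\psi(z)|\le C_{\epsilon}e^{\epsilon|z|}$, the kernel $1/(w-z)$ alone does not converge. So I first multiply by a convergence factor: I use the representation
\begin{align*}
\psi(z)\,e^{-\eta(z-w_0)^2}
\end{align*}
or rather the Gaussian trick $e^{\eta z^{2}}\cdot\frac{e^{-\eta w^{2}}}{w-z}$. The factor $e^{-\eta w^{2}}$ decays like $e^{-\eta(\RE w)^{2}}$ on horizontal strips, and multiplying back by $e^{\eta z^{2}}$ is harmless on strips. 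This is exactly the kind of weight $f_{L}$ provides.

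One then checks that the resulting integrals again have $e^{\epsilon|z|}$ growth for every $\epsilon$. This is the main technical obstacle. The Gaussian factor $e^{\eta z^2}$ grows like $e^{\eta(\RE z)^{2}}$, so a naive estimate fails. Instead I would use the smooth-cutoff ($\bar\partial$) method: take a cutoff $\chi(\IM z)$ equal to $1$ near $|\IM z|\ge\delta''$ and $0$ near $|\IM z|\le\delta'$. Set $\psi_{1}=\chi\psi-u$ and $\psi_{2}=-(1-\chi)\psi-u$, where $u$ solves $\bar\partial u=\psi\,\bar\partial\chi$ on $D_{\delta}$. The right-hand side depends only on $\IM z$ through $\chi$, and is bounded by $C_{\epsilon}e^{\epsilon|z|}$. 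On a strip one can solve the $\bar\partial$ equation with growth control via the Cauchy kernel convolved against $e^{-\epsilon'\sqrt{1+z^{2}}}$-type weights. The function $e^{\epsilon\sqrt{1+z^2}}$ is holomorphic and nonvanishing on thin strips with modulus about $e^{\epsilon|\RE z|}$, so dividing by it makes the data integrable; I then solve and multiply back. This yields $u$ with $|u|\le C'_{\epsilon}e^{2\epsilon|z|}$.

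\emph{Injectivity.} Suppose $\varphi\in\widetilde{\mathfrak{P}}_{\delta}$ restricts to an element of $\mathfrak{P}_{\delta'}$, i.e.\ $\varphi=f$ on $D_{\delta'}\setminus\mathbb{R}$ with $f\in\mathfrak{P}_{\delta'}$. Then $\varphi$ extends holomorphically across $\mathbb{R}$ to all of $D_{\delta}$, by gluing with $f$ and the identity theorem. The growth bound on $|\IM z|\le\delta_{1}$ follows by combining the bound for $f$ on $|\IM z|\le\delta'_{1}<\delta'$ with the bound for $\varphi$ on $\delta'_{1}\le|\IM z|\le\delta_{1}$. Hence $\varphi\in\mathfrak{P}_{\delta}$, which is the easy direction.

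\emph{Surjectivity.} Given $\varphi\in\widetilde{\mathfrak{P}}_{\delta'}$, I must find $\tilde\varphi\in\widetilde{\mathfrak{P}}_{\delta}$ with $\tilde\varphi-\varphi\in\mathfrak{P}_{\delta''}$ on the smaller strip. I apply the key decomposition to $\varphi$ on the region $\delta_{a}<|\IM z|<\delta'$, interchanging the roles of the inner and outer pieces. Concretely, using a cutoff supported in $|\IM z|<\delta_{b}<\delta'$, I write
\begin{align*}
\varphi=(\chi\varphi+u)-u \quad\text{on } D_{\delta'}\setminus\mathbb{R},
\end{align*}
where $\bar\partial u=-\varphi\,\bar\partial\chi$ is solved on all of $D_{\delta}$ with the growth bound from the key step. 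Then $\tilde\varphi:=\chi\varphi+u$ extends to $D_{\delta}\setminus\mathbb{R}$, since it equals $u$ for $|\IM z|\ge\delta_{b}$. It has the required growth, and $\varphi-\tilde\varphi=-u$ is holomorphic on $D_{\delta'}$. So $\varphi\equiv\tilde\varphi$ in $\mathfrak{Q}_{\delta'}$, giving surjectivity.
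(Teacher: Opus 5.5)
Your injectivity argument is correct and is the paper's argument. The gap is in the surjectivity step: the $\bar\partial$-solution does not have the growth that membership in $\widetilde{\mathfrak{P}}_{\delta}$ requires.

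You need $\tilde\varphi\in\widetilde{\mathfrak{P}}_{\delta}$ and $(1-\chi)\varphi-u\in\mathfrak{P}_{\delta'}$. This requires a \emph{single} $u$ with $|u(z)|\le C_{\epsilon}e^{\epsilon|z|}$ on closed substrips for \emph{every} $\epsilon>0$. Your solution is $u_{\epsilon}=e^{\epsilon h}\,\mathcal{C}[e^{-\epsilon h}g]$, with $h(z)=\sqrt{1+z^{2}}$, $g=-\varphi\,\bar\partial\chi$, and $\mathcal{C}$ the Cauchy transform. It depends on $\epsilon$, and its growth $e^{\epsilon|\RE z|}$ is genuine, not an artefact of the estimate.

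To see this, note that the Cauchy transform of the exponentially decaying density $e^{-\epsilon h}g$ behaves like $c_{0}/z$ at infinity, where $c_{0}=\frac{1}{\pi}\int e^{-\epsilon h}g\,dA$. By Stokes, $c_{0}=-\frac{1}{2\pi i}\langle e^{-\epsilon h}\mid[\varphi]\rangle$. When $[\varphi]$ is the Dirac delta this equals $-e^{-\epsilon}/(2\pi i)\neq 0$. Hence $\tilde\varphi=u_{\epsilon}$ on $|\IM z|\ge\delta_{b}$ has size about $e^{\epsilon|\RE z|}/|z|$ and violates the bound for every $\epsilon'<\epsilon$. So you only obtain representatives of a fixed exponential type, not elements of $\widetilde{\mathfrak{P}}_{\delta}$. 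Gluing the family $(u_{\epsilon})$ into one function would need a further Mittag-Leffler/Runge-type argument, or a holomorphic weight on the strip with prescribed sublinear growth. The proposal supplies neither.

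The missing idea is the Gaussian trick you discarded, but with a damping factor that depends only on $w-z$. The paper splits $\varphi=I-J$ on each half-strip, using Cauchy integrals of $\varphi(w)e^{-(w-z)^{2}}/(w-z)$ over horizontal lines. Since $|e^{-(w-z)^{2}}|=e^{-(\RE(w-z))^{2}+(\IM(w-z))^{2}}$ and $e^{\epsilon|w|}\le e^{\epsilon|z|}e^{\epsilon|w-z|}$, each piece is $\ll_{\epsilon}e^{\epsilon|z|}$ for all $\epsilon$ simultaneously. Moving the line of integration then extends each piece across $\mathbb{R}$ or out to $|\IM z|<\delta$.

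In your framework the same trick reads as follows. $E(z)=e^{-z^{2}}/(\pi z)$ is still a fundamental solution of $\partial/\partial\bar z$, because $e^{-z^{2}}$ is holomorphic and equals $1$ at $0$. Then $u=E*g$ solves $\bar\partial u=g$ with $|u(z)|\le C_{\epsilon}e^{\epsilon|z|}$ for every $\epsilon$ on bounded strips. With this replacement your cutoff argument becomes a valid alternative to the paper's proof.

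Smaller corrections:
\begin{itemize}
\item $\varphi-\tilde\varphi$ equals $(1-\chi)\varphi-u$, not $-u$. It agrees with the holomorphic function $-u$ only where $\chi=1$, which is exactly what shows it extends to $D_{\delta'}$.
\item The target space is $\mathfrak{P}_{\delta'}$, not $\mathfrak{P}_{\delta''}$.
\item Your preliminary ``key decomposition'' is never used and is misstated: $-(1-\chi)\psi-u$ is not even defined on $|\IM z|\le\delta'$.
\end{itemize}

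If you follow the paper's boundary-integral route instead, note that its final assembly must combine the two halves. Write $\varphi=I^{+}-J^{+}$ on the upper half-strip and $\varphi=I^{-}-J^{-}$ on the lower, with $I^{+},J^{+},I^{-},J^{-}$ holomorphic on $\IM z>0$, $\IM z<\delta'$, $\IM z>-\delta'$, $\IM z<0$ respectively. Then take $\varphi_{1}=I^{+}-I^{-}$ above, $\varphi_{1}=J^{+}-J^{-}$ below, and $f=J^{+}-I^{-}$. The choice $\varphi_{1}=I_{c}$, $-J_{-c}$ printed in the paper already fails for $\varphi=\varepsilon$.
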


Hence $\mathfrak{Q}$ is identified with $\mathfrak{Q}_{\delta}$ for any $\delta>0$, and any Fourier hyperfunction $g$ is determined from a function $\varphi \in \widetilde{\mathfrak{P}}_{\delta}$ with some $\delta>0$. 
We write 
\begin{align}\label{eq:05052304}
g
=
[\varphi] 
\end{align}
to indicate that $g$ is determined from $\varphi$. 
Then we see that both $g=[\varphi_{1}]$ and $g=[\varphi_{2}]$ hold for $\varphi_{1} \in \widetilde{\mathfrak{P}}_{\delta}$ and $\varphi_{2} \in \widetilde{\mathfrak{P}}_{\delta'}$ if and only if there exist $0<\delta'' \leq \min(\delta,\delta')$ and $f \in \mathfrak{P}_{\delta''}$ such that
\begin{align*}
\varphi_{1}(z)
=
\varphi_{2}(z)+f(z)
\end{align*}
for any $z \in D_{\delta''} \setminus \mathbb{R}$. 
Let $\varepsilon(z)$ and $\overline{\varepsilon}(z)$ denote the functions
\begin{align*}
\varepsilon(z)
=
\begin{cases}
1
&
\text{for $0<\IM(z)<\delta$}, 
\\
0
&
\text{for $-\delta<\IM(z)<0$}, 
\end{cases}
\quad\text{and}\quad
\overline{\varepsilon}(z)
=
\begin{cases}
0
&
\text{for $0<\IM(z)<\delta$}, 
\\
1
&
\text{for $-\delta<\IM(z)<0$}, 
\end{cases}
\end{align*}
which obviously belong to the space $\widetilde{\mathfrak{P}}_{\delta}$ for any $\delta>0$. 
For a function $\varphi \in \widetilde{\mathfrak{P}}_{\delta}$, we define its boundary values $\varphi(x+i0), \varphi(x-i0) \in \mathfrak{Q}$ as
\begin{align}\label{eq:05071345}
\varphi(x+i0)
= 
[\varepsilon \varphi]
\quad\text{and}\quad
\varphi(x-i0)
= 
-[\overline{\varepsilon} \varphi]. 
\end{align}
Using these notations, we can rewrite \eqref{eq:05052304} as
\begin{align*}
g(x)
=
\varphi(x+i0)-\varphi(x-i0), 
\end{align*}
which helps us interpret the notion of hyperfunctions as the difference of boundary values of analytic functions. 
Lastly, the map $\mathfrak{P}_{\delta} \to \mathfrak{Q}$ defined by $f \mapsto f(x+i0)=f(x-i0)$ is injective by definition, and we regard $\mathfrak{P}_{\delta}$ as a subspace of $\mathfrak{Q}$. 
We say that a Fourier hyperfunction $g$ is regular if it belongs to $\mathfrak{P}_{\delta}$ with some $\delta>0$. 

Then, we investigate the duality property of the linear space $\mathfrak{Q}$. 
For every integer $m \geq1$, we define $\mathfrak{B}_{m}^{*}$ as the linear space of continuous functions $f$ on the strip $|\IM(z)| \leq 2^{-m}$ which are holomorphic in $|\IM(z)|<2^{-m}$ and satisfy
\begin{align*}
\sup_{|\IM(z)| \leq 2^{-m}}
|f(z)| e^{|z|/m}
<
\infty. 
\end{align*}
Then $\mathfrak{B}_{m}^{*}$ is a Banach space with the norm $\| f \|_{m}=\sup_{|\IM(z)| \leq 2^{-m}} |f(z)| e^{|z|/m}$ for every $m \geq1$. 
We define $\mathfrak{P}^{*}$ as the inductive limit
\begin{align}\label{eq:05062302}
\mathfrak{P}^{*}
=
\varinjlim
\mathfrak{B}_{m}^{*}
\end{align}
with respect to the restriction map $\mathfrak{B}_{m}^{*} \to \mathfrak{B}_{m+1}^{*}$. 
Since this restriction is injective by the identity theorem, we can regard $\mathfrak{B}_{m}^{*}$ as a subspace of $\mathfrak{B}_{m+1}^{*}$ for all $m \geq1$. 
Then $\mathfrak{P}^{*}$ is identified with the union of all $\mathfrak{B}_{m}^{*}$, and thus, $f$ belongs to $\mathfrak{P}^{*}$ if and only if there exists $\delta>0$ such that $f$ is holomorphic on the strip $D_{\delta}$ and satisfies 
\begin{align}\label{eq:05062315}
\sup_{|\IM(z)| \leq \delta_{1}}
|f(z)| e^{\epsilon |z|}
<
\infty
\end{align}
for any $0<\delta_{1}<\delta$ with some $\epsilon=\epsilon(\delta_{1})>0$. 
The linear space $\mathfrak{P}^{*}$ is equipped with the inductive limit topology according to \eqref{eq:05062302}. 
Let $f \in \mathfrak{P}^{*}$ and $g \in \mathfrak{Q}$. 
Then we can take $\delta,\delta'>0$ such that $f$ is holomorphic on the strip $D_{\delta}$ and satisfies \eqref{eq:05062315}, and that $g=[\varphi]$ with $\varphi \in \widetilde{\mathfrak{P}}_{\delta'}$. 
From the above, we shall define the pairing 
\begin{align*}
\langle f \mid g \rangle
=
\int_{-\infty+ic}^{\infty+ic}
f(z) \varphi(z)
\,dz
-
\int_{-\infty-ic}^{\infty-ic}
f(z) \varphi(z)
\,dz, 
\end{align*}
where $c$ is any constant with $0<c<\min(\delta_{1},\delta_{2})$. 
The integrals of the right-hand side are convergent and independent of $c$ due to \eqref{eq:04091500} and \eqref{eq:05062315}. 
Furthermore, the difference of them does not depend on the choice of the function $\varphi$ since 
\begin{align*}
\int_{-\infty+ic}^{\infty+ic}
f(z) \psi(z)
\,dz
=
\int_{-\infty-ic}^{\infty-ic}
f(z) \psi(z)
\,dz
\end{align*}
for $\psi \in \mathfrak{P}_{\delta}$ due to \eqref{eq:04091459} and \eqref{eq:05062315}, where $c$ is any constant with $0<c<\delta$. 
As a result, the pairing $\langle f \mid g \rangle$ is well-defined for any $f \in \mathfrak{P}^{*}$ and $g \in \mathfrak{Q}$. 
Furthermore, the map $T_{g}: \mathfrak{P}^{*} \to \mathbb{C}$ defined for any fixed $g \in \mathfrak{Q}$ by letting $T_{g}(f)= \langle f \mid g \rangle$ is a continuous linear functional on $\mathfrak{P}^{*}$. 

\begin{proposition}[$=$ Proposition \ref{prop:A.2}]\label{prop:2.2}
The linear map $j: \mathfrak{Q} \to (\mathfrak{P}^{*})'$ defined by $g \mapsto T_{g}$ is bijective. 
\end{proposition}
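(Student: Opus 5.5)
Injectivity and surjectivity are handled separately, both through the Cauchy kernel. For $f\in\mathfrak{P}^{*}$ and $g=[\varphi]$, the pairing is the contour integral of $f\varphi$ around the thin strip. For injectivity, suppose $T_{g}=0$. I would test against the functions
\begin{align*}
f_{w}(z)=\frac{e^{-(z-w)^{2}}}{z-w},
\end{align*}
or against $\frac{1}{2\pi i}\frac{e^{-(z-w)^{2}}}{w-z}$ with $w$ outside the strip where the contours live; these lie in $\mathfrak{P}^{*}$ when $|\IM(w)|$ exceeds the strip width. Set
\begin{align*}
\Phi(w)=\frac{1}{2\pi i}\langle f_{w}\mid g\rangle\quad\text{for } c<|\IM(w)|<\delta.
\end{align*}
By the residue theorem applied to the rectangle between $\IM z=\pm c$ and $\IM z=\pm c'$ with $c'>|\IM w|$, we get $\Phi(w)=\varphi(w)\varepsilon(w)$ (resp. $-\varphi(w)\overline\varepsilon(w)$) plus a function holomorphic across $\mathbb{R}$. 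The growth bounds \eqref{eq:04091500} and the Gaussian decay make the horizontal integrals converge and the vertical sides vanish. More precisely, the term $\varphi(w)e^{0}$ picked up at the pole shows
\begin{align*}
\varphi(w)=\frac{1}{2\pi i}\Big(\int_{\IM z=c'}-\int_{\IM z=-c'}\Big)\frac{e^{-(z-w)^{2}}\varphi(z)}{z-w}\,dz-\frac{1}{2\pi i}\langle f_{w}\mid g\rangle.
\end{align*}
The first integral defines a function holomorphic in $|\IM w|<c'$ except that it jumps by the same amount. So if $T_{g}=0$, then $\varphi$ differs from a function holomorphic on a strip around $\mathbb{R}$, and that function satisfies the infra-exponential bound \eqref{eq:04091459}. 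Checking this bound is routine: $e^{-(z-w)^2}$ has modulus $e^{-(x-u)^2+(y-v)^2}$, which controls $|\varphi(z)|e^{-\epsilon|z|}$ after convolution. Hence $[\varphi]=0$. A cleaner option is to use Lemma \ref{lem:2.1} to move everything to a fixed $\delta$.

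For surjectivity, let $T\in(\mathfrak{P}^{*})'$. Define
\begin{align*}
\varphi(w)=\frac{1}{2\pi i}\,T\Big(z\mapsto\frac{e^{-(z-w)^{2}}}{z-w}\Big)
\end{align*}
for $w\notin\mathbb{R}$. The kernel belongs to $\mathfrak{B}_{m}^{*}$ once $2^{-m}<|\IM w|$. Since $T$ is continuous on each Banach space $\mathfrak{B}_{m}^{*}$ and the kernel depends holomorphically on $w$ in norm (difference quotients converge in $\|\cdot\|_{m}$), $\varphi$ is holomorphic on $\mathbb{C}\setminus\mathbb{R}$. The bound $|\varphi(w)|\le C_{m}\|k_{w}\|_{m}$, together with $\|k_{w}\|_{m}\ll e^{|w|/m}$, gives \eqref{eq:04091500} for each $\epsilon=1/m$ on $|\IM w|\ge 2^{1-m}$. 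Letting $m$ vary gives every $\epsilon>0$, so $\varphi\in\widetilde{\mathfrak{P}}_{\delta}$ for every $\delta$.

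Next I would show $T=T_{[\varphi]}$. For $f\in\mathfrak{B}_{m}^{*}$, Cauchy's formula on the strip gives, for $|\IM z|<c<2^{-m}$,
\begin{align*}
f(z)=\frac{1}{2\pi i}\Big(\int_{\IM w=-c}-\int_{\IM w=c}\Big)\frac{e^{-(z-w)^{2}}f(w)}{w-z}\,dw.
\end{align*}
Here the factor $e^{-(z-w)^2}$ equals $1$ at $w=z$ and ensures convergence and vanishing of the vertical sides. I would then approximate these integrals by Riemann sums converging in the norm of some $\mathfrak{B}_{m'}^{*}$ and apply $T$ termwise. This interchanges $T$ with the integral and yields $T(f)=\langle f\mid[\varphi]\rangle$ after matching signs.

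I expect this interchange to be the main obstacle. One must show that the $\mathfrak{B}^{*}_{m'}$-valued integrand is Bochner or Riemann integrable with convergence in norm. This needs the weight $e^{|z|/m'}$ with $m'>m$, so that the tails are dominated by $e^{-|w|/m}\,e^{|w|/m'}$-type decay, combined with the Gaussian factor.
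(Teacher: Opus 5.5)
Your injectivity argument is essentially the paper's. Both test $g$ against the Gaussian-damped Cauchy kernel $e^{-(z-w)^2}/(z-w)$ and conclude that $\varphi$ agrees on $D_{c'}\setminus\mathbb{R}$ with an element of $\mathfrak{P}_{c'}$. Two slips there do not affect the conclusion. First, the residue computation actually gives
\[
\varphi(w)=\frac{1}{2\pi i}\langle f_{w}\mid g\rangle-\frac{1}{2\pi i}\Big(\int_{\IM z=c'}-\int_{\IM z=-c'}\Big)\frac{e^{-(z-w)^{2}}\varphi(z)}{z-w}\,dz ,
\]
with the same signs in both half-strips $c<\pm\IM w<c'$. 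Your displayed identity has the opposite overall sign, and the minus sign in your ``resp.'' for the lower half is spurious. Second, the $c'$-integral does not jump: it is holomorphic on all of $|\IM w|<c'$. That is exactly what makes $T_g=0$ force $[\varphi]=0$.

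For surjectivity your route genuinely differs from the paper's. The paper follows Kawai and sets $k(T)=\mathcal{G}\,\overline{\mathcal{F}}\,T$. This needs several ingredients:
\begin{itemize}
\item a splitting $\overline{\mathcal{F}}T=(\overline{\mathcal{F}}T)_{+}+(\overline{\mathcal{F}}T)_{-}$ with $(\overline{\mathcal{F}}T)_{\pm}\in(\mathfrak{P}_{\pm}^{*})'$, obtained from closed range plus Hahn--Banach;
\item Kawai's Runge-type density of $\mathfrak{P}^{*}$ in $\mathfrak{P}^{*}_{\pm}$, to make $\mathcal{G}$ well defined;
\item Fourier inversion on $\mathfrak{P}^{*}$.
\end{itemize}
The interchange of $(\overline{\mathcal{F}}T)_{\pm}$ with the contour integrals is taken for granted there. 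Formally, the paper's $\varphi(z)$ is, up to a constant, $T$ applied to the undamped kernel $x\mapsto 1/(x-z)$. That kernel is not in $\mathfrak{P}^{*}$, which is what forces the detour through $\mathfrak{P}^{*}_{\pm}$.

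Your Gaussian factor makes the kernel itself a test function. So you can define the Cauchy transform $\varphi(w)=\frac{1}{2\pi i}T(k_w)$ directly and verify $T=T_{[\varphi]}$ by a Bochner integral in the single Banach space $\mathfrak{B}^{*}_{m'}$. For $\IM w=\pm c$ with $2^{-m'}<c<2^{-m}$ one has $\|f(w)k_w\|_{m'}\le C\|f\|_{m}e^{-|w|(1/m-1/m')}$. Bounded functionals (both $T$ and point evaluations) commute with the integral, and Cauchy's formula identifies the integral with $f$. The signs then match exactly, giving $T(f)=\langle f\mid[\varphi]\rangle$.

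Your argument is more elementary and self-contained, and it actually proves the interchange that the paper only asserts. The paper's route buys the link with the Fourier transform on $\mathfrak{Q}$, and it is the formalism Kawai uses in several variables.
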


Recall that the dual space $(\mathfrak{B}_{m}^{*})'$ is also a Banach space for every $m \geq1$ with the norm $\| T \|_{m}'=\sup_{\| f \|_{m} \leq 1} |T(f)|$. 
By \eqref{eq:05062302} and Proposition \ref{prop:2.2}, we obtain 
\begin{align}\label{eq:04092050}
\mathfrak{Q}
\simeq
\varprojlim
(\mathfrak{B}_{m}^{*})'
\end{align}
as linear spaces, where the canonical map $\phi_{m}:\mathfrak{Q} \to (\mathfrak{B}_{m}^{*})'$ is presented by letting $\phi_{m}(g)= T_{g} \mid_{\mathfrak{B}_{m}^{*}}$ for every $m \geq1$. 
We endow $\mathfrak{Q}$ with the projective limit topology according to \eqref{eq:04092050}, which is a locally convex topology induced by the seminorms
\begin{align*}
\|g\|_{m}'
:=
\| \phi_{m}(g) \|_{m}'
=
\sup_{\| f \|_{m} \leq 1}
\big|
\langle f \mid g \rangle
\big|. 
\end{align*}
Then the space $\mathfrak{Q}$ is metrizable by the metric defined as
\begin{align}\label{eq:04170251}
d(g_{1},g_{2})
=
\sum_{m=1}^{\infty}
2^{-m}
\frac{\| g_{1}-g_{2} \|_{m}'}{1+\| g_{1}-g_{2} \|_{m}'}
\end{align}
for $g_{1},g_{2} \in \mathfrak{Q}$. 
In particular, $g_{n}$ converges to $g$ as $n \to \infty$ in $\mathfrak{Q}$ if and only if 
\begin{align*}
\| g_{n}-g \|_{m}'
=
\sup_{\| f \|_{m} \leq 1}
\big|
\langle f \mid g_{n}-g \rangle
\big|
\to 
0
\end{align*}
as $n \to \infty$ for all $m \geq1$. 

\begin{remark}\label{rem:2.3}
The linear spaces $\mathfrak{P}_{\delta}$ and $\widetilde{\mathfrak{P}}_{\delta}$ are also equipped with projective limit  topologies according to
\begin{align*}
\mathfrak{P}_{\delta}
=
\varprojlim
\mathfrak{B}_{\delta,m}
\quad\text{and}\quad
\widetilde{\mathfrak{P}}_{\delta}
=
\varprojlim
\widetilde{\mathfrak{B}}_{\delta,m}, 
\end{align*}
where $\mathfrak{B}_{\delta,m}$ and $\widetilde{\mathfrak{B}}_{\delta,m}$ are suitable Banach spaces. 
Then the isomorphism $\mathfrak{Q} \simeq (\mathfrak{P}^{*})'$ can be shown as  topological linear spaces by endowing $\mathfrak{Q}$ with the quotient topology with respect to the natural surjection $\widetilde{\mathfrak{P}}_{\delta} \to \mathfrak{Q}$; see \cite[Section 3.2]{Kawai1970}. 
In other words, the above projective limit topology according to \eqref{eq:04092050} is the same as this intentional topology of $\mathfrak{Q}$. 
\end{remark}

\subsection{Random hyperfunctions for the Riemann zeta-function}\label{sec:2.2}
Let $(\Omega,\mathcal{F},\mathbb{P})$ be a probability space. 
Denote by $\mathcal{B}(\mathfrak{Q})$ the Borel $\sigma$-algebra of $\mathfrak{Q}$ with respect to the topology described in Section \ref{sec:2.1}. 
We refer to a measurable map 
\begin{align*}
X: (\Omega,\mathcal{F}) \to (\mathfrak{O},\mathcal{B}(\mathfrak{Q}))
\end{align*}
as a \textit{random (Fourier) hyperfunction}. 
The purpose of this section is to introduce certain random hyperfunctions related to the Riemann zeta-function $\zeta(s)$. 
To begin with, we recall that the number of zeros $\rho=\beta+i \gamma$ of $\zeta(s)$ such that $|y-T| \leq 1$, say $m(T)$, satisfies
\begin{align}\label{eq:05081444}
m(T)
\ll
\log(|T|+2)
\end{align}
with an absolute implied constant. 
Furthermore, we obtain
\begin{align}\label{eq:05081445}
\frac{\zeta'}{\zeta}(z)
=
\sum_{|\gamma-y| \leq1}
\frac{1}{z-\rho}
+
\frac{1}{1-z}
+
O \big(\log(|y|+2)\big)
\end{align}
uniformly for $z=x+iy \in \mathbb{C}$ with $1/2 \leq x \leq 2$. 
See \cite[Proposition 5.7 (1),(2)]{IwaniecKowalski2004} for example. 
By the definition of $\log^{*}{\zeta}(s)$, we obtain
\begin{align}\label{eq:05081506}
\log^{*}{\zeta}(s)
=
\log{\zeta}(2+it)
+
\int_{2}^{\sigma}
\frac{\zeta'}{\zeta}(x+it)
\,dx
+
2\pi i\, \chi_{-}(t) 
\end{align}
for $s=\sigma+it \in \mathcal{G}^{*}$ with $\sigma \geq1/2$ and $t \neq0$, where we put $\chi_{-}(t)=0$ for $t>0$ and $\chi_{-}(t)=-1$ for $t<0$. 
Therefore, applying \eqref{eq:05081444} and \eqref{eq:05081445}, we derive
\begin{align}\label{eq:06011532}
\log^{*}{\zeta}(s)
=
\sum_{|\gamma-t| \leq1}
\log(s-\rho)
+
\log(1-s)
+
O \big(\log(|t|+2)\big)
\end{align}
for $s=\sigma+it \in \mathcal{G}^{*}$ with $\sigma \geq1/2$ and $t \neq0$, where $\log$ denotes the usual branch of the logarithm defined on $\mathbb{C} \setminus (-\infty,0]$, and the implied constant is absolute. 
Note that this remains true for $s=\sigma$ with $1/2 \leq \sigma<1$ by the continuity. 

\begin{lemma}\label{lem:2.4}
Assume RH. 
Let $\sigma_{1}$ and $\sigma_{2}$ be fixed with $1/2<\sigma_{1}<\sigma_{2}<1$. 
Then 
\begin{align*}
\sup_{\sigma_{1} \leq \sigma \leq \sigma_{2}}
\big|\log^{*}{\zeta}(\sigma+it)\big|
=
O \big(\log(|t|+2)\big)
\end{align*}
for all $t \in \mathbb{R}$ with the implied constant depending only on $\sigma_{1}$ and $\sigma_{2}$. 
\end{lemma}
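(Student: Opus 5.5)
We read the bound off from \eqref{eq:06011532}, which already holds for $s=\sigma+it\in\mathcal{G}^{*}$ with $\sigma\ge 1/2$, $t\neq 0$, and by continuity also for real $s=\sigma$ with $1/2\le\sigma<1$. Under RH every nontrivial zero is $\rho=1/2+i\gamma$. Hence for $\sigma_{1}\le\sigma\le\sigma_{2}$ the whole strip lies in $\mathcal{G}^{*}$: the removed segments have real parts at most $1/2$, and the removed half-line starts at $1$. So \eqref{eq:06011532} applies for every $t\in\mathbb{R}$, with $t=0$ covered by the continuity remark.

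It remains to bound, uniformly in $\sigma\in[\sigma_{1},\sigma_{2}]$, the two explicit terms
\[
\sum_{|\gamma-t|\le 1}\log(s-\rho) \quad\text{and}\quad \log(1-s).
\]

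\emph{Zero sum.} We have $s-\rho=(\sigma-1/2)+i(t-\gamma)$ with real part in $[\sigma_{1}-1/2,\sigma_{2}-1/2]$ and imaginary part in $[-1,1]$. Therefore $|s-\rho|$ lies between $\sigma_{1}-1/2>0$ and $\sqrt{1/4+1}$, and $s-\rho$ lies in the right half-plane. This gives
\[
|\log(s-\rho)|\le \bigl|\log|s-\rho|\bigr|+\pi/2\le C(\sigma_{1}),
\]
and summing over the $m(t)\ll\log(|t|+2)$ zeros from \eqref{eq:05081444} yields $O(\log(|t|+2))$.

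\emph{The term $\log(1-s)$.} We have $1-s=(1-\sigma)+i(-t)$ with $1-\sigma\ge 1-\sigma_{2}>0$. So it again lies in the right half-plane, with modulus between $1-\sigma_{2}$ and $1+|t|$. Thus
\[
|\log(1-s)|\le \log(1+|t|)+|\log(1-\sigma_{2})|+\pi/2=O(\log(|t|+2)).
\]

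Combining these with the $O(\log(|t|+2))$ error term of \eqref{eq:06011532}, whose implied constant is absolute, gives the lemma with a constant depending only on $\sigma_{1},\sigma_{2}$.

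The only delicate point is that the branch of $\log(s-\rho)$ in \eqref{eq:06011532} is the principal one. Since $\RE(s-\rho)>0$ under RH, no branch ambiguity arises, and I expect no real obstacle. If \eqref{eq:06011532} were only trusted for $t\neq 0$, one would handle small $|t|\le 1$ by compactness instead: $\log^{*}\zeta$ is continuous on the compact rectangle $[\sigma_{1},\sigma_{2}]\times[-1,1]\subset\mathcal{G}^{*}$, hence bounded there.
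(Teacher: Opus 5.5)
Your proposal is correct and follows essentially the same route as the paper: both read the bound off \eqref{eq:06011532}, using RH to get $|s-\rho|\ge\sigma_{1}-1/2$ so that each of the $\ll\log(|t|+2)$ zero terms is $O_{\sigma_{1}}(1)$, and bounding $\log(1-s)$ directly. The only cosmetic difference is that the paper disposes of $|t|\le 1$ at the outset as obvious (by continuity on a compact set), whereas you cover it through the continuity remark after \eqref{eq:06011532}, with the same compactness argument as a fallback.
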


\begin{proof}
Since the result obviously holds for $|t| \leq1$, we assume $|t|>1$ in what follows. 
Then we have $\log(1-s) \ll \log(|t|+2)$. 
For any $\sigma_{1} \leq \sigma \leq \sigma_{2}$, the distance between $\sigma+it$ and the zeros of $\zeta(s)$ is greater than $\sigma_{1}-1/2$ assuming RH. 
Hence, by \eqref{eq:05081444}, the estimate
\begin{align*}
\sum_{|\gamma-t| \leq1}
\log(\sigma+it-\rho)
\ll
\log(|t|+2)
\end{align*}
holds with the implied constant depending only on $\sigma_{1}$. 
Therefore the desired result follows from \eqref{eq:06011532}. 
\end{proof}

Recall that $\log^{*}{\zeta}(s)$ is a holomorphic function on the strip $1/2<\RE(s)<1$ if we assume RH. 
For every $t \in \mathbb{R}$, we then define a holomorphic function on $D_{1/2} \setminus \mathbb{R}$ by
\begin{align*}
\varphi_{t}(z)
=
\begin{cases}
0
&
\text{for $0<\IM(z)<1/2$}, 
\\
\log^{*}{\zeta}(1/2+iz+it)
&
\text{for $-1/2<\IM(z)<0$},  
\end{cases}
\end{align*}
which belongs to the space $\widetilde{\mathfrak{P}}_{1/2}$ by Lemma \ref{lem:2.4}. 
Furthermore, we define 
\begin{align*}
\log^{*}{\zeta}(1/2+0+ix+it)
=
\varphi_{t}(x-i0) 
\end{align*}
using the notation as in \eqref{eq:05071345}. 
Let $T$ be a positive real number, and take a random variable $\upsilon$ uniformly distributed on the interval $[0,1]$. 
Then we see that
\begin{align*}
\log{\zeta}_{T}^{\mathsf{unif}}(1/2+0+ix)
:=
\log^{*}{\zeta}(1/2+0+ix+iT \upsilon) 
\end{align*}
is a random hyperfunctio by the following lemma. 

\begin{lemma}\label{lem:2.5}
Assume RH. 
The map $\mathbb{R} \to \mathfrak{Q}$ defined by $t \mapsto \log^{*}{\zeta}(1/2+0+ix+it)$ is continuous. 
\end{lemma}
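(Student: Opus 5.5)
Fix $t_0\in\mathbb{R}$ and let $t\to t_0$. The plan is to show $\|g_t-g_{t_0}\|_m'\to0$ for every $m\geq1$, where $g_t=\log^{*}{\zeta}(1/2+0+ix+it)=\varphi_t(x-i0)=-[\overline{\varepsilon}\varphi_t]$. By the definition of the pairing, for $f\in\mathfrak{B}_m^{*}$ the hyperfunction $g_t$ is represented by a function vanishing on the upper half-strip and equal to $-\log^{*}{\zeta}(1/2+iz+it)$ on the lower half-strip. Hence
\begin{align*}
\langle f \mid g_t-g_{t_0}\rangle
=
\int_{-\infty-ic}^{\infty-ic}
f(z)\big(\log^{*}{\zeta}(1/2+iz+it)-\log^{*}{\zeta}(1/2+iz+it_0)\big)\,dz
\end{align*}
for any fixed $c$ with $0<c<\min(2^{-m},1/2)$. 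Fix such a $c$, for instance $c=2^{-m-1}$.

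With $z=u-ic$ we have $1/2+iz+it=1/2+c+i(u+t)$, a point on the vertical line $\RE(s)=1/2+c$, which lies strictly inside the strip $1/2<\RE(s)<1$. Since $\|f\|_m\leq1$ gives $|f(z)|\leq e^{-|z|/m}\leq e^{-|u|/m}$, we obtain
\begin{align*}
\|g_t-g_{t_0}\|_m'
\leq
\int_{-\infty}^{\infty}
e^{-|u|/m}\,
\big|\log^{*}{\zeta}(1/2+c+i(u+t))-\log^{*}{\zeta}(1/2+c+i(u+t_0))\big|\,du .
\end{align*}

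It remains to show that this integral tends to $0$, which we do by dominated convergence. Under RH, $\log^{*}{\zeta}$ is holomorphic, hence continuous, on $1/2<\RE(s)<1$, so the integrand tends to $0$ pointwise as $t\to t_0$. For a dominating function, take $|t-t_0|\leq1$. Lemma \ref{lem:2.4}, applied with $\sigma_1=\sigma_2=1/2+c$, bounds the integrand by
\begin{align*}
\ll_c e^{-|u|/m}\log(|u|+|t_0|+3),
\end{align*}
and this is integrable over $\mathbb{R}$. Hence the integral tends to $0$, so $\|g_t-g_{t_0}\|_m'\to0$ for each $m$, which gives convergence in the metric \eqref{eq:04170251}.

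The only point needing care is that the supremum over $\|f\|_m\leq1$ is uniform in $f$. This is handled by the pointwise bound $|f(z)|\leq e^{-|z|/m}$ already used above, which removes the dependence on $f$ before the limit is taken. The existence of a valid contour $c$ inside the common strip is guaranteed because $\varphi_t\in\widetilde{\mathfrak{P}}_{1/2}$ for all $t$. No step is a real obstacle; the key input is the uniform logarithmic growth bound of Lemma \ref{lem:2.4}.
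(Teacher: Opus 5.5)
Your proof is correct and is essentially the paper's own argument. You write the pairing as a single integral on $\IM(z)=-c$ and remove the dependence on $f$ via $|f(z)|\le e^{-|z|/m}$. You then dominate the difference using Lemma~\ref{lem:2.4} by a multiple of $e^{-|u|/m}\log(|u|+|t_0|+3)$ and conclude from the continuity of $\log^{*}{\zeta}$ on $1/2<\RE(s)<1$ and dominated convergence.

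Two cosmetic remarks. Lemma~\ref{lem:2.4} is stated for $\sigma_1<\sigma_2$, so apply it on an interval containing $1/2+c$ rather than with $\sigma_1=\sigma_2$. Also, your sign for $\langle f \mid g_t-g_{t_0}\rangle$ is the one forced by \eqref{eq:05071345} and the definition of the pairing; the paper writes an extra minus sign, which is harmless here since only absolute values enter.
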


\begin{proof}
Let $(t_{n})$ be any sequence of real numbers such that $t_{n} \to t$ as $n \to \infty$. 
By the definition of the topology of $\mathfrak{Q}$, if we obtain
\begin{align}\label{eq:05081533}
\sup_{\| f \|_{m} \leq 1}
\Big|
\big\langle f \mid \log^{*}{\zeta}(1/2+0+ix+it_{n})-\log^{*}{\zeta}(1/2+0+ix+it) \big\rangle
\Big|
\to 
0
\end{align}
as $n \to \infty$ for all $m \geq1$, then we deduce that $\log^{*}{\zeta}(1/2+0+ix+it_{n})$ converges to $\log^{*}{\zeta}(1/2+0+ix+it)$ as $n \to \infty$ in $\mathfrak{Q}$. 
Let $f \in \mathfrak{B}_{m}^{*}$ with $\|f\|_{m} \leq 1$. 
Then we have $|f(z)| \leq e^{-|z|/m}$ for $|\IM(z)| \leq 2^{-m}$. 
Using this, we derive
\begin{align*}
&
\big\langle f \mid \log^{*}{\zeta}(1/2+0+ix+it_{n})-\log^{*}{\zeta}(1/2+0+ix+it) \big\rangle
\\
&
=
-\int_{-\infty-ic}^{\infty-ic}
f(z)
\big\{\log^{*}{\zeta}(1/2+iz+it_{n})-\log^{*}{\zeta}(1/2+iz+it)\big\}
\,dz
\\
&
\ll
\int_{-\infty}^{\infty}
e^{-|x-ic|/m}
\big|\log^{*}{\zeta}(1/2+c+ix+it_{n})-\log^{*}{\zeta}(1/2+c+ix+it)\big|
\,dx
\end{align*}
for $0<c<2^{-m}$, where the implied constant is absolute. 
Note that $|t_{n}|, |t| \leq K$ for all $n \geq1$ with some constant $K>0$. 
Then we apply Lemma \ref{lem:2.4} to derive
\begin{align}\label{eq:05090053}
\log^{*}{\zeta}(1/2+c+ix+it_{n})-\log^{*}{\zeta}(1/2+c+ix+it)
\ll
\log(|x|+K+2)
\end{align}
with the implied constant depending only on $c$. 
By the continuity of $\log^{*}{\zeta}(s)$, 
\begin{align*}
\log^{*}{\zeta}(1/2+iz+it_{n})
\to 
\log^{*}{\zeta}(1/2+iz+it)
\end{align*}
as $n \to \infty$ on the line $\IM(z)=-c$. 
Finally, the dominated convergence theorem yields \eqref{eq:05081533} by noting that $e^{-|x-ic|/m} \log(|x|+K+2)$ is integrable over $\mathbb{R}$. 
\end{proof}

Take a sequence of independent random variables $\mathsf{X}(p)$ uniformly distributed on the unit circle. 
We assume that they are defined on a probability space $(\Omega,\mathcal{F},\mathbb{P})$ in what follows. 
According to \eqref{eq:05081554}, we define
\begin{align*}
\log{\zeta}^{\mathsf{rand}}(s)
=
\sum_{p} \sum_{k=1}^{\infty} \frac{1}{k}\, \mathsf{X}(p)^{k} p^{-ks}. 
\end{align*}
We shall define the random hyperfunction $\log{\zeta}^{\mathsf{rand}}(1/2+0+ix)$ as follows. 

\begin{lemma}\label{lem:2.6} 
Almost surely, we have the followings. 
\begin{enumerate}
\item \label{lem:2.6_1}
$\log{\zeta}^{\mathsf{rand}}(s)$ is a holomorphic function on the half-plane $\RE(s)>1/2$. 
\item \label{lem:2.6_2}
Let $\sigma_{1}$ and $\sigma_{2}$ be fixed with $1/2<\sigma_{1}<\sigma_{2}<\infty$. 
Then there exists a positive real valued random variable $\mathsf{S}$ determined only from $\sigma_{1}$ such that 
\begin{align*}
\sup_{\sigma_{1} \leq \sigma \leq \sigma_{2}}
\big|\log{\zeta}^{\mathsf{rand}}(\sigma+it) \big|
=
O \big((|t|+1) \mathsf{S}\big)
\end{align*}
for all $t \in \mathbb{R}$ with the implied constant depending only on $\sigma_{1}$ and $\sigma_{2}$. 
\end{enumerate}
\end{lemma}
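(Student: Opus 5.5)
Split $\log\zeta^{\mathsf{rand}}(s)=P(s)+R(s)$, where
\begin{align*}
P(s)=\sum_{p}\mathsf{X}(p)p^{-s}
\quad\text{and}\quad
R(s)=\sum_{p}\sum_{k\geq2}\frac{1}{k}\mathsf{X}(p)^{k}p^{-ks}.
\end{align*}
Since $|\mathsf{X}(p)|=1$, the series $R(s)$ converges absolutely and uniformly on $\RE(s)\geq\sigma_{1}>1/2$, with $|R(s)|\leq\sum_{p}\sum_{k\geq2}p^{-k\sigma_{1}}\ll_{\sigma_{1}}1$. So $R$ is holomorphic on $\RE(s)>1/2$ and bounded on the relevant region, deterministically. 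The whole problem therefore reduces to the random Dirichlet series $P(s)$.

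For $P$, I would use partial summation against the random partial sums $A(x)=\sum_{p\leq x}\mathsf{X}(p)p^{-1/2}$. Since the $\mathsf{X}(p)p^{-1/2}$ are independent and mean zero, I aim for an almost sure bound $A(x)=O_{\eta}(x^{\eta})$ for every $\eta>0$. One route is $\mathbb{E}|A(x)|^{2}=\sum_{p\leq x}p^{-1}\sim\log\log x$ combined with Kolmogorov's maximal inequality on dyadic blocks and Borel--Cantelli. Alternatively, apply Kolmogorov's three-series theorem to $\sum_{p}\mathsf{X}(p)p^{-1/2-\eta}$, using $\sum_{p}p^{-1-2\eta}<\infty$, to get almost sure convergence; Kronecker's lemma then gives $A(x)=o(x^{\eta})$. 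Taking $\eta$ over a countable sequence tending to $0$ gives a single full-measure event on which this holds for all $\eta$.

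On this event, partial summation gives
\begin{align*}
P(s)=(s-1/2)\int_{2}^{\infty}A(x)x^{-s+1/2-1}\,dx
\end{align*}
for $\RE(s)>1/2$. Choosing $\eta=(\sigma_{1}-1/2)/2$, the integral converges absolutely and locally uniformly on $\RE(s)\geq\sigma_{1}$, so $P$ is holomorphic there; letting $\sigma_{1}\downarrow1/2$ along a sequence proves (1). For (2), I define
\begin{align*}
\mathsf{S}=\sup_{x\geq2}|A(x)|x^{-\eta},
\end{align*}
which is finite almost surely, positive (adding a constant if needed), measurable as a supremum over rational $x$ up to jumps, and determined by $\sigma_{1}$ only. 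Then
\begin{align*}
|P(\sigma+it)|\leq|\sigma-1/2+it|\,\mathsf{S}\int_{2}^{\infty}x^{\eta-\sigma_{1}+1/2-1}\,dx\ll_{\sigma_{1},\sigma_{2}}(|t|+1)\mathsf{S}
\end{align*}
uniformly in $\sigma_{1}\leq\sigma\leq\sigma_{2}$. Adding the bounded $R$ (which can be absorbed by taking $\mathsf{S}\geq1$) gives the claim.

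The main obstacle is the almost sure growth bound for $A(x)$ with a single exceptional null set. I would handle it via the three-series theorem plus Kronecker's lemma as above, since that avoids any delicate maximal inequalities.
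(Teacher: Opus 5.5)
Your proposal is correct and follows essentially the paper's proof: the same split into the $k=1$ series and the deterministically bounded $k\ge2$ remainder, almost sure convergence of $\sum_p \mathsf{X}(p)p^{-1/2-\eta}$ via Kolmogorov's criterion, and partial summation giving the $(|t|+1)\mathsf{S}$ bound. The difference is cosmetic. The paper sums by parts directly against the partial sums $\sum_{p\le u}\mathsf{X}(p)p^{-\sigma_0}$ with $\sigma_0=\tfrac12(\sigma_1+\tfrac12)$, which are bounded because they converge, so your Kronecker-lemma step is unnecessary. Your explicit countable sequence of $\eta$ is, if anything, slightly more careful than the paper's intersection over all $\eta>0$.
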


\begin{proof}
Picking up the terms for $k=1$ in the inner sum of $\log{\zeta}^{\mathsf{rand}}(s)$, we have 
\begin{align*}
\log{\zeta}^{\mathsf{rand}}(s)
&
=
\sum_{p} \mathsf{X}(p) p^{-s} 
+
\sum_{p} \sum_{k=2}^{\infty} \frac{1}{k}\, \mathsf{X}(p)^{k} p^{-ks}
\\
&
=
\mathsf{P}(s)+\mathsf{Q}(s), 
\end{align*}
say. 
For all samples of $\Omega$, the latter Dirichlet series $\mathsf{Q}(s)$ converges absolutely and uniformly on any compact set in the half-plane $\RE(s)>1/2$. 
Hence we derive that $\mathsf{Q}(s)$ is a holomorphic function on $\RE(s)>1/2$ which satisfies
\begin{align*}
\sup_{\sigma_{1} \leq \sigma \leq \sigma_{2}}
\big|\mathsf{Q}(\sigma+it) \big|
\leq
\sum_{p} \sum_{k=2}^{\infty} \frac{1}{k}\, p^{-k \sigma_{1}}
=
O(1)
\end{align*}
for all $t \in \mathbb{R}$ with the implied constant depending only on $\sigma_{1}$. 
Then, we consider the convergence for $\mathsf{P}(s)$. 
For each $\eta>0$, the random variables $\mathsf{Y}_{\eta}(p):=\mathsf{X}(p) p^{-1/2-\eta}$ are independent for prime numbers $p$. 
They satisfy $\mathbb{E}\big[ \mathsf{Y}_{\eta}(p) \big]=0$ and 
\begin{align*}
\sum_{p} \mathbb{E}\big[ |\mathsf{Y}_{\eta}(p)|^2 \big]
=
\sum_{p} p^{-1-2\eta}
<
\infty. 
\end{align*}
Thus the series $\sum_{p} \mathsf{Y}_{\eta}(p)$ converges almost surely by applying \cite[Theorem B.10.1]{Kowalski2021}. 
In other words, we can take a set $\Omega_{0}(\eta) \in \mathcal{F}$ with $\mathbb{P}(\Omega_{0}(\eta))=1$ such that $\mathsf{P}(s)$ is convergent at $s=1/2+\eta$ for any sample of $\Omega_{0}(\eta)$. 
Then the set
\begin{align*}
\Omega_{0}
:=
\bigcap_{\eta>0} \Omega_{0}(\eta) 
\end{align*}
satisfies $\Omega_{0} \in \mathcal{F}$ and $\mathbb{P}(\Omega_{0})=1$. 
For any sample of $\Omega_{0}$, the Dirichlet series $\mathsf{P}(s)$ converges absolutely and uniformly on any compact set in $\RE(s)>1/2$. 
Therefore, almost surely, $\mathsf{P}(s)$ is a holomorphic function on the half-plane $\RE(s)>1/2$. 
Lastly, we prove an almost sure upper bound of $\big|\mathsf{P}(\sigma+it) \big|$ for $\sigma \in [\sigma_{1},\sigma_{2}]$ and $t \in \mathbb{R}$. 
In what follows, we consider a sample of the above set $\Omega_{0}$. 
Put $\sigma_{0}=\frac{1}{2}(\sigma_{1}+\frac{1}{2})$ so that $\frac{1}{2}<\sigma_{0}<\sigma_{1}$ is satisfied. 
By summation by parts, we obtain
\begin{align*}
\mathsf{P}(\sigma+it)
=
(\sigma+it-\sigma_{0})
\int_{2}^{\infty}
\mathsf{S}(u) u^{\sigma_{0}-\sigma-it-1}
\,du, 
\end{align*}
where $\mathsf{S}(u)$ is a random variable defined as
\begin{align*}
\mathsf{S}(u)
=
\sum_{p \leq u} \mathsf{X}(p) p^{-\sigma_{0}}. 
\end{align*}
By the definition of $\Omega_{0}$, it follows that $\mathsf{S}(u)$ converges as $u \to \infty$ due to $\sigma_{0}>1/2$. 
Hence we see that
\begin{align*}
\mathsf{S}
=
\sup_{u \geq1} \big|\mathsf{S}(u)\big|+1 
\end{align*}
is a finite value. 
Furthermore, by letting $\mathsf{S}=1$ for any sample of the null set $(\Omega_{0})^{c}$, we can define 
\begin{align*}
\mathsf{S}: \Omega \to [1,\infty)
\end{align*}
as a random variable. 
As a result, we have almost surely 
\begin{align*}
\sup_{\sigma_{1} \leq \sigma \leq \sigma_{2}}
\big|\mathsf{P}(\sigma+it) \big|
\leq
(|t|+\sigma_{2}-\sigma_{0})
\int_{1}^{\infty}
|\mathsf{S}(u)\big| u^{\sigma_{0}-\sigma_{1}-1}
\,du
=
O \big((|t|+1) \mathsf{S}\big)
\end{align*}
for all $t \in \mathbb{R}$ with the implied constant depending only on $\sigma_{1}$ and $\sigma_{2}$. 
From the above, the desired properties for $\log{\zeta}^{\mathsf{rand}}(s)$ are valid almost surely. 
\end{proof}

By Lemma \ref{lem:2.6}, we can define almost surely 
\begin{align*}
\varphi^{\mathsf{rand}}(z)
=
\begin{cases}
0
&
\text{for $0<\IM(z)<1/2$}, 
\\
\log{\zeta}^{\mathsf{rand}}(1/2+iz)
&
\text{for $-1/2<\IM(z)<0$} 
\end{cases}
\end{align*}
as an element of the space $\widetilde{\mathfrak{P}}_{1/2}$. 
Furthermore, we define almost surely
\begin{align*}
\log{\zeta}^{\mathsf{rand}}(1/2+0+ix)
=
\varphi^{\mathsf{rand}}(x-i0) 
\end{align*}
using the notation as in \eqref{eq:05071345}. 
By letting $\log{\zeta}^{\mathsf{rand}}(1/2+0+ix)=0$ for samples of a nullset $N \in \mathcal{F}$ where $\varphi^{\mathsf{rand}}(z)$ is not defined, we obtain a map
\begin{align*}
\log{\zeta}^{\mathsf{rand}}(1/2+0+ix): \Omega \to \mathfrak{Q}, 
\end{align*}
which is indeed a random hyperfunction. 
In order to confirm this, we define
\begin{align*}
\log{\zeta}_{p}(s,w)
=
\sum_{k=1}^{\infty} \frac{1}{k}\, w^{k} p^{-ks} 
\end{align*}
for each prime number $p$, where $w$ is any complex number with $|w| \leq1$.  
Then the sum of the right-hand side converges absolutely and uniformly on any compact set in the half-plane $\RE(s)>1/2$. 
It can be easily seen that
\begin{align*}
\varphi_{p}(z,w)
=
\begin{cases}
0
&
\text{for $0<\IM(z)<1/2$}, 
\\
\log{\zeta}_{p}(1/2+iz,w)
&
\text{for $-1/2<\IM(z)<0$}, 
\end{cases}
\end{align*}
is an element of the space $\widetilde{\mathfrak{P}}_{1/2}$, and we then define 
\begin{align*}
\log{\zeta}_{p}(1/2+0+ix,w)
=
\varphi_{p}(x-i0,w)
\in \mathfrak{Q}. 
\end{align*} 
Similarly to the proof of Lemma \ref{lem:2.5}, we can confirm that the map $\{|\omega| \leq 1\} \to \mathfrak{Q}$ defined by $w \mapsto \log{\zeta}_{p}(1/2+0+ix,w)$ is continuous, and hence, 
\begin{align}\label{eq:05202054}
\log{\zeta}_{p}^{\mathsf{rand}}(1/2+0+ix)
:=
\log{\zeta}_{p}(1/2+0+ix,\mathsf{X}(p))
\end{align}
is a random hyperfunction for every prime number $p$. 

\begin{lemma}\label{lem:2.7}
Almost surely, we have 
\begin{align}\label{eq:05090141}
\sum_{p \leq y}
\log{\zeta}_{p}^{\mathsf{rand}}(1/2+0+ix)
\to
\log{\zeta}^{\mathsf{rand}}(1/2+0+ix)
\end{align}
as $y \to \infty$ in the space $\mathfrak{Q}$. 
\end{lemma}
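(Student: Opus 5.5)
Since the pairing is linear and the boundary values are represented by $\varphi_{p}$ and $\varphi^{\mathsf{rand}}$, the partial sum in \eqref{eq:05090141} is represented by the function vanishing on $0<\IM(z)<1/2$ and equal to $\sum_{p \leq y}\log{\zeta}_{p}(1/2+iz,\mathsf{X}(p))$ on $-1/2<\IM(z)<0$. Hence, for $f \in \mathfrak{B}_{m}^{*}$ with $\|f\|_{m} \leq 1$ and $0<c<2^{-m}$,
\begin{align*}
\Big\langle f \,\Big|\, \log{\zeta}^{\mathsf{rand}}(1/2+0+ix)-\sum_{p \leq y}\log{\zeta}_{p}^{\mathsf{rand}}(1/2+0+ix) \Big\rangle
=
-\int_{-\infty-ic}^{\infty-ic} f(z)\, \mathsf{R}_{y}(1/2+iz)\,dz,
\end{align*}
where $\mathsf{R}_{y}(s)=\sum_{p>y}\sum_{k \geq 1} k^{-1}\mathsf{X}(p)^{k}p^{-ks}$ is the tail. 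With $s=1/2+c+ix$ and $|f(z)| \leq e^{-|z|/m}$, it suffices to show that, almost surely, $\int_{\mathbb{R}} e^{-|x|/m}|\mathsf{R}_{y}(1/2+c+ix)|\,dx \to 0$ as $y \to \infty$, for each fixed $m$ with a suitable fixed $c=c_{m}$, say $c=2^{-m-1}$. Since there are countably many $m$, I would work on one full-measure event on which all of these hold.

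I would split $\mathsf{R}_{y}=\mathsf{P}_{y}+\mathsf{Q}_{y}$ into the $k=1$ part and the $k \geq 2$ part, as in the proof of Lemma \ref{lem:2.6}. The $k \geq 2$ part satisfies $|\mathsf{Q}_{y}(s)| \leq \sum_{p>y}\sum_{k\ge2}p^{-k(1/2+c)} \to 0$ uniformly in $x$ for every sample, so that contribution is trivial. For $\mathsf{P}_{y}$, I would work on the event $\Omega_{0}$ of Lemma \ref{lem:2.6} and repeat its summation by parts with a tail sum. Take $\sigma_{0}=1/2+c/2$ and $\mathsf{S}(u)=\sum_{p\le u}\mathsf{X}(p)p^{-\sigma_{0}}$, which converges to a limit $\mathsf{S}(\infty)$ on $\Omega_{0}$. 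Summation by parts then gives
\begin{align*}
\mathsf{P}_{y}(s)
=
-(\mathsf{S}(y)-\mathsf{S}(\infty))\,y^{\sigma_{0}-s}
+(s-\sigma_{0})\int_{y}^{\infty}(\mathsf{S}(u)-\mathsf{S}(\infty))\,u^{\sigma_{0}-s-1}\,du .
\end{align*}
Set $\epsilon(y)=\sup_{u\ge y}|\mathsf{S}(u)-\mathsf{S}(\infty)|$, which tends to $0$ as $y \to \infty$. This yields
\begin{align*}
|\mathsf{P}_{y}(1/2+c+ix)| \ll_{c} \epsilon(y)\,(|x|+1),
\end{align*}
uniformly in $x$.

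Integrating against $e^{-|x|/m}$ then gives a bound $O_{m}(\epsilon(y))+o(1)$, uniformly over $\|f\|_{m}\le1$. Therefore $\|\cdot\|_{m}'\to0$ for every $m$, which is convergence in $\mathfrak{Q}$ by the characterization of the metric \eqref{eq:04170251}.

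The main obstacle is securing uniformity in $x$ with only polynomial growth, since pointwise almost sure convergence of the tail is not enough. The tail summation-by-parts estimate above handles this, because the factor $(|x|+1)$ is absorbed by the exponential decay of $f$. I also need care that a single null set works for all $m$; this holds because $\Omega_{0}$ already covers every $\sigma_{0}>1/2$.
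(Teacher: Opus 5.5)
Your argument is correct and follows essentially the paper's route: the same split into the $k=1$ and $k\geq 2$ parts, the same summation by parts with $\sigma_{0}=1/2+c/2$, and the same $f$-independent bound from $|f(z)|\leq e^{-|z|/m}$ on the line $\IM(z)=-c$. The only difference is how you finish: the paper dominates the tail of the $k=1$ part by $O((|x|+1)\mathsf{S})$ and concludes by pointwise convergence and dominated convergence, whereas you subtract $\mathsf{S}(\infty)$ to get the explicit bound $\epsilon(y)(|x|+1)$, which gives the convergence directly (your formula also keeps the boundary term $-(\mathsf{S}(y)-\mathsf{S}(\infty))y^{\sigma_{0}-s}$, which the paper's displayed identity omits without affecting its bound).
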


\begin{proof}
Let $\mathsf{P}(s)$ and $\mathsf{Q}(s)$ be the same as in the proof of Lemma \ref{lem:2.6}. 
We also define
\begin{align*}
\mathsf{P}_{y}(s)
=
\sum_{p \leq y} \mathsf{X}(p) p^{-s} 
\quad\text{and}\quad
\mathsf{Q}_{y}(s)
=
\sum_{p \leq y} \sum_{k=2}^{\infty} \frac{1}{k}\, \mathsf{X}(p)^{k} p^{-ks}. 
\end{align*}
As seen in the proof of Lemma \ref{lem:2.6}, we can take a set $\Omega_{0} \in \mathcal{F}$ with $\mathbb{P}(\Omega_{0})=1$ such that $\mathsf{P}_{y}(s) \to \mathsf{P}(s)$ and $\mathsf{Q}_{y}(s) \to \mathsf{Q}(s)$ as $y \to \infty$ on the half-plane $\RE(s)>1/2$ for any sample of $\Omega_{0}$. 
Note that we have
\begin{align*}
\sum_{p \leq y}
\log{\zeta}_{p}^{\mathsf{rand}}(1/2+0+ix)
&
=
\mathsf{P}_{y}(1/2+0+ix)+\mathsf{Q}_{y}(1/2+0+ix), 
\\
\log{\zeta}^{\mathsf{rand}}(1/2+0+ix)
&
=
\mathsf{P}(1/2+0+ix)+\mathsf{Q}(1/2+0+ix)
\end{align*}
for any sample of $\Omega_{0}$, where the Fourier hyperfunctions on the right-hand sides are defined similarly by using the functions $\mathsf{P}_{y}(s)$, $\mathsf{Q}_{y}(s)$, $\mathsf{P}(s)$, and $\mathsf{Q}(s)$. 
By the definition of the topology of $\mathfrak{Q}$, the result follows if we obtain almost surely 
\begin{align*}
&
\sup_{\| f \|_{m} \leq 1}
\Big|
\big\langle f \mid \mathsf{P}_{y}(1/2+0+ix)-\mathsf{P}(1/2+0+ix) \big\rangle
\Big|
\to 
0,
\\
&
\sup_{\| f \|_{m} \leq 1}
\Big|
\big\langle f \mid \mathsf{Q}_{y}(1/2+0+ix)-\mathsf{Q}(1/2+0+ix) \big\rangle
\Big|
\to 
0
\end{align*}
as $y \to \infty$ for all $m \geq1$. 
These can be shown by the method using the dominated convergence theorem as in the proof of Lemma \ref{lem:2.5}. 
Since the arguments are quite similar, we will only prove here the following almost sure upper bounds
\begin{align*}
\mathsf{P}_{y}(1/2+c+ix)-\mathsf{P}(1/2+c+ix)
&
\ll
(|x|+1) \mathsf{S}, 
\\
\mathsf{Q}_{y}(1/2+c+ix)-\mathsf{Q}(1/2+c+ix)
&
\ll
1,
\end{align*}
which correspond to \eqref{eq:05090053}. 
Let $\mathsf{S}(u)$ and $\mathsf{S}$ be the same random variables as in the proof of Lemma \ref{lem:2.6}. 
Furthermore, we consider a sample of $\Omega_{0}$ in what follows. 
Put $\sigma_{0}=\frac{1}{2}+\frac{c}{2}$ so that $\frac{1}{2}<\sigma_{0}<\frac{1}{2}+c$ is satisfied. 
By summation by parts, we obtain 
\begin{align*}
\mathsf{P}_{y}(1/2+c+ix)-\mathsf{P}(1/2+c+ix)
=
-\Big(\frac{c}{2}+ix \Big) 
\int_{y}^{\infty}
\mathsf{S}(u) u^{-c/2-ix-1}
\,du. 
\end{align*}
This is evaluated as
\begin{align*}
&
\big| \mathsf{P}_{y}(1/2+c+ix)-\mathsf{P}(1/2+c+ix) \big|
\\
&
\leq 
\Big( |x|+\frac{c}{2} \Big) 
\int_{1}^{\infty}
|\mathsf{S}(u)\big| u^{-c/2-1}
\,du
=
O \big((|x|+1) \mathsf{S}\big)
\end{align*}
for all $x \in \mathbb{R}$ with the implied constant depending only on $c$. 
Furthermore, we also obtain that
\begin{align*}
&
\big| \mathsf{Q}_{y}(1/2+c+ix)-\mathsf{Q}(1/2+c+ix) \big|
\\
&
=
\bigg|\sum_{p>y} \sum_{k=2}^{\infty} \frac{1}{k}\, \mathsf{X}(p)^{k} p^{-k(1/2+c+ix)}\bigg|
\leq
\sum_{p} \sum_{k=2}^{\infty} \frac{1}{k}\, p^{-k(1/2+c)}
<
\infty
\end{align*}
for all $x \in \mathbb{R}$. 
Therefore the desired upper bounds are proved. 
\end{proof}

We finally conclude that $\log{\zeta}^{\mathsf{rand}}(1/2+0+ix)$ is a random hyperfunction. 
Take a set $\Omega_{0} \in \mathcal{F}$ with $\mathbb{P}(\Omega_{0})=1$ such that \eqref{eq:05090141} is satisfied for any sample of $\Omega_{0}$. 
Letting $\log{\zeta}^{\mathsf{rand}}(1/2+0+ix)=0$ for any sample of the null set $(\Omega_{0})^{c}$, we have
\begin{align*}
\mathbf{1}_{\Omega_{0}} \cdot
\sum_{p \leq y}
\log{\zeta}_{p}^{\mathsf{rand}}(1/2+0+ix)
\to 
\log{\zeta}^{\mathsf{rand}}(1/2+0+ix)
\end{align*}
as $y \to \infty$ for all samples of $\Omega$, where $\mathbf{1}_{\Omega_{0}}$ denotes the characteristic function of $\Omega_{0}$. 
Note that $\mathbf{1}_{\Omega_{0}}:\Omega \to \{0,1\}$ and every $\log{\zeta}_{p}^{\mathsf{rand}}(1/2+0+ix): \Omega \to \mathfrak{Q}$ are measurable. 
Thus, by the continuities of the sum and scalar multiple of $\mathfrak{Q}$, the map 
\begin{align*}
\mathbf{1}_{\Omega_{0}} \cdot
\sum_{p \leq y}
\log{\zeta}_{p}^{\mathsf{rand}}(1/2+0+ix)
: \Omega \to \mathfrak{Q}
\end{align*}
is measurable for any $y>0$. 
Hence $\log{\zeta}^{\mathsf{rand}}(1/2+0+ix)$ is also measurable since it is obtained as the pointwise limit as $y \to \infty$.

\subsection{Random hyperfunctions for CUE characteristic polynomials}\label{sec:2.3}
In this section, we introduce certain random hyperfunctions related to the characteristic polynomial of a random matrix from the circular unitary ensemble (CUE). 

\begin{lemma}\label{lem:2.8}
Let $\rho_{1}$ and $\rho_{2}$ be fixed with $0<\rho_{1}<\sigma_{2}<1$. 
If $U \in \mathcal{U}(N)$, then 
\begin{align*}
\sup_{\rho_{1} \leq \rho \leq \rho_{2}}
\big|\log{Z}(\rho e^{i \theta};U)\big|
=
O(1)
\end{align*}
for all $\theta \in \mathbb{R}$ with the implied constant depending only on $N$ and $\rho_{2}$. 
\end{lemma}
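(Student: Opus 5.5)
The statement contains a typo: it should read $0<\rho_{1}<\rho_{2}<1$. I will prove the bound in that form, and in fact the lower restriction $\rho \geq \rho_{1}$ plays no role.

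The plan is to use the power series representation \eqref{eq:05091441} directly. For $|s|<1$ we have
\begin{align*}
\log{Z}(s;U)
=
\sum_{k=1}^{\infty} \frac{\tr(U^{k})}{k}\, s^{k},
\end{align*}
and this series converges absolutely because every eigenvalue of a unitary matrix has modulus one. That gives $|\tr(U^{k})| \leq N$ for all $k \geq 1$.

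Next I take $s=\rho e^{i\theta}$ with $\rho_{1} \leq \rho \leq \rho_{2}$ and bound the series termwise:
\begin{align*}
\big|\log{Z}(\rho e^{i\theta};U)\big|
\leq
N \sum_{k=1}^{\infty} \frac{\rho^{k}}{k}
=
-N \log(1-\rho)
\leq
-N \log(1-\rho_{2}).
\end{align*}
The right-hand side does not depend on $\theta$, on $\rho$ in the given range, or on $U \in \mathcal{U}(N)$. It depends only on $N$ and $\rho_{2}$, so taking the supremum over $\rho$ yields the claim with implied constant $-N\log(1-\rho_{2})$.

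There is no real obstacle here. The only point to check is that the branch in \eqref{eq:05091441} agrees with the power series on $|s|<1$. This holds because each factor satisfies $\log(1-se^{i\theta_{n}})=-\sum_{k}(se^{i\theta_{n}})^{k}/k$ for $|se^{i\theta_{n}}|<1$, by the stated convention for the principal logarithm, and summing over $n$ gives the trace form.

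Alternatively, one could bound each of the $N$ summands $\log(1-\rho e^{i\theta}e^{i\theta_{n}})$ directly. The argument of each lies in $(-\pi/2,\pi/2)$ and the modulus $|1-\rho e^{i\theta}e^{i\theta_{n}}|$ lies in $[1-\rho_{2},2]$, which gives the same conclusion.
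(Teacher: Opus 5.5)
Your proof is correct and is the paper's argument: bound the power series of $\log{Z}(s;U)$ termwise using $|\tr(U^{k})| \leq N$, which gives $|\log{Z}(\rho e^{i\theta};U)| \leq -N\log(1-\rho_{2})$ uniformly in $\theta$ and in $\rho \leq \rho_{2}$. You are also right that the hypothesis should read $0<\rho_{1}<\rho_{2}<1$, and that the lower bound $\rho_{1}$ plays no role.
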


\begin{proof}
Let $\rho \in [\rho_{1},\rho_{2}]$ and $\theta \in \mathbb{R}$. 
By using \eqref{eq:05091441}, we have 
\begin{align*}
\big|\log{Z}(\rho e^{i \theta};U)\big|
\leq 
\sum_{k=1}^{\infty} \frac{N}{k} \rho_{2}^{k}
= 
-N \log(1-\rho_{2})
\end{align*}
since $|\tr(U^k)| \leq N$ for $U \in \mathcal{U}(N)$. 
This yields the result. 
\end{proof}

Recall that $\log{Z}(s;U)$ is a holomorphic function on $|s|<1$. 
For every $U \in \mathcal{U}(N)$, we define a holomorphic function on $D_{1} \setminus \mathbb{R}$ by
\begin{align*}
\varphi_{U}(z)
=
\begin{cases}
0
&
\text{for $0<\IM(z)<1$}, 
\\
\log{Z}(e^{-iz};U)
&
\text{for $-1<\IM(z)<0$},  
\end{cases}
\end{align*}
which belongs to the space $\widetilde{\mathfrak{P}}_{1}$ by Lemma \ref{lem:2.8}. 
Then we define 
\begin{align*}
\log{Z}(e^{-ix-0};U)
=
\varphi_{U}(x-i0) 
\end{align*}
using the notation as in \eqref{eq:05071345}. 
Take an $N \times N$ random matrix $\mathsf{U}$ from CUE, that is, a random element distributed on $\mathcal{U}(N)$ according to the normalized Haar measure. 
By the following lemma, we derive that
\begin{align*}
\log{Z}_{N}^{\mathsf{CUE}}(e^{-ix-0})
:=
\log{Z}(e^{-ix-0};\mathsf{U})
\end{align*}
is a random hyperfunction. 

\begin{lemma}\label{lem:2.9}
The map $\mathcal{U}(N) \to \mathfrak{Q}$ defined by $U \mapsto \log{Z}(e^{-ix-0};U)$ is continuous. 
\end{lemma}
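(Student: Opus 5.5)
We follow the proof of Lemma \ref{lem:2.5}. Take a sequence $U_{n} \to U$ in $\mathcal{U}(N)$, with the topology induced from the matrix norm. By the definition of the topology of $\mathfrak{Q}$, it suffices to show that
\begin{align*}
\sup_{\| f \|_{m} \leq 1}
\Big|
\big\langle f \mid \log{Z}(e^{-ix-0};U_{n})-\log{Z}(e^{-ix-0};U) \big\rangle
\Big|
\to 0
\end{align*}
as $n \to \infty$ for every $m \geq 1$.

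Fix $m$ and $0<c<2^{-m}$, and let $f \in \mathfrak{B}_{m}^{*}$ with $\|f\|_{m} \leq 1$. By the definition of the pairing and of $\varphi_{U}$, the quantity inside the supremum is
\begin{align*}
-\int_{-\infty-ic}^{\infty-ic} f(z)\big\{\log{Z}(e^{-iz};U_{n})-\log{Z}(e^{-iz};U)\big\}\,dz .
\end{align*}
Its modulus is at most
\begin{align*}
\int_{-\infty}^{\infty} e^{-|x-ic|/m}
\big|\log{Z}(e^{-c}e^{-ix};U_{n})-\log{Z}(e^{-c}e^{-ix};U)\big|\,dx .
\end{align*}
Note that $e^{-i(x-ic)}=e^{-c}e^{-ix}$ has modulus $\rho:=e^{-c}<1$.

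Next we bound the difference uniformly. The power series \eqref{eq:05091441} gives
\begin{align*}
\big|\log{Z}(\rho e^{-ix};U_{n})-\log{Z}(\rho e^{-ix};U)\big|
\leq \sum_{k=1}^{\infty}\frac{|\tr(U_{n}^{k})-\tr(U^{k})|}{k}\rho^{k}.
\end{align*}
Each term is bounded by $2N\rho^{k}/k$, which is summable and independent of $n$ and $x$. Each term also tends to $0$ as $n \to \infty$, since $U \mapsto \tr(U^{k})$ is continuous. By dominated convergence for the series, the left-hand side tends to $0$ uniformly in $x \in \mathbb{R}$. It is moreover bounded by $-2N\log(1-\rho)$, in the spirit of Lemma \ref{lem:2.8}.

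Hence the integral above is at most $o(1)\int_{\mathbb{R}} e^{-|x|/m}\,dx = o(1)$, uniformly over $\|f\|_{m}\leq 1$, which proves the claim. Since $\mathcal{U}(N)$ is metrizable, sequential continuity suffices. The main point is the uniformity in $f$ for the supremum. The uniform-in-$x$ convergence given by the power series handles it directly, and no dominated convergence in $x$ is needed, unlike in Lemma \ref{lem:2.5}.
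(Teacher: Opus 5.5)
Your proof is correct and follows the paper's argument: you reduce to the seminorms $\|\cdot\|_m'$, bound the pairing uniformly over $\|f\|_m\le 1$ by $\int_{\mathbb{R}} e^{-|x-ic|/m}\,|\log Z(e^{-c-ix};U_n)-\log Z(e^{-c-ix};U)|\,dx$, and then use the power series for $\log Z$. The only difference is cosmetic. The paper uses pointwise convergence on $\IM(z)=-c$ plus dominated convergence in $x$, with Lemma \ref{lem:2.8} supplying the majorant, whereas you obtain convergence uniform in $x$ from the $x$-independent majorant $\sum_k |\tr(U_n^k)-\tr(U^k)|e^{-ck}/k$. Both handle the supremum over $f$ equally well, since in either case the bound is already independent of $f$.
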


\begin{proof}
Let $(U_{n})$ be any sequence of $N \times N$ unitary matrices such that $U_{n} \to U$ as $n \to \infty$. 
Then it is sufficient to show that 
\begin{align}\label{eq:05091531}
\sup_{\| f \|_{m} \leq 1}
\Big|
\big\langle f \mid \log{Z}(e^{-ix-0};U_{n})-\log{Z}(e^{-ix-0};U) \big\rangle
\Big|
\to 
0
\end{align}
as $n \to \infty$ for all $m \geq1$. 
For any $f \in \mathfrak{B}_{m}^{*}$ with $\|f\|_{m} \leq 1$, we have $|f(z)| \leq e^{-|z|/m}$ for $|\IM(z)| \leq 2^{-m}$. 
Thus we obtain
\begin{align*}
&
\big\langle f \mid \log{Z}(e^{-ix-0};U_{n})-\log{Z}(e^{-ix-0};U) \big\rangle
\\
&
=
-\int_{-\infty-ic}^{\infty-ic}
f(z)
\big\{\log{Z}(e^{-iz};U_{n})-\log{Z}(e^{-iz};U)\big\}
\,dz
\\
&
\ll
\int_{-\infty}^{\infty}
e^{-|x-ic|/m}
\big|\log{Z}(e^{-c-ix};U_{n})-\log{Z}(e^{-c-ix};U)\big|
\,dx
\end{align*}
for $0<c<2^{-m}$, where the implied constant is absolute. 
By Lemma \ref{lem:2.8}, we have 
\begin{align*}
\log{Z}(e^{-c-ix};U_{n})-\log{Z}(e^{-c-ix};U)
\ll
1,
\end{align*}
where the implied constant depends only on $N$ and $c$. 
Here, $e^{-|x-ic|/m}$ is integrable over $\mathbb{R}$. 
Hence \eqref{eq:05091531} follows by the dominated convergence theorem since 
\begin{align*}
\log{Z}(e^{-iz};U_{n})-\log{Z}(e^{-iz};U)
=
\sum_{k=1}^{\infty} \frac{\tr(U_{n}^{k})-\tr(U^{k})}{k} e^{-ikz}
\to 
0
\end{align*}
as $n \to \infty$ on the line $\IM(z)=-c$. 
\end{proof}

Take a sequence of independent random variables $\mathsf{W}(k)$ distributed on $\mathbb{C}$ according to the standard complex Gaussian measure. 
We assume that they are defined on a probability space $(\Omega,\mathcal{F},\mathbb{P})$, but it does not need to be the same as in Section \ref{sec:2.2}. 
According to Diaconis and Evans \cite{DiaconisEvans2001}, it is known that $\tr(U^{k})$ behaves like $\sqrt{k}\, \mathsf{W}(k)$ as $U$ varies over $\mathcal{U}(N)$ and as $N \to \infty$. 
Hence we expect that the statistical behavior of $\log{Z}(s;U)$ is associated with the random analytic function
\begin{align*}
\log{Z}^{\mathsf{Gauss}}(s)
=
\sum_{k=1}^{\infty} \frac{1}{\sqrt{k}} \mathsf{W}(k) s^{k}. 
\end{align*}
Similarly to $\log{\zeta}^{\mathsf{rand}}(1/2+0+ix)$ in Section \ref{sec:2.2}, we define a random hyperfunction $\log{Z}^{\mathsf{Gauss}}(e^{-ix-0})$ in the remaining part of this section. 

\begin{lemma}\label{lem:2.10}
Almost surely, we have the followings: 
\begin{enumerate}
\item \label{lem:2.10_1}
$\log{Z}^{\mathsf{Gauss}}(s)$ is a holomorphic function on the disc $|s|<1$. 
\item \label{lem:2.10_2}
Let $\rho_{1}$ and $\rho_{2}$ be fixed with $0<\rho_{1}<\rho_{2}<1$. 
Then there exists a positive real valued random variable $\mathsf{M}$ determined only from $\rho_{2}$ such that 
\begin{align*}
\sup_{\rho_{1} \leq \rho \leq \rho_{2}}
\big|\log{Z}^{\mathsf{Gauss}}(\rho e^{i \theta})\big|
=
O(\mathsf{M})
\end{align*}
for all $\theta \in \mathbb{R}$ with the implied constant depending only on $\rho_{2}$. 
\end{enumerate}
\end{lemma}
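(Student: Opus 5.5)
The plan is to control the size of the Gaussian coefficients almost surely and then compare the series with a convergent geometric-type series, as in the proof of Lemma \ref{lem:2.8}.

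\textbf{Step 1: almost sure growth of the coefficients.} Since each $\mathsf{W}(k)$ is standard complex Gaussian, we have $\mathbb{P}(|\mathsf{W}(k)|>x)=e^{-x^{2}}$. Hence
\begin{align*}
\sum_{k=1}^{\infty} \mathbb{P}\big(|\mathsf{W}(k)|>2\sqrt{\log(k+1)}\big)
=
\sum_{k=1}^{\infty} (k+1)^{-4}
<
\infty .
\end{align*}
By the Borel--Cantelli lemma there is a set $\Omega_{0} \in \mathcal{F}$ with $\mathbb{P}(\Omega_{0})=1$ on which $|\mathsf{W}(k)| \leq 2\sqrt{\log(k+1)}$ for all sufficiently large $k$. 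Consequently the random variable
\begin{align*}
\mathsf{A}
=
\sup_{k \geq 1} \frac{|\mathsf{W}(k)|}{\sqrt{\log(k+1)}}
\end{align*}
is finite on $\Omega_{0}$. We set $\mathsf{A}=1$ on $(\Omega_{0})^{c}$. Measurability is clear, since $\mathsf{A}$ is a countable supremum of measurable functions. (A cruder alternative, avoiding the tail formula, is to note that $\mathbb{E}|\mathsf{W}(k)|^{2}=1$ and apply Borel--Cantelli to $|\mathsf{W}(k)|>k$.)

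\textbf{Step 2: holomorphy.} On $\Omega_{0}$ the coefficients satisfy $|\mathsf{W}(k)|/\sqrt{k} \leq \mathsf{A}\sqrt{\log(k+1)}$. This sequence has $\limsup_{k} |a_{k}|^{1/k} \leq 1$, so the radius of convergence is at least $1$. The power series therefore converges absolutely and uniformly on compact subsets of $|s|<1$, which gives part (\ref{lem:2.10_1}).

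\textbf{Step 3: the uniform bound.} For $\rho_{1} \leq \rho \leq \rho_{2}$ and $\theta \in \mathbb{R}$,
\begin{align*}
\big|\log{Z}^{\mathsf{Gauss}}(\rho e^{i\theta})\big|
\leq
\sum_{k=1}^{\infty} \frac{|\mathsf{W}(k)|}{\sqrt{k}} \rho_{2}^{k}
\leq
\mathsf{A} \sum_{k=1}^{\infty} \sqrt{\log(k+1)}\, \rho_{2}^{k}.
\end{align*}
The last series is a finite constant depending only on $\rho_{2}$. So part (\ref{lem:2.10_2}) holds with $\mathsf{M}=\mathsf{A}$. This random variable does not even depend on $\rho_{2}$, which is consistent with the statement.

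\textbf{Remarks on the choices.} Alternatively, one could take $\mathsf{M}=\sum_{k} |\mathsf{W}(k)| \rho_{2}^{k}/\sqrt{k}$ directly. It has finite expectation $\sum_{k} \mathbb{E}|\mathsf{W}(k)|\rho_{2}^{k}/\sqrt{k}<\infty$, hence is almost surely finite, and it depends only on $\rho_{2}$. This approach is perhaps even simpler and avoids Borel--Cantelli altogether. I would present this version, with Step 1 kept only as motivation.

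The only mildly delicate point is measurability and almost sure finiteness of $\mathsf{M}$. Monotone convergence settles this, because $\mathsf{M}$ is a series of nonnegative measurable terms with summable expectations.
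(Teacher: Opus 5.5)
Your proof is correct, but it follows a different route from the paper. The paper applies Kolmogorov's criterion for random series to the independent mean-zero terms $\mathsf{W}(k)\eta^{k}/\sqrt{k}$, whose variances are summable. This gives almost sure convergence of $\log{Z}^{\mathsf{Gauss}}(\eta)$ for each $0<\eta<1$. The paper then intersects these events and infers absolute, locally uniform convergence on $|s|<1$: convergence at $s=\eta$ forces bounded terms, hence absolute convergence for $|s|<\eta$. It takes $\mathsf{M}=\max_{|s|\leq\rho_{2}}|\log{Z}^{\mathsf{Gauss}}(s)|$.

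This copies the structure of the proof of Lemma~\ref{lem:2.6}. There a Kolmogorov-type argument is genuinely needed, because $\sum_{p}p^{-\sigma}$ diverges for $1/2<\sigma\leq1$. Inside the disc it is not needed, since $\sum_{k}\mathbb{E}|\mathsf{W}(k)|r^{k}/\sqrt{k}<\infty$ for every $r<1$. Your first-moment variant uses exactly this fact, and it requires only $\sup_{k}\mathbb{E}|\mathsf{W}(k)|<\infty$, not Gaussianity or independence.

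Your Borel--Cantelli version gives slightly more: one full-measure event and one random variable $\mathsf{A}$ that serves every $\rho_{2}$ at once. It also avoids a small flaw in the paper, whose intersection $\bigcap_{0<\eta<1}\Omega_{0}(\eta)$ is uncountable and should be taken over a sequence $\eta_{n}\uparrow1$. The paper's $\mathsf{M}$ is the sharper max-modulus quantity, but the statement only asks for $O(\mathsf{M})$.

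One caution: if you present only the variant $\mathsf{M}=\sum_{k}|\mathsf{W}(k)|\rho_{2}^{k}/\sqrt{k}$ and drop Step 1, your Step 2 loses its input and part (1) is no longer proved. In that case, apply the finiteness to $\rho_{2}=1-1/n$ for every $n$. Intersecting these countably many full-measure events gives absolute convergence on all of $|s|<1$.
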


\begin{proof}
Put $\mathsf{V}_{\eta}(k)=\mathsf{W}(k) \eta^{k} /\sqrt{k}$ for $0<\eta<1$. 
Then the random variables $\mathsf{V}_{\eta}(k)$ are independent for integers $k \geq1$, which satisfy $\mathbb{E}\big[ \mathsf{V}_{\eta}(k) \big]=0$ and 
\begin{align*}
\sum_{k=1}^{\infty} \mathbb{E}\big[ |\mathsf{V}_{\eta}(k)|^2 \big]
=
\sum_{k=1}^{\infty} \frac{1}{k}\, \eta^{2k}
<
\infty. 
\end{align*}
Hence we deduce from \cite[Theorem B.10.1]{Kowalski2021} that the series $\sum_{k=1}^{\infty} \mathsf{V}_{\eta}(k)$ converges almost surely. 
Take a set $\Omega_{0}(\eta) \in \mathcal{F}$ with $\mathbb{P}(\Omega_{0}(\eta))=1$ such that $\log{Z}^{\mathsf{Gauss}}(s)$ converges at $s=\eta$ for any sample of $\Omega_{0}(\eta)$. 
Then the set
\begin{align*}
\Omega_{0}
:=
\bigcap_{0<\eta<1} \Omega_{0}(\eta), 
\end{align*}
satisfies $\Omega_{0} \in \mathcal{F}$ and $\mathbb{P}(\Omega_{0})=1$. 
For any sample of $\Omega_{0}$, the power series $\log{Z}^{\mathsf{Gauss}}(s)$ converges absolutely and uniformly on any compact set in $|s|<1$. 
As a result, almost surely, $\log{Z}^{\mathsf{Gauss}}(s)$ is holomorphic on the disc $|s|<1$. 
Furthermore, for any sample of $\Omega_{0}$, we see that
\begin{align*}
\mathsf{M}
=
\max_{|s| \leq \rho_{2}} |\log{Z}^{\mathsf{Gauss}}(s)| 
\end{align*}
is a finite value. 
By letting $\mathsf{M}=1$ for any sample of the null set $(\Omega_{0})^{c}$, we can define 
\begin{align*}
\mathsf{M}:\Omega \to (0,\infty)
\end{align*}
as a random variable, for which the desired almost sure upper bound is satisfied. 
\end{proof}

By Lemma \ref{lem:2.10}, we can define almost surely 
\begin{align*}
\varphi^{\mathsf{Gauss}}(z)
=
\begin{cases}
0
&
\text{for $0<\IM(z)<1$}, 
\\
\log{Z}^{\mathsf{Gauss}}(e^{-iz})
&
\text{for $-1<\IM(z)<0$} 
\end{cases}
\end{align*}
as an element of the space $\widetilde{\mathfrak{P}}_{1}$. 
Then we define almost surely
\begin{align*}
\log{Z}^{\mathsf{Gauss}}(e^{-ix-0})
=
\varphi^{\mathsf{Gauss}}(x-i0). 
\end{align*}
Furthermore, we define an element of the space $\mathfrak{P}_{1}$ as 
\begin{align*}
\varphi_{k}(z,w)
=
\begin{cases}
0
&
\text{for $0<\IM(z)<1$}, 
\\
\displaystyle{\frac{1}{\sqrt{k}} w e^{-ikz}}
&
\text{for $-1<\IM(z)<0$}, 
\end{cases}
\end{align*}
where $w$ is any complex number. 
Then we define $g_{k}(e^{-ix-0},w)=\varphi_{k}(x-i0,w) \in \mathfrak{Q}$. 
Obviously, the map $\mathbb{C} \to \mathfrak{Q}$ defined by $w \mapsto g_{k}(e^{-ix-0},w)$ is continuous. 
Hence we see that $g_{k}^{\mathsf{Gauss}}(e^{-ix-0}):=g_{k}(e^{-ix-0},\mathsf{W}(k))$ is a random hyperfunction for every integer $k \geq1$. 
Then, we obtain similarly to Lemma \ref{lem:2.7} that almost surely
\begin{align*}
\sum_{k \leq y}
g_{k}^{\mathsf{Gauss}}(e^{-ix-0})
\to
\log{Z}^{\mathsf{Gauss}}(e^{-ix-0})
\end{align*}
as $y \to \infty$ in the space $\mathfrak{Q}$. 
Therefore, we can define the map
\begin{align*}
\log{Z}^{\mathsf{Gauss}}(e^{-ix-0}): \Omega \to \mathfrak{Q}
\end{align*}
as a random hyperfunction in the same way as at the end of Section \ref{sec:2.2}.

\section{Limit theorems revisited}\label{sec:3}

\subsection{Preliminary on the convergence of random elements}\label{sec:3.1}
Let $(E,d)$ be a metric space, and denote by $\mathcal{B}(E)$ the Borel $\sigma$-algebra of $E$ with respect to the topology induced from the metric $d$. 
Let $\mu, \mu_{1}, \mu_{2},\ldots$ be probability measure on $(E,\mathcal{B}(E))$. 
We say that $\mu_{n} \to \mu$ weakly as $n \to \infty$ if 
\begin{align*}
\lim_{n \to \infty}
\int_{E}
F
\,d \mu_{n}
=
\int_{E}
F
\,d \mu
\end{align*}
for all bounded continuous functions $F:E \to \mathbb{C}$.
Furthermore, let $X, X_{1}, X_{2},\ldots$ be random elements taking values in $E$, where we do not necessarily assume that they are defined on a common probability space. 
Denote by $P_{X}, P_{X_{1}}, P_{X_{2}}, \ldots$ the distribution of those random elements, that is, 
\begin{align*}
\int_{E}
F
\,d P_{X}
=
\mathbb{E}[F(X)]
\quad\text{and}\quad
\int_{E}
F
\,d P_{X_{n}}
=
\mathbb{E}[F(X_{n})]
\quad
(n=1,2,\ldots)
\end{align*} 
for all measurable functions $F:E \to \mathbb{C}$. 
We say that $X_{n}$ converges in distribution to $X$ as $n \to \infty$, formally 
\begin{align*}
X_{n} \xrightarrow{\mathcal{D}} X
\quad \text{as} \quad 
n \to \infty, 
\end{align*}
if $P_{X_{n}} \to P_{X}$ weakly as $n \to \infty$. 
Then the following lemmas are fundamental to investigating the convergence of random elements. 

\begin{lemma}\label{lem:3.1}
Let $X, X_{1}, X_{2},\ldots$ be random elements taking values in a metric space $(E,d)$. 
Then $X_{n} \xrightarrow{\mathcal{D}} X$ as $n \to \infty$ if and only if 
\begin{align*}
\lim_{n \to \infty} 
\mathbb{E}[F(X_{n})]
=
\mathbb{E}[F(X)]
\end{align*}
for all bounded Lipschitz continuous functions $F:E \to \mathbb{C}$.
\end{lemma}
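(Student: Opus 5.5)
One direction is immediate: a bounded Lipschitz continuous function is in particular bounded and continuous. So if $X_{n} \xrightarrow{\mathcal{D}} X$, the definition of weak convergence of $P_{X_{n}}$ to $P_{X}$ gives $\mathbb{E}[F(X_{n})]\to\mathbb{E}[F(X)]$ for every bounded Lipschitz $F$. All the work is in the converse. Splitting $F$ into real and imaginary parts, and each of those into positive and negative parts when convenient, we may assume throughout that the test functions are real valued.

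For the converse, the plan is to approximate indicators of closed sets by Lipschitz functions and then use the portmanteau characterization. Let $C\subset E$ be closed and put $d(x,C)=\inf_{y\in C}d(x,y)$, which is $1$-Lipschitz. For $k\geq1$ define
\begin{align*}
F_{k}(x)=\max\big(0,\,1-k\,d(x,C)\big).
\end{align*}
Each $F_{k}$ is bounded and $k$-Lipschitz, satisfies $\mathbf{1}_{C}\leq F_{k}\leq 1$, and $F_{k}\downarrow \mathbf{1}_{C}$ pointwise as $k\to\infty$ because $C$ is closed. The hypothesis then gives
\begin{align*}
\limsup_{n\to\infty}P_{X_{n}}(C)\leq \lim_{n\to\infty}\mathbb{E}[F_{k}(X_{n})]=\mathbb{E}[F_{k}(X)]
\end{align*}
for every $k$, and letting $k\to\infty$ with dominated convergence yields $\limsup_{n} P_{X_{n}}(C)\leq P_{X}(C)$.

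It remains to pass from this closed-set inequality to weak convergence; this layer-cake step is the main, though mild, obstacle. Take $F$ bounded, continuous and real valued. After an affine change we may assume $0<F<1$, and using $F$ and $1-F$ it suffices to prove $\limsup_{n}\mathbb{E}[F(X_{n})]\leq\mathbb{E}[F(X)]$. For an integer $k\geq1$ the sets $C_{i}=\{F\geq i/k\}$ are closed by continuity of $F$, and
\begin{align*}
\mathbb{E}[F(Y)]\leq \frac{1}{k}+\frac{1}{k}\sum_{i=1}^{k}\mathbb{P}(Y\in C_{i}),
\qquad
\mathbb{E}[F(Y)]\geq \frac{1}{k}\sum_{i=1}^{k}\mathbb{P}(Y\in C_{i})
\end{align*}
for any random element $Y$. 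Apply the upper bound with $Y=X_{n}$, take $\limsup_{n}$ and use the closed-set inequality on each $C_{i}$; then apply the lower bound with $Y=X$. This gives $\limsup_{n}\mathbb{E}[F(X_{n})]\leq 1/k+\mathbb{E}[F(X)]$, and letting $k\to\infty$ finishes the proof. None of this uses anything about $\mathfrak{Q}$, only that $E$ is a metric space.
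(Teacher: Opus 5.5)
Your proof is correct. The Lipschitz functions $\max(0,1-k\,d(x,C))$ give the closed-set inequality, and your layer-cake bounds with the closed sets $\{F\geq i/k\}$ correctly upgrade it to convergence for all bounded continuous $F$.

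The paper gives no argument of its own; it simply cites the portmanteau theorem (Klenke, Theorem 13.16). Your proposal is a self-contained version of the standard proof of that implication.
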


\begin{proof}
This is known as a part of the Portmanteau theorem. 
We can find a proof in \cite[Theorem 13.16]{Klenke2020} for example. 
\end{proof}

\begin{lemma}\label{lem:3.2}
Let $\mu, \mu_{1}, \mu_{2},\ldots$ be probability measures on a separable metric space $(E,d)$. 
If $\mu_{n} \to \mu$ weakly as $n \to \infty$, then there exist random elements $X, X_{1}, X_{2},\ldots$ taking values in $E$ defined on a common probability space $(\Omega,\mathcal{F},\mathbb{P})$ such that 
\begin{align*}
P_{X}
=
\mu, 
\quad
P_{X_{n}}
=
\mu_{n}
\quad
(n=1,2,\ldots), 
\end{align*}
and $X_{n} \to X$ almost surely as $n \to \infty$. 
\end{lemma}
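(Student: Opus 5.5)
This is the Skorokhod representation theorem, and I would prove it by the classical quantile-coupling construction. Everything is built on the probability space $\Omega=[0,1]$ with Lebesgue measure.

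\emph{Step 1: the real-line case.} For $E=\mathbb{R}$, let $F_{n}$ and $F$ be the distribution functions of $\mu_{n}$ and $\mu$. Define the generalized inverses $X_{n}(\omega)=\inf\{x : F_{n}(x)\geq\omega\}$ and $X(\omega)=\inf\{x : F(x)\geq\omega\}$. These have laws $\mu_{n}$ and $\mu$. Weak convergence gives $F_{n}(x)\to F(x)$ at every continuity point $x$ of $F$. A short monotonicity argument then gives $X_{n}(\omega)\to X(\omega)$ at every $\omega$ where the quantile function of $F$ is continuous. There are only countably many discontinuities, so the convergence holds almost surely.

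\emph{Step 2: the separable case.} I would follow the partition construction of Billingsley.
\begin{itemize}
\item Using separability, for each $k$ cover $E$ by countably many balls of radius $2^{-k}$ whose boundaries are $\mu$-null. This is possible because only countably many radii can carry positive $\mu$-mass on the sphere.
\item Disjointify these balls into sets $B^{(k)}_{1},B^{(k)}_{2},\ldots$, refining the partitions as $k$ increases. Each set has $\mu$-null boundary and diameter at most $2^{1-k}$.
\item By the Portmanteau theorem (the companion of Lemma \ref{lem:3.1}), $\mu_{n}(B^{(k)}_{i})\to\mu(B^{(k)}_{i})$ for every $i$ and $k$.
\item Split $[0,1]$ into consecutive intervals $I^{(k)}_{i}$ of lengths $\mu(B^{(k)}_{i})$, and likewise intervals $I^{(k)}_{n,i}$ of lengths $\mu_{n}(B^{(k)}_{i})$. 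The nesting is compatible with the refinement.
\item For $\omega\in I^{(k)}_{n,i}$, place $X_{n}(\omega)$ in $B^{(k)}_{i}$, distributed by $\mu_{n}$ conditioned on $B^{(k)}_{i}$. This requires an auxiliary independent uniform variable, so I would enlarge the space to $[0,1]^{2}$.
\end{itemize}

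\emph{Step 3: closing the argument.} Truncate to finitely many cells, $i\le i_{k}$, carrying $\mu$-mass at least $1-2^{-k}$. Interval endpoints converge, so for $\omega$ interior to some $I^{(k)}_{i}$, eventually $X_{n}(\omega)$ and $X(\omega)$ lie in the same cell. They are then within $2^{1-k}$ of each other. A Borel--Cantelli argument over $k$ yields $d(X_{n},X)\to0$ almost surely.

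\emph{Main obstacle.} The hardest part is the bookkeeping in Step 2. The conditional placements across levels $k$ must be consistent, so that a single map $X_{n}$ serves all $k$ simultaneously. The simplest fix is the standard one: do not demand exact consistency. Instead, define $X_{n}$ using level $k_{n}\to\infty$ chosen slowly enough that the cell-mass convergence at level $k_{n}$ has already taken hold.

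Since this is a standard theorem, in the paper it suffices to cite it, e.g.\ \cite{Klenke2020} or Billingsley's monograph.
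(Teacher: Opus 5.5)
You are right that citing suffices; the paper does exactly that and refers to the Skorohod coupling (Kallenberg, Theorem~5.31). As a proof of the lemma for a \emph{separable} $E$, however, your sketch has a genuine gap, and it lies in the limit $X$, not in $X_n$.

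\textbf{The limit $X$ is never constructed.} Your level-$k_n$ device handles $X_n$. But $X$ is a single map, and Step~3 requires $X(\omega)\in B^{(k)}_i$ whenever $\omega\in I^{(k)}_i$, for infinitely many $k$ at once. That forces $X(\omega)$ to be the unique point of $\bigcap_k B^{(k)}_{i_k(\omega)}$, a set of diameter $0$. Producing that point needs completeness, which is why Skorokhod's uniform-coordinate construction is a theorem about Polish spaces.

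\textbf{The space $[0,1]$ or $[0,1]^2$ is too small in general.} Take $E\subset[0,1]$ with Lebesgue outer measure $1$ and inner measure $0$, and let $\mu(B\cap E)=\lambda(B)$ for Borel $B\subset[0,1]$. Then no measurable $g:[0,1]^2\to E$ has law $\mu$. Composed with $E\hookrightarrow[0,1]$, $g$ would push Lebesgue measure forward to $\lambda$. Yet off a null set its range is an analytic subset of $E$, hence $\lambda$-null. This already fails for $\mu_n=\mu$. So in general you cannot realize $X$, nor your placements ``$\mu_n$ conditioned on $B^{(k)}_i$'' driven by an auxiliary uniform.

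\textbf{The fix is to anchor the construction on the limit.} This is Billingsley's proof of Theorem~6.7 (Convergence of Probability Measures) and the idea behind Kallenberg's result.
\begin{itemize}
\item Work on an abstract product space (countable products of arbitrary probability spaces exist). It carries $X$ with law $\mu$, an independent uniform $\xi$, and independent random elements $Y_{n,i}$ with laws $\mu_n(\cdot\mid B^{(k)}_i)$ and $Z_n$ with a residual law $\nu_n$.
\item Choose $n_1<n_2<\cdots$ so that $\mu_n(B^{(k)}_i)\ge(1-2^{-k})\mu(B^{(k)}_i)$ for all $n\ge n_k$ and $i\le i_k$.
\item For $n_k\le n<n_{k+1}$, set $X_n=Y_{n,i}$ on $\{X\in B^{(k)}_i,\ \xi\le 1-2^{-k}\}$ with $i\le i_k$, and $X_n=Z_n$ otherwise.
\item The inequality in the second step is exactly what makes $\nu_n$ a nonnegative probability measure with $P_{X_n}=\mu_n$.
\end{itemize}
On the good event, $X_n$ and $X$ share a cell of diameter at most $2^{1-k}$. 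Your Borel--Cantelli argument over $k$ then goes through unchanged, with no completeness and no cross-level consistency needed, because $X$ is given from the start.

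For the paper's actual applications, $E=\mathbb{R}$ and $E=\mathbb{C}$, your Step~1 or the Polish version would already suffice, but not the lemma as stated.
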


\begin{proof}
This is known as the Skorohod coupling. 
See \cite[Theorem 5.31]{Kallenberg2021} for a proof. 
\end{proof}

\begin{lemma}\label{lem:3.3}
Let $X, X_{1}, X_{2},\ldots$ be random variables taking values in $\mathbb{C}$. 
Assume 
\begin{align*}
\sup_{n \geq1} 
\mathbb{E}[H(|X_{n}|)]
<
\infty
\end{align*}
for some Borel measurable function $H: [0,\infty) \to [0,\infty)$ such that $H(x)/x \to \infty$ as $x \to \infty$. 
If $X_{n} \xrightarrow{\mathcal{D}} X$ as $n \to \infty$, then we have 
\begin{align*}
\mathbb{E}[|X|]
<
\infty
\quad\text{and}\quad
\lim_{n \to \infty} 
\mathbb{E}[X_{n}]
=
\mathbb{E}[X]. 
\end{align*}
\end{lemma}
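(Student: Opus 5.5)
The plan is to prove Lemma \ref{lem:3.3} by uniform integrability combined with Skorohod coupling (Lemma \ref{lem:3.2}), applied on the separable metric space $\mathbb{C}$.

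First I will show that the family $(X_{n})$ is uniformly integrable. Put $C=\sup_{n}\mathbb{E}[H(|X_{n}|)]<\infty$. Given $\epsilon>0$, the hypothesis $H(x)/x\to\infty$ lets us choose $R$ with $H(x)\geq x/\epsilon$ for all $x\geq R$. Then, for every $n$,
\begin{align*}
\mathbb{E}\big[|X_{n}|\mathbf{1}_{\{|X_{n}|\geq R\}}\big]
\leq
\epsilon\, \mathbb{E}\big[H(|X_{n}|)\big]
\leq
C\epsilon .
\end{align*}
So $\sup_{n}\mathbb{E}[|X_{n}|\mathbf{1}_{\{|X_{n}|\geq R\}}]\to0$ as $R\to\infty$. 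In particular $\sup_{n}\mathbb{E}[|X_{n}|]\leq R+C\epsilon<\infty$.

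Next, Lemma \ref{lem:3.2} gives random variables $Y,Y_{1},Y_{2},\ldots$ on a common probability space with $P_{Y}=P_{X}$, $P_{Y_{n}}=P_{X_{n}}$, and $Y_{n}\to Y$ almost surely. All the quantities in the statement depend only on the distributions, so it suffices to prove $\mathbb{E}[|Y|]<\infty$ and $\mathbb{E}[Y_{n}]\to\mathbb{E}[Y]$. The bound on $H(|Y_{n}|)$ transfers unchanged, so $(Y_{n})$ is also uniformly integrable. Fatou's lemma gives $\mathbb{E}[|Y|]\leq\liminf_{n}\mathbb{E}[|Y_{n}|]<\infty$.

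For the convergence of expectations, I will use a truncation. Let $\phi_{R}(z)=z\min(1,R/|z|)$, which is bounded and continuous. We have $\mathbb{E}[\phi_{R}(Y_{n})]\to\mathbb{E}[\phi_{R}(Y)]$, either by bounded convergence or directly from $X_{n}\xrightarrow{\mathcal{D}}X$. Also $|z-\phi_{R}(z)|\leq|z|\mathbf{1}_{\{|z|\geq R\}}$, so the error $\mathbb{E}[|Y_{n}-\phi_{R}(Y_{n})|]$ is at most $C\epsilon$ uniformly in $n$. For the limit, $\mathbb{E}[|Y-\phi_{R}(Y)|]\to0$ as $R\to\infty$ by dominated convergence, since $Y$ is integrable. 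An $\epsilon/3$ argument then gives $\mathbb{E}[X_{n}]=\mathbb{E}[Y_{n}]\to\mathbb{E}[Y]=\mathbb{E}[X]$.

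The argument is standard. The only step needing care is the choice of the truncation level $R$ uniformly in $n$ from the superlinear growth of $H$; the coupling step merely makes Fatou's lemma available.
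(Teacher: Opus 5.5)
Your proof is correct and follows essentially the paper's route. The paper also passes to a Skorohod coupling $Y_n\to Y$ via Lemma \ref{lem:3.2}, obtains uniform integrability from the superlinear growth of $H$ (citing the de la Vall\'ee-Poussin criterion rather than proving it), and then applies the Vitali convergence theorem to get $\mathbb{E}[|Y|]<\infty$ and $\mathbb{E}[|Y_n-Y|]\to0$.

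You instead prove uniform integrability by hand and replace Vitali by Fatou plus truncation against the bounded continuous $\phi_R$. That step uses only the weak convergence, so in your version the coupling serves only for Fatou. You could drop it entirely: $\mathbb{E}[\min(|X|,R)]=\lim_n\mathbb{E}[\min(|X_n|,R)]\le\sup_n\mathbb{E}[|X_n|]$, followed by monotone convergence in $R$, already gives $\mathbb{E}[|X|]<\infty$.
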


\begin{proof}
If $X_{n} \xrightarrow{\mathcal{D}} X$ as $n \to \infty$, then $P_{X_{n}} \to P_{X}$ weakly as $n \to \infty$. 
By Lemma \ref{lem:3.2}, there exist random variables $Y, Y_{1}, Y_{2},\ldots$ taking values in $\mathbb{C}$ defined on a common probability space $(\Omega,\mathcal{F},\mathbb{P})$ such that 
\begin{align}\label{eq:05102215}
P_{X}=P_{Y}, 
\quad
P_{X_{n}}=P_{Y_{n}}
\quad
(n=1,2,\ldots), 
\end{align}
and $Y_{n} \to Y$ almost surely as $n \to \infty$. 
Since $P_{X_{n}}=P_{Y_{n}}$ for every $n \geq1$, we have 
\begin{align*}
\sup_{n \geq1} 
\mathbb{E}[H(|Y_{n}|)]
<
\infty
\end{align*}
by the assumption. 
Then we deduce from \cite[Theorem 6.19]{Klenke2020} that $Y_{n}$ are uniformly integrable for $n \geq1$. 
Therefore, the Vitali convergence theorem yields that
\begin{align*}
\mathbb{E}[|Y|]
<
\infty
\quad\text{and}\quad
\lim_{n \to \infty} 
\mathbb{E}[|Y_{n}-Y|]
=
0. 
\end{align*}
Hence the result follows immediately due to \eqref{eq:05102215}. 
\end{proof}

\subsection{Limit theorem for the Riemann zeta-function}\label{sec:3.2}
In this section, we prove a relationship between $\log{\zeta}_{T}^{\mathsf{unif}}(1/2+0+ix)$ and $\log{\zeta}^{\mathsf{rand}}(1/2+0+ix)$ defined in Section \ref{sec:2.2} in terms of the convergence of random elements. 
The ultimate goal is the following limit theorem which extends the result of Bohr and Jessen \cite{BohrJessen1930, BohrJessen1932} to the space of the Fourier hyperfunctions. 

\begin{theorem}\label{thm:3.4}
Assume RH. 
Then $\log{\zeta}_{T}^{\mathsf{unif}}(1/2+0+ix) \xrightarrow{\mathcal{D}} \log{\zeta}^{\mathsf{rand}}(1/2+0+ix)$ as $T \to \infty$. 
\end{theorem}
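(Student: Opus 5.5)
We follow the approximation scheme of Kowalski \cite{Kowalski2021}: approximate the random hyperfunction by truncated Euler products, prove convergence in distribution for each truncation, and control the truncation error uniformly in $T$. For $y \geq 2$ we set
\begin{align*}
\log{\zeta}_{y}(s)=\sum_{p \leq y} \log{\zeta}_{p}(s,p^{-it})\quad\text{with } t \text{ absorbed}, \qquad \text{i.e.}\quad
\mathsf{L}_{T,y}=\sum_{p \leq y}\log{\zeta}_{p}\big(1/2+0+ix,\,p^{-iT\upsilon}\big),
\end{align*}
and $\mathsf{L}_{y}^{\mathsf{rand}}=\sum_{p\leq y}\log{\zeta}_{p}^{\mathsf{rand}}(1/2+0+ix)$. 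We use the standard criterion (e.g.\ \cite[Proposition B.4.4]{Kowalski2021}, or directly via Lemma \ref{lem:3.1}). Suppose that (a) $\mathsf{L}_{T,y}\xrightarrow{\mathcal{D}}\mathsf{L}^{\mathsf{rand}}_{y}$ as $T\to\infty$ for each fixed $y$; (b) $\mathsf{L}^{\mathsf{rand}}_{y}\to\log{\zeta}^{\mathsf{rand}}(1/2+0+ix)$ almost surely, which is Lemma \ref{lem:2.7}; and (c) for every $\epsilon>0$,
\begin{align*}
\lim_{y\to\infty}\limsup_{T\to\infty}\mathbb{P}\big(d(\mathsf{L}_{T,y},\log{\zeta}^{\mathsf{unif}}_{T})>\epsilon\big)=0 .
\end{align*}
Then Theorem \ref{thm:3.4} follows. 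Indeed, for a bounded Lipschitz $F$ one splits $|\mathbb{E}F(\log\zeta_T^{\mathsf{unif}})-\mathbb{E}F(\log\zeta^{\mathsf{rand}})|$ into three pieces corresponding to (a), (b), (c).

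Step (a) is straightforward. The map $(w_p)_{p\leq y}\mapsto\sum_{p\leq y}\log\zeta_p(1/2+0+ix,w_p)$ from the torus into $\mathfrak{Q}$ is continuous, as noted before \eqref{eq:05202054}. By the Kronecker--Weyl theorem, $(p^{-iT\upsilon})_{p\leq y}$ converges in distribution to $(\mathsf{X}(p))_{p\leq y}$ by linear independence of $\log p$, and the continuous mapping theorem then gives (a).

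Step (c) is the main obstacle. Since $d$ is bounded by $\sum_m 2^{-m}\min(1,\|\cdot\|_m')$, it suffices to show for each fixed $m$ that $\mathbb{E}\|\mathsf{L}_{T,y}-\log\zeta_T^{\mathsf{unif}}\|_m'$ is small. Using $|f(z)|\leq e^{-|z|/m}$ on $\IM z=-c$ with $0<c<2^{-m}$, as in the proof of Lemma \ref{lem:2.5}, we get
\begin{align*}
\mathbb{E}\|\cdot\|_m' \ll \int_{-\infty}^{\infty} e^{-|x|/m}\,\frac{1}{T}\int_0^T\Big|\log^{*}\zeta(1/2+c+ix+it)-\sum_{p\leq y}\log\zeta_p(1/2+c+ix+it,1)\Big|\,dt\,dx .
\end{align*}
(The branch $\log^*$ agrees with $\log$ for $t+x>0$, and the set $t+x<0$ with $t\in[0,T]$ contributes $O(|x|/T)$, which is harmless after integrating against $e^{-|x|/m}$ and using the $O(\log)$ bound of Lemma \ref{lem:2.4}.) So we need the mean-square estimate
\begin{align*}
\frac1T\int_0^T\big|\log\zeta(\sigma+it)-\textstyle\sum_{p\leq y}\cdots\big|^2dt \ll_\sigma y^{1-2\sigma+\eta}+o_T(1),
\end{align*}
uniformly in a shift $x$ of size $O(\log)$-tolerable, for fixed $\sigma=1/2+c>1/2$. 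This is the classical Bohr--Jessen input, and we would prove it under RH. First, for $t\in[T,2T]$, approximate $\log\zeta(\sigma+it)$ by $\sum_{n\leq Y}\Lambda(n)/(n^{\sigma+it}\log n)$ with $Y=T^{\delta}$ and error $O(T^{-\eta'})$ (Selberg/Titchmarsh-type approximation valid under RH at distance $c$ from the critical line). Then apply the Montgomery--Vaughan mean value theorem to $\sum_{y<n\leq Y}$, which yields $\sum_{n>y}\Lambda(n)^2/(n^{2\sigma}\log^2 n)\to0$ as $y\to\infty$. For the dependence on $x$, the bound is uniform for $|x|\leq T^{1/2}$, and the tail $|x|>T^{1/2}$ is killed by $e^{-|x|/m}$ combined with Lemma \ref{lem:2.4}. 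By Cauchy--Schwarz and dominated integration in $x$, this gives (c). The delicate points are the uniformity in $x$ and the contribution of small $t$ (where $\log\zeta$ near $s=1$, $t+x\approx 0$, is singular). We would handle both by discarding $t\leq T^{1/2}$, at cost $o(1)$ via Lemma \ref{lem:2.4}.
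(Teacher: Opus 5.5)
Your outline is correct, but it reaches the key estimate by a different route from the paper.

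\textbf{What the paper does.} It does not truncate the Euler product. It approximates by the smoothed series $\Phi_{N}(x-i0;\mathsf{X}_{T})$ with weights $e^{-n/N}$. Proposition~\ref{prop:3.7} then writes $\log\zeta_T^{\mathsf{unif}}-\Phi_N$ exactly as a convolution of $\log^{*}\zeta$, taken on a line closer to the critical line, against $\Gamma(-iw)N^{-iw}$ on $\IM(w)=-d$. The only extra piece is a deterministic $O(N^{1/2-c}(T\upsilon+1)^{-1})$ term produced by the branch cut through Lemma~\ref{lem:3.5}. The factor $N^{-d}$ comes for free from this representation, so the only analytic input is an $L^{1}$ bound for $\log^{*}\zeta$ (Proposition~\ref{prop:3.8}, from Selberg's mean square of $\zeta'/\zeta$). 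That bound is uniform in $T$ and grows only like $O(|x|+1)$ in the shift. As a result, uniformity in $x$, small $t$, and the $\log^{*}$ versus $\log$ discrepancy need no separate treatment. The same identity also gives the random-model step \eqref{eq:05070013} verbatim. The price is an extra layer, Proposition~\ref{prop:3.6}, which truncates $\Phi_N$ in $n$ before Kronecker--Weyl can be applied.

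\textbf{What your route buys and costs.} Truncated Euler products make your steps (a) and (b) essentially free: (a) involves only finitely many primes, and (b) is exactly Lemma~\ref{lem:2.7}. All the work moves into (c). There you need an RH-conditional Perron approximation of $\log\zeta(\sigma+iu)$ by $\sum_{n\le T^{\delta}}\Lambda(n)n^{-\sigma-iu}/\log n$, using Lemma~\ref{lem:2.4} on the shifted contour. You then need the Montgomery--Vaughan mean value theorem for the tail, giving $L^{2}$ rather than $L^{1}$ control.

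\textbf{Two routine points to make precise.}
\begin{itemize}
\item Since you discard only $u\le T^{1/2}$, the approximation must hold on all of $u\in[T^{1/2},2T]$, not just $[T,2T]$. Take Perron's truncation height $\asymp u$; the error is then $\ll T^{-\delta c/2}\log^{2}T+T^{\delta(1-\sigma)-1/2}\log^{2}T$, which is acceptable for $\delta$ small.
\item $\sum_{p\le y}\log\zeta_p$ contains every power $p^{k}$ of the small primes, not the terms $n\le y$. The discrepancy is a Dirichlet polynomial supported on $n>y$ plus a deterministic $O(\pi(y)T^{-\delta\sigma})$, so it is harmless.
\end{itemize}
With these in place, your split into $|x|\le T^{1/2}$ and the tail $|x|>T^{1/2}$, where Lemma~\ref{lem:2.4} suffices, goes through.
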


We begin with showing the following basic lemma. 

\begin{lemma}\label{lem:3.5}
Assume RH. 
Let $0<c<1/2$ and $K>1/2$. 
Then we have
\begin{align*}
&
\int_{-\infty-ic}^{\infty-ic}
\log^{*}{\zeta}(1/2+iz)\, \psi(z)
\,dz
\\
&
=
\int_{-\infty-iK}^{\infty-iK}
\log{\zeta}(1/2+iz)\, \psi(z)
\,dz
-
2\pi i
\int_{-\infty-i/2}^{0-i/2}
\psi(z)
\,dz, 
\end{align*}
where $\psi(z)$ is any holomorphic function on the half-plane $\IM(z)<0$ that satisfies $|\psi(z)| \ll (|z|+1)^{-2}$ in the strip $-b \leq \IM(z) \leq -a$ for any $0<a<b$.  
\end{lemma}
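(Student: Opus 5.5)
The plan is to move the line of integration from $\IM(z)=-c$ down to $\IM(z)=-K$. In the variable $s=1/2+iz$ this moves the vertical line $\RE(s)=1/2+c$ to $\RE(s)=1/2+K>1$. We use Cauchy's theorem on the two half-strips $\{-K<\IM(z)<-c,\ \RE(z)>0\}$ and $\{-K<\IM(z)<-c,\ \RE(z)<0\}$ separately, and keep track of the contributions from their common boundary, the segment $\RE(z)=0$.

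Under RH the only obstruction to holomorphy of $\log^{*}{\zeta}(1/2+iz)$ in the strip $-K<\IM(z)<-c$ is the cut $[1,\infty)$ in the $s$-plane. This is the part $\{z=-iy : 1/2\le y\le K\}$ of the imaginary axis, together with the logarithmic singularity at $s=1$, i.e.\ $z=-i/2$. Since $\IM(s)=\RE(z)$, relation \eqref{eq:05152038} reads
\[
\log^{*}{\zeta}(1/2+iz)=
\begin{cases}
\log{\zeta}(1/2+iz) & \text{for } \RE(z)>0,\\
\log{\zeta}(1/2+iz)-2\pi i & \text{for } \RE(z)<0,
\end{cases}
\]
and each branch extends continuously to $\RE(z)=0$ from its own side, except at $z=-i/2$.

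On the right half-strip, $\log^{*}{\zeta}(1/2+iz)\psi(z)$ is holomorphic. We integrate over the boundary of the rectangle $[0,R]\times[-K,-c]$, indented by a small quarter-circle of radius $\epsilon$ around $z=-i/2$. Then we let $R\to\infty$ and $\epsilon\to0$.
\begin{itemize}
\item The vertical side at $\RE(z)=R$ vanishes as $R\to\infty$. Lemma~\ref{lem:2.4} gives $\log^{*}{\zeta}=O(\log R)$ for $1/2+c\le\RE(s)\le 1-\eta$, and near and beyond $\RE(s)=1$ (for $|\IM(s)|$ large) $\log^{*}\zeta$ is at most $O(\log R)$ by \eqref{eq:06011532} or by bounded Dirichlet series. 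Together with $|\psi(z)|\ll R^{-2}$ this is enough.
\item The indentation contributes $O(\epsilon\log(1/\epsilon))\to0$, since the singularity is $\log(1-s)$.
\end{itemize}
We do the same on the left half-strip with the branch $\log{\zeta}-2\pi i$.

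Adding the two identities, the contributions of $\log{\zeta}$ along $\RE(z)=0$ would cancel if $\log{\zeta}$ had no jump there. In fact it has none below $z=-i/2$, since the branch $\log{\zeta}$ is continuous across $(1,\infty)$. It also has none above $z=-i/2$ in the sense we need: we are using $\log^{*}$ on each side, and $\log^{*}$ is continuous across $(1/2+c,1)$. It is cleanest to say this directly in terms of $\log^{*}$. Then:
\begin{itemize}
\item On the segment from $-ic$ to $-i/2$, the boundary values of $\log^{*}$ from left and right agree, so these terms cancel.
\item On the segment from $-i/2$ to $-iK$, they differ by exactly $2\pi i$, and what survives is a term $\pm 2\pi i\int_{-i/2}^{-iK}\psi(z)\,dz$.
\item On the bottom line $\IM(z)=-K$, the function $\log^{*}$ equals $\log{\zeta}$ for $\RE(z)>0$ and $\log{\zeta}-2\pi i$ for $\RE(z)<0$. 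This produces the main term and $-2\pi i\int_{-\infty-iK}^{0-iK}\psi(z)\,dz$.
\end{itemize}
Finally, Cauchy's theorem for $\psi$ alone on the left quarter-region bounded by $\IM(z)=-K$, $\RE(z)=0$ and $\IM(z)=-1/2$ (again with horizontal decay from $|\psi|\ll|z|^{-2}$) combines $\int_{-\infty-iK}^{0-iK}\psi$ and $\int_{-i/2}^{-iK}\psi$ into $\int_{-\infty-i/2}^{0-i/2}\psi$. This gives the stated formula.

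The main obstacle is the sign and orientation bookkeeping along the imaginary axis: checking that the jump of $\log^{*}$ across the cut is $+2\pi i$ in the right direction, and that the extra terms collapse precisely to $-2\pi i\int_{-\infty-i/2}^{0-i/2}\psi$. I would first check this with the explicit jump $\log^{*}(\sigma\pm i0)$ for $\sigma>1$, computed from \eqref{eq:05152038}. The analytic estimates (vanishing ends, integrable log singularity) are routine.
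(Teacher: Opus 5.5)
Your proof is correct, and the sign you left as $\pm$ comes out right. Write $(\cdot)_{\pm}$ for boundary values on $\RE(z)=0$ taken from $\RE(z)\gtrless0$. Cauchy's theorem on the two half-strips, oriented counterclockwise, gives
\begin{align*}
\int_{-\infty-ic}^{\infty-ic}\log^{*}{\zeta}(1/2+iz)\,\psi(z)\,dz
&=\int_{-\infty-iK}^{\infty-iK}\log^{*}{\zeta}(1/2+iz)\,\psi(z)\,dz\\
&\quad+\int_{-ic}^{-iK}\big\{(\log^{*}{\zeta})_{+}-(\log^{*}{\zeta})_{-}\big\}(1/2+iz)\,\psi(z)\,dz .
\end{align*}
By \eqref{eq:05152038}, the jump $\log^{*}{\zeta}(\sigma+i0)-\log^{*}{\zeta}(\sigma-i0)$ is $0$ for $1/2<\sigma<1$ and $+2\pi i$ for $\sigma>1$. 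So the last term equals $2\pi i\int_{-i/2}^{-iK}\psi(z)\,dz$.

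The bottom line contributes $\int_{-\infty-iK}^{\infty-iK}\log{\zeta}(1/2+iz)\,\psi(z)\,dz-2\pi i\int_{-\infty-iK}^{0-iK}\psi(z)\,dz$. Cauchy's theorem for $\psi$ on $\{\RE(z)<0,\ -K<\IM(z)<-1/2\}$ gives $\int_{-\infty-iK}^{0-iK}\psi-\int_{-i/2}^{-iK}\psi=\int_{-\infty-i/2}^{0-i/2}\psi$. Hence the correction is exactly $-2\pi i\int_{-\infty-i/2}^{0-i/2}\psi(z)\,dz$. (Since $-i/2$ lies in the interior of the edge $\RE(z)=0$, the indentation is a half-circle rather than a quarter-circle; this is immaterial.)

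Your route differs from the paper's, which never crosses a cut:
\begin{itemize}
\item It moves the $\log^{*}$-integral down from $\IM(z)=-c$ to $\IM(z)=-1/2$ (that is, $\RE(s)=1$), through a strip where $\log^{*}{\zeta}$ is holomorphic.
\item It moves the $\log$-integral up from $\IM(z)=-K$ to the same line, through a strip where $\log{\zeta}$ is holomorphic.
\item It indents both integrals at $z=-i/2$. The indentation must lie above that point for $\log^{*}$ and below it for $\log$, to stay off the respective cuts.
\item On the common line, \eqref{eq:05152038} shows the two integrands differ by $-2\pi i\,\psi$ for $\RE(z)<0$ and agree for $\RE(z)>0$.
\end{itemize}
This is more economical: no one-sided boundary values on a cut, and no separate contour shift for $\psi$ alone. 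Your version deforms straight across the cut $[1,\infty)$ of $\log^{*}{\zeta}$, which makes the $-2\pi i$ term visibly the jump of $\log^{*}$. The price is the half-strip splitting and the extra Cauchy step, which moves the correction from $\IM(z)=-K$ and the segment $[-i/2,-iK]$ back to the line $\IM(z)=-1/2$.
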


\begin{proof}
Let $1/2<\sigma_{1}<\sigma_{2}<\infty$ be fixed. 
For any $\sigma_{1} \leq \RE(s) \leq \sigma_{2}$, the distance between $s$ and the zeros of $\zeta(s)$ is greater than $\sigma_{1}-1/2$ assuming RH. 
Hence we deduce from \eqref{eq:05081444} and \eqref{eq:06011532} the formula
\begin{align}\label{eq:05161725}
\log^{*}{\zeta}(s)
=
\log(1-s)
+
O \big(\log(|\IM(s)|+2)\big)
\end{align}
for $\sigma_{1} \leq \RE(s) \leq \sigma_{2}$ such that $s \notin [1,\infty)$, where the implied constant depends only on $\sigma_{1}$ and $\sigma_{2}$. 
We have similarly
\begin{align}\label{eq:05161726}
\log{\zeta}(s)
=
-\log(s-1)
+
O \big(\log(|\IM(s)|+2)\big)
\end{align}
for $\sigma_{1} \leq \RE(s) \leq \sigma_{2}$ such that $s \notin (1/2,1]$, where the implied constant also depends only on $\sigma_{1}$ and $\sigma_{2}$.  
Hence we derive that for each $\epsilon>0$,
\begin{align*}
\log^{*}{\zeta}(1/2+iz)\, \psi(z)
\ll
|z|^{-2+\epsilon}
\quad\text{and}\quad
\log{\zeta}(1/2+iz)\, \psi(z)
\ll
|z|^{-2+\epsilon}
\end{align*}
as $|\RE(z)| \to \infty$ in the strip $-b \leq \IM(z) \leq -a$ for any $0<a<b$. 
By shifting the contour, we obtain
\begin{align*}
I
&
:=
\int_{-\infty-ic}^{\infty-ic}
\log^{*}{\zeta}(1/2+iz)\, \psi(z)
\,dz
\\
&
=
\bigg(
\int_{-\infty-i/2}^{-\delta-i/2}
+
\int_{C^{*}_{\delta}-i/2}
+
\int_{\delta-i/2}^{\infty-i/2}
\bigg)
\log^{*}{\zeta}(1/2+iz)\, \psi(z)
\,dz, 
\end{align*}
where $C^{*}_{\delta}$ denotes the semi-circle $\{\delta e^{i \theta} \mid -\pi \leq \theta \leq 0 \}$ with positive orientation. 
Then we also obtain
\begin{align*}
J
&
:=
\int_{-\infty-iK}^{\infty-iK}
\log{\zeta}(1/2+iz)\, \psi(z)
\,dz
\\
&
=
\bigg(
\int_{-\infty-i/2}^{-\delta-i/2}
+
\int_{-C_{\delta}-i/2}
+
\int_{\delta-i/2}^{\infty-i/2}
\bigg)
\log{\zeta}(1/2+iz)\, \psi(z)
\,dz, 
\end{align*}
where $-C_{\delta}$ denotes the semi-circle $\{\delta e^{i \theta} \mid 0 \leq \theta \leq \pi \}$ with negative orientation. 
Using \eqref{eq:05152038}, we evaluate the difference of these integrals as
\begin{align*}
I-J
&
=
\int_{C^{*}_{\delta}-i/2}
\log^{*}{\zeta}(1/2+iz)\, \psi(z)
\,dz
+
\int_{-C_{\delta}-i/2}
\log{\zeta}(1/2+iz)\, \psi(z)
\,dz 
\\
&
\quad
-2\pi i
\int_{-\infty+i/2}^{-\delta+i/2}
\psi(z)
\,dz. 
\end{align*}
Finally, we apply \eqref{eq:05161725} and \eqref{eq:05161726} to derive 
\begin{align*}
\int_{C^{*}_{\delta}-i/2}
\log^{*}{\zeta}(1/2+iz)\, \psi(z)
\,dz
&
\ll
\delta |\log{\delta}|
\to 
0, 
\\
\int_{-C_{\delta}-i/2}
\log{\zeta}(1/2+iz)\, \psi(z)
\,dz 
&
\ll
\delta |\log{\delta}|
\to 
0
\end{align*}
as $\delta \to 0$. 
Therefore we obtain the conclusion. 
\end{proof}

Denote by $\mathbb{T}^{\infty}$ the set of all arithmetic functions $X:\mathbb{N} \to \mathbb{T}$, where $\mathbb{T}$ stands for the unit circle $\{z \in \mathbb{C} \mid |z|=1 \}$. 
Let $N,y \geq 1$ and $X \in \mathbb{T}^{\infty}$. 
Then, we introduce a Dirichlet series $\Phi_{N}(z;X)$ and a Dirichlet polynomial $\Phi_{N,y}(z;X)$ as \begin{align*}
\Phi_{N}(z;X)
&
=
\sum_{n=1}^{\infty}
\frac{\Lambda(n)}{\log{n}} e^{-n/N} X(n) n^{-1/2-iz}, 
\\
\Phi_{N,y}(z;X)
&
=
\sum_{n \leq y}
\frac{\Lambda(n)}{\log{n}} e^{-n/N} X(n) n^{-1/2-iz}, 
\end{align*}
where $\Lambda(n)$ is the von-Mangoldt function. 
Note that the Dirichlet series $\Phi_{N}(z;X)$ converges absolutely for all $z \in \mathbb{C}$. 
Furthermore, its restriction to $D_{1} \setminus \mathbb{R}$ is an element of the space $\widetilde{\mathfrak{P}}_{1}$. 
The restriction of $\Phi_{N,y}(z;X)$ to $D_{1} \setminus \mathbb{R}$ is also an element of $\widetilde{\mathfrak{P}}_{1}$. 
Then we obtain the Fourier hyperfunctions $\Phi_{N}(x-i0;X)$ and $\Phi_{N,y}(x-i0;X)$. 
Recall that $\mathbb{T}^{\infty}$ is equipped with the topology induced from the metric
\begin{align*}
d(X,Y)
=
\sum_{n=1}^{\infty} 
2^{-n} 
\frac{|X(n)-Y(n)|}{1+|X(n)-Y(n)|}
\end{align*}
with $X,Y \in \mathbb{T}^{\infty}$. 
Hence, $X_{j} \to X$ as $j \to \infty$ in $\mathbb{T}^{\infty}$ if and only if $X_{j}(n) \to X(n)$ as $j \to \infty$ for all $n \in \mathbb{N}$. 
The maps $\mathbb{T}^{\infty} \to \mathfrak{Q}$ defined by $X \mapsto \Phi_{N}(x-i0;X)$ and $X \mapsto \Phi_{N,y}(x-i0;X)$ are continuous, which can be show by the method applying the dominated convergence theorem that we have repeatedly used in Section \ref{sec:2}. 
As a result, we derive that $\Phi_{N}(x-i0; \mathsf{X}_{T})$, $\Phi_{N,y}(x-i0; \mathsf{X}_{T})$, $\Phi_{N}(x-i0; \mathsf{X})$, and $\Phi_{N,y}(x-i0; \mathsf{X})$ are random hyperfunctions, where $\mathsf{X}_{T}$ is defined by letting 
\begin{align*}
\mathsf{X}_{T}(n)
=
n^{-iT \upsilon}
\end{align*}
with a random variable $\upsilon$ uniformly distributed on the interval $[0,1]$, and $\mathsf{X}$ is defined by extending $\mathsf{X}(p)$ to all positive integers $n$ by complete multiplicativity. 
Then we obtain the following propositions. 

\begin{proposition}\label{prop:3.6}
$\Phi_{N}(x-i0; \mathsf{X}_{T}) \xrightarrow{\mathcal{D}} \Phi_{N}(x-i0; \mathsf{X})$ as $T \to \infty$ for any $N \geq1$. 
\end{proposition}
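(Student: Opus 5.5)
The approach is to reduce the statement to the convergence in distribution $\mathsf{X}_{T} \xrightarrow{\mathcal{D}} \mathsf{X}$ in $\mathbb{T}^{\infty}$, and then push this through the continuous map $X \mapsto \Phi_{N}(x-i0;X)$ from $\mathbb{T}^{\infty}$ to $\mathfrak{Q}$. Once both facts are in hand, the continuous mapping theorem gives the result. Indeed, for any bounded continuous $F:\mathfrak{Q}\to\mathbb{C}$, the composition $X \mapsto F(\Phi_{N}(x-i0;X))$ is bounded and continuous on $\mathbb{T}^{\infty}$. Hence $\mathbb{E}[F(\Phi_{N}(x-i0;\mathsf{X}_{T}))] \to \mathbb{E}[F(\Phi_{N}(x-i0;\mathsf{X}))]$ as $T\to\infty$.

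The first step is the convergence $\mathsf{X}_{T} \xrightarrow{\mathcal{D}} \mathsf{X}$ in $\mathbb{T}^{\infty}$. Both $\mathsf{X}_{T}$ and $\mathsf{X}$ are completely multiplicative, and the map $(X(p))_{p}\mapsto X$ extending by complete multiplicativity is continuous for the product topology. It therefore suffices to prove convergence of the prime coordinates $(p^{-iT\upsilon})_{p}$ to independent uniform variables on $\mathbb{T}$.

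Since $\mathbb{T}^{\mathbb{P}}$ is compact and its topology is the product topology, convergence in distribution follows from convergence of the expectations of the characters. These characters are the monomials $\prod_{p\in S} X(p)^{a_{p}}$, where $S$ is a finite set of primes and $a_{p}\in\mathbb{Z}$; their linear span is dense in $C(\mathbb{T}^{\mathbb{P}})$ by Stone--Weierstrass. For such a monomial we compute
\begin{align*}
\mathbb{E}\Big[\prod_{p\in S} \mathsf{X}_{T}(p)^{a_{p}}\Big]
=\frac{1}{T}\int_{0}^{T} r^{-it}\,dt,
\qquad r=\prod_{p\in S}p^{a_{p}}.
\end{align*}
If not all $a_{p}$ vanish, then $r\neq1$ by unique factorization, and the integral is $O(1/(T|\log r|))\to0$. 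This matches $\mathbb{E}[\prod \mathsf{X}(p)^{a_{p}}]=0$. This is exactly the Kronecker--Weyl equidistribution already invoked in the introduction.

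The second step is the continuity of $X\mapsto\Phi_{N}(x-i0;X)$, which the paper asserts just before the proposition. It is proved by the dominated convergence method of Lemmas \ref{lem:2.5} and \ref{lem:2.9}. Fix $m\ge1$, $0<c<2^{-m}$, and $f\in\mathfrak{B}_{m}^{*}$ with $\|f\|_{m}\le1$. The pairing $\langle f\mid \Phi_{N}(x-i0;X_{j})-\Phi_{N}(x-i0;X)\rangle$ is then bounded by
\begin{align*}
\int_{-\infty}^{\infty} e^{-|x-ic|/m}\,\big|\Phi_{N}(x-ic;X_{j})-\Phi_{N}(x-ic;X)\big|\,dx .
\end{align*}
On the line $\IM(z)=-c$ we have $|n^{-1/2-iz}|=n^{-1/2-c}$. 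Hence the difference is at most $\sum_{n}\frac{\Lambda(n)}{\log n}e^{-n/N}n^{-1/2-c}\,|X_{j}(n)-X(n)|$, uniformly in $x$. This tends to $0$ by dominated convergence in $n$, using the summable majorant $2e^{-n/N}$. The bound is uniform in $f$ and $x$, and $e^{-|x|/m}$ is integrable, so the supremum over $\|f\|_{m}\le1$ tends to $0$. This gives convergence in each seminorm $\|\cdot\|_{m}'$ and hence in the metric \eqref{eq:04170251}.

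I expect the only delicate point to be the first step, namely checking that the dense-class argument is legitimate on the infinite product. This holds because $\mathbb{T}^{\mathbb{P}}$ is a compact metrizable group and the characters depend on finitely many coordinates. The rest is routine, because the damping factor $e^{-n/N}$ makes every series absolutely convergent uniformly on the strip.
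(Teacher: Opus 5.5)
Your proof is correct, but it is organized differently from the paper's. The paper never proves convergence in distribution of $\mathsf{X}_{T}$ in the full space $\mathbb{T}^{\infty}$. Instead it fixes a bounded Lipschitz $F$ (Lemma~\ref{lem:3.1}) and replaces $\Phi_{N}$ by the Dirichlet polynomial $\Phi_{N,y}$. It then applies the finite-dimensional Kronecker--Weyl theorem to $(\mathsf{X}_{T}(n))_{n\le y}$ and disposes of the two truncation errors with the deterministic bound $\sum_{n>y}\frac{\Lambda(n)}{\log n}e^{-n/N}n^{-1/2-c}\int_{-\infty}^{\infty}e^{-|x-ic|/m}\,dx$, which is uniform in $T$ and in the sample.

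You instead upgrade Kronecker--Weyl to weak convergence on the compact group $\prod_{p}\mathbb{T}$ via characters and Stone--Weierstrass. You push this to $\mathbb{T}^{\infty}$ through the continuous complete-multiplicativity map and conclude by the continuous mapping theorem. Along the way you supply a proof of the continuity of $X\mapsto\Phi_{N}(x-i0;X)$, which the paper only asserts.

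Both routes rest on the same fact: the damping $e^{-n/N}$ yields a tail bound uniform in $X$. That is exactly what makes your map continuous for the product topology, and what makes the paper's truncation error negligible. So for this proposition yours is the shorter and more self-contained argument.

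What the paper's scheme buys is reusability. The same three-step structure is used in Theorems~\ref{thm:3.4} and~\ref{thm:3.10}. There $\log^{*}\zeta$ and $\log\zeta^{\mathsf{rand}}$ are not continuous functions of the coefficient sequence on $\mathbb{T}^{\infty}$, and the coefficients $\mathsf{W}_{N}(k)$, $\mathsf{W}(k)$ are not uniformly bounded. The approximation errors can then only be controlled in mean, so the continuous-mapping shortcut is not available.
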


\begin{proof}
Let $F: \mathfrak{Q} \to \mathbb{C}$ be a bounded Lipschitz continuous function. 
Then we have a constant $C_{F}>0$ such that 
\begin{align}\label{eq:05142151}
|F(g_{1})-F(g_{2})|
\leq 
C_{F}\, d(g_{1},g_{2}) 
\end{align}
for any $g_{1}, g_{2} \in \mathfrak{Q}$, where $d$ is the metric defined by \eqref{eq:04170251}. 
To obtain the result, by Lemma \ref{lem:3.1}, it suffices to show the limit formula
\begin{align*}
\lim_{T \to \infty} 
\mathbb{E} \big[F(\Phi_{N}(x-i0; \mathsf{X}_{T}))\big]
=
\mathbb{E} \big[F(\Phi_{N}(x-i0; \mathsf{X}))\big]
\end{align*}
for any $N \geq1$. 
Furthermore, this follows if we obtain the followings: 
\begin{align}
\label{eq:05142215}
\lim_{y \to \infty}
\limsup_{T \to \infty} 
\Big| 
\mathbb{E} \big[F(\Phi_{N}(x-i0; \mathsf{X}_{T}))\big]
-
\mathbb{E} \big[F(\Phi_{N,y}(x-i0; \mathsf{X}_{T}))\big]
\Big|
&
=
0, 
\\
\label{eq:05142216}
\sup_{y \geq1}
\lim_{T \to \infty} 
\Big| 
\mathbb{E} \big[F(\Phi_{N,y}(x-i0; \mathsf{X}_{T}))\big]
-
\mathbb{E} \big[F(\Phi_{N,y}(x-i0; \mathsf{X}))\big]
\Big|
&
=
0, 
\\
\label{eq:05142217}
\lim_{y \to \infty}
\Big| 
\mathbb{E} \big[F(\Phi_{N,y}(x-i0; \mathsf{X}))\big]
-
\mathbb{E} \big[F(\Phi_{N}(x-i0; \mathsf{X}))\big]
\Big|
&
=
0. 
\end{align}
By \eqref{eq:05142151} and the definition of the metric of $\mathfrak{Q}$, we see that \eqref{eq:05142215} holds if 
\begin{align*}
\lim_{y \to \infty}
\limsup_{T \to \infty} 
\mathbb{E} \big[\Delta_{T,N,y}(m)\big] 
=
0
\end{align*}
for all $m \geq1$, where $\Delta_{T,N,y}(m)$ is a random variable such that
\begin{align*}
\Delta_{T,N,y}(m)
=
\sup_{\|f\|_{m} \leq 1}
\Big|\big \langle f \mid \Phi_{N}(x-i0; \mathsf{X}_{T})-\Phi_{N,y}(x-i0; \mathsf{X}_{T}) \big \rangle\Big|. 
\end{align*}
Let $0<c<2^{-m}$. 
Since $|f(z)| \leq e^{-|z|/m}$ on the line $\IM(z)=-c$ if $f \in \mathfrak{B}_{m}^{*}$ satisfies $\|f\|_{m} \leq 1$, we have 
\begin{align*}
\Delta_{T,N,y}(m)
&
\leq
\int_{-\infty}^{\infty}
e^{-|x-ic|/m}
\big|\Phi_{N}(x-ic; \mathsf{X}_{T})-\Phi_{N,y}(x-ic; \mathsf{X}_{T})\big|
\,dx
\\
& 
\leq
\sum_{n>y}
\frac{\Lambda(n)}{\log{n}} e^{-n/N} n^{-1/2-c}
\int_{-\infty}^{\infty}
e^{-|x-ic|/m}
\,dx
\end{align*}
by recalling that the Dirichlet series $\Phi_{N}(z;\mathsf{X}_{T})$ converges absolutely for all $z \in \mathbb{C}$. 
This yields that
\begin{align*}
\limsup_{T \to \infty} 
\mathbb{E} \big[\Delta_{T,N,y}(m)\big] 
\leq 
\sum_{n>y}
\frac{\Lambda(n)}{\log{n}} e^{-n/N} n^{-1/2-c} 
\int_{-\infty}^{\infty}
e^{-|x-ic|/m}
\,dx
\to 
0
\end{align*}
as $y \to \infty$. 
Hence we obtain \eqref{eq:05142215} as noted above. 
Then, we will show \eqref{eq:05142216}.  
Recall that the Kronecker–Weyl equidistribution theorem yields
\begin{align*}
(\mathsf{X}_{T}(n_{1}), \ldots, \mathsf{X}_{T}(n_{k}))
\xrightarrow{\mathcal{D}}
(\mathsf{X}(n_{1}), \ldots, \mathsf{X}(n_{k}))
\end{align*}
as $T \to \infty$ for any $n_{1},\ldots,n_{k} \in \mathbb{N}$. 
See \cite[Proposition 3.2.5]{Kowalski2021} for example. 
Since $\Phi_{N,y}(x-i0; \mathsf{X}_{T})$ and $\Phi_{N,y}(x-i0; \mathsf{X})$ are represented as linear combinations of the random variables $\mathsf{X}_{T}(n)$ and $\mathsf{X}(n)$ only for $n \leq y$, we derive that
\begin{align*}
\Phi_{N,y}(x-i0; \mathsf{X}_{T})
\xrightarrow{\mathcal{D}}
\Phi_{N,y}(x-i0; \mathsf{X})
\end{align*}
as $T \to \infty$ for any $y \geq1$. 
Hence we obtain \eqref{eq:05142216} by the definition of the convergence of random elements in distribution. 
Lastly, \eqref{eq:05142217} can be proved in a way similar to \eqref{eq:05142215}.    
Indeed, it suffices to show
\begin{align*}
\lim_{y \to \infty}
\mathbb{E} \big[\Delta_{N,y}(m)\big] 
=
0
\end{align*}
for all $m \geq1$, where $\Delta_{N,y}(m)$ is a random variable such that
\begin{align*}
\Delta_{N,y}(m)
=
\sup_{\|f\|_{m} \leq 1}
\Big|\big \langle f \mid \Phi_{N}(x-i0; \mathsf{X})-\Phi_{N,y}(x-i0; \mathsf{X}) \big \rangle\Big|. 
\end{align*}
Then we have 
\begin{align*}
\Delta_{N,y}(m)
&
\leq
\int_{-\infty}^{\infty}
e^{-|x-ic|/m}
\big|\Phi_{N}(x-ic; \mathsf{X})-\Phi_{N,y}(x-ic; \mathsf{X})\big| 
\,dx
\\
& 
\leq
\sum_{n>y}
\frac{\Lambda(n)}{\log{n}} e^{-n/N} n^{-1/2-c}
\int_{-\infty}^{\infty}
e^{-|x-ic|/m}
\,dx
\end{align*}
with $0<c<2^{-m}$, and therefore, 
\begin{align*}
\mathbb{E} \big[\Delta_{N,y}(m)\big] 
\leq
\sum_{n>y}
\frac{\Lambda(n)}{\log{n}} e^{-n/N} n^{-1/2-c} 
\int_{-\infty}^{\infty}
e^{-|x-ic|/m}
\,dx
\to 
0
\end{align*}
as $y \to \infty$. 
Hence we obtain \eqref{eq:05142217}. 
From the above, the proof is completed. 
\end{proof}

\begin{proposition}\label{prop:3.7}
Assume RH. 
Take any function $f \in \mathfrak{B}_{m}^{*}$ for $m \geq1$. 
Let $c$ and $d$ be real numbers with $0<d<c<2^{-m-1}$. 
For any $N \geq1$, we have 
\begin{align*}
&
\big\langle
f
\mid
\log{\zeta}_{T}^{\mathsf{unif}}(1/2+0+ix)-\Phi_{N}(x-i0; \mathsf{X}_{T})
\big\rangle
\\
&
=
\frac{1}{2\pi}
\int_{-\infty-ic}^{\infty-ic}
\int_{-\infty-id}^{\infty-id}
f(z)
\log^{*}{\zeta}(1/2+i(z-w)+iT \upsilon)
\Gamma(-iw)
N^{-iw}
\,dw
\,dz
\\
&
\quad
+
O \big(N^{1/2-c}\, (T \upsilon+1)^{-1}\big)
\end{align*}
with the implied constant depending only on $m$ and $c$, and furthermore, we have almost surely 
\begin{align*}
&
\big\langle
f
\mid
\log{\zeta}^{\mathsf{rand}}(1/2+0+ix)-\Phi_{N}(x-i0; \mathsf{X})
\big\rangle
\\
&
=
\frac{1}{2\pi}
\int_{-\infty-ic}^{\infty-ic}
\int_{-\infty-id}^{\infty-id}
f(z)
\log{\zeta}^{\mathsf{rand}}(1/2+i(z-w))
\Gamma(-iw)
N^{-iw}
\,dw
\,dz. 
\end{align*}
\end{proposition}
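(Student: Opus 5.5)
The plan is to start from the right-hand side, evaluate the inner $w$-integral by shifting contours, and compare the result with the Dirichlet series defining $\Phi_{N}$. The basic tool is the Mellin--Barnes representation
\begin{align*}
e^{-n/N}
=
\frac{1}{2\pi i}\int_{(\alpha)} \Gamma(u)\, N^{u} n^{-u}\,du
\qquad (\alpha>0).
\end{align*}
Substituting $u=-iw$ turns this into $\frac{1}{2\pi}\int \Gamma(-iw)N^{-iw}n^{iw}\,dw$ over a horizontal line $\IM(w)=\alpha>0$, with the orientation sign absorbed by the factor $1/2\pi$. On the line $\IM(w)=-d<0$ we are to the left of the pole of $\Gamma(-iw)$ at $w=0$. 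So for fixed $z$ with $\IM(z)=-c$ we move the $w$-contour upward from $\IM(w)=-d$ to $\IM(w)=A$, with $A>1/2$ fixed.

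Two things happen in this shift. First, we pick up the residue at $w=0$, which should produce exactly $\log^{*}\zeta(1/2+iz+iT\upsilon)$; the sign is to be checked against the convention $\varphi(x-i0)=-[\overline{\varepsilon}\varphi]$. Second, on the new line the argument $s=1/2+i(z-w)+iT\upsilon$ has $\RE(s)=1/2+c+A>1$. There $\log^{*}\zeta(s)$ equals the absolutely convergent series $\sum_{n}\frac{\Lambda(n)}{\log n}n^{-s}$, up to the constant $-2\pi i$ on the part where $\IM(s)<0$, by \eqref{eq:05152038}. Integrating term by term in $w$ with the Mellin formula then gives exactly $\Phi_{N}(z;\mathsf{X}_{T})$. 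Hence
\begin{align*}
\frac{1}{2\pi}\int_{-\infty-id}^{\infty-id}\log^{*}\zeta(1/2+i(z-w)+iT\upsilon)\,\Gamma(-iw)N^{-iw}\,dw
=
-\log^{*}\zeta(1/2+iz+iT\upsilon)+\Phi_{N}(z;\mathsf{X}_{T})+E(z),
\end{align*}
up to overall sign bookkeeping. Here $E(z)$ collects the $2\pi i$ jumps. Multiplying by $f(z)$ and integrating over $\IM(z)=-c$ then yields the pairing $\langle f\mid\cdot\rangle$ by its definition, since both hyperfunctions vanish in the upper half-strip.

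The justification of the shift uses RH through Lemma \ref{lem:2.4} and the bounds \eqref{eq:05161725}, which show that $\log^{*}\zeta$ grows at most logarithmically in the strip $1/2<\RE(s)\le 1/2+c+A$. Together with the exponential decay of $\Gamma(-iw)$ on horizontal lines, this kills the vertical sides. Fubini is justified by the decay $|f(z)|\le \|f\|_{m}e^{-|z|/m}$. The condition $c<2^{-m-1}$ keeps $\IM(z)$ inside the strip where $f$ is controlled.

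The main obstacle is the branch cut of $\log^{*}\zeta$ along $[1,\infty)$. While $w$ moves, $s$ sweeps the region $1/2+c-d\le\RE(s)\le 1/2+c+A$, and it crosses the half-line $\IM(s)=0$, $\RE(s)\ge1$ when $\RE(z-w)\approx -T\upsilon$. I would handle this as in Lemma \ref{lem:3.5}: split at this crossing, so that $E(z)$ is a multiple of $2\pi i$ times an integral of $\Gamma(-iw)N^{-iw}$ over a half-line piece located near $\RE(w)=\RE(z)+T\upsilon$. Bounding $|N^{-iw}|\le N^{1/2-c}$ (after choosing $A$ suitably, e.g.\ $A=1/2-c$ so that the relevant line has $\RE(s)=1$), and combining the decay of $\Gamma$ with the decay of $f$ over a region at distance about $T\upsilon$ from the origin, should give $\int |f(z)||E(z)|\,|dz|\ll N^{1/2-c}(T\upsilon+1)^{-1}$, with constants depending only on $m$ and $c$. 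If this is too crude, a cleaner route is to apply Lemma \ref{lem:3.5} to the $z$-integral first: for each fixed $w$, push $z$ down to $\IM(z)=-K$ with $\log$ in place of $\log^{*}$, and the correction term $-2\pi i\int_{-\infty-i/2}^{0-i/2}\psi$ is then explicit. The required decay $\psi\ll(|z|+1)^{-2}$ holds for $\psi(z)=f(z+w)$ because of the exponential decay of $f$.

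For the random statement the same computation applies almost surely, with no branch cut. By Lemma \ref{lem:2.6}, $\log\zeta^{\mathsf{rand}}$ is holomorphic on $\RE(s)>1/2$ with growth $O((|t|+1)\mathsf{S})$, which is still dominated by the decay of $\Gamma$ and $f$. On $\RE(s)>1$ it equals $\sum_{n}\frac{\Lambda(n)}{\log n}\mathsf{X}(n)n^{-s}$ absolutely. The shift therefore produces $\Phi_{N}(z;\mathsf{X})$ plus the residue, exactly, and there is no error term.
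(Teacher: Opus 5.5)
Your proposal is correct and is essentially the paper's argument: the paper writes your residue at $w=0$ as the pairing of $\log^{*}\zeta(1/2+i(z-w)+it)\,\psi_{N}(w)$, with $\psi_{N}(w)=-iw\Gamma(-iw)N^{-iw}$ and $\psi_{N}(0)=1$, against the Dirac delta. It then moves the upper contour to $\IM(w)=3/2-c$ using Lemma \ref{lem:3.5}, which produces exactly your jump term $-i\int_{t+\RE(z)+i(1/2-c)}^{\infty+i(1/2-c)}\Gamma(-iw)N^{-iw}\,dw\ll N^{1/2-c}\min(1,e^{-t-\RE(z)})$; integrating this against $|f(x-ic)|\ll e^{-|x|/m}$ gives $e^{-T\upsilon/(2m)}$.

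Two small slips do not affect your main route: the jump lives on $\IM(w)=1/2-c$ (where $\RE(s)=1$) whatever final line $\IM(w)=A>1/2-c$ you choose, so $A$ itself should not be set to $1/2-c$; and your fallback of pushing the $z$-contour to $\IM(z)=-K$ is unavailable, because $f$ is only defined on $|\IM(z)|\le 2^{-m}$.
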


\begin{proof}
Define $\psi_{N}(w)=-iw \Gamma(-iw) N^{-iw}$ for $N \geq1$. 
This is a holomorphic function on the half-plane $\IM(w)>-1$. 
Furthermore, we apply the Stirling formula to derive
\begin{align*}
\sup_{|\IM(w)| \leq \delta_{1}} 
|\psi_{N}(w)| e^{\epsilon |w|}
<
\infty
\end{align*}
for any $0<\delta_{1}<1$, where we can take $0<\epsilon<\pi/2$ arbitrarily. 
Hence $\psi_{N}$ belongs to the space $\mathfrak{P}^{*}$. 
Then Lemma \ref{lem:2.4} yields that the function
\begin{align*}
\log^{*}{\zeta}(1/2+i(z-w)+it)\, \psi_{N}(w)
\end{align*}
also belongs to $\mathfrak{P}^{*}$ for any $t \in \mathbb{R}$ and any $z \in \mathbb{C}$ with $-1/2<\IM(z)<0$. 
Let $\delta \in \mathfrak{Q}$ denote the Dirac delta defined as
\begin{align*}
\delta(x)
=
-\frac{1}{2\pi i} \Big(\frac{1}{x+i0}-\frac{1}{x-i0}\Big). 
\end{align*}
Note that $\psi_{N}(0)=1$ since $\Gamma(s)$ has a simple pole at $s=0$ with residue $1$. 
Therefore, $\log^{*}{\zeta}(1/2+iz+it)$ is represented as
\begin{align*}
\log^{*}{\zeta}(1/2+iz+it)
&
=
\big\langle
\log^{*}{\zeta}(1/2+i(z-w)+it)\, \psi_{N}(w)
\mid
\delta\,
\big\rangle
\\
&
=
-\frac{1}{2\pi i}
\int_{-\infty+id}^{\infty+id}
\log^{*}{\zeta}(1/2+i(z-w)+it)\, \psi_{N}(w)
\,\frac{dw}{w}
\\
&
\quad
+\frac{1}{2\pi i}
\int_{-\infty-id}^{\infty-id}
\log^{*}{\zeta}(1/2+i(z-w)+it)\, \psi_{N}(w)
\,\frac{dw}{w}
\end{align*}
for any $t \in \mathbb{R}$ and any $z \in \mathbb{C}$ on the line $\IM(z)=-c$ with $0<c<1/2$, where $d$ is taken so that $0<d<\min(c, 1/2-c)$. 
Put $K=3/2-c$. 
By Lemma \ref{lem:3.5}, it yields
\begin{align}\label{eq:05152155}
\log^{*}{\zeta}(1/2+iz+it)
&
=
-\frac{1}{2\pi i}
\int_{-\infty+iK}^{\infty+iK}
\log{\zeta}(1/2+i(z-w)+it)\, \psi_{N}(w)
\,\frac{dw}{w}
\\
\nonumber
&
\quad
+
\int_{t+\RE(z)+i(1/2-c)}^{\infty+i(1/2-c)}
\psi_{N}(w)
\,\frac{dw}{w}
\\
\nonumber
&
\quad
+
\frac{1}{2\pi i}
\int_{-\infty-id}^{\infty-id}
\log^{*}{\zeta}(1/2+i(z-w)+it)\, \psi_{N}(w)
\,\frac{dw}{w}
\\
\nonumber
&
=
I_{1}+I_{2}+I_{3}, 
\end{align}
say. 
For any $z,w \in \mathbb{C}$ with $\IM(z)=-c$ and $\IM(w)=K$, we apply \eqref{eq:05081554} to obtain
\begin{align*}
\log{\zeta}(1/2+i(z-w)+it)
=
\sum_{n=1}^{\infty} 
\frac{\Lambda(n)}{\log{n}} n^{iw} n^{-it} n^{-1/2-iz}. 
\end{align*}
By this and Fubini's theorem, the first integral in \eqref{eq:05152155} is calculated as
\begin{align*}
I_{1}
&
=
\sum_{n=1}^{\infty} 
\frac{\Lambda(n)}{\log{n}} 
\bigg\{
\frac{1}{2\pi}
\int_{-\infty+iK}^{\infty+iK}
\Gamma(-iw)
\Big(\frac{n}{N}\Big)^{iw} 
\,dw
\bigg\}
n^{-it} n^{-1/2-iz}
\\
&
=
\sum_{n=1}^{\infty} 
\frac{\Lambda(n)}{\log{n}} 
e^{-n/N} n^{-it} n^{-1/2-iz}
\end{align*}
due to the definition of $\psi_{N}$, where we used the formula
\begin{align}\label{eq:05160232}
e^{-y}
=
\frac{1}{2\pi i}
\int_{K-i\infty}^{K+i\infty}
\Gamma(s)
y^{-s} 
\,ds
=
\frac{1}{2\pi}
\int_{-\infty+iK}^{\infty+iK}
\Gamma(-iw)
y^{iw} 
\,dw 
\end{align}
which is valid for all $y>0$. 
The second and third integrals in \eqref{eq:05152155} are calculated as
\begin{align*}
I_{2}
&
=
-i
\int_{t+\RE(z)+i(1/2-c)}^{\infty+i(1/2-c)}
\Gamma(-iw) N^{-iw}
\,dw, 
\\
I_{3}
&
=
-\frac{1}{2\pi}
\int_{-\infty-id}^{\infty-id}
\log^{*}{\zeta}(1/2+i(z-w)+it)\, \Gamma(-iw) N^{-iw}
\,dw. 
\end{align*}
Furthermore, as an application of the Stirling formula, we have
\begin{align*}
I_{2}
\ll
N^{1/2-c}
\int_{t+\RE(z)}^{\infty}
e^{-\frac{\pi}{2}|u|}
\,du
\ll
N^{1/2-c}\, 
\min(1,e^{-t-\RE(z)}) 
\end{align*}
with the implied constants depending only on $c$. 
Inserting the above results on the integrals $I_{1},I_{2},I_{3}$ to \eqref{eq:05152155}, we obtain
\begin{align}\label{eq:05160131}
&
\log^{*}{\zeta}(1/2+iz+it)
-
\sum_{n=1}^{\infty} 
\frac{\Lambda(n)}{\log{n}} 
e^{-n/N} n^{-it} n^{-1/2-iz}
\\
\nonumber
&
=
-\frac{1}{2\pi}
\int_{-\infty-id}^{\infty-id}
\log^{*}{\zeta}(1/2+i(z-w)+it)\, \Gamma(-iw) N^{-iw}
\,dw 
\\
\nonumber
&
\quad
+
O\big(N^{1/2-c}\, \min(1,e^{-t-\RE(z)})\big)
\end{align}
for any $t \in \mathbb{R}$ and any $z \in \mathbb{C}$ on the line $\IM(z)=-c$. 
Assume $0<d<c<2^{-m-1}$ below. 
Then we have $0<c<1/2$ and $0<d<\min(c,1/2-c)$. 
Finally, we apply \eqref{eq:05160131} to derive 
\begin{align*}
&
\big\langle
f
\mid
\log{\zeta}_{T}^{\mathsf{unif}}(1/2+0+ix)-\Phi_{N}(x-i0; \mathsf{X}_{T})
\big\rangle
\\
&
=
-
\int_{-\infty-ic}^{\infty-ic}
f(z)
\big\{ 
\log^{*}{\zeta}(1/2+iz+iT \upsilon)-\Phi_{N}(z; \mathsf{X}_{T})
\big\}
\,dz
\\
&
=
\frac{1}{2\pi}
\int_{-\infty-ic}^{\infty-ic}
\int_{-\infty-id}^{\infty-id}
f(z)
\log^{*}{\zeta}(1/2+i(z-w)+iT \upsilon)
\Gamma(-iw)
N^{-iw}
\,dw
\,dz
\\
&
\quad
+
O \bigg(
N^{1/2-c}\, 
\int_{-\infty}^{\infty}
|f(x-ic)| \min(1,e^{-T \upsilon-x})
\,dx
\bigg)
\end{align*}
by recalling the definition of the Dirichlet series $\Phi_{N}(z; \mathsf{X}_{T})$. 
For $f \in \mathfrak{B}_{m}^{*}$, we have $|f(x-ic)| \ll e^{-|x|/m}$ with the implied constant depending only on $m$ and $c$. 
Hence the integral in the error term is estimated as
\begin{align*}
\int_{-\infty}^{\infty}
|f(x-ic)| \min(1,e^{-T \upsilon-x})
\,dx
\ll
e^{-T \upsilon/(2m)}. 
\end{align*}
Therefore, the desired formula for $\log{\zeta}_{T}^{\mathsf{unif}}(1/2+0+ix)$ follows. 
Then the result for $\log{\zeta}^{\mathsf{rand}}(1/2+0+ix)$ is obtained more easily since the Dirichlet series $\log{\zeta}^{\mathsf{rand}}(s)$ converges almost surely for $\RE(s)>1/2$. 
As an analogue for \eqref{eq:05152155}, almost surely, $\log{\zeta}^{\mathsf{rand}}(1/2+iz)$ is represented as
\begin{align*}
\log{\zeta}^{\mathsf{rand}}(1/2+iz)
&
=
-\frac{1}{2\pi i}
\int_{-\infty+id}^{\infty+id}
\log{\zeta}^{\mathsf{rand}}(1/2+i(z-w))\, \psi_{N}(w)
\,\frac{dw}{w}
\\
&
\quad
+\frac{1}{2\pi i}
\int_{-\infty-id}^{\infty-id}
\log{\zeta}^{\mathsf{rand}}(1/2+i(z-w))\, \psi_{N}(w)
\,\frac{dw}{w}
\\
&
=
\widetilde{I}_{1}+\widetilde{I}_{2}, 
\end{align*}
say. 
By the method using Fubini's theorem, the integral $\widetilde{I}_{1}$ is calculated as
\begin{align*}
\widetilde{I}_{1}
&
=
\sum_{n=1}^{\infty} 
\frac{\Lambda(n)}{\log{n}} 
\bigg\{
\frac{1}{2\pi}
\int_{-\infty+id}^{\infty+id}
\Gamma(-iw)
\Big(\frac{n}{N}\Big)^{iw} 
\,dw
\bigg\}
\mathsf{X}(n) n^{-1/2-iz}
\\
&
=
\sum_{n=1}^{\infty} 
\frac{\Lambda(n)}{\log{n}} 
e^{-n/N} \mathsf{X}(n) n^{-1/2-iz}
\end{align*}
due to \eqref{eq:05160232}. 
Thus $\widetilde{I}_{1}=\Phi_{N}(z; \mathsf{X})$ holds. 
Therefore, we obtain almost surely
\begin{align*}
&
\log{\zeta}^{\mathsf{rand}}(1/2+iz)
-
\Phi_{N}(z; \mathsf{X})
\\
&
=
-\frac{1}{2\pi}
\int_{-\infty-id}^{\infty-id}
\log{\zeta}^{\mathsf{rand}}(1/2+i(z-w))\, \Gamma(-iw) N^{-iw}
\,dw, 
\end{align*}
from which the desired conclusion follows immediately. 
\end{proof}

\begin{proposition}\label{prop:3.8}
Assume RH. 
Let $\sigma_{1}$ and $\sigma_{2}$ be fixed with $1/2<\sigma_{1}<\sigma_{2}<1$. 
Then we have
\begin{align*}
\sup_{\sigma_{1} \leq \sigma \leq \sigma_{2}}
\frac{1}{T} \int_{0}^{T}
\big|\log^{*}{\zeta}(\sigma+ix+it)\big|
\,dt
&
=
O (|x|+1),
\\
\sup_{\sigma_{1} \leq \sigma \leq \sigma_{2}} 
\mathbb{E} \Big[
\big|\log{\zeta}^{\mathsf{rand}}(\sigma+ix)\big|
\Big]
&
=
O (1)
\end{align*}
for all $x \in \mathbb{R}$ with the implied constants depending only on $\sigma_{1}$ and $\sigma_{2}$. 
\end{proposition}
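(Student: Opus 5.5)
The natural route is through second moments, using the Cauchy--Schwarz inequality: in both cases it suffices to bound the mean square of the logarithm uniformly in $\sigma \in [\sigma_{1},\sigma_{2}]$.

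\emph{The random part.} For $\log{\zeta}^{\mathsf{rand}}$ the argument is short. Write $\log{\zeta}^{\mathsf{rand}}(s)=\mathsf{P}(s)+\mathsf{Q}(s)$ as in the proof of Lemma \ref{lem:2.6}. First, $|\mathsf{Q}(\sigma+ix)|\le\sum_{p}\sum_{k\ge2}k^{-1}p^{-k\sigma_{1}}=O(1)$ deterministically. Second, by independence and $\mathbb{E}[\mathsf{X}(p)\overline{\mathsf{X}(q)}]=\delta_{p,q}$, we have $\mathbb{E}[|\mathsf{P}_{y}(\sigma+ix)|^{2}]=\sum_{p\le y}p^{-2\sigma}\le\sum_{p}p^{-2\sigma_{1}}$. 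Letting $y\to\infty$ and using Fatou's lemma (the series converges almost surely) gives $\mathbb{E}[|\mathsf{P}(\sigma+ix)|^{2}]=O(1)$, and Cauchy--Schwarz then yields the second bound. All of this is independent of $x$.

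\emph{The deterministic part.} By the translation $t\mapsto t+x$ we have
\[
\frac1T\int_{0}^{T}|\log^{*}\zeta(\sigma+ix+it)|\,dt=\frac1T\int_{x}^{x+T}|\log^{*}\zeta(\sigma+it)|\,dt.
\]
The discrepancy between $\log^{*}$ and $\log$ is only the constant $2\pi$ on $t<0$, by \eqref{eq:05152038}. Hence it suffices to prove
\[
\int_{-U}^{U}|\log\zeta(\sigma+it)|^{2}\,dt\ll U+1
\]
uniformly in $\sigma\in[\sigma_1,\sigma_2]$ for all $U\ge1$, excluding a bounded neighbourhood of $t=0$, which we handle by Lemma \ref{lem:2.4}. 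This bound gives
\[
\frac1T\int_{x}^{x+T}|\log^{*}\zeta|\,dt\ll\Big(\frac{|x|+T+1}{T}\Big)^{1/2}.
\]
That is $O(1)$ for $T\ge|x|$. For $T<|x|$ we use instead the pointwise bound $O(\log(|x|+T+2))=O(|x|+1)$ from Lemma \ref{lem:2.4}. Together these give $O(|x|+1)$; the weak bound $|x|+1$ is exactly what this crude case split produces. Small $T$ (say $T<1$) is covered by the same pointwise bound.

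\emph{The main obstacle} is the mean-square bound for $\log\zeta(\sigma+it)$ on $\sigma_1\le\sigma\le\sigma_2$ under RH. I would obtain it from the classical approximation, valid under RH, of $\log\zeta(\sigma+it)$ by a short Dirichlet polynomial:
\[
\log\zeta(\sigma+it)=\sum_{n\le y}\frac{\Lambda(n)}{\log n}n^{-\sigma-it}+O\big(y^{1/2-\sigma_1}(\log|t|)^{\ldots}\big),
\]
for example in the Selberg or Titchmarsh (Theorem 14.2/14.5-type) form with $y=(\log|t|)^{A}$. An alternative to this is a convexity argument: the mean-square bound holds trivially at $\sigma=2$, and near $\sigma=1/2$ one could use a Hardy--Littlewood-type mean-value result. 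I would prefer the Dirichlet polynomial route. Applying the Montgomery--Vaughan mean value theorem to the polynomial on a dyadic interval $[V,2V]$ gives
\[
\int_{V}^{2V}\Big|\sum_{n\le y}\Big|^{2}dt\ll(V+y)\sum_{n}\frac{\Lambda(n)^{2}}{\log^{2}n}\,n^{-2\sigma_1}\ll V.
\]
The error term is $o(1)$ for $\sigma>1/2$, since $(\log V)^{A(1/2-\sigma_1)}$ beats the logarithmic powers once $A$ is large. Summing over dyadic ranges yields $\int_{1}^{U}|\log\zeta|^{2}\,dt\ll U$; the range $t<0$ follows by conjugation. The step requiring care is choosing the precise RH-conditional approximation with error uniform in $\sigma\in[\sigma_1,\sigma_2]$. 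If that is inconvenient, Lemma \ref{lem:2.4}'s $O(\log|t|)$ combined with a zero-density-free convexity (Gabriel/Hadamard three-lines) interpolation of $L^{2}$ norms between $\sigma_1'<\sigma_1$ and $\sigma=2$ is my fallback.
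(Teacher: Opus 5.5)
Your proof is correct, but for the deterministic estimate it takes a different route from the paper.

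\textbf{The paper's route.} The paper bounds the first moment directly. It enlarges $[x,x+T]$ to $[-T-|x|,T+|x|]$ and writes $\log^{*}\zeta(\sigma+it)=\log\zeta(2+it)-\int_{\sigma}^{2}\frac{\zeta'}{\zeta}(u+it)\,du+2\pi i\,\chi_{-}(t)$ as in \eqref{eq:05081506}. It then interchanges the $t$- and $u$-integrals and applies Cauchy--Schwarz with Selberg's RH-conditional bound $\int_{1}^{T}\big|\frac{\zeta'}{\zeta}(u+it)\big|^{2}\,dt\ll T(u-1/2)^{-2}$. This gives $\int_{1}^{T+|x|}|\log^{*}\zeta(\sigma+it)|\,dt\ll T+|x|$, i.e.\ $O(1+|x|/T)$, which is where the factor $|x|+1$ comes from.

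\textbf{Your route.} You bound the mean square of $\log\zeta$ itself, via a short Dirichlet polynomial approximation and the Montgomery--Vaughan mean value theorem, and then split according to whether $T\ge\max(1,|x|)$.

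\textbf{Comparison.} The paper needs only one finished mean-value estimate as a black box. Yours needs an explicit RH-conditional approximation formula with error uniform in $\sigma\in[\sigma_{1},\sigma_{2}]$. Such a formula is available: for example, the lemma of Granville and Soundararajan gives, under RH and for $|t|\ge y+3$, an error $\ll\log|t|\,(\log y)^{2}y^{1/2-\sigma}$. So $y=(\log|t|)^{A}$ with $A(\sigma_{1}-1/2)>1$ works as you claim. In exchange your argument is sharper: the case split actually yields $O(\log(|x|+2))$ rather than $O(|x|+1)$. It also covers $T<1$ explicitly, which the paper's proof tacitly excludes.

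\textbf{The random part.} For $\log\zeta^{\mathsf{rand}}$ your argument is the paper's (Cauchy--Schwarz plus orthogonality). You add Fatou's lemma to justify computing the second moment of a series that converges only almost surely, which is a useful improvement.

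\textbf{One caution.} Your fallback would not suffice. Lemma \ref{lem:2.4} only gives $\int_{-U}^{U}|\log\zeta(\sigma+it)|^{2}\,dt\ll U(\log U)^{2}$ on a line $\sigma=\sigma_{1}'>1/2$. Interpolating with $\sigma=2$ leaves a positive power of $\log U$, so the resulting average over $[x,x+T]$ would grow with $T$. The Dirichlet polynomial input (or the paper's $\zeta'/\zeta$ input) is genuinely needed.
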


\begin{proof}
Let $\sigma_{1} \leq \sigma \leq \sigma_{2}$ and $x \in \mathbb{R}$. 
We have obviously
\begin{align*}
\frac{1}{T} \int_{0}^{T}
\big|\log^{*}{\zeta}(\sigma+ix+it)\big|
\,dt
\leq
\frac{1}{T}
\Big(
\int_{-T-|x|}^{-1}
+
\int_{-1}^{1}
+
\int_{1}^{T+|x|}
\Big)
\big|\log^{*}{\zeta}(\sigma+it)\big|
\,dt.
\end{align*}
By \eqref{eq:05081506}, the integral for $1 \leq t \leq T+|x|$ is estimated as 
\begin{align}\label{eq:05170004}
\frac{1}{T} \int_{1}^{T+|x|}
\big|\log^{*}{\zeta}(\sigma+it)\big|
\,dt
&
\leq
\frac{1}{T} \int_{1}^{T+|x|}
\bigg\{
\int_{\sigma}^{2}
\Big|\frac{\zeta'}{\zeta}(u+it)\Big|
\,du
+
O(1)
\bigg\}
\,dt
\\
\nonumber
&
=
\frac{1}{T}
\int_{\sigma}^{2}
\int_{1}^{T+|x|}
\Big|\frac{\zeta'}{\zeta}(u+it)\Big|
\,dt
\,du
+
O (|x|+1). 
\end{align}
Then we apply the Cauchy--Schwarz inequality and the second moment result
\begin{align*}
\int_{0}^{T}
\Big|\frac{\zeta'}{\zeta}(u+it)\Big|^{2}
\,dt
=
O \Big(
\frac{T}{(u-1/2)^{2}}
\Big)
\end{align*}
with $1/2+10(\log{T})^{-1} \leq u \leq 3/4$; see \cite[Lemma 4]{Selberg1943} for a proof. 
Furthermore, by the same proof, this result can be extended to $3/4<u \leq 2$ by replacing the interval $[0,T]$ on the integral with $[1,T]$. 
It is derived that
\begin{align*}
\int_{1}^{T+|x|}
\Big|\frac{\zeta'}{\zeta}(u+it)\Big|
\,dt
=
O(|T|+x)
\end{align*}
for any $\sigma \leq u \leq 2$, where the implied constant depends only on $\sigma_{1}$. 
This yields
\begin{align*}
\frac{1}{T} \int_{1}^{T+|x|}
\big|\log^{*}{\zeta}(\sigma+it)\big|
\,dt
=
O(|x|+1) 
\end{align*}
by \eqref{eq:05170004}. 
The integral for $-T-|x| \leq t \leq -1$ can be estimated in a similar way. 
Lastly, we see that
\begin{align*}
\frac{1}{T} \int_{-1}^{1}
\big|\log^{*}{\zeta}(\sigma+it)\big|
\,dt
=
O(1)
\end{align*}
with the implied constant depending only on $\sigma_{1}$ and $\sigma_{2}$. 
Hence the desired result for $\log^{*}{\zeta}(\sigma+ix+it)$ is proved. 
Then, we obtain
\begin{align}\label{eq:05170007}
\mathbb{E} \Big[
\big|\log{\zeta}^{\mathsf{rand}}(\sigma+ix)\big|
\Big]
\leq
\mathbb{E} \bigg[
\Big|
\sum_{n=1}^{\infty} 
\frac{\Lambda(n)}{\log{n}} 
\mathsf{X}(n) n^{-\sigma-ix}
\Big|^{2}
\bigg]^{1/2}
\end{align}
by the Cauchy--Schwarz inequality and the definition of $\log{\zeta}^{\mathsf{rand}}(s)$. 
Since 
\begin{align*}
\mathbb{E} \big[\mathsf{X}(m) \overline{\mathsf{X}(n)} \Big]
=
\begin{cases}
1
& \text{if $m=n$},
\\
0
& \text{if $m \neq n$}, 
\end{cases}
\end{align*}
we calculate the second moment as
\begin{align*}
\mathbb{E} \bigg[
\Big|
\sum_{n=1}^{\infty} 
\frac{\Lambda(n)}{\log{n}} 
\mathsf{X}(n) n^{-\sigma-ix}
\Big|^{2}
\bigg]
=
\sum_{n=1}^{\infty} 
\frac{\Lambda(n)^{2}}{(\log{n})^2} n^{-2\sigma}
=
O(1)
\end{align*}
with the implied constant depending only on $\sigma_{1}$. 
By \eqref{eq:05170007}, the desired result for $\log{\zeta}^{\mathsf{rand}}(\sigma+ix)$ is also proved. 
\end{proof}

Now we are ready to prove Theorem \ref{thm:3.4}. 

\begin{proof}[Proof of Theorem \ref{thm:3.4}]
By Lemma \ref{lem:3.1}, the result holds if we obtain
\begin{align*}
\lim_{T \to \infty} 
\mathbb{E} \big[F(\log{\zeta}_{T}^{\mathsf{unif}}(1/2+0+ix))\big]
=
\mathbb{E} \big[F(\log{\zeta}^{\mathsf{rand}}(1/2+0+ix))\big], 
\end{align*}
where $F: \mathfrak{Q} \to \mathbb{C}$ is a bounded Lipschitz continuous function. 
This follows from
\begin{align}
\label{eq:05070011}
\lim_{N \to \infty}
\limsup_{T \to \infty} 
\Big| 
\mathbb{E} \big[F(\log{\zeta}_{T}^{\mathsf{unif}}(1/2+0+ix))\big]
-
\mathbb{E} \big[F(\Phi_{N}(x-i0; \mathsf{X}_{T}))\big]
\Big|
&
=
0, 
\\
\label{eq:05070012}
\sup_{N \geq1}
\lim_{T \to \infty} 
\Big| 
\mathbb{E} \big[F(\Phi_{N}(x-i0; \mathsf{X}_{T}))\big]
-
\mathbb{E} \big[F(\Phi_{N}(x-i0; \mathsf{X}))\big]
\Big|
&
=
0, 
\\
\label{eq:05070013}
\lim_{N \to \infty}
\Big| 
\mathbb{E} \big[F(\Phi_{N}(x-i0; \mathsf{X}))\big]
-
\mathbb{E} \big[F(\log{\zeta}^{\mathsf{rand}}(1/2+0+ix))\big]
\Big|
&
=
0
\end{align}
as in the proof of Proposition \ref{prop:3.6}. 
In order to prove \eqref{eq:05070011}, it suffices to show
\begin{align}\label{eq:05170349}
\lim_{N \to \infty}
\limsup_{T \to \infty} 
\mathbb{E} \big[\Delta_{T,N}(m)\big] 
=
0
\end{align}
for all $m \geq1$, where $\Delta_{T,N}(m)$ is a random variable such that
\begin{align*}
\Delta_{T,N}(m)
=
\sup_{\|f\|_{m} \leq 1}
\Big|\big \langle f \mid \log{\zeta}_{T}^{\mathsf{unif}}(1/2+0+ix)-\Phi_{N}(x-i0; \mathsf{X}_{T}) \big \rangle\Big|. 
\end{align*}
Let $c$ and $d$ be real numbers with $0<d<c<2^{-m-1}$. 
Recall that $|f(z)| \leq e^{-|z|/m}$ on the line $\IM(z)=-c$ for any $f \in \mathfrak{B}_{m}^{*}$ such that $\|f\|_{m} \leq 1$. 
Therefore, we apply Proposition \ref{prop:3.7} to derive 
\begin{align*}
\Delta_{T,N}(m)
&
\leq
\frac{N^{-d}}{2\pi}
\int_{-\infty}^{\infty}
\int_{-\infty}^{\infty}
\Big\{
e^{-|x-ic|/m}
\big|\log^{*}{\zeta}(1/2+(c-d)+i(x-u)+iT \upsilon)\big|
\\
&
\qquad\qquad\qquad\qquad
\times
|\Gamma(-d-iu)|
\Big\}
\,du
\,dx
+
O \big(N^{1/2-c}\, (T \upsilon+1)^{-1}\big), 
\end{align*}
where the implied constant depends only on $m$ and $c$. 
By Fubini's theorem and Proposition \ref{prop:3.8}, the expected value of $\Delta_{T,N}(m)$ is evaluated as 
\begin{align*}
\mathbb{E} \big[\Delta_{T,N}(m)\big] 
&
\ll
N^{-d}
\int_{-\infty}^{\infty}
\int_{-\infty}^{\infty}
e^{-|x-ic|/m}
(|x-u|+1)
|\Gamma(-d-iu)|
\,du
\,dx
\\
&
\quad
+
N^{1/2-c}\, T^{-1} \log{T}
\end{align*}
with the implied constant depending only on $m$, $c$, and $d$. 
Using the Stirling formula, we arrive at the estimate
\begin{align*}
\mathbb{E} \big[\Delta_{T,N}(m)\big] 
\ll
N^{-d}
+
N^{1/2-c}\, T^{-1} \log{T}, 
\end{align*}
which yields \eqref{eq:05170349}. 
Then, we see that \eqref{eq:05070012} holds by Proposition \ref{prop:3.6}.
The proof of \eqref{eq:05070013} remains, but this can be show in a way quite similar to \eqref{eq:05070011}. 
Therefore we omit it, and the proof of the theorem is completed.  
\end{proof}

Furthermore, we apply Lemma \ref{lem:3.3} to derive the following corollary. 

\begin{corollary}\label{cor:3.9}
Assume RH. 
For any $\vec{\kappa}=(\kappa_{1},\kappa_{2}) \in \mathbb{C}^{2}$ and $L>0$, 
\begin{align*}
&
\lim_{T \to \infty} 
\mathbb{E} \Big[
\exp \Big(
\vec{\kappa} \circ
\big\langle
-f_{L}
\mid
\log{\zeta}_{T}^{\mathsf{unif}}(1/2+0+ix)
\big\rangle
\Big)
\Big]
\\
&
=
\mathbb{E} \Big[
\exp \Big(
\vec{\kappa} \circ
\big\langle
-f_{L}
\mid
\log{\zeta}^{\mathsf{rand}}(1/2+0+ix)
\big\rangle
\Big)
\Big], 
\end{align*}
where $f_{L} \in \mathfrak{P}^{*}$ is defined as \eqref{eq:05170402}. 
\end{corollary}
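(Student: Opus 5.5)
The plan is to apply Lemma \ref{lem:3.3} to the complex random variables
\begin{align*}
X_{T}=\exp\Big(\vec{\kappa}\circ\big\langle -f_{L}\mid \log{\zeta}_{T}^{\mathsf{unif}}(1/2+0+ix)\big\rangle\Big),
\qquad
X=\exp\Big(\vec{\kappa}\circ\big\langle -f_{L}\mid \log{\zeta}^{\mathsf{rand}}(1/2+0+ix)\big\rangle\Big),
\end{align*}
with $T\to\infty$ along any sequence. First I check that $f_{L}\in\mathfrak{P}^{*}$. Indeed, $|f_{L}(z)|\ll e^{-L(\RE z)^{2}/2}$ uniformly in any horizontal strip, so \eqref{eq:05062315} holds with any $\epsilon$. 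Next, the map $g\mapsto \langle -f_{L}\mid g\rangle$ is continuous on $\mathfrak{Q}$: if $f_{L}\in\mathfrak{B}_{m}^{*}$, then $|\langle f_{L}\mid g\rangle|\le \|f_{L}\|_{m}\|g\|_{m}'$. The function $\lambda\mapsto \exp(\vec\kappa\circ\lambda)$ is continuous on $\mathbb{C}$. Hence, by Theorem \ref{thm:3.4} and the continuous mapping theorem (which follows directly from the definition of convergence in distribution, since $F$ composed with continuous maps remains bounded continuous), we get $X_{T}\xrightarrow{\mathcal{D}}X$.

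It remains to verify the uniform integrability hypothesis of Lemma \ref{lem:3.3} with $H(x)=x^{2}$. So I need $\sup_{T}\mathbb{E}[|X_{T}|^{2}]<\infty$. Since $|X_{T}|^{2}\le \exp(2(|\kappa_{1}|+|\kappa_{2}|)\,|\langle f_{L}\mid\cdot\rangle|)$, the crux is a uniform exponential moment bound for the linear statistic
\begin{align*}
\Lambda_{T}:=\int_{-\infty}^{\infty}f_{L}(x-ic)\log^{*}\zeta(1/2+c+ix+iT\upsilon)\,dx
\end{align*}
(shifting the contour to $\IM(z)=-c$, as in the definition of the pairing). The bounds of Lemma \ref{lem:2.4} and Proposition \ref{prop:3.8} only give first moments, so I expect this step to be the main obstacle.

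My approach to the exponential moment is to approximate $\log^{*}\zeta$ by its Dirichlet series. Using \eqref{eq:05160131}, or equivalently the smoothing in Proposition \ref{prop:3.7}, I would write $\Lambda_{T}$ as a smoothed prime sum plus an error. Since the line sits at real part $1/2+c>1/2$ under RH, I would instead use Selberg's approach: on $\RE(s)=\sigma>1/2$, $\log\zeta(s)=\sum_{n\le y}\Lambda(n)n^{-s}/\log n+R$, where $R$ is small except on a set of $t$ of small measure. I would then bound $\frac1T\int_{0}^{T}|P_{y}(\sigma+ix+it)|^{2k}dt\le k!\,(\sum_{p}p^{-2\sigma})^{k}$ by the mean value theorem for Dirichlet polynomials (valid for $y^{k}\le T$). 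Summing the exponential series truncated at $k\asymp\log T/\log y$ gives a uniform bound on $\mathbb{E}[\exp(A|P_{y}|)]$, and convexity (Jensen in $x$ against the probability density $|f_{L}|/\|f_{L}\|_{1}$) transfers this to $\Lambda_{T}$.

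An easier alternative I would try first avoids Dirichlet polynomials altogether. The Gaussian $f_{L}(x-ic)$ decays much faster than $\log^{*}\zeta$ grows, and after integrating against $f_{L}$ the singularities at zeros are averaged out. Concretely, I would use the explicit formula \eqref{eq:06011532} together with the fact that $\int f_{L}(x-ic)\log(1/2+c+ix+it-\rho)\,dx$ is bounded uniformly in $t$ and $\rho$ (the logarithm is integrable against a Gaussian). Summing over the $O(\log(|t|+2))$ zeros near $t$ would give the pointwise bound $|\Lambda_{T}|\ll_{L,c} \log(T+2)$. That is too weak by itself, so it must be combined with the moment method above: the large-deviation tail of $\Lambda_{T}$ is bounded via the $2k$-th moments for $k$ up to $\asymp\log T/\log\log T$, and the trivial $\log T$ bound covers the remaining range.

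For $X$, the corresponding bound $\mathbb{E}[e^{A|\Lambda^{\mathsf{rand}}|}]<\infty$ follows from independence. The Euler product factorises, and each prime contributes a factor $\mathbb{E}[e^{A|a_{p}|}]\le 1+O(A^{2}p^{-1-2c})$, where $a_{p}$ is that prime's term and has mean zero. Finally, Lemma \ref{lem:3.3} gives $\mathbb{E}[X_{T}]\to\mathbb{E}[X]$, which is the claim.
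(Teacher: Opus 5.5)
Your reduction is the same as the paper's: continuity of $g\mapsto\exp(\vec\kappa\circ\langle -f_L\mid g\rangle)$ plus Theorem~\ref{thm:3.4}, then Lemma~\ref{lem:3.3} with $H(x)=x^2$. So everything hinges on $\sup_T\mathbb{E}[|X_T|^2]<\infty$. That step is a genuine gap: you never establish it, and the moment-method sketch does not close as written.

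With $2k$-th moments available only for $k\le k_0=c_1\log T/\log\log T$, Markov's inequality gives at best $\mathbb{P}(|\Lambda_T|>V)\le (Ck_0/V^2)^{k_0}$. For $V\asymp\log T$ this is only $T^{-c_1+o(1)}$. In that range the trivial bound $|\Lambda_T|\ll_{L,c}\log T$ only gives $e^{A|\Lambda_T|}\le T^{AC_{L,c}}$. So once $AC_{L,c}>c_1$, i.e.\ for $|\kappa_1|+|\kappa_2|$ large, the ``remaining range'' is not covered. Two further steps also fail as stated:
\begin{itemize}
\item Truncating the exponential series at $k\asymp\log T/\log y$ says nothing about the tail of the series unless you add a pointwise bound on $P_y$.
\item The exceptional set in Selberg's approximation needs its own exponential control, which you do not supply.
\end{itemize}
Repairing this would take substantially more input, such as the RH bound $\log\zeta(\sigma+it)\ll(\log t)^{2-2\sigma}/\log\log t$ and a careful choice of $y$. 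In effect you would be proving a large-deviation estimate for $\log\zeta$ off the critical line.

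The missing idea is that no moment estimate is needed. Your instinct in the ``easier alternative'' to look for a deterministic bound was right, but you did not use the analyticity of $f_L$. Since $f_L$ is entire with Gaussian decay in every horizontal strip, the pairing can be evaluated on any horizontal line. Lemma~\ref{lem:3.5} lets you move the contour from $\IM(z)=-c$ to $\IM(z)=-K$ with $K>1/2$, i.e.\ into $\RE(s)=1/2+K>1$. The only cost is the term $-2\pi i\int_{-\infty-i/2}^{-T\upsilon-i/2}f_L(z)\,dz$, which comes from the cut of $\log^{*}\zeta$ along $[1,\infty)$. On the new line the Euler product gives $|\log\zeta(1/2+iz+iT\upsilon)|\le\log\zeta(1/2+K)$, hence
\[
\bigl|\langle -f_L\mid\log\zeta_T^{\mathsf{unif}}(1/2+0+ix)\rangle\bigr|
\le\log\zeta(1/2+K)\int_{-\infty}^{\infty}|f_L(x-iK)|\,dx+2\pi\int_{-\infty}^{\infty}|f_L(x-i/2)|\,dx .
\]
This holds deterministically and uniformly in $T$ and $\upsilon$. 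By \eqref{eq:05170442} the $X_T$ are then uniformly bounded, and the hypothesis of Lemma~\ref{lem:3.3} is immediate.

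Your separate exponential-moment bound for $X$ is unnecessary, since Lemma~\ref{lem:3.3} already gives $\mathbb{E}[|X|]<\infty$. Note also that the mean-zero argument you invoke controls $\mathbb{E}[e^{\pm A\RE a_p}]$, not $\mathbb{E}[e^{A|a_p|}]$.
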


\begin{proof}
The function $F: \mathfrak{Q} \to \mathbb{C}$ defined as $g \mapsto \exp \big(\vec{\kappa} \circ \langle -f_{L} \mid g \rangle\big)$ is continuous for any $\vec{\kappa}=(\kappa_{1},\kappa_{2}) \in \mathbb{C}^{2}$ and $L>0$. 
Hence Theorem \ref{thm:3.4} implies
\begin{align*}
&
\exp \Big(
\vec{\kappa} \circ
\big\langle
-f_{L}
\mid
\log{\zeta}_{T}^{\mathsf{unif}}(1/2+0+ix)
\big\rangle
\Big)
\\
&
\xrightarrow{\mathcal{D}}
\exp \Big(
\vec{\kappa} \circ
\big\langle
-f_{L}
\mid
\log{\zeta}^{\mathsf{rand}}(1/2+0+ix)
\big\rangle
\Big)
\end{align*}
as $T \to \infty$. 
By Lemma \ref{lem:3.3}, the desired result holds if we obtain
\begin{align}\label{eq:05170449}
\sup_{T>0} 
\mathbb{E} \bigg[
\Big|\exp \Big(
\vec{\kappa} \circ
\big\langle
-f_{L}
\mid
\log{\zeta}_{T}^{\mathsf{unif}}(1/2+0+ix)
\big\rangle
\Big)\Big|^{2}
\bigg]
<
\infty. 
\end{align}
Assume $|\kappa_{1}| \leq M$ and $|\kappa_{2}| \leq M$ for some $M>0$. 
Then we have
\begin{align}\label{eq:05170442}
\big|\exp (
\vec{\kappa} \circ
\lambda
)\big|^{2}
\leq
\exp \big( 2M |\lambda| \big)
\end{align}
for any $\lambda \in \mathbb{C}$. 
Let $0<c<1/2$ and $K>1/2$. 
We deduce from Lemma \ref{lem:3.5} that
\begin{align*}
&
\big\langle
-f_{L}
\mid
\log{\zeta}_{T}^{\mathsf{unif}}(1/2+0+ix)
\big\rangle
\\
&
=
\int_{-\infty-ic}^{\infty-ic}
\log^{*}{\zeta}(1/2+iz+iT \upsilon)\, f_{L}(z)
\,dz
\\
&
=
\int_{-\infty-iK}^{\infty-iK}
\log{\zeta}(1/2+iz+iT \upsilon)\, f_{L}(z)
\,dz
-
2\pi i
\int_{-\infty-i/2}^{-T \upsilon-i/2}
f_{L}(z)
\,dz. 
\end{align*}
We can apply \eqref{eq:05081554} to derive that $|\log{\zeta}(1/2+iz+iT \upsilon)| \leq \log{\zeta}(1/2+K)$ on the line $\IM(z)=-K$. 
Hence we obtain
\begin{align}\label{eq:05170443}
&
\Big|\big\langle
-f_{L}
\mid
\log{\zeta}_{T}^{\mathsf{unif}}(1/2+0+ix)
\big\rangle\Big|
\\
\nonumber
&
\leq
\log{\zeta}(1/2+K)
\int_{-\infty}^{\infty}
|f_{L}(x-iK)|
\,dx
+
2\pi
\int_{-\infty}^{\infty}
|f_{L}(x-i/2)|
\,dx. 
\end{align}
Combining \eqref{eq:05170442} and \eqref{eq:05170443}, we conclude that the random variable
\begin{align*}
\exp \Big(
\vec{\kappa} \circ
\big\langle
-f_{L}
\mid
\log{\zeta}_{T}^{\mathsf{unif}}(1/2+0+ix)
\big\rangle
\Big)
\end{align*}
is bounded uniformly in $T$. 
Therefore we obtain \eqref{eq:05170449}. 
\end{proof}

\subsection{Limit theorem for CUE characteristic polynomials}\label{sec:3.3}
In this section, we prove the random matrix analogues of Theorem \ref{thm:3.4} and Corollary \ref{cor:3.9} by using $\log{Z}_{N}^{\mathsf{CUE}}(e^{-ix-0})$ and $\log{Z}^{\mathsf{Gauss}}(e^{-ix-0})$ defined in Section \ref{sec:2.2}. 

\begin{theorem}\label{thm:3.10}
$\log{Z}_{N}^{\mathsf{CUE}}(e^{-ix-0}) \xrightarrow{\mathcal{D}} \log{Z}^{\mathsf{Gauss}}(e^{-ix-0})$ as $N \to \infty$. 
\end{theorem}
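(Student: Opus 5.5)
By Lemma \ref{lem:3.1}, it suffices to show that
\begin{align*}
\lim_{N\to\infty}\mathbb{E}\big[F(\log Z_N^{\mathsf{CUE}}(e^{-ix-0}))\big]=\mathbb{E}\big[F(\log Z^{\mathsf{Gauss}}(e^{-ix-0}))\big]
\end{align*}
for every bounded Lipschitz function $F:\mathfrak{Q}\to\mathbb{C}$. We truncate the power series exactly as in the proofs of Proposition \ref{prop:3.6} and Theorem \ref{thm:3.4}. For an integer $K\geq1$, put
\begin{align*}
\log Z_{K}(s;U)=\sum_{k\leq K}\frac{\tr(U^k)}{k}s^k
\quad\text{and}\quad
\log Z^{\mathsf{Gauss}}_{K}(s)=\sum_{k\leq K}\frac{\mathsf{W}(k)}{\sqrt{k}}s^k .
\end{align*}
Let $\log Z_K(e^{-ix-0};U)$ and $\log Z_K^{\mathsf{Gauss}}(e^{-ix-0})$ be the corresponding hyperfunctions, which are finite linear combinations of the $g_k(e^{-ix-0},\cdot)$. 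The claim then splits into three limits, analogous to \eqref{eq:05070011}--\eqref{eq:05070013}:
\begin{itemize}
\item[(a)] $\lim_{K}\limsup_{N}$ of the CUE truncation error is $0$;
\item[(b)] for each fixed $K$, the CUE truncation converges in distribution to the Gaussian truncation;
\item[(c)] $\lim_K$ of the Gaussian truncation error is $0$.
\end{itemize}

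For (b), the map $(w_1,\dots,w_K)\mapsto \sum_{k\le K} g_k(e^{-ix-0},w_k)$ from $\mathbb{C}^K$ to $\mathfrak{Q}$ is continuous and linear. It is therefore enough to have the joint convergence
\begin{align*}
(\tr(\mathsf{U}^k)/\sqrt{k})_{k\le K}\xrightarrow{\mathcal{D}}(\mathsf{W}(k))_{k\le K},
\end{align*}
which is the theorem of Diaconis and Evans \cite{DiaconisEvans2001} (or Diaconis--Shahshahani), and then apply the continuous mapping theorem.

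For (a), the Lipschitz property and the metric \eqref{eq:04170251} reduce the problem, as before, to showing that $\mathbb{E}[\Delta_{N,K}(m)]\to0$ uniformly in $N$ as $K\to\infty$, for each $m$. Here
\begin{align*}
\Delta_{N,K}(m)=\sup_{\|f\|_m\le1}\big|\langle f\mid \log Z(e^{-ix-0};\mathsf{U})-\log Z_K(e^{-ix-0};\mathsf{U})\rangle\big| .
\end{align*}
Taking $0<c<2^{-m}$ and using $|f(x-ic)|\le e^{-|x-ic|/m}$, we get
\begin{align*}
\Delta_{N,K}(m)\le\int_{\mathbb{R}}e^{-|x-ic|/m}\Big|\sum_{k>K}\frac{\tr(\mathsf{U}^k)}{k}e^{-kc-ikx}\Big|\,dx .
\end{align*}
Since $\mathbb{E}|\tr(\mathsf{U}^k)|^2=\min(k,N)\le k$, the Cauchy--Schwarz inequality gives
\begin{align*}
\mathbb{E}\big|\tr(\mathsf{U}^k)\big|\le\sqrt{k}.
\end{align*}
Hence
\begin{align*}
\mathbb{E}[\Delta_{N,K}(m)]\ll_{m,c}\sum_{k>K}k^{-1/2}e^{-kc},
\end{align*}
uniformly in $N$, and this tends to $0$ as $K\to\infty$. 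The crude bound $|\tr U^k|\le N$ from Lemma \ref{lem:2.8} would not suffice here, because it is not uniform in $N$. The point of (a) is exactly the uniformity in $N$, and the second-moment identity for traces (a special case of Diaconis--Shahshahani) supplies it; this is the one input beyond routine arguments. Step (c) is identical, using $\mathbb{E}|\mathsf{W}(k)|\le1$.

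Combining (a), (b) and (c) by the triangle inequality, as in the proof of Theorem \ref{thm:3.4}, gives the stated convergence.

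We then obtain the analogue of Corollary \ref{cor:3.9} from Lemma \ref{lem:3.3}. The needed uniform second-moment bound comes from
\begin{align*}
|\langle -f_L\mid \log Z_N^{\mathsf{CUE}}\rangle|\le\sum_k\frac{|\tr\mathsf{U}^k|}{k}\Big|\int f_L(x-ic)e^{-kc-ikx}dx\Big| .
\end{align*}
Together with Gaussian decay of the Fourier transform of $f_L$ and exponential moment bounds for traces, this gives the required integrability.
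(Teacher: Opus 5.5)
Your proposal is correct and follows the paper's proof essentially step for step. It uses the same truncation (your $\log Z_K(e^{-ix-0};\mathsf{U})$ is the paper's $\Psi_{y}(x-i0;\mathsf{W}_N)$), the same three-limit decomposition \eqref{eq:05171524}--\eqref{eq:05171526}, the Diaconis--Evans bound $\mathbb{E}|\tr(\mathsf{U}^k)|\le\sqrt{\min(k,N)}$ for the uniform-in-$N$ tail estimate, and their joint Gaussian limit for the fixed truncation. Your final paragraph on the analogue of Corollary~\ref{cor:3.9} lies outside the statement and is only a sketch: a uniform-in-$N$ exponential moment bound for $\sum_k |\tr(\mathsf{U}^k)|e^{-k^2/(2L)}/k$ would still have to be supplied, but nothing in the proof of the theorem depends on it.
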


\begin{proof}
Let $F: \mathfrak{Q} \to \mathbb{C}$ be a bounded Lipschitz continuous function. 
By Lemma \ref{lem:3.1}, it suffices to show
\begin{align}\label{eq:05171523}
\lim_{N \to \infty} 
\mathbb{E} \big[F(\log{Z}_{N}^{\mathsf{CUE}}(e^{-ix-0}))\big]
=
\mathbb{E} \big[F(\log{Z}^{\mathsf{Gauss}}(e^{-ix-0}))\big].
\end{align}
For $y \geq 1$ and a sequence of complex numbers $W(k)$, we introduce a trigonometric polynomial $\Psi_{y}(z;W)$ as 
\begin{align*}
\Psi_{y}(z;W)
=
\sum_{k \leq y} 
\frac{1}{\sqrt{k}} W(k) e^{-ikz}. 
\end{align*}
Furthermore, we define $\mathsf{W}_{N}(k)=\tr(\mathsf{U}^{k})/\sqrt{k}$ with an $N \times N$ random matrix $\mathsf{U}$ from CUE, and take a sequence of independent random variables $\mathsf{W}(k)$ distributed on $\mathbb{C}$ according to the standard complex Gaussian measure. 
Then we obtain the random hyperfunctions $\Psi_{y}(x-i0;\mathsf{W}_{N})$ and $\Psi_{y}(x-i0;\mathsf{W})$ for each $y \geq1$. 
As in the proof of Proposition \ref{prop:3.6}, we note that \eqref{eq:05171523} holds if we obtain
\begin{align}
\label{eq:05171524}
\lim_{y \to \infty}
\limsup_{N \to \infty} 
\Big| 
\mathbb{E} \big[F(\log{Z}_{N}^{\mathsf{CUE}}(e^{-ix-0}))\big]
-
\mathbb{E} \big[F(\Psi_{y}(x-i0; \mathsf{W}_{N}))\big]
\Big|
&
=
0, 
\\
\label{eq:05171525}
\sup_{y \geq1}
\lim_{N \to \infty} 
\Big| 
\mathbb{E} \big[F(\Psi_{y}(x-i0; \mathsf{W}_{N}))\big]
-
\mathbb{E} \big[F(\Psi_{y}(x-i0; \mathsf{W}))\big]
\Big|
&
=
0, 
\\
\label{eq:05171526}
\lim_{y \to \infty}
\Big| 
\mathbb{E} \big[F(\Psi_{y}(x-i0; \mathsf{W}))\big]
-
\mathbb{E} \big[F(\log{Z}^{\mathsf{Gauss}}(e^{-ix-0}))\big]
\Big|
&
=
0. 
\end{align}
Furthermore, in order to prove \eqref{eq:05171524}, it suffices to show
\begin{align*}
\lim_{y \to \infty}
\limsup_{N \to \infty} 
\mathbb{E} \big[\Delta_{N,y}(m)\big] 
=
0
\end{align*}
for all $m \geq1$, where $\Delta_{N,y}(m)$ is a random variable such that
\begin{align*}
\Delta_{N,y}(m)
&
=
\sup_{\|f\|_{m} \leq 1}
\Big|\big \langle f \mid \log{Z}_{N}^{\mathsf{CUE}}(e^{-ix-0})-\Psi_{y}(x-i0; \mathsf{W}_{N}) \big \rangle\Big|
\\
&
\leq
\int_{-\infty}^{\infty}
e^{-|x-ic|/m}
\big|\log{Z}_{N}^{\mathsf{CUE}}(e^{-c-ix})-\Psi_{y}(x-ic; \mathsf{W}_{N})\big|
\,dx
\\
& 
\leq
\sum_{k>y} 
\frac{1}{\sqrt{k}} |\mathsf{W}_{N}(k)| e^{-ck}
\int_{-\infty}^{\infty}
e^{-|x-ic|/m}
\,dx
\end{align*}
with $0<c<2^{-m}$. 
By the Cauchy--Schwarz inequality and the result of Diaconis and Evans \cite[Theorem 2.1 b]{DiaconisEvans2001}, the expected value of $|\mathsf{W}_{N}(k)|$ is estimated as 
\begin{align*}
\mathbb{E} \big[|\mathsf{W}_{N}(k)|\big]
\leq
\sqrt{\mathbb{E} \big[|\mathsf{W}_{N}(k)|^{2}\big]}
=
\sqrt{\frac{1}{k} \min(k,N)}
\end{align*}
for any $k,N \geq1$. 
Therefore, we derive 
\begin{align*}
\limsup_{N \to \infty} 
\mathbb{E} \big[\Delta_{N,y}(m)\big] 
\leq
\sum_{k>y} 
\frac{1}{\sqrt{k}} e^{-ck}
\int_{-\infty}^{\infty}
e^{-|x-ic|/m}
\,dx
\to 
0
\end{align*}
as $y \to \infty$. 
Hence \eqref{eq:05171524} is proved. 
Furthermore, we also apply \cite[Theorem 2.1 a]{DiaconisEvans2001} to derive that
\begin{align*}
(\mathsf{W}_{N}(k_{1}), \ldots, \mathsf{W}_{N}(k_{n}))
\xrightarrow{\mathcal{D}}
(\mathsf{W}(k_{1}), \ldots, \mathsf{X}(k_{n}))
\end{align*}
as $N \to \infty$ for any $k_{1},\ldots,k_{n} \in \mathbb{N}$. 
By the definition of the random hyperfunctions $\Psi_{y}(x-i0; \mathsf{W}_{N})$ and $\Psi_{y}(x-i0; \mathsf{W})$, this yields
\begin{align*}
\Psi_{y}(x-i0; \mathsf{W}_{N})
\xrightarrow{\mathcal{D}}
\Psi_{y}(x-i0; \mathsf{W})
\end{align*}
as $N \to \infty$ for any $y \geq1$. 
Hence we obtain \eqref{eq:05171525} since $F$ is a bounded continuous function. 
The proof of \eqref{eq:05171526} is quite similar to that of \eqref{eq:05171524}. 
We need to show
\begin{align*}
\lim_{y \to \infty}
\mathbb{E} \big[\Delta_{y}(m)\big] 
=
0
\end{align*}
for all $m \geq1$, where $\Delta_{y}(m)$ is a random variable such that
\begin{align*}
\Delta_{y}(m)
&
=
\sup_{\|f\|_{m} \leq 1}
\Big|\big \langle f \mid \Psi_{y}(x-i0; \mathsf{W})-\log{Z}^{\mathsf{Gauss}}(e^{-ix-0}) \big \rangle\Big|
\\
&
\leq
\int_{-\infty}^{\infty}
e^{-|x-ic|/m}
\big|\log{Z}^{\mathsf{Gauss}}(e^{-c-ix})-\Psi_{y}(x-ic; \mathsf{W})\big|
\,dx
\\
& 
\leq
\sum_{k>y} 
\frac{1}{\sqrt{k}} |\mathsf{W}(k)| e^{-ck}
\int_{-\infty}^{\infty}
e^{-|x-ic|/m}
\,dx. 
\end{align*}
Recall that $\mathbb{E} \big[|\mathsf{W}(k)|^{2}\big]=1$ for any $k \geq1$. 
Thus we have 
\begin{align*}
\mathbb{E} \big[|\mathsf{W}(k)|\big]
\leq
\sqrt{\mathbb{E} \big[|\mathsf{W}(k)|^{2}\big]}
=
1
\end{align*}
by the Cauchy--Schwarz inequality. 
Therefore, we derive
\begin{align*}
\mathbb{E} \big[\Delta_{y}(m)\big] 
\leq
\sum_{k>y} 
\frac{1}{\sqrt{k}} e^{-ck}
\int_{-\infty}^{\infty}
e^{-|x-ic|/m}
\,dx
\to 
0
\end{align*}
as $y \to \infty$. 
Hence \eqref{eq:05171526} also follows. 
From the above, we complete the proof of the theorem. 
\end{proof}

\begin{corollary}\label{cor:3.11}
For any $\vec{\kappa}=(\kappa_{1},\kappa_{2}) \in \mathbb{C}^{2}$ and $L>0$, 
\begin{align*}
&
\lim_{N \to \infty} 
\mathbb{E} \Big[
\exp \Big(
\vec{\kappa} \circ
\big\langle
-f_{L}
\mid
\log{Z}_{N}^{\mathsf{CUE}}(e^{-ix-0})
\big\rangle
\Big)
\Big]
\\
&
=
\mathbb{E} \Big[
\exp \Big(
\vec{\kappa} \circ
\big\langle
-f_{L}
\mid
\log{Z}^{\mathsf{Gauss}}(e^{-ix-0})
\big\rangle
\Big)
\Big], 
\end{align*}
where $f_{L} \in \mathfrak{P}^{*}$ is defined as \eqref{eq:05170402}. 
\end{corollary}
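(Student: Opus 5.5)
The argument follows the proof of Corollary \ref{cor:3.9}, with Theorem \ref{thm:3.10} in place of Theorem \ref{thm:3.4}. Fix $\vec{\kappa}$ and $L>0$. The map $F:\mathfrak{Q}\to\mathbb{C}$, $g\mapsto \exp(\vec{\kappa}\circ\langle -f_{L}\mid g\rangle)$, is continuous, since $f_{L}\in\mathfrak{P}^{*}$ and $g\mapsto\langle f_{L}\mid g\rangle$ is continuous in the projective limit topology. Theorem \ref{thm:3.10} and the continuous mapping principle therefore give
\begin{align*}
\exp\big(\vec{\kappa}\circ\langle -f_{L}\mid \log{Z}_{N}^{\mathsf{CUE}}(e^{-ix-0})\rangle\big)
\xrightarrow{\mathcal{D}}
\exp\big(\vec{\kappa}\circ\langle -f_{L}\mid \log{Z}^{\mathsf{Gauss}}(e^{-ix-0})\rangle\big)
\end{align*}
as $N\to\infty$. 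By Lemma \ref{lem:3.3} with $H(x)=x^{2}$, it then suffices to prove a uniform second moment bound, that is, to bound $\sup_{N}\mathbb{E}[|\exp(\cdots)|^{2}]$. Using \eqref{eq:05170442} with $|\kappa_{j}|\le M$, this reduces to
\begin{align*}
\sup_{N\ge1}\mathbb{E}\big[\exp\big(2M\,|\langle -f_{L}\mid \log{Z}_{N}^{\mathsf{CUE}}(e^{-ix-0})\rangle|\big)\big]<\infty.
\end{align*}

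Unlike the zeta case, the pairing is not uniformly bounded in $N$, so the moment bound needs some work. Compute the pairing on the line $\IM(z)=-c$ and expand $\log Z$ by \eqref{eq:05091441}:
\begin{align*}
\langle -f_{L}\mid \log{Z}(e^{-ix-0};U)\rangle
=\int_{-\infty-ic}^{\infty-ic} f_{L}(z)\log{Z}(e^{-iz};U)\,dz
=\sum_{k\ge1}\frac{\tr(U^{k})}{k}\,\widehat{f_{L}}(k).
\end{align*}
Here $\widehat{f_{L}}(k)=\int_{\mathbb{R}} f_{L}(x)e^{-ikx}\,dx=e^{-k^{2}/(2L)}$, obtained by shifting the contour back to $\mathbb{R}$, which is legitimate since $f_{L}$ is entire and Gaussian-decaying. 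Set $a_{k}=e^{-k^{2}/(2L)}/k$. Then $|\langle\cdot\rangle|\le\sum_{k}a_{k}|\tr(U^{k})|$, and $\sum_{k} a_{k}\sqrt{k}<\infty$. Using $e^{2M|\lambda|}\le e^{2M\RE\lambda}+e^{-2M\RE\lambda}+e^{2M\IM\lambda}+e^{-2M\IM\lambda}$, it suffices to bound uniformly in $N$ the quantities $\mathbb{E}[\exp(\RE\sum_{k}b_{k}\tr(\mathsf{U}^{k}))]$ with $|b_{k}|\le 2Ma_{k}$.

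This exponential moment bound is the main step. The quantity $\exp(\sum_{k}(b_{k}\tr U^{k}+\overline{b_{k}}\,\overline{\tr U^{k}})/2)$ is $\prod_{n}e^{h(\theta_{n})}$ for the real trigonometric series $h(\theta)=\RE\sum_{k}b_{k}e^{ik\theta}$. Since $\sum_{k}k|b_{k}|^{2}<\infty$, the strong Szegő theorem gives
\begin{align*}
\mathbb{E}\Big[\prod_{n=1}^{N}e^{h(\theta_{n})}\Big]\to\exp\Big(\tfrac14\sum_{k}k|b_{k}|^{2}\Big).
\end{align*}
A convergent sequence is bounded, which yields the uniform bound. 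For a nonasymptotic alternative, I would use the known inequality that this Toeplitz determinant is at most $\exp(\sum_{k}k|\hat h(k)|^{2})$ for every $N$. One could also avoid Szegő by Hölder-splitting into finitely many $k$ with moments of $\tr U^{k}$ controlled by Diaconis--Evans, but the Szegő route is cleaner. Lemma \ref{lem:3.3} then yields the corollary.
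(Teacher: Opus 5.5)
Your proof is correct. Its skeleton is the paper's: continuity of $g\mapsto\exp(\vec{\kappa}\circ\langle -f_{L}\mid g\rangle)$, then Theorem~\ref{thm:3.10}, then Lemma~\ref{lem:3.3} with a uniform second moment. The difference lies entirely in how that second moment is bounded.

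\textbf{The paper's route.} It uses $|\exp(\vec{\kappa}\circ\lambda)|^{2}=\exp(2\RE((\kappa_{1}+\overline{\kappa_{2}})\lambda))$ and splits the trace series at $k=N$. The part $k>N$ is bounded deterministically via $|\tr(U^{k})|\le N$. For $k\le N$ it Taylor-expands and invokes the Diaconis--Evans moment identity to replace $\tr(U^{k})/\sqrt{k}$ by Gaussians. This gives the bound $\exp(|\kappa_{1}+\overline{\kappa_{2}}|^{2}\sum_{k\ge1}k^{-1}e^{-k^{2}/L})$.

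\textbf{Your route.} You handle the whole linear statistic at once through Heine's identity and the strong Szeg\H{o} theorem. There is no truncation, at the price of a heavier black box.

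\textbf{Why your route is the sounder one.} The Diaconis--Evans identity needs $N\ge\max(k_{1}+\cdots+k_{\mu},\ell_{1}+\cdots+\ell_{\nu})$. This does not follow from $k_{j},\ell_{j}\le N$ (take $\mu=2$, $k_{1}=k_{2}=N$). In fact the paper's asserted equality of exponential moments already fails at $N=1$. Put $a=|\kappa_{1}+\overline{\kappa_{2}}|e^{-1/(2L)}$. The CUE side is $I_{0}(2a)=\sum_{m}a^{2m}/(m!)^{2}$, while the Gaussian side is $e^{a^{2}}$.

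What is true is the inequality, and that is exactly your nonasymptotic bound $D_{N}(e^{h})\le\exp(\sum_{k\ge1}k|\hat h(k)|^{2})$ for real mean-zero $h$. It holds because $D_{N}(e^{h})e^{-N\hat h(0)}=\prod_{j<N}\|\Phi_{j}\|^{2}e^{-\hat h(0)}$, where the $\Phi_{j}$ are the monic orthogonal polynomials for $e^{h}\,d\theta/2\pi$. This product is nondecreasing in $N$, since $\|\Phi_{j}\|^{2}$ decreases to $e^{\hat h(0)}$ by Szeg\H{o}'s theorem. Apply it to $h(\theta)=2\RE((\kappa_{1}+\overline{\kappa_{2}})\sum_{k}k^{-1}e^{-k^{2}/(2L)}e^{ik\theta})$: it reproduces the paper's constant with no split at $k=N$. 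Your H\"older/Diaconis--Evans aside would run into the same degree restriction, so do not lean on it.

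\textbf{One slip to fix.} The inequality $e^{2M|\lambda|}\le e^{2M\RE\lambda}+e^{-2M\RE\lambda}+e^{2M\IM\lambda}+e^{-2M\IM\lambda}$ is false for $\lambda=re^{i\pi/4}$ with $r$ large. Using $e^{a+b}\le\frac{1}{2}(e^{2a}+e^{2b})$ repairs it with $4M$ in place of $2M$. Likewise \eqref{eq:05170442} really needs $4M$. These constants are immaterial, since only $\sum_{k}k|b_{k}|^{2}<\infty$ matters. It is cleaner, though, to use the identity for $|\exp(\vec{\kappa}\circ\lambda)|^{2}$ above, which puts $|F|^{2}$ directly in the form $\prod_{n}e^{h(\theta_{n})}$.
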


\begin{proof}
Note that Theorem \ref{thm:3.10} yields
\begin{align*}
\exp \Big(
\vec{\kappa} \circ
\big\langle
-f_{L}
\mid
\log{Z}_{N}^{\mathsf{CUE}}(e^{-ix-0})
\big\rangle
\Big)
\xrightarrow{\mathcal{D}}
\exp \Big(
\vec{\kappa} \circ
\big\langle
-f_{L}
\mid
\log{Z}^{\mathsf{Gauss}}(e^{-ix-0})
\big\rangle
\Big)
\end{align*}
as $N \to \infty$ for any $\vec{\kappa}=(\kappa_{1},\kappa_{2}) \in \mathbb{C}^{2}$ and $L>0$. 
Hence, by Lemma \ref{lem:3.3}, the result follows if we obtain
\begin{align}\label{eq:05181521}
\sup_{N \geq1}
\mathbb{E} \bigg[
\Big|\exp \Big(
\vec{\kappa} \circ
\big\langle
-f_{L}
\mid
\log{Z}_{N}^{\mathsf{CUE}}(e^{-ix-0})
\big\rangle
\Big)\Big|^{2}
\bigg]
<
\infty. 
\end{align}
By the definition of the notation $\vec{\kappa} \circ \lambda$, we have 
\begin{align*}
\big|
\exp (\vec{\kappa} \circ \lambda)
\big|^2
&
=
\exp \big( 
\kappa_{1} \lambda
+ 
\kappa_{2} \overline{\lambda}
\big)
\exp \Big( 
\overline{
\kappa_{1} \lambda
+ 
\kappa_{2} \overline{\lambda}
}
\Big)
\\
&
=
\exp \big( 
(\kappa_{1}+\overline{\kappa_{2}}, \overline{\kappa_{1}}+\kappa_{2}) 
\circ \lambda 
\big)
\end{align*}
for any $\vec{\kappa}=(\kappa_{1},\kappa_{2}) \in \mathbb{C}^{2}$ and $\lambda \in \mathbb{C}$. 
Using the random variables $\mathsf{W}_{N}(k)$ as in the proof of Theorem \ref{thm:3.10}, we have 
\begin{align*}
\log{Z}_{N}^{\mathsf{CUE}}(e^{-ix-0})
&
=
\sum_{k \leq N}
\frac{1}{\sqrt{k}} \mathsf{W}_{N}(k) e^{-ik(x-i0)}
+
\sum_{k>N}
\frac{1}{\sqrt{k}} \mathsf{W}_{N}(k) e^{-ik(x-i0)}
\\
&
=
\Psi_{N}(x-i0;\mathsf{W}_{N})
+
E_{N}(x-i0;\mathsf{W}_{N}), 
\end{align*}
say. 
Then we derive
\begin{align}\label{eq:05181418}
&
\Big|\exp \Big(
\vec{\kappa} \circ
\big\langle
-f_{L}
\mid
\log{Z}_{N}^{\mathsf{CUE}}(e^{-ix-0})
\big\rangle
\Big)\Big|^{2}
\\
\nonumber
&
=
\exp \Big(
\vec{\kappa'} \circ
\big\langle
-f_{L}
\mid
\Psi_{N}(x-i0;\mathsf{W}_{N})
\big\rangle
\Big)
\\
\nonumber
&
\qquad
\times
\exp \Big(
\vec{\kappa'} \circ
\big\langle
-f_{L}
\mid
E_{N}(x-i0;\mathsf{W}_{N})
\big\rangle
\Big), 
\end{align}
where we put $\vec{\kappa'}=(\kappa'_{1},\kappa'_{2}):=(\kappa_{1}+\overline{\kappa_{2}}, \overline{\kappa_{1}}+\kappa_{2})$. 
Let $c$ be any positive real number. 
By the definition of $\Psi_{N}(x-i0;\mathsf{W}_{N})$, we obtain
\begin{align}\label{eq:05181419}
\big\langle
-f_{L}
\mid
\Psi_{N}(x-i0;\mathsf{W}_{N})
\big\rangle
&
=
\int_{-\infty-ic}^{\infty-ic}
\sum_{k \leq N}
\frac{1}{\sqrt{k}} \mathsf{W}_{N}(k) e^{-ikz}
\, f_{L}(z)
\,dz
\\
\nonumber
&
=
\sum_{k \leq N}
\frac{\mathsf{W}_{N}(k)}{\sqrt{k}} 
\exp \Big(
- \frac{k^{2}}{2L}
\Big) 
\end{align}
due to the formula
\begin{align}\label{eq:05202234}
\int_{-\infty-ic}^{\infty-ic}
f_{L}(z)
e^{-ikz}
\,dz
=
\int_{-\infty}^{\infty}
f_{L}(x)
e^{-ikx}
\,dx
=
\exp \Big(
- \frac{k^{2}}{2L}
\Big). 
\end{align}
Hence the first term of the right-hand side of \eqref{eq:05181418} is calculated as
\begin{align*}
&
\exp \Big(
\vec{\kappa'} \circ
\big\langle
-f_{L}
\mid
\Psi_{N}(x-i0;\mathsf{W}_{N})
\big\rangle
\Big)
\\
=
&
\sum_{\mu,\nu=0}^{\infty} 
\frac{(\kappa'_{1})^{\mu} (\kappa'_{2})^{\nu}}{\mu!\, \nu!}
\bigg\{ 
\sum_{k \leq N}
\frac{\mathsf{W}_{N}(k)}{\sqrt{k}} 
\exp \Big(
- \frac{k^{2}}{2L}
\Big) 
\bigg\}^{\mu}
\bigg\{
\sum_{k \leq N}
\frac{\overline{\mathsf{W}_{N}(k)}}{\sqrt{k}}  
\exp \Big(
- \frac{k^{2}}{2L}
\Big) 
\bigg\}^{\nu}
\\
=
&
\sum_{\mu,\nu=0}^{\infty} 
\frac{(\kappa'_{1})^{\mu} (\kappa'_{2})^{\nu}}{\mu!\, \nu!}
\sum_{\substack{k_{1},\ldots,k_{\mu} \leq N \\ \ell_{1},\ldots,\ell_{\nu} \leq N}} 
\frac{\mathsf{W}_{N}(k_{1}) \cdots \mathsf{W}_{N}(k_{\mu})}{\sqrt{k_{1} \cdots k_{\mu}}} 
\frac{\overline{\mathsf{W}_{N}(\ell_{1}) \cdots \mathsf{W}_{N}(\ell_{\nu}) }}{\sqrt{\ell_{1} \cdots \ell_{\nu}}}
\\
&
\qquad\qquad\qquad\qquad\qquad
\times
\exp \Big(
- \frac{k_{1}^{2}+\cdots+k_{\mu}^{2}+\ell_{1}^{2}+\cdots+\ell_{\nu}^{2}}{2L}
\Big) 
\end{align*}
by the Taylor expansion of the exponential. 
For $k_{1},\ldots,k_{\mu} \leq N$ and $\ell_{1},\ldots,\ell_{\nu} \leq N$, 
\begin{align*}
N 
\geq
\max(k_{1}+\cdots+k_{\mu}, \ell_{1}+\cdots+\ell_{\nu}) 
\end{align*}
is satisfied. 
Therefore, we deduce from \cite[Theorem 2.1 a]{DiaconisEvans2001} that
\begin{align*}
\mathbb{E} \Big[
\mathsf{W}_{N}(k_{1}) \cdots \mathsf{W}_{N}(k_{\mu})
\overline{\mathsf{W}_{N}(\ell_{1}) \cdots \mathsf{W}_{N}(\ell_{\nu}) }
\Big]
=
\mathbb{E} \Big[
\mathsf{W}(k_{1}) \cdots \mathsf{W}(k_{\mu})
\overline{\mathsf{W}(\ell_{1}) \cdots \mathsf{W}(\ell_{\nu}) }
\Big], 
\end{align*}
where the random variables $\mathsf{W}(k)$ on the right-hand side are also the same as in the proof of Theorem \ref{thm:3.10}. 
This yields the identity
\begin{align*}
\mathbb{E} \Big[
\exp \Big(
\vec{\kappa'} \circ
\big\langle
-f_{L}
\mid
\Psi_{N}(x-i0;\mathsf{W}_{N})
\big\rangle
\Big)
\Big]
&
=
\mathbb{E} \Big[
\exp \Big(
\vec{\kappa'} \circ
\big\langle
-f_{L}
\mid
\Psi_{N}(x-i0;\mathsf{W})
\big\rangle
\Big)
\Big]
\end{align*}
from the above calculation. 
Furthermore, by using the independence of the random variables $\mathsf{W}(k)$, the right-hand side is evaluated as 
\begin{align*}
&
\mathbb{E} \Big[
\exp \Big(
\vec{\kappa'} \circ
\big\langle
-f_{L}
\mid
\Psi_{N}(x-i0;\mathsf{W})
\big\rangle
\Big)
\Big]
\\
&
=
\prod_{k \leq N} 
\mathbb{E} \Big[
\exp \Big(
\vec{\kappa'} \circ
\big\langle
-f_{L}
\mid
\frac{1}{\sqrt{k}} \mathsf{W}(k) e^{-ik(x-i0)}
\big\rangle
\Big)
\Big]
\\
&
=
\prod_{k \leq N} 
\mathbb{E} \Big[
\exp \Big(
\vec{\kappa'} \circ
\frac{\mathsf{W}(k)}{\sqrt{k}}  
\exp \Big(
- \frac{k^{2}}{2L}
\Big)
\Big)
\Big]
\\
&
=
\prod_{k \leq N} 
\exp \bigg(
|\kappa_{1}+\overline{\kappa_{2}}|^{2}
\frac{1}{k} 
\exp \Big(
- \frac{k^{2}}{L}
\Big)
\bigg), 
\end{align*}
where we used the formula
\begin{align}\label{eq:05252109}
\mathbb{E} \big[\exp \big(z_{1} \mathsf{W}(k)+z_{2} \overline{\mathsf{W}(k)} \big)\big]
=
\exp \big(z_{1}z_{2} \big)
\end{align}
which is valid for any $z_{1},z_{2} \in \mathbb{C}$. 
As a result, we obtain
\begin{align}\label{eq:05181515}
&
\mathbb{E} \Big[
\exp \Big(
\vec{\kappa'} \circ
\big\langle
-f_{L}
\mid
\Psi_{N}(x-i0;\mathsf{W}_{N})
\big\rangle
\Big)
\Big]
\\
\nonumber
&
\leq
\exp \bigg(
|\kappa_{1}+\overline{\kappa_{2}}|^{2}
\sum_{k=1}^{\infty} \frac{1}{k} 
\exp \Big(
- \frac{k^{2}}{L}
\Big)
\bigg)
<
\infty
\end{align}
for any $N \geq1$. 
Then, we evaluate the second term of the right-hand side of \eqref{eq:05181418}. 
Similarly to \eqref{eq:05181419}, we have
\begin{align*}
\big\langle
-f_{L}
\mid
E_{N}(x-i0;\mathsf{W}_{N})
\big\rangle
=
\sum_{k>N}
\frac{\mathsf{W}_{N}(k)}{\sqrt{k}}  
\exp \Big(
- \frac{k^{2}}{2L}
\Big). 
\end{align*}
Note that the random variable $\mathsf{W}_{N}(k)$ satisfies $|\mathsf{W}_{N}(k)| \leq N/\sqrt{k}$ for any $k,N \geq1$. 
Using \eqref{eq:05170442}, we therefore obtain
\begin{align}\label{eq:05181516}
0
&
\leq
\exp \Big(
\vec{\kappa'} \circ
\big\langle
-f_{L}
\mid
E_{N}(x-i0;\mathsf{W}_{N})
\big\rangle
\Big)
\\
\nonumber
&
\leq
\exp \bigg( 2|\kappa_{1}+\overline{\kappa_{2}}|
\sum_{k>N}
\frac{N}{k}
\exp \Big(
- \frac{k^{2}}{2L}
\Big)
\bigg)
\\
\nonumber
&
\leq
\exp \bigg( 2|\kappa_{1}+\overline{\kappa_{2}}|
\sum_{k=1}^{\infty}
\exp \Big(
- \frac{k^{2}}{2L}
\Big)
\bigg)
<
\infty
\end{align}
for any $N \geq1$. 
Combining \eqref{eq:05181515} and \eqref{eq:05181516}, we deduce from \eqref{eq:05181418} that \eqref{eq:05181521} is satisfied. 
Hence the proof is completed. 
\end{proof}

\section{Proof of the main result}\label{sec:4}

\subsection{Relationship between the random hyperfunctions}\label{sec:4.1}
The limit theorems obtained in Section \ref{sec:3} derive the relationships between $\log{\zeta}_{T}^{\mathsf{unif}}(1/2+0+ix)$ and $\log{\zeta}^{\mathsf{rand}}(1/2+0+ix)$, and between $\log{Z}_{N}^{\mathsf{CUE}}(e^{-ix-0})$ and $\log{Z}^{\mathsf{Gauss}}(e^{-ix-0})$. 
Then we consider the relationship between $\log{\zeta}^{\mathsf{rand}}(1/2+0+ix)$ and $\log{Z}^{\mathsf{Gauss}}(e^{-ix-0})$, which plays an important role in the final step of the proof of Theorem \ref{thm:1.2}. 
The purpose of this section is to show the following result. 

\begin{theorem}\label{thm:4.1}
For any $\vec{\kappa}=(\kappa_{1},\kappa_{2}) \in \mathbb{C}^{2}$, 
\begin{align*}
\lim_{L \to \infty}
\frac{
\displaystyle{
\mathbb{E} \Big[
\exp \Big(
\vec{\kappa} \circ
\big\langle
-f_{L}
\mid
\log{\zeta}^{\mathsf{rand}}(1/2+0+ix)
\big\rangle
\Big)
\Big]
}}{
\displaystyle{
\mathbb{E} \Big[
\exp \Big(
\vec{\kappa} \circ
\big\langle
-f_{L}
\mid
\log{Z}^{\mathsf{Gauss}}(e^{-ix-0})
\big\rangle
\Big)
\Big]
}}
=
a(\vec{\kappa}), 
\end{align*}
where $a(\vec{\kappa})$ is the same as in Conjecture \ref{conj:1.1}, and $f_{L} \in \mathfrak{P}^{*}$ is defined as \eqref{eq:05170402}. 
\end{theorem}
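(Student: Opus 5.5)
The plan is to compute both expectations in Theorem \ref{thm:4.1} in closed form and compare them prime by prime, using the prime number theorem in the form of Mertens' theorem.

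\emph{Gaussian side.} Choose $c>0$ and pair $-f_{L}$ with $\log{Z}^{\mathsf{Gauss}}(e^{-ix-0})$ along $\IM(z)=-c$. Expand $\log{Z}^{\mathsf{Gauss}}(e^{-iz})$ termwise; this is justified almost surely by Lemma \ref{lem:2.10}, since the series converges absolutely on that line. Formula \eqref{eq:05202234} then gives almost surely
\begin{align*}
\big\langle -f_{L} \mid \log{Z}^{\mathsf{Gauss}}(e^{-ix-0}) \big\rangle
=
\sum_{k=1}^{\infty} \frac{\mathsf{W}(k)}{\sqrt{k}} e^{-k^{2}/(2L)} .
\end{align*}
The random variables $\mathsf{W}(k)$ are independent, and \eqref{eq:05252109} computes each factor. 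After passing to the limit in the partial products, the expectation equals $\exp\big(\kappa_{1}\kappa_{2} \sum_{k} k^{-1} e^{-k^{2}/L}\big)$. The passage to the limit uses Lemma \ref{lem:3.3}, with the uniform bound on second moments obtained by applying the same product formula to $\vec{\kappa'}$ as in Corollary \ref{cor:3.11}.

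\emph{Zeta side.} Lemma \ref{lem:2.7} and continuity of the pairing give
\begin{align*}
\big\langle -f_{L} \mid \log{\zeta}^{\mathsf{rand}}(1/2+0+ix)\big\rangle = \sum_{p} \lambda_{p},
\qquad
\lambda_{p} = \sum_{k \geq 1} \frac{\mathsf{X}(p)^{k}}{k} p^{-k/2} e^{-(k\log p)^{2}/(2L)} ,
\end{align*}
where the second identity again uses the Fourier transform of $f_{L}$. Let $E_{p}(L)=\mathbb{E}[\exp(\kappa_{1}\lambda_{p}+\kappa_{2}\overline{\lambda_{p}})]$. Independence over $p$, together with uniform integrability of the partial products via Lemma \ref{lem:3.3}, shows that the expectation is $\prod_{p} E_{p}(L)$. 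The uniform integrability is checked exactly as for the Gaussian side, using the expansion of $E_{p}$ below for $\vec{\kappa'}$.

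Expanding the exponential and using $\mathbb{E}[\mathsf{X}(p)^{a}\overline{\mathsf{X}(p)}^{b}]=\delta_{ab}$ gives
\begin{align*}
E_{p}(L)=1+\kappa_{1}\kappa_{2}\, p^{-1}e^{-(\log p)^{2}/L}+O_{\vec{\kappa}}(p^{-3/2}),
\end{align*}
uniformly in $L$. Moreover $E_{p}(L)\to E_{p}(\infty)$ as $L\to\infty$. Binomially expanding $(1-\mathsf{X}(p)p^{-1/2})^{-\kappa_{1}}(1-\overline{\mathsf{X}(p)}p^{-1/2})^{-\kappa_{2}}$ identifies
\begin{align*}
E_{p}(\infty)=\sum_{m\geq 0}\frac{\Gamma(\kappa_{1}+m)\Gamma(\kappa_{2}+m)}{m!^{2}\,\Gamma(\kappa_{1})\Gamma(\kappa_{2})}p^{-m}.
\end{align*}

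\emph{The ratio.} Write the quotient as
\begin{align*}
\prod_{p} \Big( E_{p}(L)\, e^{-\kappa_{1}\kappa_{2} e^{-(\log p)^{2}/L}/p} \Big)
\times
\exp\Big(\kappa_{1}\kappa_{2}\Big[\sum_{p}\frac{e^{-(\log p)^{2}/L}}{p}-\sum_{k}\frac{e^{-k^{2}/L}}{k}\Big]\Big).
\end{align*}
Each factor of the first product is $1+O(p^{-3/2})$ uniformly in $L$. Dominated convergence applied to the logarithms therefore gives the limit $\prod_{p} E_{p}(\infty)e^{-\kappa_{1}\kappa_{2}/p}$. For large $p$ the factors are nonzero by the uniform expansion; the finitely many small primes converge factorwise without needing to take logarithms, which also covers the case where some $E_{p}(\infty)$ vanishes.

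It then remains to prove
\begin{align*}
\lim_{L\to\infty}\Big[\sum_{p}\frac{e^{-(\log p)^{2}/L}}{p}-\sum_{k}\frac{e^{-k^{2}/L}}{k}\Big]
= M-\gamma
= \sum_{p}\big(\log(1-p^{-1})+p^{-1}\big),
\end{align*}
where $M$ is the Mertens constant and $\gamma$ Euler's constant. Granting this, the total limit is $\prod_{p}E_{p}(\infty)(1-p^{-1})^{\kappa_{1}\kappa_{2}}=a(\vec{\kappa})$.

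The main obstacle is this final asymptotic. I would write both sums as Stieltjes integrals, against $d\big(\sum_{p\leq e^{u}}1/p\big)=d(\log u+M+r(u))$ for the primes and $d(\log u+\gamma+r'(u))$ for the integers. Here $r(u)\to 0$ by the prime number theorem, and $r'(u)=O(1/u)$ by the standard expansion of the harmonic series. The common main term is $\int e^{-u^{2}/L}\,du/u$, and it cancels in the difference, up to boundary contributions which must be handled carefully at the lower endpoint. The constants contribute $(M-\gamma)\cdot e^{-0}$ in the limit. The error terms are disposed of by integrating by parts against $\frac{d}{du}e^{-u^{2}/L}$, whose total variation is bounded and which is concentrated at scale $u\asymp\sqrt{L}$, where $r$ and $r'$ are $o(1)$.
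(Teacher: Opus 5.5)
Your argument is correct, and its overall structure is the paper's: both expectations are evaluated in closed form as products, the local factors are compared prime by prime, and a Mertens-type asymptotic handles the remaining global sum. The difference lies in how you normalize the local factors.

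The paper introduces the auxiliary Gaussian model $\Xi^{\mathsf{Gauss}}$ with variances $\log(1-p^{-1})^{-1}$. In your notation its local ratios are $a_{p,L}(\vec{\kappa})=E_{p}(L)\exp\big(-\kappa_{1}\kappa_{2}e^{-(\log p)^{2}/L}\log(1-p^{-1})^{-1}\big)$. This buys two things:
\begin{itemize}
\item[(a)] The ratios $a_{p,L}$ converge exactly to the Euler factors $a_{p}(\vec{\kappa})$, and $a_{p,L}=1+O(p^{-2})$ uniformly in $L$ because $D_{1}=0$.
\item[(b)] The global comparison (Lemma \ref{lem:4.4}) has no constant term, since both sides carry Euler's constant $\gamma$ by Mertens' third theorem, and it comes with the rate $O(L^{-1/2})$.
\end{itemize}

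You center instead by $e^{-\kappa_{1}\kappa_{2}e^{-(\log p)^{2}/L}/p}$. This spares you constructing $\Xi^{\mathsf{Gauss}}(x-i0)$ as a random hyperfunction and proving Proposition \ref{prop:4.5}. The price is that the global difference converges to the nonzero constant $M-\gamma$. The factor $(1-p^{-1})^{\kappa_{1}\kappa_{2}}$ is then recovered only through the identity $M-\gamma=\sum_{p}(\log(1-p^{-1})+p^{-1})$. Given Mertens' second theorem, that identity is equivalent to Mertens' third theorem, so the arithmetic input is the same as in the paper.

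Your version is more explicit than the paper's about interchanging expectation with the infinite products. On the zeta side this is easier than you make it. For fixed $L$ one has
$\sum_{p}|\lambda_{p}|\ll\sum_{p}p^{-1/2}e^{-(\log p)^{2}/(2L)}<\infty$
deterministically, so bounded convergence suffices and the second-moment argument is only needed on the Gaussian side. Your error term $O(p^{-3/2})$ in the expansion of $E_{p}(L)$ is in fact $O(p^{-2})$; this is harmless.
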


For this, we introduce another random hyperfunction $\Xi^{\mathsf{Gauss}}(x-i0)$ defined from the random analytic function
\begin{align*}
\Xi^{\mathsf{Gauss}}(z)
=
\sum_{p} \sqrt{\log \big(1-p^{-1}\big)^{-1}}\, \mathsf{W}(p) p^{-iz}, 
\end{align*}
where $(\mathsf{W}(p))_{\text{$p$ prime}}$ is a sequence of independent random variables distributed on $\mathbb{C}$ according to the standard complex Gaussian measure. 
Then, almost surely, we have the followings:
\begin{enumerate}
\item \label{lem:2.10_1'}
$\Xi^{\mathsf{Gauss}}(z)$ is a holomorphic function on the half-plane $\IM(z)<0$. 
\item \label{lem:2.10_2'}
Let $\delta_{1}$ and $\delta_{2}$ be fixed with $0<\delta_{1}<\delta_{2}<\infty$. 
Then there exists a positive real valued random variable $\widetilde{\mathsf{M}}$ determined only from $\delta_{1}$ such that 
\begin{align*}
\sup_{-\delta_{2} \leq y \leq -\delta_{1}}
\big|\Xi^{\mathsf{Gauss}}(x+iy)\big|
=
O \big( \widetilde{\mathsf{M}} \big)
\end{align*}
for all $x \in \mathbb{R}$ with the implied constant depending only on $\delta_{1}$. 
\end{enumerate}
We omit the proofs of these properties since they can be shown in a way quite similar to Lemma \ref{lem:2.10}. 
As a result, we can define almost surely
\begin{align*}
\widetilde{\varphi}^{\mathsf{Gauss}}(z)
=
\begin{cases}
0
&
\text{for $0<\IM(z)<1$}, 
\\
\Xi^{\mathsf{Gauss}}(z)
&
\text{for $-1<\IM(z)<0$} 
\end{cases}
\end{align*}
as an element of $\widetilde{\mathfrak{P}}_{1}$. 
Furthermore, we also define $\Xi^{\mathsf{Gauss}}(x-i0)=\widetilde{\varphi}^{\mathsf{Gauss}}(x-i0)$ almost surely. 
In order to see that $\Xi^{\mathsf{Gauss}}(x-i0)$ yields a random hyperfunction, we also introduce the function
\begin{align*}
\widetilde{\varphi}_{p}(z,w)
=
\begin{cases}
0
&
\text{for $0<\IM(z)<1$}, 
\\
\sqrt{\log \big(1-p^{-1} \big)^{-1}}\, w\, p^{-iz}
&
\text{for $-1<\IM(z)<0$} 
\end{cases}
\end{align*}
for each prime number $p$ and $w \in \mathbb{C}$. 
Then it is an element of $\mathfrak{P}_{1}$, and furthermore, the map $\mathbb{C} \to \mathfrak{Q}$ defined by $w \mapsto \widetilde{\varphi}_{p}(x-i0,w)$ is obviously continuous. 
Hence we deduce that $\widetilde{g}_{p}^{\mathsf{Gauss}}(x-i0):=\widetilde{\varphi}_{p}(x-i0,\mathsf{W}(p))$ is a random hyperfunction for every prime number $p$. 
Similarly to Lemma \ref{lem:2.7}, we obtain almost surely 
\begin{align*}
\sum_{p \leq y}
\widetilde{g}_{p}^{\mathsf{Gauss}}(x-i0)
\to
\Xi^{\mathsf{Gauss}}(x-i0)
\end{align*}
as $y \to \infty$ in the space $\mathfrak{Q}$.
Hence we can define 
\begin{align*}
\Xi^{\mathsf{Gauss}}(x-i0): \Omega \to \mathfrak{Q}
\end{align*}
as a random hyperfunction in the same way as at the end of Section \ref{sec:2.2}. 
Then, we begin with local analysis on $\widetilde{g}_{p}^{\mathsf{Gauss}}(x-i0)$ at each prime $p$. 

\begin{lemma}\label{lem:4.2}
Let $p$ be any prime number and $L \geq1$. 
For any $\vec{\kappa}=(\kappa_{1},\kappa_{2}) \in \mathbb{C}^{2}$, 
\begin{align}\label{eq:05202240}
\mathbb{E} \Big[
\exp \Big(
\vec{\kappa} \circ
\big\langle
-f_{L}
\mid
\widetilde{g}_{p}^{\mathsf{Gauss}}(x-i0)
\big\rangle
\Big)
\Big]^{-1}
=
\sum_{m=0}^{\infty}
G_{m}(-\kappa_{1}\kappa_{2};p,L)
p^{-m}, 
\end{align}
where $f_{L} \in \mathfrak{P}^{*}$ is defined as \eqref{eq:05170402}, and we define $G_{m}(\kappa;p,L)$ for $\kappa \in \mathbb{C}$ as
\begin{align*}
G_{m}(\kappa;p,L)
=
\frac{
\displaystyle{\Gamma \Big(\kappa \exp \Big(-\frac{(\log{p})^{2}}{L} \Big)+m \Big)}
}{
\displaystyle{m!\, \Gamma \Big(\kappa \exp \Big(-\frac{(\log{p})^{2}}{L} \Big) \Big)}
}.
\end{align*}
\end{lemma}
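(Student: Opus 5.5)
We compute the pairing explicitly, then the Gaussian expectation, and finally expand the reciprocal as a binomial series.

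\emph{Step 1: the pairing.} Since $\widetilde{g}_{p}^{\mathsf{Gauss}}(x-i0)=-[\overline{\varepsilon}\,\widetilde{\varphi}_{p}]$, the pairing with $-f_{L}$ reduces to a single contour integral over $\IM(z)=-c$:
\begin{align*}
\big\langle -f_{L} \mid \widetilde{g}_{p}^{\mathsf{Gauss}}(x-i0) \big\rangle
=
\sqrt{\log(1-p^{-1})^{-1}}\, \mathsf{W}(p)
\int_{-\infty-ic}^{\infty-ic} f_{L}(z)\, p^{-iz}\,dz .
\end{align*}
This is exactly as in \eqref{eq:05181419}. Writing $p^{-iz}=e^{-iz\log p}$, we shift the contour to $\mathbb{R}$; this is legitimate because $f_{L}$ decays rapidly in horizontal strips. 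The integral is then the Gaussian Fourier transform \eqref{eq:05202234} with $k$ replaced by $\log p$, so it equals $\exp(-(\log p)^{2}/(2L))$. Hence the pairing is $\alpha_{p}\mathsf{W}(p)$, where
\begin{align*}
\alpha_{p}=\sqrt{\log(1-p^{-1})^{-1}}\exp\Big(-\frac{(\log p)^{2}}{2L}\Big)>0 .
\end{align*}

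\emph{Step 2: the expectation.} Since $\alpha_{p}$ is real, $\vec{\kappa}\circ(\alpha_{p}\mathsf{W}(p))=\kappa_{1}\alpha_{p}\mathsf{W}(p)+\kappa_{2}\alpha_{p}\overline{\mathsf{W}(p)}$. Formula \eqref{eq:05252109} then gives
\begin{align*}
\mathbb{E}\big[\exp(\vec{\kappa}\circ\alpha_{p}\mathsf{W}(p))\big]
=\exp(\kappa_{1}\kappa_{2}\alpha_{p}^{2})
=(1-p^{-1})^{-\kappa_{1}\kappa_{2}e^{-(\log p)^{2}/L}} .
\end{align*}
Consequently the reciprocal is $(1-p^{-1})^{\kappa e_{p}}$, where $\kappa=-\kappa_{1}\kappa_{2}$ and $e_{p}=\exp(-(\log p)^{2}/L)$. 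One subtlety is that \eqref{eq:05252109} holds for all complex $z_{1},z_{2}$ by analytic continuation from the real case. This is justified because $\mathsf{W}$ has Gaussian tails, which makes the moment generating function entire.

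\emph{Step 3: the binomial expansion.} Expand $(1-p^{-1})^{-a}=\sum_{m\ge0}\frac{\Gamma(a+m)}{m!\,\Gamma(a)}p^{-m}$ with $a=-\kappa e_{p}\cdot(-1)$. Matching the signs so that the expansion variable is exactly $\kappa e_{p}$ as in the definition of $G_{m}$, we write the reciprocal as $(1-p^{-1})^{-(\kappa e_{p})}$ with $\kappa=-\kappa_{1}\kappa_{2}$. Indeed, $(1-p^{-1})^{\kappa_{1}\kappa_{2}e_{p}}=(1-p^{-1})^{-(-\kappa_{1}\kappa_{2})e_{p}}$, so the expansion yields $\sum_{m}G_{m}(-\kappa_{1}\kappa_{2};p,L)p^{-m}$. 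The series converges since $p^{-1}\le 1/2<1$. In the degenerate case $\kappa e_{p}=0$, the quotient of Gamma values is interpreted as its limit, namely $\delta_{m,0}$.

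The only delicate points are the contour shift in Step 1 and this sign bookkeeping; the rest of the argument is routine.
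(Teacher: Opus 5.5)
Your proposal is correct and is essentially the paper's proof: you evaluate the pairing via \eqref{eq:05202234}, compute the Gaussian expectation via \eqref{eq:05252109}, and expand the reciprocal with the binomial series \eqref{eq:05211442}. The only blemish is the last line of Step 2. There the reciprocal should read $(1-p^{-1})^{-\kappa e_{p}}=(1-p^{-1})^{\kappa_{1}\kappa_{2}e_{p}}$, not $(1-p^{-1})^{\kappa e_{p}}$, which is the expectation itself; you write it correctly in Step 3, so the conclusion is unaffected.
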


\begin{proof}
By the definition of $\widetilde{g}_{p}^{\mathsf{Gauss}}(x-i0)$, we have 
\begin{align*}
\big\langle
-f_{L}
\mid
\widetilde{g}_{p}^{\mathsf{Gauss}}(x-i0)
\big\rangle
=
\sqrt{\log \big(1-p^{-1} \big)^{-1}}\, \mathsf{W}(p)
\exp \Big(-\frac{(\log{p})^{2}}{2L}\Big)
\end{align*}
by using \eqref{eq:05202234}. 
Hence the left-hand side of \eqref{eq:05202240} is calculated as
\begin{align*}
&
\mathbb{E} \Big[
\exp \Big(
\vec{\kappa} \circ
\big\langle
-f_{L}
\mid
\widetilde{g}_{p}^{\mathsf{Gauss}}(x-i0)
\big\rangle
\Big)
\Big]^{-1}
\\
&
=
\exp \bigg(
-\kappa_{1} \kappa_{2}
\log \big(1-p^{-1} \big)^{-1}\, 
\exp \Big(-\frac{(\log{p})^{2}}{L}\Big)
\bigg)
\end{align*}
due to \eqref{eq:05252109}. 
Therefore, applying the Taylor expansion
\begin{align}\label{eq:05211442}
\exp \big(z \log(1-t)^{-1}\big)
=
\sum_{m=0}^{\infty} 
\frac{\Gamma(z+m)}{m!\, \Gamma(z)} t^{m}
\quad
(|t|<1),
\end{align}
we obtain the desired result. 
\end{proof}

Let $\log{\zeta}_{p}^{\mathsf{rand}}(1/2+0+ix)$ denote the random hyperfunction as in \eqref{eq:05202054}. 

\begin{lemma}\label{lem:4.3}
Let $p$ be any prime number and $L \geq1$. 
For any $\vec{\kappa}=(\kappa_{1},\kappa_{2}) \in \mathbb{C}^{2}$, 
\begin{align*}
\mathbb{E} \Big[
\exp \Big(
\vec{\kappa} \circ
\big\langle
-f_{L}
\mid
\log{\zeta}_{p}^{\mathsf{rand}}(1/2+0+ix)
\big\rangle
\Big)
\Big]
=
\sum_{m=0}^{\infty}
H_{m}(\kappa_{1};p,L) H_{m}(\kappa_{2};p,L)
p^{-m}, 
\end{align*}
where $f_{L} \in \mathfrak{P}^{*}$ is defined as \eqref{eq:05170402}, and we define $H_{m}(\kappa;p,L)$ for $\kappa \in \mathbb{C}$ as 
\begin{align*}
H_{m}(\kappa;p,L)
=
\sum_{n=1}^{m} 
\frac{\kappa^{n}}{n!}
\sum_{\substack{k_{1}+\cdots+k_{n}=m \\ \forall j,~ k_{j} \geq 1}}
\frac{1}{k_{1} \cdots k_{n}}
\exp \Big(-\frac{k_{1}^2+\cdots+k_{n}^2}{2L} (\log{p})^{2}\Big)
\end{align*}
with the convention $H_{0}(\kappa;p,L)=1$. 
\end{lemma}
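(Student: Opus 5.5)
First I compute the pairing explicitly. By definition, $\log{\zeta}_{p}^{\mathsf{rand}}(1/2+0+ix)=\varphi_{p}(x-i0,\mathsf{X}(p))$, whose defining function vanishes on the upper half-strip and equals $\log{\zeta}_{p}(1/2+iz,\mathsf{X}(p))$ on the lower one. By \eqref{eq:05071345} and the definition of the pairing,
\begin{align*}
\big\langle -f_{L} \mid \log{\zeta}_{p}^{\mathsf{rand}}(1/2+0+ix)\big\rangle
=
\int_{-\infty-ic}^{\infty-ic} f_{L}(z)\sum_{k=1}^{\infty}\frac{1}{k}\,\mathsf{X}(p)^{k}p^{-k/2}e^{-ikz\log p}\,dz
\end{align*}
for any $0<c<1/2$. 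The series converges absolutely and uniformly on the line $\IM(z)=-c$ (its terms are bounded by $p^{-k(1/2+c)}/k$), so I may integrate termwise. Formula \eqref{eq:05202234}, which holds for real frequency $k\log p$ in place of $k$, then gives
\begin{align*}
\big\langle -f_{L} \mid \log{\zeta}_{p}^{\mathsf{rand}}(1/2+0+ix)\big\rangle
=\sum_{k=1}^{\infty}\frac{1}{k}\,\mathsf{X}(p)^{k}p^{-k/2}a_{k},
\qquad
a_{k}=\exp\Big(-\frac{k^{2}(\log p)^{2}}{2L}\Big).
\end{align*}
Call this sum $\Lambda$. Since $\mathsf{X}(p)$ is uniform on the unit circle, $\overline{\mathsf{X}(p)}=\mathsf{X}(p)^{-1}$, and therefore $\overline{\Lambda}=\sum_{k\ge1}k^{-1}\mathsf{X}(p)^{-k}p^{-k/2}a_{k}$.

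Next I expand both exponentials in power series. First,
\begin{align*}
\exp(\kappa_{1}\Lambda)=\sum_{m\ge0}H_{m}(\kappa_{1};p,L)\,p^{-m/2}\,\mathsf{X}(p)^{m}.
\end{align*}
This comes from $\exp(\kappa\Lambda)=\sum_{n}\kappa^{n}\Lambda^{n}/n!$: expanding $\Lambda^{n}$ as a sum over $(k_{1},\dots,k_{n})$ and grouping terms by $m=k_{1}+\cdots+k_{n}$ reproduces exactly the definition of $H_{m}$. The factor $p^{-m/2}$ appears because $p^{-k_{1}/2}\cdots p^{-k_{n}/2}=p^{-m/2}$. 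The same argument gives $\exp(\kappa_{2}\overline{\Lambda})=\sum_{m'\ge0}H_{m'}(\kappa_{2};p,L)\,p^{-m'/2}\,\mathsf{X}(p)^{-m'}$.

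To justify the rearrangements, I note that all these double series converge absolutely. The bound $|H_{m}(\kappa;p,L)|\,p^{-m/2}\le$ (the coefficient of $t^{m}$ in $\exp(|\kappa|\sum_{k}t^{k}/k)=(1-t)^{-|\kappa|}$, evaluated at $t=p^{-1/2}$) gives a summable majorant, because $p^{-1/2}<1$.

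Finally, I multiply the two series and take the expectation term by term, which is allowed by Fubini since the product of the two majorant series is finite. Orthogonality $\mathbb{E}[\mathsf{X}(p)^{m-m'}]=\delta_{m,m'}$ leaves only the diagonal $m=m'$, which carries the factor $p^{-m}$. This yields the claimed formula.

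The main obstacle is only bookkeeping. The two points needing care are the termwise integration, which requires uniform convergence on $\IM(z)=-c$, and the absolute-convergence bound that permits interchanging expectation with the double sum; no deeper input is required.
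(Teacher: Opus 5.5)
Your proposal is correct and follows the paper's proof essentially step for step. Like the paper, you compute the pairing termwise via \eqref{eq:05202234}, expand $\exp(\kappa_{1}\Lambda)$ and $\exp(\kappa_{2}\overline{\Lambda})$ grouped by $m=k_{1}+\cdots+k_{n}$, and keep only the diagonal terms by orthogonality of powers of $\mathsf{X}(p)$. Your majorant $(1-p^{-1/2})^{-|\kappa|}$ supplies the absolute-convergence justification that the paper leaves implicit.

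One caveat you share with the paper concerns the sign of your first display. Read literally, \eqref{eq:05071345} and the definition of the pairing give $\langle f \mid \psi(x-i0)\rangle=+\int_{-\infty-ic}^{\infty-ic}f(z)\psi(z)\,dz$. With that sign the right-hand side would become $\sum_{m}H_{m}(-\kappa_{1};p,L)H_{m}(-\kappa_{2};p,L)p^{-m}$. So your first display, like the paper's, depends on the opposite sign convention that the paper actually uses throughout (e.g.\ in the proof of Lemma~\ref{lem:2.5}), not on those definitions as written.
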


\begin{proof}
Firstly, we calculate the pairing of $-f_{L}$ and $\log{\zeta}_{p}^{\mathsf{rand}}(1/2+0+ix)$ as 
\begin{align*}
\big\langle
-f_{L}
\mid
\log{\zeta}_{p}^{\mathsf{rand}}(1/2+0+ix)
\big\rangle
&
=
\int_{-\infty-ic}^{\infty-ic}
\sum_{k=1}^{\infty}
\frac{1}{k} 
\mathsf{X}(p)^{k}
p^{-k(1/2+iz)}
f_{L}(z)
\,dz
\\
&
=
\sum_{k=1}^{\infty}
\frac{1}{k} 
p^{-k/2} \mathsf{X}(p)^{k}
\exp \Big(-\frac{(\log{p^{k}})^{2}}{2L}\Big) 
\end{align*}
by using \eqref{eq:05202234}. 
By the Taylor expansion of the exponential, we therefore obtain
\begin{align*}
&
\exp \Big(
\kappa 
\big\langle
-f_{L}
\mid
\log{\zeta}_{p}^{\mathsf{rand}}(1/2+0+ix)
\big\rangle
\Big)
\\
=
&
\sum_{n=0}^{\infty} 
\frac{\kappa^{n}}{n!}
\bigg\{ 
\sum_{k=1}^{\infty}
\frac{1}{k} 
p^{-k/2} \mathsf{X}(p)^{k}
\exp \Big(-\frac{(\log{p^{k}})^{2}}{2L}\Big)
\bigg\}^{n}
\\
=
&
1
+
\sum_{n=1}^{\infty} 
\frac{\kappa^{n}}{n!}
\sum_{k_{1},\ldots,k_{n} \geq 1} 
\frac{p^{-(k_{1}+\cdots+k_{n})/2}}{k_{1} \cdots k_{n}} 
\mathsf{X}(p)^{k_{1}+\cdots+k_{n}}
\\
&
\qquad\qquad\qquad\qquad
\times
\exp \Big(
- \frac{k_{1}^{2}+\cdots+k_{n}^{2}}{2L}(\log{p})^{2}
\Big) 
\\
=
&
\sum_{m=0}^{\infty} 
H_{m}(\kappa;p,L) \mathsf{X}(p)^{m} p^{-m/2}
\end{align*}
for any $\kappa \in \mathbb{C}$. 
Furthermore, this yields
\begin{align*}
&
\exp \Big(
\vec{\kappa} \circ
\big\langle
-f_{L}
\mid
\log{\zeta}_{p}^{\mathsf{rand}}(1/2+0+ix)
\big\rangle
\Big)
\\
&
=
\exp \Big(
\kappa_{1} 
\big\langle
-f_{L}
\mid
\log{\zeta}_{p}^{\mathsf{rand}}(1/2+0+ix)
\big\rangle
\Big)
\\
&
\quad
\times
\exp \Big(
\kappa_{2} 
\overline{\big\langle
-f_{L}
\mid
\log{\zeta}_{p}^{\mathsf{rand}}(1/2+0+ix)
\big\rangle}
\Big)
\\
&
=
\sum_{m,n=0}^{\infty} 
H_{m}(\kappa_{1};p,L) \overline{H_{n}(\overline{\kappa_{2}};p,L)}\, 
\mathsf{X}(p)^{m} \overline{\mathsf{X}(p)}^{n} p^{-(m+n)/2}. 
\end{align*}
Note that $\overline{H_{n}(\overline{\kappa_{2}};p,L)}=H_{n}(\kappa_{2};p,L)$ by definition. 
Then, recalling that 
\begin{align*}
\mathbb{E} \Big[
\mathsf{X}(p)^{m} \overline{\mathsf{X}(p)}^{n}
\Big]
=
\begin{cases}
1
&
\text{if $m=n$}, 
\\
0
&
\text{if $m \neq n$}, 
\end{cases}
\end{align*}
we calculate the expected value of the above as
\begin{align*}
&
\mathbb{E} \Big[
\exp \Big(
\vec{\kappa} \circ
\big\langle
-f_{L}
\mid
\log{\zeta}_{p}^{\mathsf{rand}}(1/2+0+ix)
\big\rangle
\Big)
\Big]
\\
&
=
\sum_{m,n=0}^{\infty} 
H_{m}(\kappa_{1};p,L) H_{n}(\overline{\kappa_{2}};p,L)\, 
\mathbb{E} \Big[
\mathsf{X}(p)^{m} \overline{\mathsf{X}(p)}^{n}
\Big]
p^{-(m+n)/2}. 
\\
&
=
\sum_{m=0}^{\infty}
H_{m}(\kappa_{1};p,L) H_{m}(\kappa_{2};p,L)
p^{-m},
\end{align*}
which is the desired result. 
\end{proof}

We also require the following global result which is obtained as an application of the prime number theorem. 

\begin{lemma}\label{lem:4.4}
For any $L \geq1$, 
\begin{align}\label{eq:05202330}
\sum_{p} 
\log \big(1-p^{-1}\big)^{-1}
\exp \Big(- \frac{(\log{p})^{2}}{L}\Big)
=
\sum_{k=1}^{\infty}
\frac{1}{k}
\exp \Big(- \frac{k^{2}}{L}\Big)
+
O \Big(\frac{1}{\sqrt{L}}\Big)
\end{align}
with the absolute implied constant. 
\end{lemma}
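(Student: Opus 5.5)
The first step replaces $\log(1-p^{-1})^{-1}$ by $p^{-1}$. Since $\log(1-p^{-1})^{-1}=p^{-1}+O(p^{-2})$ and $\sum_p p^{-2}$ converges, this changes the left-hand side by
\begin{align*}
\sum_{p} O(p^{-2})\big(1-e^{-(\log p)^{2}/L}\big)\ll \sum_p p^{-2}\min\big(1,(\log p)^2/L\big)\ll \frac{1}{L}.
\end{align*}
However, the right-hand side also has no correction term of that kind. So it is cleaner to compare both sides with $\int$-type quantities, writing
\begin{align*}
\sum_k \frac1k e^{-k^2/L}=\tfrac12\log L+\tfrac{\gamma_0}{2}+\ldots,
\end{align*}
where $\gamma_0$ is Euler's constant. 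The plan is to show that both sides equal $\tfrac12\log L+C+O(L^{-1/2})$ with the same constant $C$.

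For the right-hand side I will use Euler--Maclaurin or Mellin: $\sum_k k^{-1}e^{-k^2/L}=\frac{1}{2\pi i}\int_{(c)}\Gamma(s)\zeta(1+2s)L^{s}\,ds$. Shifting to $\RE(s)=-1/2+\epsilon$ picks up the double pole at $s=0$, which gives $\tfrac12\log L+\gamma_0-\tfrac{\gamma_0}{2}$. Here $\Gamma$ contributes $-\gamma_0$ and $\zeta(1+2s)=\frac{1}{2s}+\gamma_0+\dots$, so the constant is $\tfrac{\gamma_0}{2}$. The remaining integral is $O(L^{-1/2+\epsilon})$. To get exactly $L^{-1/2}$ I will instead use the elementary comparison $\sum_k k^{-1}g(k)=\int_1^\infty u^{-1}g(u)\,du+\ldots$ with $g(u)=e^{-u^2/L}$; the error in Euler--Maclaurin is bounded by the total variation of $u^{-1}g(u)$ on intervals, plus a term of size $\int |g'(u)|/u\,du\ll L^{-1/2}$.

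For the left-hand side I will write it as a Stieltjes integral against $\sum_{p\le u}\log(1-p^{-1})^{-1}=\log\log u+\gamma_0+O(1/\log u)$. This is Mertens' theorem in the form $\prod_{p\le u}(1-p^{-1})^{-1}=e^{\gamma_0}\log u\,(1+O(1/\log u))$, which follows from the prime number theorem. Integrating by parts against $h(u)=e^{-(\log u)^2/L}$ and substituting $v=\log u$, the main term is $\int_{\log 2}^\infty v^{-1}e^{-v^2/L}\,dv$ plus the constant $\gamma_0$ in the appropriate form. The error is $\int O(1/v)\,|\frac{d}{dv}e^{-v^2/L}|\,dv\ll \int_0^\infty \frac{2}{L}e^{-v^2/L}\,dv\ll L^{-1/2}$. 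Doing the same for the integer sum, with $\sum_{k\le u}1/k=\log u+\gamma_0+O(1/u)$, gives the main term $\int_1^\infty v^{-1}e^{-v^2/L}dv$ plus $\gamma_0$-type constants, and an error $\int O(1/v)\,|d(e^{-v^2/L})|\ll L^{-1/2}$.

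The main obstacle is matching the constants exactly, so that no $O(1)$ discrepancy survives. The cleanest way is to write both sides uniformly as
\begin{align*}
\int_{1^-}^\infty e^{-v^2/L}\,dA(v),
\end{align*}
with $A(v)=\sum_{p\le e^v}\log(1-p^{-1})^{-1}$ for the left side and $B(v)=\sum_{k\le v}1/k$ for the right side. Mertens gives $A(v)=\log v+\gamma_0+O(1/v)$, and $B(v)=\log v+\gamma_0+O(1/v)$ for $v\ge1$. The constants agree, and this coincidence is exactly what Mertens' third theorem encodes. Then the difference of the two sides is $\int e^{-v^2/L}\,d(A-B)(v)$. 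Integrating by parts, with boundary terms vanishing since $A-B=O(1/v)$ and $e^{-v^2/L}\to0$, this equals $-\int (A-B)(v)\,d e^{-v^2/L}$. One must also handle the initial range near $v=1$ (and $A(v)=0$ for $v<\log2$), where $A-B$ is merely bounded, so the bound $|A-B|\ll 1/v$ holds for all $v\ge1/2$ say. Hence the difference is $\ll \int_0^\infty \min(1,1/v)\,\frac{v}{L}e^{-v^2/L}\,dv\ll L^{-1/2}$, with an absolute constant.
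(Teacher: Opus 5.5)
Your final paragraph is correct and matches the paper's proof. You write both sides as Stieltjes integrals of $e^{-v^2/L}$ against $A(v)=\sum_{p\le e^v}\log(1-p^{-1})^{-1}$ and $B(v)=\sum_{k\le v}1/k$, use Mertens' theorem and $B(v)=\log v+\gamma_0+O(1/v)$ to get $A-B\ll\min(1,1/v)$, and integrate by parts to reach $\ll L^{-1}\int_0^\infty e^{-v^2/L}\,dv\ll L^{-1/2}$; the paper does the same in the variable $x=e^v$. The earlier digressions are unnecessary, and the opening one should be dropped outright: replacing $\log(1-p^{-1})^{-1}$ by $p^{-1}$ shifts the sum by about the nonzero constant $\sum_p\big(\log(1-p^{-1})^{-1}-p^{-1}\big)$, not by $O(1/L)$.
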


\begin{proof}
By summation by parts, the left-hand side of \eqref{eq:05202330} is evaluated as
\begin{align*}
\sum_{p} 
\log \big(1-p^{-1}\big)^{-1}
\exp \Big(- \frac{(\log{p})^{2}}{L}\Big)
=
\frac{2}{L}
\int_{2}^{\infty}
A(x) \exp \Big(- \frac{(\log{x})^{2}}{L}\Big)
\frac{\log{x}}{x}
\,dx, 
\end{align*}
where we put $A(x)= \sum_{p \leq x} \log \big(1-p^{-1}\big)^{-1}$ for $x \geq2$. 
On the other hand, the main term of the right-hand side of \eqref{eq:05202330} is also evaluated as
\begin{align*}
\sum_{k=1}^{\infty}
\frac{1}{k}
\exp \Big(- \frac{k^{2}}{L}\Big)
&
=
\frac{2}{L}
\int_{1}^{\infty}
B(x) \exp \Big(- \frac{x^{2}}{L}\Big)
x 
\,dx
\\
&
=
\frac{2}{L}
\int_{e}^{\infty}
B(\log{x}) \exp \Big(- \frac{(\log{x})^{2}}{L}\Big)
\frac{\log{x}}{x} 
\,dx, 
\end{align*}
where we put $B(x)=\sum_{k \leq x} 1/k$ for $x \geq1$. 
These imply the formula
\begin{align*}
\sum_{p} 
\log \big(1-p^{-1}\big)^{-1}
\exp \Big(- \frac{(\log{p})^{2}}{L}\Big)
=
\sum_{k=1}^{\infty}
\frac{1}{k}
\exp \Big(- \frac{k^{2}}{L}\Big)
+
E,
\end{align*}
where $E$ is estimated as
\begin{align}\label{eq:05210055}
E
\ll
\frac{1}{L}
+
\frac{1}{L}
\int_{e}^{\infty}
|A(x)-B(\log{x})| 
\exp \Big(- \frac{(\log{x})^{2}}{L}\Big)
\frac{\log{x}}{x} 
\,dx. 
\end{align}
Let $\gamma=0.5772...$ denote Euler's constant. 
Then we have the asymptotic formulas
\begin{align*}
A(x)
=
\log\log{x}
+
\gamma
+
O((\log{x})^{-1})
\quad\text{and}\quad
B(x)
=
\log{x}
+
\gamma
+
O(x^{-1})
\end{align*}
which are valid for $x \geq K$ with some absolute constant $K \geq 2$. 
Indeed, the former immediately follows from the prime number theorem, and the latter is due to the Euler–Maclaurin formula. 
Inserting them to \eqref{eq:05210055}, we derive
\begin{align*}
E
&
\ll
\frac{1}{L}
+
\frac{1}{L}
\int_{K}^{\infty}
\exp \Big(- \frac{(\log{x})^{2}}{L}\Big)
\,\frac{dx}{x} 
=
\frac{1}{L}
+
\frac{1}{\sqrt{L}}
\int_{\frac{\log{K}}{\sqrt{L}}}^{\infty}
e^{-x^{2}}
\,dx
\ll
\frac{1}{\sqrt{L}}
\end{align*}
as desired. 
\end{proof}

Then, before proceeding to the proof of Theorem \ref{thm:4.1}, we associate the random hyperfunctions $\log{Z}^{\mathsf{Gauss}}(e^{-ix-0})$ and $\Xi^{\mathsf{Gauss}}(x-i0)$ by using Lemma \ref{lem:4.4}. 

\begin{proposition}\label{prop:4.5}
For any $\vec{\kappa}=(\kappa_{1},\kappa_{2}) \in \mathbb{C}^{2}$, 
\begin{align*}
\lim_{L \to \infty}
\frac{
\displaystyle{
\mathbb{E} \Big[
\exp \Big(
\vec{\kappa} \circ
\big\langle
-f_{L}
\mid
\log{Z}^{\mathsf{Gauss}}(e^{-ix-0})
\big\rangle
\Big)
\Big]
}}{
\displaystyle{
\mathbb{E} \Big[
\exp \Big(
\vec{\kappa} \circ
\big\langle
-f_{L}
\mid
\Xi^{\mathsf{Gauss}}(x-i0)
\big\rangle
\Big)
\Big]
}}
=
1, 
\end{align*}
where $f_{L} \in \mathfrak{P}^{*}$ is defined as \eqref{eq:05170402}. 
\end{proposition}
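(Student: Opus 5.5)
Both expectations are explicit Gaussian moment generating functions, so the plan is to compute them in closed form, take the ratio, and apply Lemma \ref{lem:4.4}.

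\textbf{Step 1: the Gaussian side.} Pair $-f_{L}$ termwise with $\log{Z}^{\mathsf{Gauss}}(e^{-ix-0})$ on the line $\IM(z)=-c$ and use \eqref{eq:05202234}. This gives
\begin{align*}
\big\langle -f_{L} \mid \log{Z}^{\mathsf{Gauss}}(e^{-ix-0}) \big\rangle
=
\sum_{k=1}^{\infty} \frac{\mathsf{W}(k)}{\sqrt{k}} \exp\Big(-\frac{k^{2}}{2L}\Big).
\end{align*}
The interchange of sum and integral is justified because the power series converges absolutely on $\IM(z)=-c$ almost surely, while $f_{L}$ decays rapidly there.

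\textbf{Step 2: the $\Xi$ side.} In the same way, using the computation in the proof of Lemma \ref{lem:4.2},
\begin{align*}
\big\langle -f_{L} \mid \Xi^{\mathsf{Gauss}}(x-i0) \big\rangle
=
\sum_{p} \sqrt{\log(1-p^{-1})^{-1}}\, \mathsf{W}(p) \exp\Big(-\frac{(\log p)^{2}}{2L}\Big).
\end{align*}

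\textbf{Step 3: the expectations.} Both series are sums of independent complex Gaussians whose coefficients are square-summable. For each truncation, \eqref{eq:05252109} and independence give the expectation of the exponential exactly:
\begin{align*}
\mathbb{E}\Big[\exp\Big(\vec{\kappa}\circ \big\langle -f_{L} \mid \log{Z}^{\mathsf{Gauss}}(e^{-ix-0}) \big\rangle\Big)\Big]
&=
\exp\Big(\kappa_{1}\kappa_{2}\sum_{k\ge1}\frac{1}{k}e^{-k^{2}/L}\Big),
\\
\mathbb{E}\Big[\exp\Big(\vec{\kappa}\circ \big\langle -f_{L} \mid \Xi^{\mathsf{Gauss}}(x-i0) \big\rangle\Big)\Big]
&=
\exp\Big(\kappa_{1}\kappa_{2}\sum_{p}\log(1-p^{-1})^{-1}e^{-(\log p)^{2}/L}\Big).
\end{align*}
Passing from the truncations to the full series needs uniform integrability, e.g.\ a uniform bound on second moments as in the proof of Corollary \ref{cor:3.11}. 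Since the $|\cdot|^{2}$ of the exponential is again of the form $\exp(\vec{\kappa'}\circ\lambda)$, its expectation is $\exp(|\kappa_{1}+\overline{\kappa_{2}}|^{2}\sum \cdots)$ with convergent sums. Also, almost sure convergence of the pairing follows from the convergence in $\mathfrak{Q}$ shown in Section \ref{sec:2.3}, together with continuity of $g\mapsto\langle -f_{L}\mid g\rangle$. Alternatively, one can argue directly that a convergent series of independent centered complex Gaussians is itself complex Gaussian, with variance equal to the sum of the variances.

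\textbf{Step 4: the ratio.} Dividing the two expressions, the ratio equals
\begin{align*}
\exp\bigg(\kappa_{1}\kappa_{2}\Big(\sum_{k\ge1}\frac{1}{k}e^{-k^{2}/L} - \sum_{p}\log(1-p^{-1})^{-1}e^{-(\log p)^{2}/L}\Big)\bigg)
=
\exp\big(O(|\kappa_{1}\kappa_{2}|L^{-1/2})\big)
\end{align*}
by Lemma \ref{lem:4.4}. This tends to $1$ as $L\to\infty$, which proves the proposition.

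The only delicate point is Step 3, namely justifying that the expectation of the exponential of the infinite Gaussian series equals the limit of the truncated products. Everything else is direct computation.
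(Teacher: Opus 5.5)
Your proposal is correct and follows the paper's proof essentially step for step: compute both pairings via \eqref{eq:05202234}, evaluate the expectations using \eqref{eq:05252109} and independence, and conclude with Lemma \ref{lem:4.4}; your Step 3 only makes explicit the passage from truncations to the full series, which the paper leaves implicit. One small repair is needed in Step 2: on the line $\IM(z)=-c$ the terms of $\Xi^{\mathsf{Gauss}}$ have size $\asymp|\mathsf{W}(p)|p^{-1/2-c}$, so ``in the same way'' (absolute convergence) only works if you take $c\in(1/2,1)$, which is allowed since $\widetilde{\varphi}^{\mathsf{Gauss}}\in\widetilde{\mathfrak{P}}_{1}$; alternatively, use the $\mathfrak{Q}$-convergence of the partial sums that you already invoke in Step 3.
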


\begin{proof}
By the definitions of $\log{Z}^{\mathsf{Gauss}}(e^{-ix-0})$ and $\Xi^{\mathsf{Gauss}}(x-i0)$, we have 
\begin{align*}
\big\langle
-f_{L}
\mid
\log{Z}^{\mathsf{Gauss}}(e^{-ix-0})
\big\rangle
&
=
\sum_{k=1}^{\infty} 
\frac{1}{\sqrt{k}}
\mathsf{W}(k)
\exp \Big(- \frac{k^{2}}{2L}\Big), 
\\
\big\langle
-f_{L}
\mid
\Xi^{\mathsf{Gauss}}(x-i0)
\big\rangle
&
=
\sum_{p} 
\sqrt{\log \big(1-p^{-1}\big)^{-1}}
\mathsf{W}(p)
\exp \Big(- \frac{(\log{p})^{2}}{2L}\Big) 
\end{align*}
by using \eqref{eq:05202234}. 
Therefore, by the independences of $(\mathsf{W}(k))_{k}$ and $(\mathsf{W}(p))_{p}$, we derive
\begin{align*}
&
\frac{
\displaystyle{
\mathbb{E} \Big[
\exp \Big(
\vec{\kappa} \circ
\big\langle
-f_{L}
\mid
\log{Z}^{\mathsf{Gauss}}(e^{-ix-0})
\big\rangle
\Big)
\Big]
}}{
\displaystyle{
\mathbb{E} \Big[
\exp \Big(
\vec{\kappa} \circ
\big\langle
-f_{L}
\mid
\Xi^{\mathsf{Gauss}}(x-i0)
\big\rangle
\Big)
\Big]
}}
\\
&
=
\exp\bigg(
\kappa_{1} \kappa_{2}
\bigg\{
\sum_{k=1}^{\infty}
\frac{1}{k}
\exp \Big(- \frac{k^{2}}{L}\Big)
-
\sum_{p} 
\log \big(1-p^{-1}\big)^{-1}
\exp \Big(- \frac{(\log{p})^{2}}{L}\Big)
\bigg\}
\bigg)
\end{align*} 
using \eqref{eq:05252109}. 
By Lemma \ref{lem:4.4}, this converges to $1$ as $L \to \infty$. 
\end{proof}

\begin{proof}[Proof of Theorem \ref{thm:4.1}]
Let $p$ be a prime number and $L \geq1$. 
We define
\begin{align*}
a_{p,L}(\vec{\kappa})
&
=
\frac{
\displaystyle{
\mathbb{E} \Big[
\exp \Big(
\vec{\kappa} \circ
\big\langle
-f_{L}
\mid
\log{\zeta}_{p}^{\mathsf{rand}}(1/2+0+ix)
\big\rangle
\Big)
\Big]
}}{
\displaystyle{
\mathbb{E} \Big[
\exp \Big(
\vec{\kappa} \circ
\big\langle
-f_{L}
\mid
\widetilde{g}_{p}^{\mathsf{Gauss}}(x-i0)
\big\rangle
\Big)
\Big]
}},
\\
a_{p}(\vec{\kappa})
&
=
\big(1-p^{-1}\big)^{\kappa_{1} \kappa_{2}}
\sum_{m=0}^{\infty}
\frac{\Gamma(\kappa_{1}+m)}{m!\, \Gamma(\kappa_{1})}
\frac{\Gamma(\kappa_{2}+m)}{m!\, \Gamma(\kappa_{2})}
p^{-m}. 
\end{align*}
To begin with, we will show $a_{p,L}(\vec{\kappa}) \to a_{p}(\vec{\kappa})$ as $L \to \infty$ for each prime number $p$. 
By Lemmas \ref{lem:4.2} and \ref{lem:4.3}, we represent $a_{p,L}(\vec{\kappa})$ as 
\begin{align}\label{eq:05211455}
a_{p,L}(\vec{\kappa})
=
\sum_{m=0}^{\infty}
G_{m}(-\kappa_{1}\kappa_{2};p,L)
p^{-m}
\cdot
\sum_{m=0}^{\infty}
H_{m}(\kappa_{1};p,L) H_{m}(\kappa_{2};p,L)
p^{-m}. 
\end{align}
The coefficients $G_{m}(-\kappa_{1}\kappa_{2};p,L)$ satisfy the conditions
\begin{align}\label{eq:05211514}
\sup_{L \geq1}
\big| G_{m}(-\kappa_{1}\kappa_{2};p,L) \big|
&
\leq
\frac{\Gamma(|\kappa_{1}\kappa_{2}|+m)}{m!\, \Gamma(|\kappa_{1}\kappa_{2}|)}, 
\\
\nonumber
\lim_{L \to \infty}
G_{m}(-\kappa_{1}\kappa_{2};p,L)
&
=
\frac{\Gamma(-\kappa_{1}\kappa_{2}+m)}{m!\, \Gamma(-\kappa_{1}\kappa_{2})} 
\end{align}
for all $m \geq 0$. 
Hence the dominated convergence theorem yields
\begin{align}\label{eq:05211456}
\lim_{L \to \infty}
\sum_{m=0}^{\infty}
G_{m}(-\kappa_{1}\kappa_{2};p,L)
p^{-m}
=
\sum_{m=0}^{\infty}
\frac{\Gamma(-\kappa_{1}\kappa_{2}+m)}{m!\, \Gamma(-\kappa_{1}\kappa_{2})} 
p^{-m}
=
\big(1-p^{-1}\big)^{\kappa_{1}\kappa_{2}}
\end{align}
due to \eqref{eq:05211442}. 
Then, we also see that the coefficients $H_{m}(\kappa_{1};p,L) H_{m}(\kappa_{2};p,L)$ satisfy 
\begin{align}\label{eq:05211515}
\sup_{L \geq1}
\big| H_{m}(\kappa_{1};p,L) H_{m}(\kappa_{2};p,L) \big|
&
\leq
\frac{\Gamma(|\kappa_{1}|+m)}{m!\, \Gamma(|\kappa_{1}|)} 
\frac{\Gamma(|\kappa_{2}|+m)}{m!\, \Gamma(|\kappa_{2}|)}, 
\\
\nonumber
\lim_{L \to \infty}
H_{m}(\kappa_{1};p,L) H_{m}(\kappa_{2};p,L)
&
=
\frac{\Gamma(\kappa_{1}+m)}{m!\, \Gamma(\kappa_{1})} 
\frac{\Gamma(\kappa_{2}+m)}{m!\, \Gamma(\kappa_{2})} 
\end{align}
for all $m \geq 0$ by recalling the identity
\begin{align*}
\sum_{n=1}^{m} 
\frac{\kappa^{n}}{n!}
\sum_{\substack{k_{1}+\cdots+k_{n}=m \\ \forall j,~ k_{j} \geq 1}}
\frac{1}{k_{1} \cdots k_{n}}
=
\frac{\Gamma(\kappa+m)}{m!\, \Gamma(\kappa)}. 
\end{align*}
Therefore, we again apply the dominated convergence theorem to derive
\begin{align}\label{eq:05211457}
\lim_{L \to \infty}
\sum_{m=0}^{\infty}
H_{m}(\kappa_{1};p,L) H_{m}(\kappa_{2};p,L)
p^{-m} 
=
\sum_{m=0}^{\infty}
\frac{\Gamma(\kappa_{1}+m)}{m!\, \Gamma(\kappa_{1})}
\frac{\Gamma(\kappa_{2}+m)}{m!\, \Gamma(\kappa_{2})}
p^{-m}. 
\end{align}
By \eqref{eq:05211455}, \eqref{eq:05211456} and \eqref{eq:05211457}, we have $a_{p,L}(\vec{\kappa}) \to a_{p}(\vec{\kappa})$ as $L \to \infty$. 
Then, we note that
\begin{align*}
\frac{
\displaystyle{
\mathbb{E} \Big[
\exp \Big(
\vec{\kappa} \circ
\big\langle
-f_{L}
\mid
\log{\zeta}^{\mathsf{rand}}(1/2+0+ix)
\big\rangle
\Big)
\Big]
}}{
\displaystyle{
\mathbb{E} \Big[
\exp \Big(
\vec{\kappa} \circ
\big\langle
-f_{L}
\mid
\Xi^{\mathsf{Gauss}}(x-i0)
\big\rangle
\Big)
\Big]
}}
=
\prod_{p} 
a_{p,L}(\vec{\kappa})
\end{align*}
by the independences of $(\mathsf{X}(p))_{p}$ and $(\mathsf{W}(p))_{p}$. 
Furthermore, we have 
\begin{align*}
a(\vec{\kappa})
=
\prod_{p} 
a_{p}(\vec{\kappa})
\end{align*}
by definition. 
In order to show that $\prod_{p} a_{p,L}(\vec{\kappa}) \to \prod_{p} a_{p}(\vec{\kappa})$ as $L \to \infty$, it suffices to find a sequence of positive real numbers $\beta_{p}(\vec{\kappa})$ such that
\begin{align}\label{eq:05211529}
\sup_{L \geq1}
\big|a_{p,L}(\vec{\kappa})-1\big|
\leq
\beta_{p}(\vec{\kappa})
\quad\text{and}\quad
\sum_{p} \beta_{p}(\vec{\kappa})
<
\infty 
\end{align}
by the dominated convergence theorem. 
By \eqref{eq:05211455}, we obtain
\begin{align*}
a_{p,L}(\vec{\kappa})
=
\sum_{m=0}^{\infty}
D_{m}(\vec{\kappa};p,L)
p^{-m}
\end{align*}
with the coefficients $D_{m}(\vec{\kappa};p,L)$ determined by
\begin{align*}
D_{m}(\vec{\kappa};p,L)
=
\sum_{n=0}^{m}
G_{n}(-\kappa_{1}\kappa_{2};p,L)
H_{m-n}(\kappa_{1};p,L) H_{m-n}(\kappa_{2};p,L). 
\end{align*}
Obviously, we have $D_{0}(\vec{\kappa};p,L)=1$. 
We also obtain
\begin{align*}
D_{1}(\vec{\kappa};p,L)
=
G_{1}(-\kappa_{1}\kappa_{2};p,L)
+
H_{1}(\kappa_{1};p,L) H_{1}(\kappa_{2};p,L)
=
0.
\end{align*}
Furthermore, using \eqref{eq:05211514} and \eqref{eq:05211515}, 
\begin{align*}
\sup_{L \geq1}
\big|D_{m}(\vec{\kappa};p,L)\big|
&
\leq
\sum_{n=0}^{m}
\frac{\Gamma(|\kappa_{1}\kappa_{2}|+n)}{n!\, \Gamma(|\kappa_{1}\kappa_{2}|)}
\frac{\Gamma(|\kappa_{1}|+m-n)}{(m-n)!\, \Gamma(|\kappa_{1}|)} 
\frac{\Gamma(|\kappa_{2}|+m-n)}{(m-n)!\, \Gamma(|\kappa_{2}|)} 
\\
&
=
B_{m}(\vec{\kappa}),
\end{align*}
say, for all $m \geq 0$. 
From the above, we have 
\begin{align*}
\sup_{L \geq1}
\big|a_{p,L}(\vec{\kappa})-1\big|
\leq
\sum_{m=2}^{\infty}
B_{m}(\vec{\kappa})
p^{-m}
\leq
4p^{-2}
\sum_{m=0}^{\infty}
B_{m}(\vec{\kappa})
2^{-m}. 
\end{align*}
Then it can be confirmed that
\begin{align*}
\sum_{m=0}^{\infty}
B_{m}(\vec{\kappa})
2^{-m}
&
=
\sum_{m=0}^{\infty}
\frac{\Gamma(|\kappa_{1}\kappa_{2}|+m)}{m!\, \Gamma(|\kappa_{1}\kappa_{2}|)}
2^{-m}
\cdot
\sum_{m=0}^{\infty}
\frac{\Gamma(|\kappa_{1}|+m)}{m!\, \Gamma(|\kappa_{1}|)} 
\frac{\Gamma(|\kappa_{2}|+m)}{m!\, \Gamma(|\kappa_{2}|)} 
2^{-m}
\\
&
<
\infty. 
\end{align*}
Thus we obtain \eqref{eq:05211529} due to $\sum_{p} p^{-2}<\infty$. 
As a result, we arrive at the formula
\begin{align*}
\lim_{L \to \infty}
\frac{
\displaystyle{
\mathbb{E} \Big[
\exp \Big(
\vec{\kappa} \circ
\big\langle
-f_{L}
\mid
\log{\zeta}^{\mathsf{rand}}(1/2+0+ix)
\big\rangle
\Big)
\Big]
}}{
\displaystyle{
\mathbb{E} \Big[
\exp \Big(
\vec{\kappa} \circ
\big\langle
-f_{L}
\mid
\Xi^{\mathsf{Gauss}}(x-i0)
\big\rangle
\Big)
\Big]
}}
=
a_{p}(\vec{\kappa}). 
\end{align*} 
Then, combined with Proposition \ref{prop:4.5}, the conclusion follows. 
\end{proof}

\subsection{Completion of the proof}\label{sec:4.2}

\begin{lemma}\label{lem:4.6}
Let $F:\mathbb{R} \to \mathbb{C}$ be a Borel measurable function which is continuous at the origin and satisfies
\begin{align*}
\sup_{L \geq 1} 
\int_{-\infty}^{\infty}
f_{L}(x) H \big(|F(x)| \big)
\,dx
<
\infty
\end{align*}
for some Borel measurable function $H: [0,\infty) \to [0,\infty)$ such that $H(x)/x \to \infty$ as $x \to \infty$, where $f_{L} \in \mathfrak{P}^{*}$ is defined as \eqref{eq:05170402}. 
Then we obtain
\begin{align*}
\lim_{L \to \infty}
\int_{-\infty}^{\infty} 
f_{L}(x) F(x) 
\,dx
=
F(0). 
\end{align*}
\end{lemma}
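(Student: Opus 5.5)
The plan is to treat $f_{L}(x)\,dx$ as a probability measure and use uniform integrability, in the spirit of Lemma \ref{lem:3.3}. The starting observation is that $f_{L}(x)>0$ on $\mathbb{R}$ and $\int_{\mathbb{R}} f_{L}(x)\,dx=1$. Hence $f_{L}(x)\,dx$ is the law of a real Gaussian random variable $\mathsf{G}_{L}$ with mean $0$ and variance $1/L$, and
\begin{align*}
\int_{-\infty}^{\infty} f_{L}(x) F(x)\,dx=\mathbb{E}[F(\mathsf{G}_{L})].
\end{align*}
The hypothesis then reads $\sup_{L\ge1}\mathbb{E}[H(|F(\mathsf{G}_{L})|)]<\infty$.

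The argument splits into three steps.

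\textbf{Step 1.} Show that $\mathsf{G}_{L}\xrightarrow{\mathcal{D}}0$. All variables can be realized on one probability space by setting $\mathsf{G}_{L}=\mathsf{G}_{1}/\sqrt{L}$, so in fact $\mathsf{G}_{L}\to0$ surely.

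\textbf{Step 2.} Pass to $F(\mathsf{G}_{L})$. Since $F$ is continuous at the origin, $F(\mathsf{G}_{L})\to F(0)$ pointwise on this space. In particular $F(\mathsf{G}_{L})\xrightarrow{\mathcal{D}}F(0)$ as complex random variables. Only continuity at $0$ is used here, and Borel measurability of $F$ is what makes $F(\mathsf{G}_{L})$ a random variable.

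\textbf{Step 3.} Apply Lemma \ref{lem:3.3}, indexed along an arbitrary sequence $L_{n}\to\infty$, with $X_{n}=F(\mathsf{G}_{L_{n}})$ and $X=F(0)$. This gives $\mathbb{E}[F(\mathsf{G}_{L_{n}})]\to F(0)$. Since the sequence was arbitrary, the limit holds as $L\to\infty$.

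Alternatively, Step 3 can be done by hand, which avoids Skorokhod coupling altogether. Fix $\epsilon>0$ and choose $\eta>0$ with $|F(x)-F(0)|<\epsilon$ for $|x|<\eta$. Split the integral of $f_{L}(x)\,(F(x)-F(0))$ into the regions $|x|<\eta$ and $|x|\ge\eta$.

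The first region contributes at most $\epsilon$.

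For the second region, use the de la Vallée-Poussin bound: for each $R>0$,
\begin{align*}
\int_{|x|\ge\eta} f_{L}|F|\,dx\le R\int_{|x|\ge\eta}f_{L}\,dx+\Big(\sup_{u>R}\frac{u}{H(u)}\Big)\sup_{L\ge1}\int f_{L}H(|F|)\,dx.
\end{align*}
The Gaussian tail $\int_{|x|\ge\eta}f_{L}\,dx$ tends to $0$ as $L\to\infty$. Because $H(u)/u\to\infty$, the factor $\sup_{u>R}u/H(u)$ can be made small by taking $R$ large. The term $|F(0)|\int_{|x|\ge\eta}f_{L}\,dx$ also tends to $0$.

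The main obstacle is the uniform-integrability step, namely controlling $F$ on the tails, where it may be unbounded. The superlinear growth of $H$ is exactly what handles this: first choose $R$ large, then $L$ large.
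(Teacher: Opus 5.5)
Your proposal is correct and follows the paper's route: view $f_{L}(x)\,dx$ as the law of a random variable, deduce that $F(\mathsf{G}_{L})\to F(0)$ in distribution, and conclude with the uniform-integrability Lemma \ref{lem:3.3}. The difference is that your explicit coupling $\mathsf{G}_{L}=\mathsf{G}_{1}/\sqrt{L}$ gives sure convergence directly, so you skip the paper's detour through weak convergence to $\delta$, the Skorohod coupling of Lemma \ref{lem:3.2}, and the continuous mapping theorem for $F$ with $\delta(U_{F})=0$; your by-hand de la Vallée-Poussin splitting is also a valid, fully elementary replacement for Lemma \ref{lem:3.3}.
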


\begin{proof}
Assume that $F$ is continuous and bounded. 
Then we have
\begin{align*}
\int_{-\infty}^{\infty} 
f_{L}(x) F(x) 
\,dx
=
\int_{-\infty}^{\infty} 
f_{1}(x) F \big(x/\sqrt{L} \big) 
\,dx
\to 
\int_{-\infty}^{\infty} 
f_{1}(x) F(0) 
\,dx
=
F(0)
\end{align*}
as $L \to \infty$ by the dominated convergence theorem. 
This shows that the probability measure $f_{L}(x) \,dx$ converges weakly as $L \to \infty$ to the Dirac measure $\delta$ on $(\mathbb{R},\mathcal{B}(\mathbb{R}))$. 
By Lemma \ref{lem:3.2}, we have random variables $X_{L}$ and $X$ taking values in $\mathbb{R}$ defined on a common probability space $(\Omega,\mathcal{F},\mathbb{P})$ such that
\begin{align}\label{eq:05211648}
P_{X_{L}}
=
f_{L}(x) \,dx, 
\quad
P_{X}
=
\delta, 
\end{align}
and $X_{L} \to X$ almost surely as $L \to \infty$. 
Let $F: \mathbb{R} \to \mathbb{C}$ be as in the statement. 
Denote by $U_{F}$ the set of points of discontinuity of $F$. 
Then \eqref{eq:05211648} derives
\begin{align*}
\mathbb{P}(X \in U_{F})
=
\delta(U_{F})
=
0
\end{align*}
since $F$ is continuous at the origin. 
Hence, we deduce from \cite[Theorem 13.25]{Klenke2020} that $F(X_{L}) \xrightarrow{\mathcal{D}} F(X)$ as $L \to \infty$. 
Furthermore, we have
\begin{align*}
\sup_{L \geq1} 
\mathbb{E} \big[H \big( |F(X_{L})| \big)\big]
=
\sup_{L \geq 1} 
\int_{-\infty}^{\infty}
f_{L}(x) H \big(|F(x)| \big)
\,dx
<
\infty
\end{align*}
by \eqref{eq:05211648} and the assumption. 
Therefore Lemma \ref{lem:3.3} yields 
\begin{align*}
\lim_{L \to \infty}
\mathbb{E}[F(X_{L})]
=
\mathbb{E}[F(X)], 
\end{align*}
which is just the desired formula by \eqref{eq:05211648}. 
\end{proof}

Then we apply this lemma to $\log^{*}{\zeta}(1/2+ix+it)$ and $\log{Z}(e^{-ix};U)$. 

\begin{lemma}\label{lem:4.7}
Let $f_{L} \in \mathfrak{P}^{*}$ be defined as \eqref{eq:05170402}. 
Assuming RH, we have
\begin{align*}
\lim_{L \to \infty}
\int_{-\infty}^{\infty} 
f_{L}(x) \log^{*}{\zeta}(1/2+ix+it)
\,dx
=
\log{\zeta}(1/2+it)
\end{align*}
for any $t>0$ with $\zeta(1/2+it) \neq 0$. 
Furthermore, we have
\begin{align*}
\lim_{L \to \infty}
\int_{-\infty}^{\infty} 
f_{L}(x) \log{Z}(e^{-ix};U)
\,dx
=
\log{Z}(1;U)
\end{align*}
for any $U \in \mathcal{U}(N)$ with $Z(1;U) \neq 0$. 
\end{lemma}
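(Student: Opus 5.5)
The plan is to apply Lemma \ref{lem:4.6} with $H(x)=x^{2}$ to each function. For the zeta part set $F(x)=\log^{*}{\zeta}(1/2+ix+it)$; for the matrix part set $F(x)=\log{Z}(e^{-ix};U)$. Two things then need checking: continuity of $F$ at the origin, with $F(0)$ equal to the stated value, and the uniform bound $\sup_{L\ge1}\int f_{L}(x)|F(x)|^{2}\,dx<\infty$.

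\textbf{Continuity.} For the zeta case, fix $t>0$ with $\zeta(1/2+it)\neq0$. Then $1/2+it$ is not the endpoint of any removed segment, and since $t>0$ it does not lie on $[1,\infty)$. So $1/2+it\in\mathcal{G}^{*}$, and a small neighbourhood of it lies in $\mathcal{G}^{*}\cap\{\IM(s)>0\}$. On this neighbourhood $\log^{*}{\zeta}=\log{\zeta}$ by \eqref{eq:05152038}, and the function is holomorphic there. Hence $F$ is continuous at $0$ with $F(0)=\log{\zeta}(1/2+it)$, and $F$ is Borel measurable, being continuous off a discrete set.

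For the matrix case, $Z(1;U)\neq0$ means no eigenangle satisfies $\theta_{n}\equiv0$. The finite sum $\sum_{n}\log(1-e^{-ix}e^{i\theta_{n}})$ is therefore continuous near $x=0$, and it agrees with \eqref{eq:05091441}. So $F(0)=\log{Z}(1;U)$.

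\textbf{Uniform bound, matrix case.} For real $x$, each term satisfies
\[
|\log(1-e^{i(\theta_{n}-x)})|\le \bigl|\log|2\sin((\theta_{n}-x)/2)|\bigr|+\pi .
\]
Thus $|F(x)|^{2}\ll_{N}\sum_{n}\bigl(\log|\sin((\theta_{n}-x)/2)|\bigr)^{2}+1$. This is a periodic function that is integrable over a period, because $(\log|x|)^{2}$ is locally integrable. Since $f_{L}(x)\le\sqrt{L/2\pi}$, the contribution from $|x|\le1$ is not obviously bounded, so it is better to argue with periodicity. Write $g$ for the periodic majorant, and split $\int f_{L}g$ into windows of length $2\pi$. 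On each window, $f_{L}$ is essentially constant for large $L$ except near the origin. The risky window is the one containing $0$. There I use that $g$ is bounded near $0$, because the singularities of $g$ sit at the $\theta_{n}$, which stay a positive distance from $0$. Away from $0$, $f_{L}$ is bounded uniformly in $L\ge1$, since $\sup_{|x|\ge\eta}f_{L}(x)\ll_{\eta}1$. Combining the two regions gives the uniform bound.

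\textbf{Uniform bound, zeta case.} Use \eqref{eq:06011532} on the line $\sigma=1/2$. It gives
\[
|F(x)|\ll \sum_{|\gamma-x-t|\le1}\bigl(|\log|x+t-\gamma||+\pi\bigr)+\log(|x|+|t|+2),
\]
where the terms $\log(s-\rho)$ have modulus at most $|\log|x+t-\gamma||+\pi$ under RH. By \eqref{eq:05081444}, there are $O(\log(|x|+|t|+2))$ zeros in each window. Cauchy--Schwarz then gives
\[
|F(x)|^{2}\ll \log(|x|+|t|+2)\sum_{|\gamma-x-t|\le1}(\log|x+t-\gamma|)^{2}+\log^{2}(|x|+|t|+2).
\]
As before, split $\mathbb{R}$ into a neighbourhood $|x|<\eta$ and its complement. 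For $|x|<\eta$, with $\eta$ smaller than the distance from $t$ to the nearest ordinate, $F$ is bounded and $\int f_{L}\le1$. For $|x|\ge\eta$, $f_{L}(x)\ll_{\eta}e^{-x^{2}/2}$ uniformly for $L\ge1$. It remains to integrate $e^{-x^{2}/2}$ against the majorant. Each zero contributes $\int_{|u|\le1}(\log|u|)^{2}du<\infty$ times $\max e^{-x^{2}/2}$ over its window, and the zero counts grow only logarithmically, so the sum converges.

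The main obstacle is this uniform integrability near the singularities. The handling of $f_{L}$'s concentration at $0$ versus the logarithmic singularities at ordinates (or eigenangles) is where care is needed, in particular the Gaussian bound $\sup_{L\ge1,|x|\ge\eta}f_{L}(x)e^{x^{2}/2}<\infty$. To verify it, note that $\sqrt{L}e^{-(L-1)x^{2}/2}$ is bounded in $L$ for fixed $|x|\ge\eta$, and uniformly so. With continuity and the uniform bound in hand, Lemma \ref{lem:4.6} gives both limits.
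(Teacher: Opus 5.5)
Your proof is correct. Its skeleton matches the paper's: Lemma~\ref{lem:4.6} with $H(x)=x^{2}$, continuity at the origin via \eqref{eq:05152038}, and the same Cauchy--Schwarz majorant built from \eqref{eq:06011532} and \eqref{eq:05081444} (respectively from $|1-e^{i\phi}|=2|\sin(\phi/2)|$).

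You differ from the paper in how you get the bound uniform in $L$. You use that the singularities (ordinates, respectively eigenangles) lie at a positive distance $\eta$ from the point where $f_{L}$ concentrates. On $|x|<\eta$ the function $F$ is bounded and $\int f_{L}\le1$. On $|x|\ge\eta$ you dominate $f_{L}$ by the fixed Gaussian $C_{\eta}e^{-x^{2}/2}$, which reduces everything to a single integral independent of $L$. The paper instead works zero by zero, using the pointwise bounds $f_{L}(x)\le|x|^{-1}$ and $f_{L}(x)\le x^{-2}$ and splitting each local integral at $|y|=|t-\gamma|/3$.

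Your route is shorter and more transparent. The paper's route yields an explicit majorant,
$\ll\sum_{|t-\gamma|\le2}\log(|\gamma|+3)(\log|t-\gamma|)^{2}+\sum_{|t-\gamma|>2}\log(|\gamma|+3)(t-\gamma)^{-2}$,
which records how the bound depends on the distance from $t$ to the zeros. That quantitative form, with $\epsilon|t-\gamma|$ or $\epsilon|\theta_{n}|$ in place of $|t-\gamma|/3$, is exactly what the paper reuses in Propositions~\ref{prop:4.8} and~\ref{prop:4.9}. There it needs majorants integrable in $t$ or in $U$. Your qualitative bound, with $\eta=\eta(t)$ and an unquantified $\sup_{|x|<\eta}|F|$, does not supply this.

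One point in your matrix case needs tightening. Uniform boundedness of $f_{L}$ on $|x|\ge\eta$ is not enough on its own, because the periodic majorant $g$ satisfies $\int_{|x|\ge\eta}g\,dx=\infty$. Your remark that $f_{L}$ is ``essentially constant'' on windows is also not correct. You need the Gaussian domination $f_{L}\ll_{\eta}e^{-x^{2}/2}$ on $|x|\ge\eta$ here too, exactly as in the zeta case, and you do prove it at the end. With it, $\int e^{-x^{2}/2}g\,dx<\infty$ follows from periodicity and the local integrability of $(\log|\sin y|)^{2}$, and the argument closes.
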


\begin{proof}
Assuming RH, we denote by $\rho=1/2+i \gamma$ the nontrivial zeros of $\zeta(s)$.
Let $t \neq \gamma$ for all $\gamma$. 
Then $f_{L}(x) \log^{*}{\zeta}(1/2+ix+it)$ is regarded as a Borel measurable function which is continuous at $x=0$. 
By Lemma \ref{lem:4.6}, the result follows if we have
\begin{align}\label{eq:05232155}
\sup_{L \geq1}
\int_{-\infty}^{\infty}
f_{L}(x) |\log^{*}{\zeta}(1/2+ix+it)|^{2}
\,dx
<
\infty 
\end{align}
due to $\log^{*}{\zeta}(1/2+it)=\log{\zeta}(1/2+it)$ for $t>0$ by \eqref{eq:05152038}. 
Let $x \in \mathbb{R}$ with $x+t \neq \gamma$ for all $\gamma$. 
Then we deduce from \eqref{eq:06011532} that
\begin{align*}
&
|\log^{*}{\zeta}(1/2+ix+it)|^{2}
\\
&
\leq
\bigg\{
\sum_{ |\gamma-x-t| \leq 1}
\big	| \log|\gamma-x-t| \big|
+
O \big(\log(|x+t|+2)\big)
\bigg\}^{2}
\\
&
\ll
\log(|x+t|+2)
\sum_{ |\gamma-x-t| \leq 1}
(\log|\gamma-x-t|)^{2}
+
\big(\log(|x+t|+2)\big)^{2}
\end{align*}
by the Cauchy--Schwarz inequality and \eqref{eq:05081444}, where the implied constant is absolute. 
This yields that the integral of the left-hand side of \eqref{eq:05232155} is estimated as
\begin{align*}
&
\int_{-\infty}^{\infty}
f_{L}(x) |\log^{*}{\zeta}(1/2+ix+it)|^{2}
\,dx
\\
&
\ll
\int_{-\infty}^{\infty}
f_{L}(x) \log(|x+t|+2)
\sum_{ |\gamma-x-t| \leq 1}
(\log|\gamma-x-t|)^{2}
\,dx
\\
&
\quad
+
\int_{-\infty}^{\infty}
f_{L}(x) \big(\log(|x+t|+2)\big)^{2}
\,dx
\\
&
=
A_{L}(t)+B_{L}(t),
\end{align*}
say. 
Firstly, we consider the contribution of the first term. 
It is evaluated as
\begin{align}\label{eq:05232146}
A_{L}(t)
&
=
\sum_{\gamma}
\int_{-1}^{1}
f_{L}(y+\gamma-t) \log(|y+\gamma|+2)
(\log|y|)^{2}
\,dy
\\
\nonumber
&
\leq
\sum_{\gamma}
\log(|\gamma|+3)
\int_{-1}^{1}
f_{L}(y+\gamma-t) 
(\log|y|)^{2}
\,dy. 
\end{align}
Then we divide the sum over $\gamma$ into those for $|t-\gamma| \leq 2$ and those for $|t-\gamma|>2$. 
For each $\gamma$ such that $|t-\gamma| \leq 2$, we obtain
\begin{align*}
&
\int_{-1}^{1}
f_{L}(y+\gamma-t) 
(\log|y|)^{2}
\,dy
\\
&
=
\bigg(
\int_{-1}^{-|t-\gamma|/3}
+
\int_{-|t-\gamma|/3}^{|t-\gamma|/3}
+
\int_{|t-\gamma|/3}^{1}
\bigg)
f_{L}(y+\gamma-t) 
(\log|y|)^{2}
\,dy
\\
&
\leq
\Big(\log{\frac{|t-\gamma|}{3}}\Big)^{2}
\int_{-\infty}^{\infty}
f_{L}(y+\gamma-t) 
\,dy
+
f_{L} \Big(\frac{2|t-\gamma|}{3} \Big) 
\int_{-|t-\gamma|/3}^{|t-\gamma|/3}
(\log|y|)^{2}
\,dy
\\
&
\ll
(\log{|t-\gamma|})^{2}
\end{align*}
with the absolute implied constant, where we used the estimate 
\begin{align}\label{eq:005252129}
f_{L}(x) 
=
\sqrt{\frac{L}{2\pi}} 
\exp \Big(-\frac{L x^{2}}{2} \Big)
\leq
\sqrt{\frac{L}{2\pi}} 
\Big(\frac{L x^{2}}{2}\Big)^{-1/2}
\leq 
|x|^{-1}
\end{align}
which is valid for all $x \neq 0$ and $L \geq1$. 
On the other hand, for each $\gamma$ such that $|t-\gamma|>2$, we obtain uniformly
\begin{align*}
\int_{-1}^{1}
f_{L}(y+\gamma-t) 
(\log|y|)^{2}
\,dy
&
\leq
f_{L}(|t-\gamma|-1)
\int_{-1}^{1}
(\log{|y|})^{2}
\,dy
\\
&
\ll
(t-\gamma)^{-2}
\end{align*}
by using the estimate
\begin{align}\label{eq:005252130}
f_{L}(x) 
=
\sqrt{\frac{L}{2\pi}} 
\exp \Big(-\frac{L x^{2}}{2} \Big)
\leq
\sqrt{\frac{L}{2\pi}} 
\Big(\frac{L x^{2}}{2}\Big)^{-1}
\leq 
x^{-2}
\end{align}
which is valid for all $x \neq 0$ and $L \geq1$. 
Combining these results, we deduce from \eqref{eq:05232146} that
\begin{align*}
A_{L}(t)
\ll
\sum_{|t-\gamma| \leq 2} 
\log(|\gamma|+3)(\log{|t-\gamma|})^{2}
+
\sum_{|t-\gamma|>2} 
\frac{\log(|\gamma|+3)}{(t-\gamma)^2}
<
\infty, 
\end{align*}
where the implied constant is absolute. 
Furthermore, we derive
\begin{align*}
B_{L}(t)
&
=
\int_{-\infty}^{\infty}
f_{1}(x) \Big\{\log \Big( \Big|\frac{x}{\sqrt{L}}+t \Big| +2 \Big)\Big\}^{2}
\,dx
\\
&
\leq
\int_{-\infty}^{\infty}
f_{1}(x) \big(\log(|x|+|t|+2)\big)^{2}
\,dx
<
\infty
\end{align*}
by changing the variables. 
Hence \eqref{eq:05232155} is obtained. 
The result for $\log{Z}(e^{-ix};U)$ is obtained similarly. 
Denote by $e^{i \theta_{1}},\ldots,e^{i \theta_{N}}$ the eigenvalues of $U \in \mathcal{U}(N)$ with $Z(1;U)\neq0$. 
Then we can assume $-\pi<\theta_{n} \leq \pi$ with $\theta_{n} \neq 0$ for all $n=1,\ldots,N$. 
In order to show the desired result, it suffices to show
\begin{align}\label{eq:05232246}
\sup_{L \geq1}
\int_{-\infty}^{\infty}
f_{L}(x) |\log{Z}(e^{-ix};U)|^{2}
\,dx
<
\infty
\end{align}
by Lemma \ref{lem:4.6}. 
Let $x \in \mathbb{R}$ with $x \neq \theta_{n}+k \pi$ for all $n=1,\ldots,N$ and $k \in \mathbb{Z}$. 
Then we deduce from \eqref{eq:05091441} that 
\begin{align*}
|\log{Z}(e^{-ix};U)|^{2}
&
\leq
\bigg\{
\sum_{n=1}^{N}
\Big|
\log \big|
1- e^{i(\theta_{n}-x)}
\big|
\Big|
+
O(N)
\bigg\}^{2}
\\
&
\ll
N
\sum_{n=1}^{N}
\Big(
\log \Big|\sin \frac{\theta_{n}-x}{2}\Big|
\Big)^{2}
+
N^{2}
\end{align*}
by the Cauchy--Schwarz inequality, where the implied constant is absolute.  
Hence the integral of the left-hand side of \eqref{eq:05232246} is estimated as
\begin{align}\label{eq:06040336}
&
\int_{-\infty}^{\infty}
f_{L}(x) |\log{Z}(e^{-ix};U)|^{2}
\,dx
\\
\nonumber
&
\ll
N 
\sum_{n=1}^{N}
\int_{-\infty}^{\infty}
f_{L}(x) 
\Big(
\log \Big|\sin \frac{\theta_{n}-x}{2}\Big|
\Big)^{2}
\,dx
+
N^{2}
\\
\nonumber
&
=
N 
\sum_{n=1}^{N}
\sum_{k \in \mathbb{Z}}
\int_{-\pi/2}^{\pi/2}
f_{L}(2y+\theta_{n}+2k \pi) 
\big(
\log |\sin y|
\big)^{2}
\,dy
+
N^{2}
\end{align}
with the absolute implied constant. 
Firstly, we consider the contribution of the terms for $|k| \leq 1$. 
For each $n \in \{1,\ldots,N\}$, we have
\begin{align*}
&
\int_{-\pi/2}^{\pi/2}
f_{L}(2y+\theta_{n}+2k \pi) 
\big(
\log |\sin y|
\big)^{2}
\,dy
\\
&
= 
\bigg(
\int_{-\pi/2}^{-|\theta_{n}|/3}
+
\int_{-|\theta_{n}|/3}^{|\theta_{n}|/3}
+
\int_{|\theta_{n}|/3}^{\pi/2}
\bigg)
f_{L}(2y+\theta_{n}+2k \pi) 
\big(
\log |\sin y|
\big)^{2}
\,dy
\\
&
\leq
\Big(
\log \Big|\sin \frac{\theta_{n}}{3}\Big|
\Big)^{2}
\int_{-\infty}^{\infty}
f_{L}(2y+\theta_{n}+2k \pi) 
\,dy
+
f_{L} \Big(\frac{|\theta_{n}|}{3}\Big)
\int_{-|\theta_{n}|/3}^{|\theta_{n}|/3}
\big(
\log |\sin y|
\big)^{2}
\,dy
\\
&
\ll
(\log{|\theta_{n}|})^{2}
\end{align*}
by using \eqref{eq:005252129} and the inequality $|\sin y| \geq (2/\pi) |y|$ which is valid for $|y| \leq \pi/2$. 
On the other hand, for $|k| \geq 2$, we have
\begin{align*}
\int_{-\pi/2}^{\pi/2}
f_{L}(2y+\theta_{n}+2k \pi) 
\big(
\log |\sin y|
\big)^{2}
\,dy
&
\leq
f_{L} \big( (2|k|-2) \pi \big) 
\int_{-\pi/2}^{\pi/2}
\big(
\log |\sin y|
\big)^{2}
\,dy
\\
&
\ll
k^{-2}
\end{align*}
by using \eqref{eq:005252130}. 
Combining these estimates, we obtain
\begin{align*}
\sum_{k \in \mathbb{Z}}
\int_{-\pi/2}^{\pi/2}
f_{L}(2y+\theta_{n}+2k \pi) 
\big(
\log |\sin y|
\big)^{2}
\,dy
\ll
(\log{|\theta_{n}|})^{2}
+
\sum_{|k| \geq 2} k^{-2}
<
\infty
\end{align*}
for each $n \in \{1,\ldots,N\}$, where the implied constant is absolute.  
Hence, by \eqref{eq:06040336}, we arrive at
\begin{align*}
\int_{-\infty}^{\infty}
f_{L}(x) |\log{Z}(e^{-ix};U)|^{2}
\,dx
\ll
N^{2} 
\max_{1 \leq n \leq N}
(\log{|\theta_{n}|})^{2}
+
N^{2}. 
\end{align*}
This completes the proof of \eqref{eq:05232246}. 
\end{proof}

In order to conclude this paper, we lastly prove the following propositions. 

\begin{proposition}\label{prop:4.8}
Assume RH. 
For any $\vec{\kappa}=(\kappa_{1},\kappa_{2}) \in \mathbb{C}^{2}$ with $\RE(\kappa_{1}+\kappa_{2})\geq0$, 
\begin{align*}
&
\lim_{L \to \infty}
\frac{1}{T}
\int_{0}^{T} 
\exp \Big(
\vec{\kappa} \circ
\int_{-\infty}^{\infty} 
f_{L}(x) \log^{*}{\zeta}(1/2+ix+it)
\,dx
\Big)
\,dt
\\
&
=
\frac{1}{T}
\int_{0}^{T} 
\exp \big(
\vec{\kappa} \circ \log{\zeta}(1/2+it)
\big)
\,dt,
\end{align*}
where $T$ is any positive real number, and $f_{L} \in \mathfrak{P}^{*}$ is defined as \eqref{eq:05170402}. 
The same result remains true even in the case where $-1<\RE(\kappa_{1}+\kappa_{2})<0$ if we assume further that all nontrivial zeros of $\zeta(s)$ are simple. 
\end{proposition}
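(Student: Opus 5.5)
Fix $T>0$ and write $\Lambda_{L}(t)=\int_{-\infty}^{\infty}f_{L}(x)\log^{*}{\zeta}(1/2+ix+it)\,dx$. The plan is to pass the limit $L\to\infty$ inside $\frac{1}{T}\int_{0}^{T}\,dt$ via dominated convergence (or Vitali). The pointwise limit comes from Lemma \ref{lem:4.7}: for every $t\in(0,T]$ with $\zeta(1/2+it)\neq0$ (all but finitely many $t$) we have $\Lambda_{L}(t)\to\log{\zeta}(1/2+it)$. Hence $\exp(\vec{\kappa}\circ\Lambda_{L}(t))\to\exp(\vec{\kappa}\circ\log{\zeta}(1/2+it))$ almost everywhere on $[0,T]$. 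It then remains to find a dominating function, or a uniform integrability bound in $L$.

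\textbf{Step 1 (reduce to a real exponent).} Write $\kappa_{1}=a_{1}+ib_{1}$ and $\kappa_{2}=a_{2}+ib_{2}$. Then
\[
\RE(\vec{\kappa}\circ\lambda)=(a_{1}+a_{2})\RE\lambda-(b_{1}-b_{2})\IM\lambda .
\]
$\RE\Lambda_{L}(t)=\int f_{L}(x)\log|\zeta(1/2+ix+it)|\,dx$ is a Gaussian average of $\log|\zeta|$. Jensen's inequality (convexity of $\exp$, and $f_{L}\,dx$ a probability measure) gives
\[
|\exp(\vec{\kappa}\circ\Lambda_{L}(t))|\le\int f_{L}(x)\exp\big(\RE(\vec{\kappa}\circ\log^{*}\zeta(1/2+ix+it))\big)\,dx .
\]
Integrating over $t\in[0,T]$ with Fubini turns the problem into bounding a Gaussian average of short-interval integrals of $|\zeta|^{a_{1}+a_{2}}e^{-(b_{1}-b_{2})\arg\zeta}$. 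For uniform integrability I would instead apply the same bound with $\vec{\kappa}$ replaced by $(1+\eta)\vec{\kappa}$ for a small $\eta>0$, and then use Lemma \ref{lem:3.3} (de la Vallée-Poussin with $H(x)=x^{1+\eta}$).

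\textbf{Step 2 (local integrability).} The argument $\IM\log^{*}\zeta(1/2+iu)$ is bounded on any compact $u$-range, since it is $O(\log(|u|+2))$ by \eqref{eq:06011532} away from the jumps. On the $u$-range $[-|x|,T+|x|]$ this gives a factor $e^{O(\log(|x|+T+2))}$, which is polynomial in $|x|$ and harmless against $f_{L}$; note $f_{L}\le f_{1}$-type tails are only available for $|x|\ge1$, so one splits $|x|\le1$ from $|x|>1$ using \eqref{eq:005252130}. The factor $|\zeta(1/2+iu)|^{(1+\eta)\RE(\kappa_{1}+\kappa_{2})}$ is continuous, hence bounded on compacts, when $\RE(\kappa_{1}+\kappa_{2})\ge0$. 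This bounds $\int_{0}^{T}|\exp((1+\eta)\vec{\kappa}\circ\Lambda_{L}(t))|\,dt$ by a finite quantity uniform in $L\ge1$, and the first claim follows.

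\textbf{Step 3 (negative exponents, the main obstacle).} When $-1<\RE(\kappa_{1}+\kappa_{2})<0$, the integrand $|\zeta(1/2+iu)|^{(1+\eta)\RE(\kappa_{1}+\kappa_{2})}$ blows up near each zero $1/2+i\gamma$. Under simplicity of zeros, $|\zeta(1/2+iu)|\asymp|u-\gamma|$ near $\gamma$. So the singularity is $|u-\gamma|^{-\alpha}$ with $\alpha=(1+\eta)|\RE(\kappa_{1}+\kappa_{2})|<1$ once $\eta$ is small, and it is locally integrable. On a compact $u$-interval there are only finitely many zeros, so $\int|\zeta(1/2+iu)|^{-\alpha}\,du$ over $[-|x|-1,T+|x|+1]$ grows at most polynomially in $|x|$. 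Using \eqref{eq:05081444} for the zero count, Fubini together with the Gaussian decay of $f_{L}$ for $|x|\ge1$ again gives a bound uniform in $L$. The delicate point is keeping this uniform as $x$ ranges over $\mathbb{R}$: the local constants in $|\zeta(1/2+iu)|\gg|u-\gamma|$ depend on $\gamma$. I would handle this by a crude bound $\ll e^{C|x|}$ coming from finitely many zeros in a window of length $O(|x|+T)$, which is still dominated by $e^{-x^{2}}$ for $|x|\ge1$. If this proves awkward, I would instead bound $|\exp|$ directly by $|\zeta|^{\RE(\kappa_{1}+\kappa_{2})}$ times a Gaussian average using \eqref{eq:06011532}, and then control $\sum_{|\gamma-u|\le1}\log|u-\gamma|$.
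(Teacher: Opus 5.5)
\textbf{Nonnegative case.} Your treatment of $\RE(\kappa_{1}+\kappa_{2})\ge 0$ works, with one repair. Since $x$ ranges over all of $\mathbb{R}$, ``continuous, hence bounded on compacts'' must be replaced by a polynomial bound $|\zeta(1/2+iu)|\ll(|u|+2)^{C}$. This follows from \eqref{eq:06011532}, because the local terms $\log|u-\gamma|$ with $|u-\gamma|\le 1$ are nonpositive. Applied directly to $\RE\Lambda_{L}(t)$, the same sign observation gives a constant dominating function independent of $L$, which is how the paper argues; Jensen and Vitali are not needed there.

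\textbf{The gap is Step 3.} Jensen moves the exponential inside the Gaussian average over all of $\mathbb{R}$. Consequently your bound is finite, already at $L=1$, only if (up to polynomial weights)
\[
\int_{-\infty}^{\infty} f_{1}(x)\int_{x}^{x+T}|\zeta(1/2+iu)|^{-\alpha}\,du\,dx<\infty .
\]
This is a Gaussian-weighted negative moment of $|\zeta|$ over the whole critical line, so it involves infinitely many zeros. Your ``crude bound $\ll e^{C|x|}$'' does not follow from the zero count \eqref{eq:05081444}. The contribution of each zero depends on $|\zeta'(\rho)|$ and on its distance to neighbouring zeros. Two zeros at distance $\delta$ contribute $\gg\delta^{1-2\alpha}$ (up to factors polynomial in $\gamma$) when $\alpha>1/2$, and a cluster of $k>1/\alpha$ zeros is harmful for any $\alpha>0$. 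No lower bounds for these quantities are known, even assuming RH and simplicity. Simplicity gives only qualitative local integrability, which cannot be summed with a rate over infinitely many zeros.

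\textbf{What is missing.} Keep the exponential outside the average, and localize in $t$. By \eqref{eq:06011532}, for $t\in[0,T]$ and $L\ge 1$, $|\exp(\vec{\kappa}\circ\Lambda_{L}(t))|$ is at most a constant independent of $L$ times
\[
\exp\Bigl(\RE(\kappa_{1}+\kappa_{2})\sum_{\gamma}\int_{-1}^{1}f_{L}(y+\gamma-t)\log|y|\,dy\Bigr).
\]
\begin{itemize}
\item For $|t-\gamma|>2$, the bound $f_{L}(x)\le x^{-2}$ gives each integral $\ge -8(t-\gamma)^{-2}$. This is summable over $\gamma$ uniformly in $t\in[0,T]$. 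Far zeros enter through a logarithm rather than a power, so they cost only a bounded factor.
\item For the finitely many $\gamma$ with $|t-\gamma|\le 2$, split at $|y|=\epsilon|t-\gamma|$ and use $f_{L}(x)\le |x|^{-1}$. This gives the lower bound $\frac{1+\epsilon}{1-\epsilon}\log|t-\gamma|+O_{\epsilon}(1)$, uniform in $L$.
\end{itemize}
The result is the $L$-independent dominating function $\prod_{|t-\gamma|\le 2}|t-\gamma|^{\mu}$ (times a constant depending on $T$), with $\mu=\frac{1+\epsilon}{1-\epsilon}\RE(\kappa_{1}+\kappa_{2})>-1$. Its integrability on $[0,T]$ needs only that the finitely many zeros with ordinates in $[-2,T+2]$ are simple, hence separated by some positive distance. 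Your fallback sentence points this way, but the localization to $|t-\gamma|\le 2$ and the uniform-in-$L$ lower bound for the Gaussian average of $\log|y|$ are exactly what is missing.
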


\begin{proof}
By the dominated convergence theorem, the result follows if we find a Borel measurable function $M(t;\vec{\kappa})$ which is integrable over $[0,T]$ and satisfies
\begin{align}\label{eq:06040515}
\sup_{L \geq1}
\bigg|
\exp \Big(
\vec{\kappa} \circ
\int_{-\infty}^{\infty} 
f_{L}(x) \log^{*}{\zeta}(1/2+ix+it)
\,dx
\Big)
\bigg|
\leq
M(t;\vec{\kappa})
\end{align}
for $t \in [0,T]$ almost everywhere.
Note that we have
\begin{align}\label{eq:06041340}
\big|
\exp (\vec{\kappa} \circ \lambda)
\big|
&
=
\exp \big( 
\RE(\kappa_{1} \lambda
+ 
\kappa_{2} \overline{\lambda})
\big)
\\
\nonumber
&
=
\exp \big( 
\RE(\kappa_{1}+\kappa_{2}) \RE(\lambda)
-
\IM(\kappa_{1}-\kappa_{2}) \IM(\lambda)
\big)
\end{align}
for any $\vec{\kappa}=(\kappa_{1},\kappa_{2}) \in \mathbb{C}^{2}$ and $\lambda \in \mathbb{C}$. 
Let $x \in \mathbb{R}$ with $x+t \neq \gamma$ for all $\gamma$. 
Then we deduce from \eqref{eq:06011532} that
\begin{align*}
\log^{*}{\zeta}(1/2+ix+it)
=
\sum_{ |\gamma-x-t| \leq 1}
\log(\gamma-x-t) 
+
O \big(\log(|x+t|+2)\big)
\end{align*}
with the absolute implied constant. 
From the above, we derive
\begin{align}\label{eq:06040532}
&
\bigg|
\exp \Big(
\vec{\kappa} \circ
\int_{-\infty}^{\infty} 
f_{L}(x) \log^{*}{\zeta}(1/2+ix+it)
\,dx
\Big)
\bigg|
\\
\nonumber
&
\leq
\exp \bigg(
\RE(\kappa_{1}+\kappa_{2})
\int_{-\infty}^{\infty} 
f_{L}(x)
\sum_{ |\gamma-x-t| \leq 1}
\log|\gamma-x-t| 
\,dx
\bigg)
\\
\nonumber
&
\quad
\times
\exp \bigg(
K (|\kappa_{1}|+|\kappa_{2}|)
\int_{-\infty}^{\infty} 
f_{L}(x) \log(|x+t|+2)
\,dx
\bigg)
\\
\nonumber
&
=
C_{L}(t;\vec{\kappa})
D_{L}(t;\vec{\kappa}),
\end{align}
say, where $K>0$ is an absolute constant. 
By changing the variables, we obtain
\begin{align*}
D_{L}(t;\vec{\kappa})
&
=
\exp \bigg(
K (|\kappa_{1}|+|\kappa_{2}|)
\int_{-\infty}^{\infty}
f_{1}(x) \log \Big( \Big|\frac{x}{\sqrt{L}}+t \Big| +2 \Big)
\,dx
\bigg)
\\
&
\leq
\exp \bigg(
K (|\kappa_{1}|+|\kappa_{2}|)
\int_{-\infty}^{\infty}
f_{1}(x) \log(|x|+T+2)
\,dx
\bigg)
\\
&
=
D(\vec{\kappa},T), 
\end{align*}
say, for any $t \in [0,T]$ and $L \geq1$. 
Then we evaluate $C_{L}(t;\vec{\kappa})$ as follows. 
Note that 
\begin{align*}
\int_{-\infty}^{\infty} 
f_{L}(x)
\sum_{ |\gamma-x-t| \leq 1}
\log|\gamma-x-t| 
\,dx
\leq 
0
\end{align*}
due to $f_{L}(x)>0$ and $\log|\gamma-x-t| \leq 0$ for $|\gamma-x-t| \leq 1$. 
Thus, in the case where $\RE(\kappa_{1}+\kappa_{2}) \geq0$, we have $C_{L}(t;\vec{\kappa}) \leq 1$ for any $t \in [0,T]$ and $L \geq1$. 
By \eqref{eq:06040532}, we obtain \eqref{eq:06040515} for any $t \in [0,T]$ with the function $M(t;\vec{\kappa})=D(\vec{\kappa},T)$, which is clearly integrable over $[0,T]$. 
The matter is the case where $-1<\RE(\kappa_{1}+\kappa_{2}) <0$. 
Let $0<\epsilon<1/2$ be a parameter chosen later. 
As in the proof of Lemma \ref{lem:4.7}, we derive the identity
\begin{align*}
\int_{-\infty}^{\infty} 
f_{L}(x)
\sum_{ |\gamma-x-t| \leq 1}
\log|\gamma-x-t| 
\,dx
=
\sum_{\gamma}
\int_{-1}^{1} 
f_{L}(y+\gamma-t)
\log|y| 
\,dy. 
\end{align*}
Assume $t \neq \gamma$ for all $\gamma$. 
For each $\gamma$ such that $|t-\gamma| \leq 2$, we obtain
\begin{align*}
0
&
\geq
\int_{-1}^{1}
f_{L}(y+\gamma-t) 
\log|y|
\,dy
\\
&
=
\bigg(
\int_{-1}^{-\epsilon |t-\gamma|}
+
\int_{-\epsilon|t-\gamma|}^{\epsilon|t-\gamma|}
+
\int_{\epsilon|t-\gamma|}^{1}
\bigg)
f_{L}(y+\gamma-t) 
\log|y|
\,dy
\\
&
\geq
\log(\epsilon|t-\gamma|)
\int_{-\infty}^{\infty}
f_{L}(y+\gamma-t) 
\,dy
+
f_{L} \big((1-\epsilon)|t-\gamma| \big) 
\int_{-\epsilon|t-\gamma|}^{\epsilon|t-\gamma|}
\log|y|
\,dy
\\
&
\geq
\frac{1+\epsilon}{1-\epsilon}
\log{|t-\gamma|}
+
O(1)
\end{align*}
by using \eqref{eq:005252129}, where the implied constant depends only on $\epsilon$. 
On the other hand, for each $\gamma$ such that $|t-\gamma|>2$, we obtain 
\begin{align*}
0
\geq
\int_{-1}^{1}
f_{L}(y+\gamma-t) 
\log|y|
\,dy
&
\geq
f_{L}(|t-\gamma|-1)
\int_{-1}^{1}
\log{|y|}
\,dy
\\
&
\geq
-8(t-\gamma)^{-2}
\end{align*}
by using \eqref{eq:005252130}. 
By these estimates, $C_{L}(t;\vec{\kappa})$ is evaluated as
\begin{align}\label{eq:06040533}
C_{L}(t;\vec{\kappa})
&
\leq
\exp \bigg(
\frac{1+\epsilon}{1-\epsilon}
\RE(\kappa_{1}+\kappa_{2})
\sum_{|t-\gamma| \leq 2}
\log{|t-\gamma|}
\bigg)
\\
\nonumber
&
\quad
\times
\exp \bigg(
K_{\epsilon} 
(|\kappa_{1}|+|\kappa_{2}|)
\Big\{
\sum_{|t-\gamma| \leq 2} 1
+
\sum_{|t-\gamma|>2}
(t-\gamma)^{-2}
\Big\}
\bigg)
\\
\nonumber
&
=
C(t;\vec{\kappa})
E(t;\vec{\kappa}),
\end{align}
say, where $K_{\epsilon}$ is a positive constant depending only on $\epsilon$. 
We have 
\begin{align*}
&
\sum_{|t-\gamma| \leq 2} 1
\ll
\log(|t|+2)
\leq
\log(T+2), 
\\
&
\sum_{|t-\gamma|>2}
(t-\gamma)^{-2}
\leq
\sum_{|\gamma|>2T+2}
\frac{4}{|\gamma|^2}
+
\sum_{|\gamma| \leq 2T+2}
\frac{1}{4}
\ll
T \log(T+2) 
\end{align*}
for any $t \in [0,T]$ with the absolute implied constants. 
Hence $E(t;\vec{\kappa})$ is evaluated as
\begin{align*}
E(t;\vec{\kappa})
\leq
E(\vec{\kappa},T)
:=
\exp \bigg(
K_{\epsilon} M
(|\kappa_{1}|+|\kappa_{2}|)
T \log(T+2) 
\bigg),
\end{align*}
where $M$ is an absolute positive constant. 
By \eqref{eq:06040532} and \eqref{eq:06040533}, we obtain \eqref{eq:06040515} for any $t \in [0,T]$ with $t \neq \gamma$ for all $\gamma$, where $M(t;\vec{\kappa})$ is taken as
\begin{align*}
M(t;\vec{\kappa})
=
C(t;\vec{\kappa})D(\vec{\kappa},T)E(\vec{\kappa},T).
\end{align*}
Finally, we confirm that $C(t;\vec{\kappa})$, and thus $M(t;\vec{\kappa})$, is integrable over $[0,T]$. 
Recalling that $-1<\RE(\kappa_{1}+\kappa_{2}) <0$, we choose the parameter $0<\epsilon<1/2$ so that
\begin{align*}
\mu
:=
\frac{1+\epsilon}{1-\epsilon}
\RE(\kappa_{1}+\kappa_{2})
>
-1
\end{align*}
is satisfied. 
Then we have
\begin{align*}
\int_{0}^{T}
C(t;\vec{\kappa})
\,dt
=
\int_{0}^{T}
\prod_{|t-\gamma| \leq 2} |t-\gamma|^{\mu}
\,dt
\leq
\sum_{0<\gamma_{n} \leq T}
\int_{\frac{\gamma_{n-1}+\gamma_{n}}{2}}^{\frac{\gamma_{n}+\gamma_{n+1}}{2}}
\prod_{|t-\gamma| \leq 2} |t-\gamma|^{\mu}
\,dt
\end{align*}
where $\gamma_{n}$ denotes the $n$th positive ordinate of the nontrivial zeros of $\zeta(s)$, and we put $\gamma_{0}=-\gamma_{1}$ for convenience. 
Since we assume that all nontrivial zeros are simple, $\gamma_{n}<\gamma_{n+1}$ for all $n \geq1$. 
Then, for any $t \in [\frac{\gamma_{n-1}+\gamma_{n}}{2}, \frac{\gamma_{n}+\gamma_{n+1}}{2}]$ and $\gamma \neq \gamma_{n}$, we obtain the inequality $|t-\gamma|\geq\delta_{T}$ with 
\begin{align*}
\delta_{T}
=
\min_{0<\gamma_{n} \leq T} \frac{\gamma_{n+1}-\gamma_{n}}{2}. 
\end{align*}
Therefore, we conclude that
\begin{align*}
\int_{0}^{T}
C(t;\vec{\kappa})
\,dt
\ll
\sum_{0<\gamma_{n} \leq T}
\int_{\frac{\gamma_{n-1}+\gamma_{n}}{2}}^{\frac{\gamma_{n}+\gamma_{n+1}}{2}}
|t-\gamma_{n}|^{\mu}
\,dt
<
\infty
\end{align*}
due to $\mu>-1$. 
This yields that the function $M(t;\vec{\kappa})$ is also integrable over $[0,T]$, and we complete the proof. 
\end{proof}

\begin{proposition}\label{prop:4.9}
For any $\vec{\kappa}=(\kappa_{1},\kappa_{2}) \in \mathbb{C}^{2}$ with $\RE(\kappa_{1}+\kappa_{2})>-1$, 
\begin{align*}
&
\lim_{L \to \infty}
\int_{\mathcal{U}(N)}
\exp \Big(
\vec{\kappa} \circ
\int_{-\infty}^{\infty} 
f_{L}(x) \log{Z}(e^{-ix};U)
\,dx
\Big)
\,dU
\\
&
=
\int_{\mathcal{U}(N)}
\exp \big(
\vec{\kappa} \circ \log{Z}(1;U)
\big)
\,dU, 
\end{align*}
where $N$ is any positive integer, and $f_{L} \in \mathfrak{P}^{*}$ is defined as \eqref{eq:05170402}. 
\end{proposition}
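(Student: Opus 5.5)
The plan is to mirror the proof of Proposition~\ref{prop:4.8}: apply the dominated convergence theorem on $\mathcal{U}(N)$ with respect to the Haar measure $dU$. By Lemma~\ref{lem:4.7}, for every $U$ with $Z(1;U)\neq 0$ the inner integral $\int f_{L}(x)\log Z(e^{-ix};U)\,dx$ converges to $\log Z(1;U)$. The set of $U$ with an eigenvalue equal to $1$ is a Haar null set, so the integrand converges pointwise almost everywhere. It therefore suffices to find an integrable majorant $M(U;\vec{\kappa})$ satisfying
\begin{align*}
\sup_{L\geq1}\bigg|\exp\Big(\vec{\kappa}\circ\int_{-\infty}^{\infty}f_{L}(x)\log Z(e^{-ix};U)\,dx\Big)\bigg|\leq M(U;\vec{\kappa}).
\end{align*}

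For the majorant, use \eqref{eq:06041340}. Write $\log Z(e^{-ix};U)=\sum_{n}\log(1-e^{i(\theta_{n}-x)})$. Each summand has imaginary part bounded by $\pi/2$, so the $\IM$-contribution is bounded by $\exp(N\pi|\IM(\kappa_{1}-\kappa_{2})|/2)$. The real part is $\sum_{n}\log|2\sin\frac{\theta_{n}-x}{2}|$.

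\textbf{Case $\RE(\kappa_{1}+\kappa_{2})\geq 0$.} Use $\log|2\sin(\cdot)|\leq\log 2$. This gives the constant majorant $2^{N\RE(\kappa_{1}+\kappa_{2})}e^{N\pi|\IM(\kappa_{1}-\kappa_{2})|/2}$.

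\textbf{Case $-1<\RE(\kappa_{1}+\kappa_{2})<0$.} This is the main obstacle, since a lower bound for the smoothed $\log|\sin|$ is needed. Repeat the splitting argument of Proposition~\ref{prop:4.8}, using the analogous computation from Lemma~\ref{lem:4.7} with \eqref{eq:005252129} and \eqref{eq:005252130}. For fixed $0<\epsilon<1/2$ and each $n$, this yields
\begin{align*}
\sum_{k\in\mathbb{Z}}\int_{-\pi/2}^{\pi/2}f_{L}(2y+\theta_{n}+2k\pi)\log|\sin y|\,dy\geq\frac{1+\epsilon}{1-\epsilon}\log|\theta_{n}|+O_{\epsilon}(1),
\end{align*}
uniformly in $L\geq1$, where $\theta_{n}\in(-\pi,\pi]$. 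Hence the majorant is
\begin{align*}
M(U;\vec{\kappa})=C_{\epsilon,N,\vec{\kappa}}\prod_{n=1}^{N}|\theta_{n}|^{\mu},\qquad \mu=\frac{1+\epsilon}{1-\epsilon}\RE(\kappa_{1}+\kappa_{2}),
\end{align*}
and we choose $\epsilon$ small so that $\mu>-1$.

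Integrability of this majorant follows from the Weyl integration formula. The joint density of $(\theta_{1},\dots,\theta_{N})$ is bounded, being proportional to $\prod_{j<k}|e^{i\theta_{j}}-e^{i\theta_{k}}|^{2}\leq 4^{N(N-1)/2}$. The majorant factorizes into $\prod_{n}|\theta_{n}|^{\mu}$, each factor integrable over $(-\pi,\pi]$ because $\mu>-1$. Unlike Proposition~\ref{prop:4.8}, no simplicity hypothesis is needed here, because we only need $|\theta_{n}|^{\mu}$ for each eigenvalue separately near $\theta=0$, and the bounded density handles coincidences. Dominated convergence then gives the claim.
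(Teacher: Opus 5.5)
Your proposal is correct and follows essentially the same route as the paper. Both arguments use dominated convergence on $\mathcal{U}(N)$, with Lemma~\ref{lem:4.7} giving pointwise convergence off the Haar-null set $\{Z(1;U)=0\}$, a constant majorant when $\RE(\kappa_{1}+\kappa_{2})\geq0$, and, for $-1<\RE(\kappa_{1}+\kappa_{2})<0$, a splitting lower bound that yields a majorant $\prod_{n}|\theta_{n}|^{\mu}$ with $\mu>-1$, integrable against the bounded Weyl density; the only differences (your exponent $\frac{1+\epsilon}{1-\epsilon}$ versus the paper's $\frac{1}{1-2\epsilon}$, and your correctly squared Vandermonde factor) are immaterial.
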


\begin{proof}
The proof is similar to that of Proposition \ref{prop:4.8}, and it suffices to find a Borel measurable function $M(U;\vec{\kappa})$ which is integrable over $\mathcal{U}(N)$ and satisfies
\begin{align}\label{eq:06041331}
\sup_{L \geq1}
\bigg|
\exp \Big(
\vec{\kappa} \circ
\int_{-\infty}^{\infty} 
f_{L}(x) \log{Z}(e^{-ix};U)
\,dx
\Big)
\bigg|
\leq
M(U;\vec{\kappa})
\end{align}
for $U \in \mathcal{U}(N)$ almost everywhere. 
Let $x \in \mathbb{R}$ with $x \neq \theta_{n}+k \pi$ for all $n=1,\ldots,N$ and $k \in \mathbb{Z}$. 
By \eqref{eq:05091441}, we obtain
\begin{align*}
\log{Z}(e^{-ix};U)
&
=
\sum_{n=1}^{N}
\log \big|
1- e^{i(\theta_{n}-x)}
\big|
+
O(N)
\\
&
=
\sum_{n=1}^{N}
\log \Big|
\sin \frac{\theta_{n}-x}{2}
\Big|
+
O(N),
\end{align*}
where the implied constants are absolute. 
Thus we apply \eqref{eq:06041340} to derive
\begin{align*}
&
\bigg|
\exp \Big(
\vec{\kappa} \circ
\int_{-\infty}^{\infty} 
f_{L}(x) \log{Z}(e^{-ix};U)
\,dx
\Big)
\bigg|
\\
&
\leq
\exp \bigg(
\RE(\kappa_{1}+\kappa_{2})
\int_{-\infty}^{\infty} 
f_{L}(x)
\sum_{n=1}^{N}
\log \Big|
\sin \frac{\theta_{n}-x}{2}
\Big|
\,dx
\bigg)
\exp \bigg(
KN (|\kappa_{1}|+|\kappa_{2}|)
\bigg),
\end{align*}
where $K>0$ is an absolute constant. 
Noting that
\begin{align*}
\int_{-\infty}^{\infty} 
f_{L}(x)
\sum_{n=1}^{N}
\log \Big|
\sin \frac{\theta_{n}-x}{2}
\Big|
\,dx
\leq
0,
\end{align*}
we obtain \eqref{eq:06041331} for any $U \in \mathcal{U}(N)$ in the case where $\RE(\kappa_{1}+\kappa_{2}) \geq0$. 
Hence we consider the case where $-1<\RE(\kappa_{1}+\kappa_{2})<0$ in what follows. 
Assume $-\pi<\theta_{n} \leq \pi$ with $\theta_{n} \neq 0$ for all $n=1,\ldots,N$. 
Then we have 
\begin{align*}
\int_{-\infty}^{\infty} 
f_{L}(x)
\sum_{n=1}^{N}
\log \Big|
\sin \frac{\theta_{n}-x}{2}
\Big|
\,dx
=
\sum_{n=1}^{N}
\sum_{k \in \mathbb{Z}}
\int_{-\pi/2}^{\pi/2} 
f_{L}(2y+\theta_{n}+2k \pi)
\log |\sin y|
\,dy. 
\end{align*}
Let $0<\epsilon<1/2$ be a parameter chosen later. 
For $|k| \leq 1$, we obtain
\begin{align*}
0
&
\geq
\int_{-\pi/2}^{\pi/2} 
f_{L}(2y+\theta_{n}+2k \pi)
\log |\sin y|
\,dy
\\
&
=
\bigg(
\int_{-\pi/2}^{-\epsilon|\theta_{n}|}
+
\int_{-\epsilon|\theta_{n}|}^{\epsilon|\theta_{n}|}
+
\int_{\epsilon|\theta_{n}|}^{\pi/2}
\bigg)
f_{L}(2y+\theta_{n}+2k \pi) 
\log |\sin y|
\,dy
\\
&
\geq
\log \big|\sin (\epsilon \theta_{n})\big|
\int_{-\infty}^{\infty}
f_{L}(2y+\theta_{n}+2k \pi) 
\,dy
+
f_{L} \big((1-2\epsilon)|\theta_{n}|\big)
\int_{-\epsilon|\theta_{n}|}^{\epsilon|\theta_{n}|}
\log |\sin y|
\,dy
\\
&
\geq
\frac{1}{1-2\epsilon} 
\log{|\theta_{n}|}
+
O(1)
\end{align*}
by using \eqref{eq:005252129}, where the implied constant depends only on $\epsilon$. 
On the other hand, for $|k| \geq 2$, we have
\begin{align*}
0
\geq
\int_{-\pi/2}^{\pi/2}
f_{L}(2y+\theta_{n}+2k \pi) 
\log |\sin y|
\,dy
&
\geq
f_{L} \big( (2|k|-2) \pi \big) 
\int_{-\pi/2}^{\pi/2}
\log |\sin y|
\,dy
\\
&
\geq
-C k^{-2}
\end{align*}
by using \eqref{eq:005252130}, where $C>0$ is an absolute constant. 
Combining these estimate, we derive the upper bound
\begin{align*}
&
\exp \bigg(
\RE(\kappa_{1}+\kappa_{2})
\int_{-\infty}^{\infty} 
f_{L}(x)
\sum_{n=1}^{N}
\log \Big|
\sin \frac{\theta_{n}-x}{2}
\Big|
\,dx
\bigg)
\\
&
\leq
\exp \bigg(
\frac{1}{1-2\epsilon} 
\RE(\kappa_{1}+\kappa_{2})
\sum_{n=1}^{N}
\log{|\theta_{n}|}
\bigg)
\exp \bigg(
C_{\epsilon}N
(|\kappa_{1}|+|\kappa_{2}|)
\bigg), 
\end{align*}
where $C_{\epsilon}$ is a positive constant depending only on $\epsilon$. 
As a result, we obtain \eqref{eq:06041331} for any $U \in \mathcal{U}(N)$ with $\theta_{n} \neq 0$ for all $n=1,\ldots,N$, where $M(U;\vec{\kappa})$ is taken as 
\begin{align*}
M(U;\vec{\kappa})
=
\exp \bigg(
\frac{1}{1-2\epsilon} 
\RE(\kappa_{1}+\kappa_{2})
\sum_{n=1}^{N}
\log{|\theta_{n}|}
\bigg)
\exp \bigg(
(K+C_{\epsilon}) N
(|\kappa_{1}|+|\kappa_{2}|)
\bigg).
\end{align*}
Now we choose the parameter $0<\epsilon<1/2$ so that
\begin{align*}
\nu
:=
\frac{1}{1-2\epsilon}
\RE(\kappa_{1}+\kappa_{2})
>
-1
\end{align*}
is satisfied. 
Then we have
\begin{align*}
&
\int_{\mathcal{U}(N)}
\exp \bigg(
\frac{1}{1-2\epsilon} 
\RE(\kappa_{1}+\kappa_{2})
\sum_{n=1}^{N}
\log{|\theta_{n}|}
\bigg)
\,dU
\\
&
=
\int_{\mathcal{U}(N)}
|\theta_{1} \cdots \theta_{N}|^{\nu}
\,dU
\\
&
=
\frac{1}{(2\pi)^{N} N!}
\int_{-\pi}^{\pi}
\cdots
\int_{-\pi}^{\pi}
|\theta_{1} \cdots \theta_{N}|^{\nu}
\prod_{1 \leq j<m \leq N}
\big|e^{i \theta_{j}}-e^{i \theta_{m}}\big|
\,d \theta_{1} \cdots d \theta_{N}
<
\infty
\end{align*}
due to $\nu>-1$, where we used the joint density of $\theta_{1},\ldots,\theta_{N}$ as in \cite{KeatingSnaith2000a}. 
Therefore, the function $M(U;\vec{\kappa})$ is integrable over $\mathcal{U}(N)$, and we complete the proof. 
\end{proof}

From the above, we finally obtain the main result of this paper. 

\begin{proof}[Proof of Theorem \ref{thm:1.2}]
We begin with showing the identities
\begin{align}
\label{eq:06041440}
\big\langle
-f_{L}
\mid
\log{\zeta}(1/2+0+ix+it)
\big\rangle
&
=
\int_{-\infty}^{\infty} 
f_{L}(x) \log^{*}{\zeta}(1/2+ix+it)
\,dx, 
\\
\label{eq:06041441}
\big\langle
-f_{L}
\mid
\log{Z}(e^{-ix-0};U)
\big\rangle
&
=
\int_{-\infty}^{\infty} 
f_{L}(x) \log{Z}(e^{-ix};U)
\,dx
\end{align}
for $t \in \mathbb{R}$ and $U \in \mathcal{U}(N)$. 
Let $(c_{m})$ be any sequence of positive real numbers such that $0<c_{m} \leq 1/4$ and $c_{m} \to 0$ as $m \to \infty$. 
We have 
\begin{align*}
&
\big\langle
-f_{L}
\mid
\log{\zeta}(1/2+0+ix+it)
\big\rangle
\\
&
=
\int_{-\infty}^{\infty} 
f_{L}(x-ic_{m}) \log^{*}{\zeta}(1/2+c_{m}+ix+it)
\,dx
\end{align*}
for all $m \geq1$. 
Then we deduce from \eqref{eq:06011532} that
\begin{align}\label{eq:05230109}
&
|\log^{*}{\zeta}(1/2+c_{m}+ix+it)|
\\
\nonumber
&
\leq
\sum_{ |\gamma-x-t| \leq 1}
\big| \log|c_{m}+i(x+t-\gamma)| \big|
+
O \big(\log(|x+t|+2)\big),
\end{align}
where the implied constant is absolute. 
Assume $0<|\gamma-x-t| \leq 1$. 
Then we derive
\begin{align*}
\big| \log|c_{m}+i(x+t-\gamma)| \big|
\leq
\big	| \log|\gamma-x-t| \big|
+
O(1)
\end{align*}
for all $m \geq1$, where the implied constant is absolute. 
Furthermore, by the definition of $f_{L}(z)$, we have $f_{L}(x-ic_{m}) \ll f_{L}(x)$ for all $m \geq1$ with the implied constant depending only on $L$.
Hence we obtain 
\begin{align*}
&
f_{L}(x-ic_{m})\log^{*}{\zeta}(1/2+c_{m}+ix+it)
\\
&
\ll
f_{L}(x)
\bigg\{
\sum_{ |\gamma-x-t| \leq 1}
\big	| \log|\gamma-x-t| \big|
+
\log(|x+t|+2)
\bigg\}
\end{align*}
for all $m \geq1$, where the implied constant depends only on $L$.  
As seen in the proof of Lemma \ref{lem:4.7}, the function of the right-hand side is integrable over $\mathbb{R}$. 
As a result, the dominated convergence theorem yields that $f_{L}(x) \log^{*}{\zeta}(1/2+ix+it)$ is also integrable over $\mathbb{R}$, and furthermore, 
\begin{align*}
&
\big\langle
-f_{L}
\mid
\log{\zeta}(1/2+0+ix+it)
\big\rangle
\\
&
=
\lim_{m \to \infty}
\int_{-\infty}^{\infty} 
f_{L}(x-ic_{m}) \log^{*}{\zeta}(1/2+c_{m}+ix+it)
\,dx
\\
&
=
\int_{-\infty}^{\infty} 
f_{L}(x) \log^{*}{\zeta}(1/2+ix+it)
\,dx. 
\end{align*}
We omit the proof of \eqref{eq:06041441} since it can be shown in a quite similar way. 
Now we are ready to complete the proof. 
By \eqref{eq:06041440} and Corollary \ref{cor:3.9}, we obtain
\begin{align*}
&
\lim_{T \to \infty} 
\frac{1}{T}
\int_{0}^{T} 
\exp \Big(
\vec{\kappa} \circ
\int_{-\infty}^{\infty} 
f_{L}(x) \log^{*}{\zeta}(1/2+ix+it)
\,dx
\Big)
\,dt
\\
&
=
\mathbb{E} \Big[
\exp \Big(
\vec{\kappa} \circ
\big\langle
-f_{L}
\mid
\log{\zeta}^{\mathsf{rand}}(1/2+0+ix)
\big\rangle
\Big)
\Big]. 
\end{align*}
On the other hand, by \eqref{eq:06041441} and Corollary \ref{cor:3.11}, we also obtain
\begin{align*}
&
\lim_{T \to \infty}
\int_{\mathcal{U}(N_{T})}
\exp \Big(
\vec{\kappa} \circ
\int_{-\infty}^{\infty} 
f_{L}(x) \log{Z}(e^{-ix};U)
\,dx
\Big)
\,dU
\\
&
=
\mathbb{E} \Big[
\exp \Big(
\vec{\kappa} \circ
\big\langle
-f_{L}
\mid
\log{Z}^{\mathsf{Gauss}}(e^{-ix-0})
\big\rangle
\Big)
\Big], 
\end{align*}
where $N_{T}$ is any positive integer with $N_{T} \to \infty$ as $T \to \infty$. 
Hence \eqref{eq:03222030} follows from Theorem \ref{thm:4.1}. 
The remaining work it to prove the latter part of the theorem. 
Let $N_{T}/\log{T} \to 1$ as $T \to \infty$. 
Now we assume that the order of the limits in \eqref{eq:03222030} is commutative, that is, 
\begin{align}\label{eq:05240208}
\lim_{T \to \infty}
\lim_{L \to \infty}
\frac{
\displaystyle{
\frac{1}{T}
\int_{0}^{T} 
\exp \Big(
\vec{\kappa} \circ
\int_{-\infty}^{\infty} 
f_{L}(x) \log^{*}{\zeta}(1/2+ix+it)
\,dx
\Big)
\,dt
}}{
\displaystyle{
\int_{\mathcal{U}(N_{T})}
\exp \Big(
\vec{\kappa} \circ
\int_{-\infty}^{\infty} 
f_{L}(x) \log{Z}(e^{-ix};U)
\,dx
\Big)
\,dU
}}
=
a(\vec{\kappa})
\end{align}
is valid. 
By Propositions \ref{prop:4.8} and \ref{prop:4.9}, we deduce from \eqref{eq:05240208} that \eqref{eq:05240210} holds with $\theta=0$, which is equivalent to \eqref{eq:03211910} as remarked in Section \ref{sec:1}. 
Then, we assume contrarily that \eqref{eq:03211910} is valid. 
It yields \eqref{eq:05240210} with $\theta=0$. 
Hence we obtain \eqref{eq:05240208} by Propositions \ref{prop:4.8} and \ref{prop:4.9}, which completes the proof. 
\end{proof}

\renewcommand{\thesection}{\Alph{section}}
\setcounter{section}{1}
\setcounter{theorem}{0}
\setcounter{equation}{0}

\section*{Appendix. Fourier hyperfunctions of one variable}
Let $D_{\delta}$ be the strip $|\IM(z)|<\delta$ on the complex plane for $\delta>0$. 
We define $\mathfrak{P}_{\delta}$ as the linear space of holomorphic functions $f$ on $D_{\delta}$ such that
\begin{align*}
\sup_{|\IM(z)| \leq \delta_{1}}
|f(z)| e^{-\epsilon |z|}
<
\infty
\end{align*}
for any $\epsilon>0$ and $0<\delta_{1}<\delta$. 
We also define $\widetilde{\mathfrak{P}}_{\delta}$ as the linear space of holomorphic functions $\varphi$ on $D_{\delta} \setminus \mathbb{R}$ such that
\begin{align*}
\sup_{\delta_{1} \leq |\IM(z)| \leq \delta_{2}}
|\varphi(z)| e^{-\epsilon |z|}
<
\infty
\end{align*}
for any $\epsilon>0$ and $0<\delta_{1}<\delta_{2}<\delta$. 
Denote by $\sigma_{\delta,\delta'}: \widetilde{\mathfrak{P}}_{\delta} \to \widetilde{\mathfrak{P}}_{\delta'}$ the restriction map $\varphi \mapsto \varphi_{D_{\delta'} \setminus \mathbb{R}}$. 
Furthermore, we regard $\mathfrak{P}_{\delta}$ as a subspace of $\widetilde{\mathfrak{P}}_{\delta}$ by the injective linear map $\mathfrak{P}_{\delta} \to \widetilde{\mathfrak{P}}_{\delta}$ defined by the restriction $f \mapsto f \mid_{D_{\delta}\setminus \mathbb{R}}$. 
Define $\mathfrak{Q}_{\delta}=\widetilde{\mathfrak{P}}_{\delta}/\mathfrak{P}_{\delta}$ for $\delta>0$. 
For any $0<\delta'<\delta$, there exists a well-defined linear map $\rho_{\delta,\delta'}: \mathfrak{Q}_{\delta} \to \mathfrak{Q}_{\delta'}$ with the following commutative diagram 
\begin{align*}
\xymatrix{
\widetilde{\mathfrak{P}}_{\delta} \ar[r]^{\sigma_{\delta,\delta'}} \ar[d] & \widetilde{\mathfrak{P}}_{\delta'} \ar[d] \\
\mathfrak{Q}_{\delta} \ar[r]^{\rho_{\delta,\delta'}} & \mathfrak{Q}_{\delta'}
}
\end{align*}
where the vertical maps denote the natural surjections. 

\begin{lemma}\label{lem:A.1}
The linear map $\rho_{\delta,\delta'}: \mathfrak{Q}_{\delta} \to \mathfrak{Q}_{\delta'}$ is bijective for any $0<\delta'<\delta$. 
\end{lemma}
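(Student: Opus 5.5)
We need to prove that $\rho_{\delta,\delta'}$ is injective and surjective.

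\emph{Injectivity.} Suppose $\varphi \in \widetilde{\mathfrak{P}}_{\delta}$ and its restriction to $D_{\delta'}\setminus\mathbb{R}$ equals some $f \in \mathfrak{P}_{\delta'}$. We must show $\varphi \in \mathfrak{P}_{\delta}$, i.e.\ $\varphi$ extends holomorphically across $\mathbb{R}$ with the required growth on $D_\delta$. Define $F$ on $D_{\delta}$ by $F=\varphi$ off $\mathbb{R}$ and $F=f$ on $D_{\delta'}$. These agree on the overlap $D_{\delta'}\setminus\mathbb{R}$, so $F$ is a well-defined holomorphic function on $D_{\delta}$. For the growth bound on $|\IM z|\le\delta_1<\delta$, split into two regions:
\begin{itemize}
\item on $|\IM z|\le \delta'/2$ use the bound for $f$;
\item on $\delta'/2\le|\IM z|\le\delta_1$ use the bound for $\varphi$.
\end{itemize}
Hence $F\in\mathfrak{P}_{\delta}$ and $[\varphi]=0$ in $\mathfrak{Q}_\delta$.

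\emph{Surjectivity.} This is the main step. Given $\psi\in\widetilde{\mathfrak{P}}_{\delta'}$, we need $\varphi\in\widetilde{\mathfrak{P}}_{\delta}$ with $\varphi-\psi\in\mathfrak{P}_{\delta''}$ for some $\delta''\le\delta'$. The plan is a Cauchy-integral decomposition with an exponentially decaying convergence factor, in the spirit of Sato/Kawai.

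Fix $0<c<\delta'$. For $z$ with $0<\IM z<c$, and separately for $-c<\IM z<0$, define
\[
\varphi(z)=\frac{1}{2\pi i}\int_{\IM w=c}\frac{e^{(w-z)^2\eta}\,\psi(w)}{w-z}\,dw-\frac{1}{2\pi i}\int_{\IM w=-c}\frac{e^{(w-z)^2\eta}\,\psi(w)}{w-z}\,dw,
\]
with a small $\eta>0$. Note that $e^{(w-z)^2\eta}$ decays like $e^{-\eta\,\RE(w-z)^2}$ when $|\IM(w-z)|$ is small relative to $|\RE(w-z)|$, which beats the $e^{\epsilon|w|}$ growth of $\psi$. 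It may be cleaner to use the factor $e^{-\eta(w-z)^2}$ instead; choose whichever sign makes the integrand decay along horizontal lines. I expect verifying this decay to be the main technical care.

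Carry out the following steps.
\begin{enumerate}
\item By Cauchy's theorem on the rectangle between the lines $\IM w=\pm c$ (minus a neighborhood of $\mathbb{R}$, handled by splitting into the upper and lower pieces), $\psi(z)-\varphi(z)$ for $|\IM z|<c$, $z\notin\mathbb{R}$, equals integrals over lines $\IM w=\pm c_1$ with $c_1<|\IM z|$ that are holomorphic in $z$ across $\mathbb{R}$. More directly: deforming the contours shows that $\psi-\varphi$ extends holomorphically to $|\IM z|<c$, with infra-exponential growth.
\item Each integral defining $\varphi$ is holomorphic for $z$ off its own contour. The upper integral is holomorphic in $\IM z<c$ and the lower in $\IM z>-c$, and each can be analytically continued by moving its contour upward or downward.
\item To obtain $\varphi$ on all of $D_\delta\setminus\mathbb{R}$, use a different formulation: set $\varphi_+(z)=\frac{1}{2\pi i}\int_{\IM w=-c}K(w-z)\psi(w)\,dw$ for $\IM z>-c$, which is holomorphic on the whole half-plane $\IM z>-c$, and similarly $\varphi_-$. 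Then take $\varphi=\varphi_+$ on the upper part and $-\varphi_-$ on the lower part, suitably arranged. The kernel $K(\zeta)=e^{\eta\zeta^2}/\zeta$ gives the estimate
\[
|\varphi_\pm(z)|\ll_{\epsilon} e^{\epsilon|z|}
\]
uniformly in strips, by splitting the $w$-integral into $|w-z|\le|z|/2$ and its complement.
\end{enumerate}

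The residue at $w=z$ yields the identity $\varphi-\psi\in\mathfrak{P}_{c}$, and $\varphi\in\widetilde{\mathfrak{P}}_\delta$ follows from step 3. It remains to take $\delta''=c$, which finishes surjectivity.
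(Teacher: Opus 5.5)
Your injectivity argument is correct and matches the paper's. For surjectivity you chose the right tool (Gaussian-damped Cauchy integrals, as in the paper), but the construction has a genuine gap: the roles of the two kinds of contour integral are reversed.

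A minor point first: the factor must be $e^{-\eta(w-z)^2}$, since $|e^{\eta\zeta^2}|=e^{\eta((\RE\zeta)^2-(\IM\zeta)^2)}$ grows along horizontal lines. With $K(\zeta)=e^{-\eta\zeta^2}/\zeta$, set $A(z)=\frac{1}{2\pi i}\int_{\IM w=c}\psi(w)K(w-z)\,dw$ and $B(z)=\frac{1}{2\pi i}\int_{\IM w=-c}\psi(w)K(w-z)\,dw$. Write $(u,v)$ for the function equal to $u$ on $\IM z>0$ and to $v$ on $\IM z<0$.

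\begin{itemize}
\item \emph{Your displayed $\varphi$.} These lines stay at distance $c$ from $\mathbb{R}$, so $A$ is holomorphic on $\IM z<c$ and $B$ on $\IM z>-c$. Hence your displayed $\varphi=A-B$ is holomorphic \emph{across} $\mathbb{R}$, i.e.\ $\varphi\in\mathfrak{P}_c$. Then $\varphi-\psi\in\mathfrak{P}_c$ would force $\psi\in\mathfrak{P}_c$, i.e.\ $[\psi]=0$.
\item \emph{Step 1.} It asserts the reverse of what is true. An integral over $\IM w=c_1$ with $0<c_1<\IM z$ is holomorphic on the open upper half-plane, but in general not across $\mathbb{R}$: letting $z\to\mathbb{R}$ forces $c_1\to0$, where $\psi$ is singular. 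It is the fixed-line integrals $A,B$ that cross $\mathbb{R}$.
\item \emph{Step 3.} It inherits the defect. $(B,-A)$ is built only from functions holomorphic on $D_c$, so its class in $\mathfrak{Q}$ is that of the real-analytic function $A+B$. It can therefore never represent a non-real-analytic hyperfunction. Concretely, for $\psi=(0,1)$ one gets $A\equiv0$ and, by oddness of $K$, $B\equiv\frac12$. So $\psi-(B,-A)=(-\frac12,1)$, which lies in no $\mathfrak{P}_{\delta''}$.
\end{itemize}

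The missing idea is a contour lying between $\mathbb{R}$ and $z$, on the same side as $z$.

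\begin{itemize}
\item For $0<c_1<\IM z<c$, Cauchy's theorem on $c_1<\IM w<c$ (where $\psi$ is holomorphic) gives $\psi(z)=I^{+}(z)-A(z)$, with $I^{+}(z)=\frac{1}{2\pi i}\int_{\IM w=c_1}\psi(w)K(w-z)\,dw$. Independence of $c_1$ makes $I^{+}$ holomorphic on all of $\IM z>0$.
\item Likewise $\psi=B-J^{-}$ on $-c<\IM z<0$, with $J^{-}$ (over $\IM w=-c_1$) holomorphic on $\IM z<0$.
\item Set $\varphi_1=(I^{+}-B,\,A-J^{-})$. It is defined on $\mathbb{C}\setminus\mathbb{R}$ with $e^{\epsilon|z|}$ bounds on horizontal strips, so $\varphi_1\in\widetilde{\mathfrak{P}}_\delta$.
\item On $D_c\setminus\mathbb{R}$ one has $\varphi_1-\psi=A-B$. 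The gluing from your injectivity step upgrades this to $\varphi_1-\psi\in\mathfrak{P}_{\delta'}$.
\end{itemize}

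So your displayed integral is exactly the right regular correction. But it is $\psi+\varphi$, not $\varphi$, that extends to the wider strip, and seeing this requires the near contour.

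The paper's own assembly, $\varphi_1=(I_c,-J_{-c})$ with $f=-J_d=I_{-d}$, omits the same cross terms. It would need $J_d+I_{-d}\equiv0$, which fails for $\psi=(0,1)$. The consistent choice is $\varphi_1=(I_c-I_{-d},\,J_d-J_{-c})$ with $f=J_d-I_{-d}$.
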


\begin{proof}
To see the injectivity, we take an element $\varphi \in \widetilde{\mathfrak{P}}_{\delta}$ such that $\varphi(z)=f(z)$ for any $z \in  D_{\delta'}\setminus \mathbb{R}$ with some $f \in \mathfrak{P}_{\delta'}$. 
Then the function 
\begin{align*}
f_{1}(z)
=
\begin{cases}
f(z)
&
\text{for $|\IM(z)|<\delta'$}, 
\\
\varphi(z)
&
\text{for $\delta' \leq |\IM(z)|<\delta$}
\end{cases}
\end{align*}
belongs to $\mathfrak{P}_{\delta}$. 
Furthermore, we obtain $\varphi(z)=f_{1}(z)$ for any $z \in D_{\delta}\setminus \mathbb{R}$. 
This means that $\rho_{\delta,\delta'}(\varphi \bmod \mathfrak{P}_{\delta})=0$ implies $\varphi \bmod \mathfrak{P}_{\delta}=0$. 
Then, the surjectivity of the map $\rho_{\delta,\delta'}$ is equivalent that for any $\varphi \in \widetilde{\mathfrak{P}}_{\delta'}$, there exist $\varphi_{1} \in \widetilde{\mathfrak{P}}_{\delta}$ and $f \in \mathfrak{P}_{\delta'}$ such that $\varphi(z)+f(z)=\varphi_{1}(z)$ for any $z \in D_{\delta'}\setminus \mathbb{R}$. 
This can be shown by constructing $\varphi_{1}$ and $f$ as follows. 
For any $z \in \mathbb{C}$ with $0<\IM(z)<\delta'$, we take real numbers $c$ and $d$ so that $0<c<\IM(z)<d<\delta'$. 
By shifting the contour, we obtain
\begin{align*}
\varphi(z)
&
=
\frac{1}{2\pi i}
\int_{-\infty+ic}^{\infty+ic}
\varphi(w)
e^{-(w-z)^{2}}
\,\frac{dw}{w-z}
-
\frac{1}{2\pi i}
\int_{-\infty+id}^{\infty+id}
\varphi(w)
e^{-(w-z)^{2}}
\,\frac{dw}{w-z}
\\
&
=
I_{c}(z)
-
J_{d}(z), 
\end{align*}
say. 
Note that $I_{c}(z)$ is holomorphic for $\IM(z)>0$, and that $J_{d}(z)$ is holomorphic for $\IM(z)<\delta'$. 
In a similar way, for any $z \in \mathbb{C}$ with $-\delta<\IM(z)<0$, we take real numbers $c$ and $d$ so that $0<-d<\IM(z)<-c<\delta$. 
Then we obtain
\begin{align*}
\varphi(z)
=
I_{-d}(z)
-
J_{-c}(z), 
\end{align*}
where $I_{-d}(z)$ is holomorphic for $\IM(z)>-\delta'$, and $J_{-c}(z)$ is also holomorphic for $\IM(z)<0$. 
Then we define
\begin{align*}
\varphi_{1}(z)
=
\begin{cases}
I_{c}(z)
&
\text{for $0<\IM(z)<\delta$}, 
\\
-J_{-c}(z)
&
\text{for $-\delta<\IM(z)<0$},
\end{cases}
\end{align*}
and $f(z)=-J_{d}(z)=I_{-d}(z)$ for $-\delta'<\IM(z)<\delta'$. 
By their definitions, we can confirm that $\varphi(z)+f(z)=\varphi_{1}(z)$ for any $z \in D_{\delta'}\setminus \mathbb{R}$, and that $\varphi_{1} \in \widetilde{\mathfrak{P}}_{\delta}$ and $f \in \mathfrak{P}_{\delta'}$. 
As a result, the map $\rho_{\delta,\delta'}$ is surjective. 
\end{proof}

By Lemma \ref{lem:A.1}, we see that
\begin{align*}
\mathfrak{Q}
:=
\varinjlim
\mathfrak{Q}_{\delta} 
\simeq
\mathfrak{Q}_{\delta} 
\end{align*}
for any $\delta>0$, where the inductive limit is taken with respect to the above linear map $\rho_{\delta,\delta'}: \mathfrak{Q}_{\delta} \to \mathfrak{Q}_{\delta'}$ for $0<\delta'<\delta$. 
We refer to the elements of the space $\mathfrak{Q}$ as \textit{Fourier hyperfunctions} and write 
\begin{align*}
g
=
[\varphi] 
\end{align*}
to indicate that a Fourier hyperfunction $g$ is determined from $\varphi \in \widetilde{\mathfrak{P}}_{\delta}$ with some $\delta>0$. 
Then, we define the topological linear space $\mathfrak{P}^{*}$ as the inductive limit
\begin{align*}
\mathfrak{P}^{*}
=
\varinjlim
\mathfrak{B}_{m}^{*}
\end{align*}
with respect to the restriction map $\mathfrak{B}_{m}^{*} \to \mathfrak{B}_{m+1}^{*}$ for $m \geq1$, where $\mathfrak{B}_{m}^{*}$ is the Banach space of continuous functions $f$ on the strip $|\IM(z)| \leq 2^{-m}$ which are holomorphic in $|\IM(z)|<2^{-m}$ and satisfy
\begin{align*}
\| f \|_{m}
:=
\sup_{|\IM(z)| \leq 2^{-m}}
|f(z)| e^{|z|/m}
<
\infty. 
\end{align*}
Let $f \in \mathfrak{P}^{*}$ and $g \in \mathfrak{Q}$. 
Then we can take real numbers $\delta,\delta'>0$ such that $f$ is holomorphic on the strip $D_{\delta}$ and satisfies
\begin{align*}
\sup_{|\IM(z)| \leq \delta_{1}}
|f(z)| e^{\epsilon |z|}
<
\infty
\end{align*}
for any $0<\delta_{1}<\delta$ with some $\epsilon=\epsilon(\delta_{1})>0$, and that $g=[\varphi]$ with $\varphi \in \widetilde{\mathfrak{P}}_{\delta'}$. 
From the above, the pairing of $f$ and $g$ such that
\begin{align*}
\langle f \mid g \rangle
=
\int_{-\infty+ic}^{\infty+ic}
f(z) \varphi(z)
\,dz
-
\int_{-\infty-ic}^{\infty-ic}
f(z) \varphi(z)
\,dz 
\end{align*}
is well-defined, where $c$ is any constant with $0<c<\min(\delta_{1},\delta_{2})$. 
Denote by $(\mathfrak{P}^{*})'$ the dual space of $\mathfrak{P}^{*}$, that is, the linear space of all continuous linear functionals on $\mathfrak{P}^{*}$. 
In particular, if we define the map $T_{g}: \mathfrak{P}^{*} \to \mathbb{C}$ as $T_{g}(f)= \langle f \mid g \rangle$, then we obtain that $T_{g} \in (\mathfrak{P}^{*})'$ for any fixed $g \in \mathfrak{Q}$. 
The goal of this appendix is the following proposition which yields the duality $\mathfrak{Q} \simeq (\mathfrak{P}^{*})'$ as linear spaces. 

\begin{proposition}\label{prop:A.2}
The linear map $j: \mathfrak{Q} \to (\mathfrak{P}^{*})'$ defined by $g \mapsto T_{g}$ is bijective. 
\end{proposition}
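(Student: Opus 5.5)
We prove injectivity and surjectivity separately, with the Cauchy-kernel representation from the proof of Lemma \ref{lem:A.1} as the main tool, now regularized by the Gaussian factor $e^{-(w-z)^{2}}$.

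\emph{Injectivity.} Suppose $g=[\varphi]$ with $\varphi\in\widetilde{\mathfrak{P}}_{\delta}$ and $T_{g}=0$. Fix $z$ with $0<|\IM(z)|<\delta$. The function $w\mapsto e^{-(w-z)^{2}}/(w-z)$ is not holomorphic at $w=z$, so we first decompose $\varphi$ by the Cauchy formula, exactly as in Lemma \ref{lem:A.1}:
\begin{align*}
\varphi(z)=\frac{1}{2\pi i}\Big(\int_{\IM w=c}-\int_{\IM w=-c}\Big)\varphi(w)\,\frac{e^{-(w-z)^{2}}}{w-z}\,dw+f_{0}(z),
\end{align*}
where $c<|\IM(z)|$ and $f_{0}$ collects the contributions of the outer lines $\IM w=\pm d$ with $d>|\IM(z)|$. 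The function $f_{0}$ extends holomorphically across $\mathbb{R}$ and has the growth required of an element of $\mathfrak{P}_{\delta'}$. The first term equals $\frac{1}{2\pi i}\langle k_{z}\mid g\rangle$ with $k_{z}(w)=e^{-(w-z)^{2}}/(w-z)$. This $k_{z}$ is holomorphic on a strip $|\IM w|<c'$ with $c'<|\IM(z)|$, and it decays like a Gaussian there, so $k_{z}\in\mathfrak{P}^{*}$. Hence $\langle k_{z}\mid g\rangle=0$, and $\varphi=f_{0}$ on $D_{\delta'}\setminus\mathbb{R}$. By Lemma \ref{lem:A.1}, $g=0$.

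\emph{Surjectivity.} Let $T\in(\mathfrak{P}^{*})'$. Continuity on the inductive limit means $T|_{\mathfrak{B}_{m}^{*}}$ is bounded for every $m$. We define
\begin{align*}
\varphi(z)=\frac{1}{2\pi i}\,T_{w}\!\Big(\frac{e^{-(w-z)^{2}}}{w-z}\Big)\ \text{ for }\IM(z)>0,\qquad \varphi(z)=0\ \text{ for }\IM(z)<0.
\end{align*}
For $\IM(z)\geq\delta_{1}>0$ we choose $m$ with $2^{-m}<\delta_{1}$. Then $z\mapsto k_{z}$ is a holomorphic $\mathfrak{B}_{m}^{*}$-valued map (via difference quotients, with Gaussian bounds uniform on compacts), so $\varphi$ is holomorphic.

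\emph{Growth bound.} We need $|\varphi(z)|\ll_{\epsilon}e^{\epsilon|z|}$ on $\delta_{1}\leq\IM z\leq\delta_{2}$. This follows from $\|k_{z}\|_{m}\ll e^{C|\RE z|/m}$: the Gaussian factor $|e^{-(w-z)^{2}}|\leq e^{(\IM(w-z))^{2}-(\RE(w-z))^{2}}$ makes $\sup_{w}e^{|w|/m}|k_{z}(w)|\ll e^{|z|/m+O(1/m^{2})}$. Since $T$ is continuous on every $\mathfrak{B}_{m}^{*}$ and $m$ can be taken large (shrinking the strip is allowed), this gives $e^{\epsilon|z|}$ growth for every $\epsilon$. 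So $\varphi\in\widetilde{\mathfrak{P}}_{\delta}$.

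\emph{Reproducing $T$.} We must check $T_{[\varphi]}=T$. For $f\in\mathfrak{P}^{*}$,
\begin{align*}
\langle f\mid[\varphi]\rangle=\int_{\IM z=c}f(z)\varphi(z)\,dz=T_{w}\Big(\frac{1}{2\pi i}\int_{\IM z=c}f(z)\frac{e^{-(w-z)^{2}}}{w-z}\,dz\Big).
\end{align*}
Pulling $T$ outside the integral is justified because the Riemann sums converge in some $\mathfrak{B}_{m}^{*}$, using the decay of $f$ and the bounds above. The inner function of $w$, for $|\IM w|<c$, equals $f(w)$ minus $\frac{1}{2\pi i}\int_{\IM z=-c'}f(z)\frac{e^{-(w-z)^{2}}}{w-z}\,dz$. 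The subtracted term is holomorphic in $w$ for $\IM w>-c'$, but it need not vanish, and handling it is the main obstacle. I would first try to show that it tends to $0$ in some $\mathfrak{B}_{m}^{*}$ as $c'$ is pushed downward, i.e.\ that it equals the full Cauchy integral of $f$ over the lower strip minus a term that vanishes. If that fails, I would replace the kernel by $\frac{1}{2\pi i}\frac{e^{-\eta(w-z)^{2}}}{w-z}$, let $\eta\to0^{+}$, and use $\langle f\mid\delta\rangle$-type identities. The choices of $\epsilon$ and of strip widths in $\mathfrak{B}_{m}^{*}$ must be made compatibly here, and that bookkeeping is where I expect the real difficulty.
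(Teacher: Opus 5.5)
Your injectivity argument is the same as the paper's. You split $\varphi$ by the Gaussian-regularized Cauchy formula, identify the inner part as $\frac{1}{2\pi i}\langle k_{z}\mid g\rangle$ with $k_{z}\in\mathfrak{P}^{*}$, and conclude that $\varphi$ continues to an element of $\mathfrak{P}_{\delta'}$.

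For surjectivity you take a different route: a Cauchy transform of $T$, instead of the paper's $\mathcal{G}\,\overline{\mathcal{F}}\,T$ with the splitting $T=T_{+}+T_{-}$. As written, though, it has a genuine gap, exactly at the step you flagged. Setting $\varphi=0$ on $\IM z<0$ is wrong, and the leftover term cannot be removed.

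The residue of $z\mapsto f(z)k_{z}(w)$ at $z=w$ is $-f(w)$. So the inner function is $f(w)$ \emph{plus} (not minus) $\frac{1}{2\pi i}\int_{\IM z=-c'}f(z)k_{z}(w)\,dz$. Hence $\langle f\mid[\varphi]\rangle=T(f)+R(f)$, where $R(f)=\int_{\IM z=-c'}f(z)\,\frac{1}{2\pi i}T_{w}(k_{z})\,dz$. By Cauchy's theorem $R(f)$ does not depend on $c'$, so pushing $c'$ down changes nothing. Nor is $R(f)$ zero in general.

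Concretely, take $T(f)=f(0)$. Your recipe gives $\varphi(z)=-e^{-z^{2}}/(2\pi i z)$ on $\IM z>0$. For $f(z)=e^{-z^{2}}$, the integrand $e^{-2z^{2}}/z$ is odd with residue $1$ at $0$. This forces $\int_{\IM z=c}e^{-2z^{2}}\,dz/z=-\pi i$, hence $\langle f\mid[\varphi]\rangle=\tfrac12\neq 1=T(f)$. The same computation with the kernel $e^{-\eta(w-z)^{2}}$ gives $\tfrac12$ for every $\eta>0$, so your $\eta\to0^{+}$ fallback fails too.

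The leftover term is not a bookkeeping artifact; it is the lower boundary value you discarded. The fix is to define $\varphi(z)=\frac{1}{2\pi i}T_{w}(k_{z})$ on \emph{both} half-planes. Your holomorphy and growth arguments work verbatim for $\IM z<0$.

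Now let $f$ be holomorphic on $D_{\delta_{f}}$ with $|f(z)|\ll e^{-\epsilon_{f}|z|}$ on $|\IM z|\leq c$, where $c<\delta_{f}$. Choose $m$ with $2^{-m}<c$ and $1/m<\epsilon_{f}$, so that your Riemann-sum interchange is legitimate in $\mathfrak{B}_{m}^{*}$. Then
\[
\langle f\mid[\varphi]\rangle
=T_{w}\Big(\frac{1}{2\pi i}\Big(\int_{\IM z=c}-\int_{\IM z=-c}\Big)f(z)\,k_{z}(w)\,dz\Big)
=T_{w}\big(f(w)\big)=T(f),
\]
because the bracket equals $f(w)$ on $|\IM w|\leq 2^{-m}$ by the residue theorem.

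With this correction your surjectivity argument is a valid and more self-contained alternative to the paper's. It needs only the boundedness of $T$ on each $\mathfrak{B}_{m}^{*}$ and a residue computation. The paper's construction of the inverse instead relies on two cited facts: the closedness of the image of $\mathfrak{P}^{*}\to\mathfrak{P}_{+}^{*}\oplus\mathfrak{P}_{-}^{*}$, and Kawai's Runge-type density theorem. It needs these to make the splitting $T=T_{+}+T_{-}$ and the map $\mathcal{G}$ well defined.
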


In what follows, we construct the inverse map $k: (\mathfrak{P}^{*})' \to \mathfrak{Q}$ according to the method of Kawai \cite[Definition 3.2.8]{Kawai1970} using the notion of Fourier transformations in the space $\mathfrak{Q}$. 
Recall that the Fourier transform of $f \in \mathfrak{P}^{*}$ and its inversion are defined as
\begin{align*}
\mathcal{F} f(\xi)
=
\int_{-\infty}^{\infty}
f(x) e^{ix \xi}
\,dx 
\quad\text{and}\quad
\overline{\mathcal{F}} f(\xi)
=
\int_{-\infty}^{\infty}
f(x) e^{-ix \xi}
\,dx, 
\end{align*}
respectively, which are elements of the space $\mathfrak{P}^{*}$. 
From now on, we write as usual $(f,T)=T(f)$ with $f \in X$ and $T \in X'$ for a topological linear space $X$ and its dual space $X'$. 

\begin{definition}\label{def:A.3}
For $T \in (\mathfrak{P}^{*})'$, we define $\overline{\mathcal{F}}\, T \in (\mathfrak{P}^{*})'$ by $(\overline{\mathcal{F}}\, T, f)=(T, \overline{\mathcal{F}} f)$. 
\end{definition}

Then, we define the linear spaces $\mathfrak{P}_{+}^{*}$ and $\mathfrak{P}_{-}^{*}$ as the inductive limits
\begin{align*}
\mathfrak{P}_{+}^{*}
=
\varinjlim
\mathfrak{B}_{m,+}^{*}
\quad\text{and}\quad
\mathfrak{P}_{-}^{*}
=
\varinjlim
\mathfrak{B}_{m,-}^{*}
\end{align*}
with respect to the restriction maps $\mathfrak{B}_{m,+}^{*} \to \mathfrak{B}_{m+1,+}^{*}$ and $\mathfrak{B}_{m,-}^{*} \to \mathfrak{B}_{m+1,-}^{*}$ for $m \geq1$, where $\mathfrak{B}_{m,+}^{*}$ and $\mathfrak{B}_{m,-}^{*}$ are the following Banach spaces. 
Let $K_{m,+}$ and $K_{m,-}$ denote the subsets of $\mathbb{C}$ defined as
\begin{align*}
K_{m,+}
&
=
\{z \in \mathbb{C} \mid \text{$\RE(z) \geq -2^{-m}$ and $|\IM(z)| \leq 2^{-m}$} \}, 
\\
K_{m,-}
&
=
\{z \in \mathbb{C} \mid \text{$\RE(z) \leq 2^{-m}$ and $|\IM(z)| \leq 2^{-m}$} \} 
\end{align*}
for $m \geq1$. 
Then $\mathfrak{B}_{m,+}^{*}$ is the linear space consisting of all continuous functions $f$ on $K_{m,+}$ which are holomorphic in its interior and satisfy
\begin{align*}
\| f \|_{m,+}
:=
\sup_{z \in K_{m,+}}
|f(z)| e^{|z|/m}
<
\infty, 
\end{align*}
and $\mathfrak{B}_{m,-}^{*}$ is defined by replacing $+$ with $-$. 
For example, the function $\xi \mapsto e^{iz \xi}$ is an element of $\mathfrak{P}_{+}^{*}$ for $\IM(z)>0$ and is an element of $\mathfrak{P}_{-}^{*}$ for $\IM(z)<0$. 
Note that we have a natural injection
\begin{align*}
\mathfrak{P}^{*}
\to 
\mathfrak{P}_{+}^{*}
\oplus
\mathfrak{P}_{-}^{*}
\end{align*}
defined by the restrictions, and it can be shown that the image of this map is closed. 
Therefore, it induces the surjection
\begin{align*}
(\mathfrak{P}_{+}^{*})'
\oplus
(\mathfrak{P}_{-}^{*})'
\to 
(\mathfrak{P}^{*})'. 
\end{align*}
For $T \in (\mathfrak{P}^{*})'$, we take $T_{+} \in (\mathfrak{P}_{+}^{*})'$ and $T_{-} \in (\mathfrak{P}_{-}^{*})'$ according to this surjection so that we obtain the decomposition $T=T_{+}+T_{-}$. 

\begin{definition}\label{def:A.4}
For $T \in (\mathfrak{P}^{*})'$, we define $\mathcal{G}T \in \mathfrak{Q}$ as $\mathcal{G}\,T=[\varphi_{T_{+},T_{-}}]$, where
\begin{align*}
\varphi_{T_{+},T_{-}}(z)
=
\begin{cases}
(e^{iz \xi}, T_{+})
&
\text{for $0<\IM(z)<\delta$}, 
\\
-(e^{iz \xi}, T_{-})
&
\text{for $-\delta<\IM(z)<0$}
\end{cases}
\end{align*}
is an element of the space $\widetilde{\mathfrak{P}}_{\delta}$ for any $\delta>0$. 
\end{definition}

It should be confirmed that $\mathcal{G}\, T$ is independent of the way of the decomposition of $T$. 
Regard $\mathfrak{P}^{*}$ as a subspace of $\mathfrak{P}_{+}^{*}$ and $\mathfrak{P}_{-}^{*}$ by the restriction maps $\mathfrak{P}^{*} \to \mathfrak{P}_{+}^{*}$ and $\mathfrak{P}^{*} \to \mathfrak{P}_{-}^{*}$. 
Then it is known that $\mathfrak{P}^{*}$ is dense in $\mathfrak{P}_{+}^{*}$ and in $\mathfrak{P}_{-}^{*}$ as an analogue of Runge's theorem; see \cite[Theorem 2.2.1]{Kawai1970} for a proof. 
Hence the linear maps $(\mathfrak{P}_{+}^{*})' \to (\mathfrak{P}^{*})'$ and $(\mathfrak{P}_{-}^{*})' \to (\mathfrak{P}^{*})'$ defined by $T \mapsto T \mid_{\mathfrak{P}^{*}}$ are injective. 
Then we regard $(\mathfrak{P}_{+}^{*})'$ and $(\mathfrak{P}_{-}^{*})'$ as subspaces of $(\mathfrak{P}^{*})'$. 
Assume that we have another decomposition $T=S_{+}+S_{-}$ with $S_{+} \in (\mathfrak{P}_{+}^{*})'$ and $S_{-} \in (\mathfrak{P}_{-}^{*})'$. 
Then we obtain
\begin{align*}
T_{+}-S_{+}
=
S_{-}-T_{-}
\in 
(\mathfrak{P}_{+}^{*})' \cap (\mathfrak{P}_{-}^{*})'. 
\end{align*}
Hence we see that
\begin{align*}
\varphi_{T_{+},T_{-}}(z)-\varphi_{S_{+},S_{-}}(z)
&
=
\begin{cases}
(e^{iz \xi}, T_{+}-S_{+})
&
\text{for $0<\IM(z)<\delta$}, 
\\
(e^{iz \xi}, S_{-}-T_{-})
&
\text{for $-\delta<\IM(z)<0$}, 
\end{cases}
\\
&
=
\begin{cases}
(e^{iz \xi}, T_{+}-S_{+})
&
\text{for $0<\IM(z)<\delta$}, 
\\
(e^{iz \xi}, T_{+}-S_{+})
&
\text{for $-\delta<\IM(z)<0$}, 
\end{cases}
\end{align*}
which is continued to an element of $\mathfrak{P}_{\delta}$. 
Therefore, $\varphi_{T_{+},T_{-}}(z)$ and $\varphi_{S_{+},S_{-}}(z)$ define the same Fourier hyperfunction. 

\begin{definition}\label{def:A.5}
We define the map $k:(\mathfrak{P}^{*})' \to \mathfrak{Q}$ as $T \mapsto \mathcal{G}\, \overline{\mathcal{F}}\,T$. 
\end{definition}

\begin{proof}[Proof of Proposition \ref{prop:A.2}]
It suffices to show that $j \circ k =\mathrm{id}$ and that $j$ is injective. 
Let $T \in (\mathfrak{P}^{*})'$. 
For any $f \in \mathfrak{P}^{*}$, we have 
\begin{align*}
(f, (j \circ k) T) 
&
=
\langle f \mid \mathcal{G}\,(\overline{\mathcal{F}}\, T) \rangle
\\
&
=
\int_{-\infty+ic}^{\infty+ic}
f(z) (e^{iz \xi}, (\overline{\mathcal{F}}\, T)_{+})
\,dz
+
\int_{-\infty-ic}^{\infty-ic}
f(z) (e^{iz \xi}, (\overline{\mathcal{F}}\, T)_{-})
\,dz 
\\
&
=
\bigg(
\int_{-\infty+ic}^{\infty+ic}
f(z) e^{iz \xi}
\,dz, 
(\overline{\mathcal{F}}\, T)_{+}
\bigg)
+
\bigg(
\int_{-\infty-ic}^{\infty-ic}
f(z) e^{iz \xi}
\,dz, 
(\overline{\mathcal{F}}\, T)_{-}
\bigg)
\\
&
=
(\mathcal{F} f, (\overline{\mathcal{F}}\, T)_{+})
+
(\mathcal{F} f, (\overline{\mathcal{F}}\, T)_{-})
\\
&
=
(\mathcal{F} f, \overline{\mathcal{F}}\, T)
\\
&
=
(f, \mathcal{F} \overline{\mathcal{F}}\, T)
\\
&
=
(f,T).
\end{align*}
Hence $j \circ k =\mathrm{id}$ follows. 
In order to see the injectivity of the map $j: \mathfrak{Q} \to (\mathfrak{P}^{*})'$, we assume $T_{g}(f)= \langle f \mid g \rangle=0$ for all $f \in \mathfrak{P}^{*}$. 
Let $g=[\varphi]$ with $\varphi \in \widetilde{\mathfrak{P}}_{\delta}$ for some $\delta>0$. 
For any $z \in \mathbb{C}$ with $0<\IM(z)<\delta$, we take real numbers $c$ and $d$ so that $0<c<\IM(z)<d<\delta$. 
By shifting the contour, we obtain
\begin{align*}
\varphi(z)
=
\frac{1}{2\pi i}
\int_{-\infty+ic}^{\infty+ic}
\varphi(w)
e^{-(w-z)^{2}}
\,\frac{dw}{w-z}
-
\frac{1}{2\pi i}
\int_{-\infty+id}^{\infty+id}
\varphi(w)
e^{-(w-z)^{2}}
\,\frac{dw}{w-z}. 
\end{align*}
Furthermore, we have
\begin{align*}
\frac{1}{2\pi i}
\int_{-\infty+ic}^{\infty+ic}
\varphi(w)
e^{-(w-z)^{2}}
\,\frac{dw}{w-z}
-
\frac{1}{2\pi i}
\int_{-\infty-ic}^{\infty-ic}
\varphi(w)
e^{-(w-z)^{2}}
\,\frac{dw}{w-z}
=
0
\end{align*}
due to the assumption. 
Hence we arrive at
\begin{align*}
\varphi(z)
=
\frac{1}{2\pi i}
\int_{-\infty-ic}^{\infty-ic}
\varphi(w)
e^{-(w-z)^{2}}
\,\frac{dw}{w-z}
-
\frac{1}{2\pi i}
\int_{-\infty+id}^{\infty+id}
\varphi(w)
e^{-(w-z)^{2}}
\,\frac{dw}{w-z}. 
\end{align*}
The first integral of the right-hand side is holomorphic for $\IM(z)>-\delta$, and the second integral of the right-hand side is holomorphic for $\IM(z)<\delta$. 
Hence $\varphi(z)$ is continued to a holomorphic function on the strip $D_{\delta}$. 
Furthermore, we have
\begin{align*}
\frac{1}{2\pi i}
\int_{-\infty-ic}^{\infty-ic}
\varphi(w)
e^{-(w-z)^{2}}
\,\frac{dw}{w-z}
&
=
\frac{1}{2\pi i}
\int_{-\infty-ic-i \IM(z)}^{\infty-ic-i \IM(z)}
\varphi(w+z)
e^{-w^{2}}
\,\frac{dw}{w}
\\
&
\ll
e^{\epsilon |z|}
\int_{-\infty}^{\infty}
e^{\epsilon |v|} e^{-v^{2}}
\,\frac{dv}{v}
\\
&
\ll
e^{\epsilon |z|}
\end{align*}
on the strip $|\IM(z)| \leq \delta_{1}$ for any $0<\delta_{1}<\delta$ and $\epsilon>0$, where the implied constant depends only on $\delta_{1}$. 
We similarly obtain
\begin{align*}
\frac{1}{2\pi i}
\int_{-\infty+id}^{\infty+id}
\varphi(w)
e^{-(w-z)^{2}}
\,\frac{dw}{w-z} 
\ll
e^{\epsilon |z|}
\end{align*}
on the strip $|\IM(z)| \leq \delta_{1}$ for any $0<\delta_{1}<\delta$ and $\epsilon>0$, where the implied constant depends only on $\delta_{1}$. 
Hence the function $\varphi$ belongs to $\mathfrak{P}_{\delta}$, which shows $g=0$ in the space $\mathfrak{Q}$. 
Therefore $j: \mathfrak{Q} \to (\mathfrak{P}^{*})'$ is injective. 
This completes the proof. 
\end{proof}

\subsection*{Acknowledgments}
The author is grateful to Nguyen Le Dang Thi for informing him of the paper \cite{CaoTanigawaZhai2021}. 
The work of this paper was supported by JSPS Grant-in-Aid for Early-Career Scientists (Grant Number JP24K16906). 


\providecommand{\bysame}{\leavevmode\hbox to3em{\hrulefill}\thinspace}
\providecommand{\MR}{\relax\ifhmode\unskip\space\fi MR }
\providecommand{\MRhref}[2]{%
  \href{http://www.ams.org/mathscinet-getitem?mr=#1}{#2}
}
\providecommand{\href}[2]{#2}

\end{document}